\crefname{subsection}{subsection}{subsections}
\newcommand{\constMap}{1_N}
\newcommand{\halfN}{\nicefrac{n}{2}}
\newcommand{\PRESEQNA}{\ensuremath{\mathsf{PresEqnA}}} 
\newcommand{\PRESEQNB}{\ensuremath{\mathsf{PresEqnB}}} 
\newcommand{\MATEQN}{\ensuremath{\mathsf{MatEqn}}} 
\newcommand{\EQNMAT}{\ensuremath{\mathsf{EqnMat}}} 
\newcommand{\equationsProbNEW}{\textsc{EquationsSubspan}\xspace}
\newcommand{\MatrixSubspanA}{\textsc{MatrixSubspanA}\xspace}
\newcommand{\MatrixSubspanB}{\textsc{MatrixSubspanB}\xspace}
\newcommand{\CNF}{\ensuremath{\mathsf{CNF}}}
\newcommand{\ONF}{\ensuremath{\mathsf{ONF}}\xspace}
\newcommand{\ENFTwo}{\ensuremath{\mathsf{ENF}}\xspace}
\newcommand{\DOddGroup}{G_{o}}
\newcommand{\DEvenGroupTwo}{G_{4c}}
\numberwithin{equation}{section}
\newcommand{\bi}{\begin{itemize}}
	\newcommand{\ei}{\end{itemize}}
\newcommand{\be}{\begin{enumerate}}
	\newcommand{\ee}{\end{enumerate}}
\renewcommand{\geq}{\geqslant} \renewcommand{\leq}{\leqslant}
\newtheorem{theorem}{Theorem}[section]
\newtheorem{proposition}[theorem]{Proposition}
\newtheorem{corollary}[theorem]{Corollary}
\newtheorem{lemma}[theorem]{Lemma}
\newtheorem{conjecture}[theorem]{Conjecture}
\theoremstyle{definition}
\newtheorem{definition}[theorem]{Definition}
\newtheorem{remark}[theorem]{Remark}
\newtheorem{example}[theorem]{Example}
\newtheorem{theoremx}{Theorem}
\newcommand{\sgn}{\textnormal{sgn}}
\newcommand{\Aut}{\textnormal{Aut}}
\newcommand{\tildef}{\tilde{f}}
\renewcommand{\geq}{\geqslant} \renewcommand{\leq}{\leqslant}  
\newcommand{\PMonlyExt}{\mathrm{SpecialExt}}
\newcommand{\Z}{\mathbb Z}
\newcommand{\N}{\mathbb N}
\newcommand{\R}{\mathbb R}
\newcommand{\bbX}{\mathbb{X}}
\newcommand{\bbY}{\mathbb{Y}}
\newcommand{\Fin}{\mathrm{Fin}}
\newcommand{\Arb}{\mathrm{FinPres}}
\newcommand{\DPclass}{\mathrm{Ab}\times\mathrm{Fin}}
\newcommand{\Free}{\mathrm{FreeGrp}}
\newcommand{\FreeAb}{\mathrm{FreeAb}}
\newcommand{\PMonlySemi}{\ensuremath{\mathrm{RestrAbelSemi}}}
\newcommand{\vC}{\mathrm{VirtCyclic}}
\newcommand{\Abe}{\mathrm{Abe}}
\newcommand{\Hom}{\mathrm{Hom}}
\newcommand{\spanZ}[1]{\operatorname{span}({#1})}
\newcommand{\spanZp}[1]{\operatorname{span_{\Z_p}}({#1})} 
\newcommand{\spanb}[1]{\operatorname{span}_{b}({#1})}
\newcommand{\spanX}[2]{\operatorname{span}_{#2}({#1})}
\newcommand{\Epi}[2]{\operatorname{Epi}(#1,#2)}
\newcommand{\cD}{\mathcal{D}}
\newcommand{\cR}{\mathcal{R}}
\newcommand{\cG}{\mathcal{G}}
\newcommand{\cI}{\mathcal{I}}
\newcommand{\cT}{\mathcal{T}}
\newcommand{\cP}{\mathcal{P}}
\newcommand{\cX}{\mathcal{X}}
\newcommand{\cY}{\mathcal{Y}}
\newcommand{\onecount}{\textnormal{$1$-count}}
\newcommand{\GL}{\mathrm{GL}}
\newenvironment{js}{\noindent\color{red} JS :  }{}
\newcommand{\MFb}{\mathfrak{b}}
\newcommand{\MFc}{\mathfrak{c}}
\newcommand{\MFd}{\mathfrak{d}}
\newcommand{\MFr}{\mathfrak{r}}
\newcommand{\zeroMat}{0}
\newcommand{\compproblem}[3][]{%
	\par\vspace{0.125cm plus 0.1cm minus 0.05cm}\begin{tabularx}{\textwidth-2\parindent}{lX}%
		\if\relax\detokenize{#1}\relax%
		\else%
		\textnormal{\textbf{Constant:}}&#1\\%
		\fi%
		\textnormal{\textbf{Input:}}&#2\\%
		\textnormal{\textbf{Question:}}&#3\\%
	\end{tabularx}\vspace{0.125cm plus 0.1cm minus 0.05cm}\par%
}
\newcommand{\compalgo}[3][]{%
	\par\vspace{0.125cm plus 0.1cm minus 0.05cm}\begin{tabularx}{\textwidth-2\parindent}{lX}%
		\if\relax\detokenize{#1}\relax%
		\else%
		\textnormal{\textbf{Constant:}}&#1\\%
		\fi%
		\textnormal{\textbf{Input:}}&#2\\%
		\textnormal{\textbf{Output:}}&#3\\%
	\end{tabularx}\vspace{0.125cm plus 0.1cm minus 0.05cm}\par%
}
\newcommand{\NP}{\ensuremath{\mathsf{NP}}\xspace} %
\renewcommand{\P}{\ensuremath{\mathsf{P}}\xspace}
\newcommand{\PSPACE}{\ensuremath{\mathsf{PSPACE}}\xspace} 
\newcommand{\EXPSPACE}{\ensuremath{\mathsf{EXPSPACE}}\xspace} 
\newcommand{\gen}[1]{\left<\, \mathinner{#1} \,\right>}
\newcommand{\Gen}[2]{\left< \,\mathinner{#1} \mid \mathinner{#2}\,\right>}
\newcommand{\mcomm}[3]{\left[{#1},\kern.1em_{#2}\,\kern.1em {#3} \right]}
\newcommand{\smcomm}[2]{\left[_{#1}\,\kern.1em {#2} \right]}
\newcommand{\QtauPres}{$(Q,\tau)$-presentation}
\newcommand{\tauX}{\tau|_\cX}
\begin{document}
	
	\title[On the complexity of epimorphism testing]
 {On the complexity of epimorphism testing\\  with virtually abelian 
 targets}

\author{Murray Elder}
\address{School of Mathematical and Physical Sciences, University of Technology Sydney, Ultimo NSW 2007, Australia}  
\email{murray.elder@uts.edu.au}
%
%
%
\author{Jerry Shen}
\address{School of Mathematical and Physical Sciences, University of Technology Sydney, Ultimo NSW 2007, Australia}  
\email{qing.shen-1@uts.edu.au}

\author{Armin Wei\ss}
\address{ Institut f\"ur Formale Methoden der Informatik, Universit\"at Stuttgart, 
70569 Stuttgart, Germany}
\email{armin.weiss@fmi.uni-stuttgart.de}
%

\date{\today}

\keywords{virtually abelian group, epimorphism problem, \NP-complete, dihedral group, equations over groups}

\subjclass[2020]{20F10, 20F65, 68Q17}

\begin{abstract}
Friedl and L\"oh (2021, Confl. Math.) prove that testing whether or not there is an epimorphism from a finitely presented group to a  virtually cyclic group, or to the direct product of an abelian and a finite group, is decidable. Here we prove that these problems are $\mathsf{NP}$-complete. We also show that testing epimorphism is $\mathsf{NP}$-complete when the target is a restricted type of semi-direct product of a finitely generated free abelian group  and a finite group, thus extending the class of virtually abelian target groups for which decidability of epimorphism is known.

Lastly, we consider epimorphism onto a fixed finite group. We show the problem is $\mathsf{NP}$-complete when the target is a  dihedral groups of order that is not a power of $2$, complementing the work on Kuperberg and Samperton  (2018, Geom. Topol.) who showed the same result when the target is non-abelian finite simple.
\end{abstract}
\maketitle

\section{Introduction}
Let $\cD, \cT$ be classes of 
groups. The (uniform) \emph{epimorphism problem} from $\cD$ to $\cT$,  denoted $\Epi{\cD}{\cT}$, is the following decision problem.
\compproblem[]{Finite descriptions 
for groups  $G \in \cD$ and $H \in \cT$}{Is there 
a surjective homomorphism
from $G$ to $H$?}

\noindent
Note that an epimorphism in the category of groups is a surjective homomorphism.
We refer to $G\in \cD$ as the \emph{domain group} and $H\in \cT$ is the  \emph{target  group} for the problem. 
If $\cT=\{H\}$ is a singleton, we write $\Epi{\cD}{H}$ for the epimorphism problem from a class $\cD$ to the fixed group $H$, in  which case the input is just a finite description for a group $G\in\cD$.

Remeslennikov
\cite{Remeslennikov} proved for $\mathcal D$ the class of non-abelian nilpotent groups, the  epimorphism problem from $\mathcal D$ to $\mathcal D$
is undecidable, via Hilbert's 10th problem.
Applying work of Razborov \cite[Theorem 3]{Razborov1985} on equations in free groups, the epimorphism problem from finitely presented  groups to finitely generated free groups is decidable, but without any known complexity bounds (see \cref{sec:otherResults}). 
Friedl and L\"oh  \cite{FriedlLoh}
considered the epimorphism problem from  finitely presented groups to virtually abelian groups, proving that the problem is decidable when the target is either a virtually cyclic or the direct product of an abelian group and a finite group. 
 Whilst they claim that the algorithms they consider to establish decidability ``will have ridiculous worst-case
complexity'',  in fact we are able to show the following.

\begin{restatable}{theoremx}{ThmMain}\label{ThmMain}
    The epimorphism problem from finitely presented groups
    to the following targets is  \NP-complete:
    \begin{enumerate}
        \item direct products of  abelian  and  finite groups
        \item virtually cyclic groups
        \item semi-direct products of a free abelian group $N$ and a finite group $Q$ where the action of $Q$ on $N$ is of a certain restricted type (see \cref{defn:PMonlySemi}).
      \end{enumerate}
\end{restatable}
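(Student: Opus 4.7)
The plan is to establish NP-completeness for each of the three target classes by proving NP membership and NP-hardness separately. Every virtually abelian group $H$ in the theorem statement fits in an extension $1 \to N \to H \to Q \to 1$ with $N \cong \Z^d$ free abelian and $Q$ finite, so a homomorphism $\varphi\colon G \to H$ from a finitely presented group $G = \Gen{x_1, \dots, x_n}{r_1, \dots, r_m}$ is determined by the images of the generators, which can be written as pairs $(v_i, q_i) \in \Z^d \times Q$. Both the verification of relations and the surjectivity test then decompose into finite-group computations combined with linear algebra over $\Z$ twisted by the $Q$-action.

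For the NP upper bound I would nondeterministically guess the finite components $q_1, \dots, q_n \in Q$ and the abelian components $v_1, \dots, v_n \in \Z^d$. Each relation $r_j$ evaluated at $\varphi$ must equal the identity of $H$; expanding the semidirect-product multiplication separates this into a word condition in $Q$ and a linear Diophantine equation in the $v_i$ whose coefficient vectors are determined by prefix evaluations of $r_j$ in $Q$. Surjectivity reduces to testing that the pairs $(v_i, q_i)$ generate $H$, which for virtually abelian $H$ is done by Hermite or Smith normal form together with a generation check in the finite quotient $Q$. The subtle point is the bit-size of the $v_i$: one must show that whenever a surjective homomorphism exists, one exists with the $v_i$ of polynomially bounded bit-length, which follows from a standard short-solution bound for solvable linear Diophantine systems subject to a surjectivity constraint.

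For NP-hardness, the plan is to exhibit in each class a target $H$ onto which epimorphism testing is already NP-hard. For class (1), reduce from an NP-hard integer equation problem (likely formalised in the paper as \equationsProbNEW\ or \MatrixSubspanA) to the question of whether a suitable presentation surjects onto a product $\Z^k \times F$: the finite factor $F$ enforces a finite set of discrete choices, modelling the nondeterminism of, say, a \textsc{SAT} or subset-sum instance, while the abelian factor records a linear constraint that must hold. For class (2), use the fact that virtually cyclic groups such as $\Z \rtimes F$ or infinite dihedral-like groups still support non-trivial modular arithmetic, and encode the hard instance into constraints on the single $\Z$-direction via moduli that depend on the input. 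For class (3), exploit the restricted $Q$-action on $\Z^d$ to simulate several simultaneous linear constraints by a single epimorphism question, reducing directly from the hardness established for class (1).

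The main obstacle will be the NP-hardness direction for class (2): virtually cyclic targets have only one $\Z$-direction plus a small finite twist, so the reduction must compress a multivariate NP-hard instance into constraints on a single integer coordinate, likely using non-constant moduli to simulate independence. A secondary subtlety, on the upper-bound side, is bounding the witness size when $d$ is unbounded and interacts non-trivially with the action of $Q$, since both the lattice and the finite part contribute to the bit-length of the guessed $\varphi$, and one must rule out pathological instances where every valid $\varphi$ requires super-polynomially large $v_i$.
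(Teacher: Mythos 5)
Your NP-hardness plan rests on a misreading of the auxiliary problems and misses the simple observation that actually drives that direction. You propose reducing from a SAT-like or subset-sum instance via \equationsProbNEW\ or \MatrixSubspanA, with the finite factor ``modelling nondeterminism'' and the abelian factor recording linear constraints. But the paper proves that \MatrixSubspanA and \MatrixSubspanB are both in \P\ (\cref{thm:MatrixProbsInP}), so neither can serve as an NP-hard source for a reduction — they are precisely the tools used for the NP \emph{upper} bound. The actual hardness argument is immediate: each of the three classes contains all finite groups (take the abelian/free-abelian/free-abelian factor to be trivial), and epimorphism from finitely presented groups onto a \emph{fixed} finite non-abelian simple group is already NP-hard by Kuperberg–Samperton (\cite[Corollary 1.2]{Kuperberg}; the paper also gives an independent proof for dihedral targets in \cref{sec:MainDihedral}). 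Consequently your worry about compressing a multivariate NP-hard instance into a single $\Z$-coordinate for the virtually cyclic class never arises — no such encoding is needed.

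On the upper bound, your route of nondeterministically guessing both the finite components $q_i$ and the abelian components $v_i$ differs from the paper's and reintroduces a subtlety the paper is designed to avoid. The paper guesses only the finite-quotient map $\tau\colon G\to Q$ nondeterministically, builds a $(Q,\tau)$-presentation, and then shows \emph{deterministically in polynomial time} (via \equationsProbNEW, then \EQNMAT, then Smith normal form) whether $\tau$ lifts to a surjection onto $H$. No polynomial bound on the size of the $v_i$ is ever needed. Your approach could perhaps be made rigorous, but the ``standard short-solution bound'' you invoke is for plain linear Diophantine systems; you would additionally have to prove that a short solution can be taken subject to the surjectivity (span) constraint, which is not standard and is exactly the content the paper handles through \MatrixSubspanA/\MatrixSubspanB and Smith normal form arguments. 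As written this is a genuine gap, though a secondary one compared with the hardness direction.
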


Kuperberg and Samperton \cite{Kuperberg} considered the special case of epimorphism from certain 3-manifold groups to finite non-abelian simple groups, in the context of more general questions. 
It follows from their work that the epimorphism problem from a finitely presented group to a fixed finite non-abelian simple group is \NP-hard (see Subsection~\ref{subsec:Kuperberg}).
Here we show the same result applies when the target is a finite dihedral group of order not a power of $2$. 

\begin{theoremx}\label{thm:MainDihedral}
 Let $n>1$ be an integer that is  not a power of $2$,
 and $D_{2n}$ denote the dihedral group of order $2n$. 
     The epimorphism problem from finitely presented groups to the group $D_{2n}$ is \NP-complete.
\end{theoremx}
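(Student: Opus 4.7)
The plan is to prove both NP-membership and NP-hardness for the epimorphism problem with fixed target $D_{2n}$.

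\emph{NP-membership.} For a presentation $G = \langle X \mid R\rangle$ in the input, a witness is an assignment $\phi \colon X \to D_{2n}$, which requires $O(|X|)$ bits since $D_{2n}$ has constant size $2n$. Verification runs in polynomial time: for each relator $r \in R$, evaluate $\phi(r)$ as a word in the constant-size group $D_{2n}$ and check it equals the identity; then verify that $\phi(X)$ generates $D_{2n}$, which is a constant-time check for a fixed target.

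\emph{NP-hardness.} Since $n$ is not a power of $2$, there is an odd prime $p$ dividing $n$. The plan is to reduce from the equation satisfiability problem over $D_{2n}$, which is known to be \NP-complete in precisely this regime by work on equations over solvable non-nilpotent groups (e.g.\ Horv\'ath and coauthors). Given a system of equations $w_1(X_1, \ldots, X_k) = 1, \ldots, w_\ell(X_1, \ldots, X_k) = 1$ with constants drawn from $D_{2n}$, fix the standard presentation $D_{2n} = \langle r, s \mid r^n, s^2, (rs)^2\rangle$, and for each constant $g \in D_{2n}$ appearing in some equation, fix a word $W_g(a, b)$ representing $g$ via the generators $a \mapsto r$, $b \mapsto s$. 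Build
\[
    \Gamma \;=\; \bigl\langle X_1, \ldots, X_k, a, b \,\bigm|\, a^n,\; b^2,\; (ab)^2,\; \hat{w}_1, \ldots, \hat{w}_\ell \bigr\rangle,
\]
where $\hat{w}_j$ is obtained from $w_j$ by replacing each constant $g$ with the word $W_g(a, b)$. Any satisfying tuple $(h_1, \ldots, h_k) \in D_{2n}^k$ yields an epimorphism $\phi \colon \Gamma \twoheadrightarrow D_{2n}$ via $a \mapsto r$, $b \mapsto s$, $X_i \mapsto h_i$.

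The reverse direction is the crux of the argument. Given an epimorphism $\phi \colon \Gamma \twoheadrightarrow D_{2n}$, set $H := \langle \phi(a), \phi(b)\rangle \leq D_{2n}$; since $(\phi(a), \phi(b))$ satisfies the defining relations of $D_{2n}$, $H$ is a quotient of $D_{2n}$. When $H = D_{2n}$, the restriction of $\phi$ to $\langle a, b\rangle$ induces an automorphism $\sigma$ of $D_{2n}$, and the tuple $(\sigma^{-1}(\phi(X_1)), \ldots, \sigma^{-1}(\phi(X_k)))$ solves the original equations. The main obstacle is the case $H \subsetneq D_{2n}$: then the words $W_g(\phi(a), \phi(b))$ lie in a proper subgroup of $D_{2n}$ and may differ from the intended constants $g$, so the equation system translates incorrectly. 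To eliminate this case, the plan is to adjoin to $\Gamma$ a collection of auxiliary ``rigidity'' relations that are satisfiable in $D_{2n}$ under the canonical generators but become unsatisfiable whenever $\phi(a), \phi(b)$ fall into a proper quotient. Because the proper quotients of $D_{2n}$ (namely $D_{2d}$ for $d \mid n$ with $d < n$, together with the abelian quotients) form a constant-size family for fixed $n$, only $O(1)$ such rigidity relations are needed, so the reduction remains polynomial-time. Designing these relations using the centralizer and automorphism structure of $D_{2n}$ is the technically critical step.
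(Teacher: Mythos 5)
Your outline is correct on NP-membership (which matches the paper's Lemma~\ref{lem:epi_finite}), and you have correctly identified the crux of NP-hardness: any epimorphism $\phi\colon\Gamma\to D_{2n}$ must map $\langle a,b\rangle$ onto all of $D_{2n}$ for the back-translation of solutions to work. However, your proposed fix --- adjoining $O(1)$ ``rigidity'' relations in the letters $a,b$ --- cannot work as stated. A relator is a word $R(a,b)$ in the free group, and the trivial assignment $\phi(a)=\phi(b)=1$ satisfies $R(1,1)=1$ for \emph{every} word $R$, so no relation on $a,b$ alone can exclude it. More generally, since surjectivity is a property of the whole image $\langle\phi(a),\phi(b),\phi(X_1),\dots,\phi(X_k)\rangle$, and the $X_i$ in your construction are completely unconstrained, an epimorphism can exist with $\langle\phi(a),\phi(b)\rangle$ collapsed to anything while the $X_i$ supply the rest of $D_{2n}$ --- exactly the failure you flagged, but with no mechanism to prevent it.

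The paper's construction circumvents this by refusing to leave the variables unconstrained. Each variable $X_j$ of the equation system is rewritten (Lemmas~\ref{lem:UsefulConjDn} and \ref{lem:Di_Subgroup}) as a product $Y_{0,j}\cdot({}^{Y_{1,j}}t)\cdots$ of conjugates of $t$ by auxiliary variables $Y_{i,j}$ that range only over a small set of involutions, and the group $\DOddGroup$ (resp.\ $\DEvenGroupTwo$) imposes the relations $g^2$, $[g,g']$, $[g,a]$ so that the generators replacing the $Y_{i,j}$ must all be commuting involutions commuting with $a$. This is what makes the image rigid: if $\psi(d)=1$ or $\psi(a)$ is central, the whole image is abelian and hence not all of $D_{2n}$, and the remaining possibilities force $\psi(a),\psi(d)$ to be $\varphi(s),\varphi(t)$ for an automorphism $\varphi$ (Lemmas~\ref{lem:d2_pt1_restricted_epi}, \ref{lem:D2_forced_epi_3}). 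Moreover the single uniform reduction you envision does not exist in the paper: $n$ odd reduces from equations over $D_{2n}$, $n=2^bc$ with $b>1$ reduces from equations over the \emph{smaller} group $D_n$ using a nested-commutator relation $\mcomm{d^c}{b}{g}$, and $n=2c$ is handled separately via the isomorphism $D_{4c}\cong D_{2c}\times C_2$ and Lemma~\ref{lem:AbelianTimesNoCenter}. So the step you call ``technically critical'' is indeed the whole content of the proof, and your sketch does not contain a viable route to it.
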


For completeness we include the following which 
collects together known results \cite{Kuperberg} and  some straightforward consequences of known results \cite{Razborov1985,kannan1979}.

\begin{theoremx}\label{thm:ObserveFacts}
    The epimorphism problem from finitely presented groups 
    to 
    \begin{enumerate}
        \item finite rank free groups is decidable
\item a fixed non-abelian finite simple group is \NP-complete
\item a fixed group $B\times A$, where $B$ is a finite non-abelian simple group, or $D_{2n}$ with $n$ odd, and $A$ is abelian, 
is \NP-complete
       \item  finitely generated abelian groups  is in \P.
    \end{enumerate}
\end{theoremx}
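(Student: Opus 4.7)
I would address the four parts separately. For part (4), observe that any homomorphism from $G = \langle X \mid R\rangle$ to an abelian group factors through the abelianization $G^{ab}$, which is presented as the cokernel of the integer matrix of exponent sums of the relators and can be put into Smith normal form in polynomial time by the Kannan--Bachem algorithm. Existence of a surjective homomorphism onto the finitely generated abelian target $A$ then reduces to a comparison of invariant factors, placing the problem in \P. For part (2), I would cite Kuperberg--Samperton for \NP-hardness and obtain \NP-membership by guessing the images in the fixed finite group $B$ of the $|X|$ generators of $G$ (constant size each) and verifying in polynomial time both that every relator evaluates to the identity and that the guessed images generate $B$.

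For part (3), the plan for \NP-hardness is the reduction $G \mapsto G \times A$ from the epimorphism problem with target $B$, invoking (2) in the simple non-abelian case and \cref{thm:MainDihedral} in the dihedral case. The forward direction is immediate; for the converse, given a surjection $\phi : G\times A \to B \times A$, let $\pi_B$ be the projection onto $B$ and set $H = (\pi_B\circ\phi)(\{e\}\times A)$ and $K = (\pi_B\circ\phi)(G\times\{e\})$. Then $H \trianglelefteq B$ is abelian, commutes elementwise with $K$, and $KH = B$. When $B$ is simple non-abelian, the only abelian normal subgroup is trivial, forcing $H=\{e\}$. When $B = D_{2n}$ with $n$ odd, $H$ lies in the rotation subgroup $\langle r\rangle$, and a direct calculation gives $C_{D_{2n}}(r^i)=\langle r\rangle$ for every nontrivial rotation, so $H\ne\{e\}$ would force $K\subseteq \langle r\rangle$ and hence $KH\subseteq \langle r\rangle\ne B$. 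Either way $H=\{e\}$, so $\pi_B\circ\phi$ factors through the $G$-coordinate and yields $G\twoheadrightarrow B$. For \NP-membership I would guess $\phi_1 : G \to B$ and then, via Goursat's lemma, decide in polynomial time whether a compatible $\phi_2 : G\to A$ exists making $(\phi_1,\phi_2)$ surjective onto $B\times A$: the existence of $\phi_2$ reduces via part (4) to a linear-algebra problem over $G^{ab}$, namely finding $\phi_2^{ab} : G^{ab} \to A$ with $\phi_2^{ab}|_{\ker\phi_1^{ab}}$ surjective onto $A$ and $\ker\phi_2^{ab}\not\subseteq \ker\phi_1^{ab}$.

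For part (1), the plan is to invoke Razborov's theorem on systems of equations in finitely generated free groups applied to the relations of $G$ viewed as equations in $F_n$, augmented by a surjectivity test. Razborov's Makanin--Razborov diagram describes the solution set as a finite union of parametric families; within each family one can decide whether some specialization additionally generates $F_n$ by testing subgroup equality in $F_n$ via Stallings foldings, which is decidable for finite rank free groups.

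The main obstacle will be the dihedral case of part (3). The clean decomposition ``$G\twoheadrightarrow B\times A$ iff $G\twoheadrightarrow B$ and $G^{ab}\twoheadrightarrow A$'' that holds for simple targets fails in general for $D_{2n}$ (for example, $G = D_{2n}$ and $A = \mathbb Z/2$), because $D_{2n}^{ab}$ and $A$ share the common quotient $\mathbb Z/2$. Both the hardness reduction and the \NP-membership argument therefore have to rely on the centralizer analysis and Goursat bookkeeping sketched above, rather than a direct factorization.
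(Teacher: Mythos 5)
Parts (2) and (4) of your proposal match the paper's argument essentially exactly (guess generator images and verify for \NP-membership, Kuperberg--Samperton for hardness; abelianization plus Smith normal form plus invariant-factor comparison for the abelian case), so I will not comment further on those.

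For part (3), your argument is correct but more elaborate than the paper's. For the converse in the hardness reduction, the paper observes directly that since $\{1\}\times A$ lies in $Z(G\times A)$ and $\kappa$ is surjective, $\kappa(\{1\}\times A)\subseteq Z(B\times A)$, so $\pi_B\circ\kappa$ sends $\{1\}\times A$ into $Z(B)=\{1\}$; your derivation via normality, abelianness and elementwise commutation of $H$ with $K$, followed by a case split (simple versus odd dihedral) and a centralizer computation, reconstructs the same conclusion by a longer route. Note that $Z(D_{2n})=\{1\}$ for $n$ odd is an elementary fact (it is \cref{lem:ODDD2nCommute}), so the case split is unnecessary; the paper states a single clean lemma (\cref{lem:NoCenterProdIFF}) covering any finite $B$ with trivial center. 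For \NP-membership the paper simply invokes \cref{thm:AbelianDirectProd_is_NP}, which already places $\Epi{\Arb}{\DPclass}$ in \NP; your Goursat/linear-algebra-over-$G^{ab}$ sketch is a viable alternative (and the condition $\ker\phi_2^{ab}\not\subseteq\ker\phi_1^{ab}$ you add is redundant once $\phi_2^{ab}|_{\ker\phi_1^{ab}}$ is required to be surjective onto a nontrivial $A$). Your closing observation that the na\"ive decomposition fails for $D_{2n}$ is correct, but the paper's center-based lemma sidesteps the issue rather than needing to confront it.

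Part (1) is where there is a genuine gap. The paper does \emph{not} proceed by enumerating parametric families in a Makanin--Razborov diagram and testing surjectivity of specializations via Stallings foldings; it invokes Razborov's Theorem~3, restated here as \cref{thm:Raz}, which directly computes the \emph{rank} of a constant-free system of equations over a free group --- the maximum, over all solutions, of the rank of the subgroup generated by the solution values. The decision then becomes ``is this rank at least $d$?'', and a solution of rank at least $d$ yields an epimorphism onto a rank-$d$ free group by composing with a retraction onto $d$ basis elements of the image. Your Stallings-folding step is not well specified: a Makanin--Razborov diagram is a finite tree of limit-group quotients and retractions, not a finite parameter space you can range over, and deciding whether some resolution produces a solution whose image has rank $\geq d$ is precisely the content of the rank theorem --- you cannot shortcut it with a subgroup-membership test on a ``generic'' specialization without first establishing that the rank is realized (and computable) from the diagram. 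Without citing the rank theorem (or reproving its content), the argument for part (1) does not go through.
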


\subsubsection*{Reduction to integer matrix problems}
We prove \cref{ThmMain} by reducing the epimorphism problem 
to the following algebraic problems, which we will show are both in \P. 

For $d \in \N$ let $[1,d]$ denote the set of integers $\{1,\dots,d\}$.
    For $m,n,\ell\in \N$ with $\ell\leq m$ and $M$ an $m\times n$ matrix, let $M|_{\ell}$
    denote the $\ell\times n$ matrix consisting of the bottom $\ell$ rows of $M$. 
We call an $n\times 1$ matrix  an \emph{$n$-vector}, and a matrix (resp. $n$-vector) whose entries are integers an \emph{integer matrix} (resp. \emph{integer $n$-vector}). For an integer matrix $M$ we let $\spanZ{M}$ denote the set of all $\Z$-linear combinations of the columns of $M$ (see Subsection~\ref{subsec:Notation} for additional notation).

\medskip
\MatrixSubspanA
\compproblem{A triple $(A,d,\ell)$ where $A$ is  an $m\times n$ integer matrix, 
$d, \ell \in \N$ with $ \ell\in[0,n-1]$}
{Do there exist  
integer $n$-vectors
 $v_1,\dots,v_d$  such that $Av_i = 0$ for  $i\in[1,d]$ and for the $n\times d$  matrix $V$
whose columns are $v_1,\dots, v_d$, $\spanZ{(V|_\ell)^T} = \Z^d$?}

\medskip
\MatrixSubspanB
\compproblem{A triple $(A,b,\ell)$ where $A$ is an $m\times n$ integer matrix, 
$b$ an integer $m$-vector,
$\ell \in \N$ where $\ell\in[0,n-1]$}
{Does there exist an 
integer $n$-vector
$\nu$ such that $A\nu + b = 0$ and  $\spanZ{(\nu|_\ell)^T}=\Z$?}

Throughout this paper we assume integer matrices are given with entries as binary numbers for the purpose of complexity.

\begin{theoremx}\label{thm:MatrixProbsInP}
\MatrixSubspanA and 
    \MatrixSubspanB are in \P.
\end{theoremx}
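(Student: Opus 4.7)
The plan is to reduce both problems to standard polynomial-time tasks in matrix algebra over $\Z$---specifically, computing an integer kernel basis, a Hermite normal form, and a Smith normal form, all of which are well-known to be polynomial-time. In each case I parameterise the integer solutions of the underlying linear system explicitly and then read off the ``span $= \Z^d$'' or ``gcd $= 1$'' side condition from invariant factors.

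For \MatrixSubspanA, I would first compute $B \in \Z^{n \times k}$ whose columns form a $\Z$-basis of $\ker(A) \cap \Z^n$. Any admissible tuple $v_1,\dots,v_d$ then arises as the columns of $V = BC$ for some $C \in \Z^{k \times d}$, so $V|_\ell = B'C$ with $B' := B|_\ell$; the condition $\spanZ{(V|_\ell)^T} = \Z^d$ becomes: the $\Z$-linear map $C^T \colon \Z^k \to \Z^d$ sends the row lattice of $B'$ onto $\Z^d$. Putting $B'$ into Smith normal form $B' = PDQ$ and absorbing the unimodular factors into $C$ reduces the question to: do there exist $c_1,\dots,c_s \in \Z^d$ with $\spanZ{d_1 c_1,\dots,d_s c_s} = \Z^d$, where $d_1 \mid \cdots \mid d_s$ are the invariant factors of $B'$? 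Taking $c_i = e_i$ for $i \leq d$ succeeds exactly when $d_d = 1$; conversely, reduction modulo any prime $p \mid d_d$ annihilates $d_i c_i$ for $i \geq d$ and leaves at most $d - 1$ generators in $(\Z/p)^d$. Hence the characterisation ``$s \geq d$ and $d_d = 1$'' is sharp and polynomially checkable from the Smith normal form.

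For \MatrixSubspanB, I would test solvability of $A\nu = -b$ via Hermite normal form: either answer NO, or produce a particular solution $\nu_0$ and a kernel basis $B$. Writing $a := \nu_0|_\ell$ and $L := B|_\ell$, the task becomes: does there exist $w \in \Z^k$ such that the gcd of entries of $a + Lw$ equals $1$? The gcd $g$ of all entries of $[a \mid L]$ is a necessary divisor of every attainable gcd. Passing to the Smith normal form $L = PDQ$ and substituting $v := P^{-1}(a + Lw) = a' + Dw'$ with $w' := Qw$, $a' := P^{-1}a$, the entries of $v$ become $a'_i + d_i w'_i$ for $i \leq r := \operatorname{rank}(L)$ and $a'_i$ for $i > r$. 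The main obstacle is sufficiency: a prime-by-prime sieve shows that $p \mid \gcd(v)$ holds for \emph{every} choice of $w'$ precisely when $p \mid g$, so $g = 1$ leaves only finitely many ``potentially bad'' primes, which can be handled simultaneously by a Chinese remainder argument---except in the degenerate case $r = 1$ together with $a'_{r+1} = \dots = a'_\ell = 0$, when $v$ has only one nonzero entry and the problem collapses to the B\'ezout-style question $|a'_1 + d_1 w'_1| = 1$, soluble iff $a'_1 \equiv \pm 1 \pmod{d_1}$. The resulting case distinction is polynomial-time from the Smith and Hermite data, yielding \cref{thm:MatrixProbsInP}.
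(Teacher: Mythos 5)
Your approach is essentially the same as the paper's: compute the integer solution set of the linear constraints via normal-form algorithms, truncate to the bottom $\ell$ rows, take a Smith normal form of the result, and read off the answer from the invariant factors. For \MatrixSubspanA your criterion ``$s \geq d$ and $d_d = 1$'' is exactly the paper's $\onecount(D) \geq d$, and your two-line justification (sufficiency via $c_i = e_i$; necessity via reduction modulo a prime $p \mid d_d$, which kills $d_ic_i$ for $i\ge d$ and leaves at most $d-1$ generators of $\Z_p^d$) is a tidier packaging of what the paper proves through \cref{lem:ifOneCount_then_existv,lem:Rmodule_onecount,lem:span_modp_invariant,corr:VT_span_checkOneCount}.

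For \MatrixSubspanB your conclusion is right but the step ``$g=1$ leaves only finitely many potentially bad primes, handled by CRT'' has a gap outside the degenerate case you flag. Whenever all residual entries $a'_i$ for $i>r$ vanish, the set of primes dividing $\gcd(a'+Dw')$ for \emph{some} $w'$ is infinite even when $r\geq 2$: take $\ell=2$, $r=2$, $d_1=2$, $d_2=6$, $a'=(3,0)^T$, so $g=1$, yet choosing $w'_2=0$ gives $v=(3+2w'_1,0)^T$ whose gcd ranges over all odd positive integers — there is no a priori finite set of bad primes over which to CRT. The answer is still yes ($w'_1=-1$, $w'_2=1$ gives $v=(1,6)^T$), but reaching it needs a preliminary substitution: set $w'_2=1$ to force the second coordinate equal to $d_2\neq 0$, and only \emph{then} invoke the finite-primes/B\'ezout argument. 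This is precisely what the paper records in \cref{lem:NEWgcdFacts}(3)(c)(iii) via the Friedl--L\"oh gcd lemma. Your identification of the genuinely degenerate case ($r\leq 1$ with zero residual entries, collapsing to $a'_1 \equiv \pm1 \pmod{d_1}$) is correct, but the sufficiency argument in the intermediate sub-case $r\geq 2$ with zero residual entries is missing, and it is exactly the case-split the paper's \cref{lem:NEWgcdFacts} exists to pin down.
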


\subsection{Notation and basic facts}\label{subsec:Notation}

\subsubsection*{Complexity}
We assume the reader is familiar with the complexity classes of $\P$ and $\NP$. A problem is \emph{\NP-hard} if every problem in \NP is reducible to it in polynomial time, and a decision problem is \emph{\NP-complete} if it is both in \NP and \NP-hard. For all decision problems and algorithms we assume integer structures (eg. matrices, $\Z$-modules) are given by a list of integers in binary, and constants (of a group) are given in unary (on generators).

\subsubsection*{Matrices, basis, span}\label{subsubsec:MatrixDefn}
Let $\Z^{m\times n}$ denote the set of all 
$m\times n$ integer 
matrices (matrices with integer entries),  $\GL(n,\Z)$  the set of invertible $n \times n$ integer matrices, and $\zeroMat_{m, n}$ (or $0$ when the size is clear) the $m\times n$ matrix with all $0$ entries.
If $M\in \Z^{m \times n}$, the matrix 
 $M|_\ell \in \Z^{\ell \times n}$ is the matrix consisting of the bottom $\ell$ rows of $M$, 
 that is, the matrix whose $i$-th row is the $(m-\ell+i)$-th row of $M$. 
 Note that the $\Z$-module $\Z^d$ is identified with $\Z^{d\times 1}$ throughout this paper.
A \emph{$\Z$-linear combination} of $d$-vectors $u_1, \dots, u_n \in \Z^d$ is a $d$-vector of the form $x = c_1 u_1 + \cdots + c_n u_n$ for $c_1, \dots, c_n \in \Z$. The \emph{span} of $u_1, \dots, u_n$ is the set of all $\Z$-linear combinations of $u_1, \dots, u_n \in \Z^d$, which we denote by  $\spanZ{u_1, \dots, u_n}$. If  $M \in \Z^{m\times n}$ we let $\spanZ{M}$ denote the span of the {columns} of $M$. 
For $b\in\Z^m$ we define  $\spanb{M}$ to be the set of all $m$-vectors  of the form $y + b$ for some $y \in \spanZ{M}$.

\subsubsection*{Words}
Let $X$ be a set.  We call a finite sequence 
$(x_1,\dots, x_n)$ with $x_i\in X$ a \emph{word} over $X$, and denote it as $x_1\cdots x_n$. 
The set of all words over $X$ is denoted $X^\ast$.
The notation $u(X) = u(x_1,\dots, x_n)$ stands for a word $u$ over the letters $x_1,\dots, x_n$. 
If $Y = \{y_1,\dots,y_p\}, Z=\{z_1,\dots, z_q\}$ are sets, then we may write  $u(Y,z_1,\dots,z_q)=u(y_1,\dots,y_p,z_1,\dots,z_q)=u(Y,Z)$.

For any set $X$ we let 
 $X^{-1}=\{x^{-1}\mid x\in X\}$ be a set of letters with $X\cap X^{-1}=\emptyset$.

If $A$ 
is a set, $H$ is a 
monoid
and $\psi\colon A\to H$ is a set map, we define the {induced} monoid homomorphism
from 
$(A\cup A^{-1})^\ast$ to $H$ by \[\psi(a_{1}^{\epsilon_1}\cdots a_{s}^{\epsilon_s})=\psi(a_{1})^{\epsilon_1}\cdots \psi(a_{s})^{\epsilon_s} \]
where $a_i\in A$ and $\epsilon_i=\pm 1$.

\subsubsection*{Groups}
We use the notation $1_G$ to denote the identity element of a group $G$,  $[a,b]:= aba^{-1}b^{-1}$ for the {commutator} of two elements $a,b$ of $G$, and $[G,G]$ the commutator subgroup of $G$ (the subgroup consisting of all products of commutators of elements of $G$).  For a group $G$ and two elements $a,b \in G$, ${}^ba = bab^{-1}$
denotes the {conjugation} of $a$ by $b$. If $u,v$ are two different ways to represent the same element of $G$,  we write $u=_G v$.

We denote the infinite cyclic group as $C_\infty$, the cyclic group of order $n\in \N_+$ as $C_n$, and the dihedral group of \emph{order} $2n$ as $D_{2n}$. We denote certain classes of groups as follows:
\be
    \item $\Arb$ is the class of finitely presented groups
    \item $\Fin$ is the class of finite groups
    \item $\FreeAb$ is the class of free abelian groups of finite rank (groups isomorphic to direct products of finitely many copies of $C_\infty$)
    \item  $\vC$ is the class of virtually cyclic groups
    \item $\DPclass$ is  the class of groups of the form $N\times Q$ where $N\in \FreeAb$ and $Q\in \Fin$
    \item $\PMonlyExt$ and $\PMonlySemi$ are restricted classes of extensions to be defined below (\cref{defn:PMonlyExt,defn:PMonlySemi}). 
\ee

\subsubsection*{Presentations}
A group $G\in \Arb$ is given by a finite presentation $\Gen{X}{R}$ where $X$ is a finite set and each $r\in R$  is a word over $X\cup X^{-1}$.  We do not assume $X$ is a subset of $G$, so for example we may have $x,y\in X$ with $x=_Gy$, and we do not assume $G$ has decidable word problem, so, \emph{a priori}, given a presentation $\Gen{X}{R}$,
we have no algorithm to determine whether $x=_Gy$ for $x,y\in X$.
The following well-known lemma is used repeatedly throughout this paper. 

\begin{lemma}[von Dyck's lemma {\cite[Lemma 2.1]{Benc2013}}]\label{lem:vonD}
    If $G$ is presented by $\displaystyle
    \Gen{g_1, \dots, g_n}{r_1, \dots, r_m}$
    where $r_i=r_i(g_1,\dots, g_n)$, and $\psi\colon \{g_1,\dots, g_n\}\to H$ is a set map to a group $H$, then the induced monoid homomorphism $\psi\colon \{g_1^{\pm 1}, \dots, g_n^{\pm 1}\}^\ast\to H$ defines a homomorphism from $G$ to $H$ if and only if $r_i(\psi(g_1),\dots,\psi(g_n))=_{H}1_H$  for $1\leq i\leq m$.
\end{lemma}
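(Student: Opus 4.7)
The plan is to leverage the universal property of free groups together with the universal property of quotient groups. Let $F$ be the free group on the set $X=\{g_1,\dots,g_n\}$, and let $N=\ggen{r_1,\dots,r_m}$ be the normal closure in $F$ of the set of relators, so that $G\cong F/N$ with the natural surjection $\pi\colon F\to G$ sending each letter to the corresponding generator of $G$.

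For the forward direction, I would argue directly: if the induced monoid homomorphism factors through $G$ as a group homomorphism $\overline{\psi}\colon G\to H$, then for each $i$ we have $r_i(g_1,\dots,g_n)=_G 1_G$ (by definition of a presentation), so applying $\overline{\psi}$ yields $r_i(\psi(g_1),\dots,\psi(g_n))=_H 1_H$.

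For the reverse direction, which is the substantive part, the universal property of the free group $F$ lifts the set map $\psi\colon X\to H$ to a unique group homomorphism $\widetilde{\psi}\colon F\to H$, and this $\widetilde{\psi}$ agrees on $(X\cup X^{-1})^\ast$ with the induced monoid homomorphism in the statement. The hypothesis says $\widetilde{\psi}(r_i)=1_H$ for each $i$, so $\{r_1,\dots,r_m\}\subseteq \ker \widetilde{\psi}$. Since $\ker\widetilde{\psi}$ is a normal subgroup of $F$, it contains the normal closure $N$. The universal property of the quotient $F/N$ then produces the unique homomorphism $\overline{\psi}\colon G\to H$ with $\overline{\psi}\circ\pi=\widetilde{\psi}$, and by construction $\overline{\psi}$ agrees with the induced monoid homomorphism on words over $X\cup X^{-1}$.

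There is no genuine obstacle here; the only care needed is bookkeeping to confirm that the monoid homomorphism from $(X\cup X^{-1})^\ast$ defined in the paper coincides with the group-theoretic extension $\widetilde{\psi}$ after passing through $F$ (this is immediate from the recursive definition $\psi(a_1^{\epsilon_1}\cdots a_s^{\epsilon_s})=\psi(a_1)^{\epsilon_1}\cdots \psi(a_s)^{\epsilon_s}$, using that $H$ is a group so that inverses exist and the assignment respects cancellation). Since this is a classical result, I would simply cite the standard reference rather than reproducing the full argument.
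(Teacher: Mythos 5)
The paper does not include a proof of this lemma; it is stated as a known result with a citation to \cite{Benc2013}. Your argument via the universal properties of the free group on $\{g_1,\dots,g_n\}$ and of the quotient $F/N$ is the standard, correct proof of von Dyck's lemma, and is exactly what the cited reference gives.
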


\subsubsection*{Equations}
Let $\bbX = \{X_1,X_1^{-1},\dots,X_n,X_n^{-1}\}$. An \emph{equation} over a group $G$ is a word
\begin{equation*}
    u(g_1,\dots,g_s,\bbX) 
\end{equation*}
where $g_i\in G$ for $i\in[1,s]$ are called  \emph{constants} and $\bbX$ are called
\emph{variables}.
A \emph{system of equations} $(u_i)_{[1,m]}$ is a finite list of  equations $
    u_i(g_1,\dots,g_s,\bbX)$ for $i\in[1,m]$.
A \emph{solution} to a system of equations $(u_i)_{[1,m]}$
is a map $\sigma\colon \bbX \to G$ of the form $\sigma\colon X_i \mapsto h_i,  X_i^{-1} \mapsto h_i^{-1}$ for some $h_i \in G$, $i\in[1,n]$ so that 
\[u_i(g_1,\dots,g_s,\sigma(X_1),\sigma(X_1^{-1}), \dots, \sigma(X_n),\sigma(X_n^{-1}))=_G 1_G \ \ \textnormal{for all} \ \ i\in[1,m].\] 
A system of equations \emph{without constants} is a list of equations of the form     $u_i(\bbX)$ for $i\in[1,m]$.
Note that 
if $G$ is a finitely generated group with finite inverse-closed generating set 
$\cY = \{y_1,\dots,y_s\}$, we may write any equation over $G$ as 
   $ u(\cY,\bbX)$. 

A key step in our reduction from epimorphism to matrix problems 
is the following decision problem for equations over groups.

\medskip
\equationsProbNEW\label{page:EqnSubspan}
\compproblem{a group $N$, variables $\bbX=\{X_1, X_1^{-1},\dots,X_t, X_t^{-1}\}$, $\bbY=\{Y_1, Y_1^{-1},\dots,Y_{\ell}, Y_{\ell}^{-1}\}$, and a finite system of equations over $N$ using variables $\bbX\cup\bbY$.}
{is there a solution $\sigma\colon \bbX\cup\bbY \to N$ such that $\gen{\sigma(Y_1),\dots, \sigma(Y_\ell)} = N$?}

\subsubsection*{Free abelian groups}
A free abelian group of rank $d\in\N_+$ is a group isomorphic to the direct product of $d$ copies of the infinite cyclic group $C_\infty$. Such a group
admits the presentation $\Gen{x_1,\dots,x_d}{[x_i,x_j], \forall i,j \in [1,d]}$, which we may write  simply as $\gen{x_1,\dots,x_d}$ when the context is understood. It follows that every element of a free abelian group $N\cong \gen{x_1,\dots,x_d}$  can be represented uniquely as a word $x_1^{c_1}\cdots x_d^{c_d}$ for some $c_1,\dots c_d \in \Z$.
It is well known that every free abelian group is a $\Z$-module which we denote by $\Z^d$, 
with standard basis $\{e_1,\dots,e_d\}$ where 
$e_i=(0,\dots, 0, 1,0,\dots, 0)^T\in \Z^d$ 
with non-zero entry in the $i$-th position, via 
the natural \label{page:natIso}
isomorphism $\varphi\colon N \to \Z^d$  defined by the set map
\begin{equation*}
    \varphi\colon x_i \mapsto e_i,
\end{equation*}
extending to the isomorphism
\begin{equation*}
    \varphi\colon x_1^{c_1}\cdots x_d^{c_d}\mapsto c_1e_1+\cdots + c_de_d.
\end{equation*}

\subsubsection*{Equations over abelian groups}

\label{page:CNF}
    Let 
    $u$ an equation over a group $N$ with variables $\bbX$
    as above 
    and a single  constant $\MFc\in N$ (possibly  with $\MFc=1_N$).
    Define the \emph{commuted normal form}  of $u$ to be the word 
    \begin{align*}
        \CNF(u) = X_1^{\alpha_1}\dots X_n^{\alpha_n}\MFc
    \end{align*} where $\alpha_i=\abs{u}_{X_i}-\abs{u}_{X_i^{-1}}$ for $i\in[1,n]$.
The following observation is immediate.
\begin{lemma}\label{lem:abel-Eqns-commute}
Let $N$ be an abelian group, and  $(u_i)_{[1,m]}$ a system of equations in $N$ where each $u_i$ consists of  variables $\bbX = \{X_1,X_1^{-1},\dots,X_n, X_n^{-1}\}$ and a single  constant. Then $\sigma\colon \bbX \to N$ is a solution to $(u_i)_{[1,m]}$  if and only $\sigma$ is a solution to $(\CNF(u_i))_{[1,m]}$.
\end{lemma}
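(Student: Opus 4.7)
The plan is to reduce the biconditional for the whole system to a pointwise identity: for each $i\in[1,m]$, I would show that under any assignment $\sigma\colon\bbX\to N$ the two words $u_i$ and $\CNF(u_i)$ evaluate to the same element of $N$. Once this is established, $\sigma(u_i)=_N 1_N$ iff $\sigma(\CNF(u_i))=_N 1_N$ for each $i$, which is precisely the claim.

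For the pointwise step, fix $i$ and write $u_i = w_1 w_2 \cdots w_k$ as a word in which each letter $w_j$ belongs to $\bbX$ or is the single constant $\MFc$. The induced monoid homomorphism evaluates $\sigma(u_i)$ to the product $\sigma(w_1)\sigma(w_2)\cdots\sigma(w_k)$ in $N$, where $\sigma(X_j)=h_j$, $\sigma(X_j^{-1})=h_j^{-1}$, and $\MFc$ is fixed. Since $N$ is abelian all factors commute, so the factors may be reordered and collected: the total contribution of the variable $X_j$ is $h_j^{\abs{u_i}_{X_j} - \abs{u_i}_{X_j^{-1}}} = h_j^{\alpha_j}$, and the constant contributes exactly one factor of $\MFc$ (recall the hypothesis that each $u_i$ has only a single constant). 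Hence
\[
\sigma(u_i) \;=_N\; h_1^{\alpha_1}\cdots h_n^{\alpha_n}\,\MFc \;=\; \sigma(\CNF(u_i)),
\]
which is the desired identity.

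There is no real obstacle here: the argument is entirely formal and relies only on commutativity of $N$ together with the definition of the exponent sums $\alpha_j$. The only subtlety worth flagging in the write-up is notational, namely to distinguish between the \emph{word} $u_i$ over $\bbX\cup\{\MFc\}$ and its \emph{evaluation} in $N$ under $\sigma$; the rearrangement of factors is legitimate only after passing to $N$, where commutativity holds.
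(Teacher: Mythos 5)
Your proof is correct and spells out exactly the argument the paper has in mind: the paper offers no explicit proof, simply labelling the lemma as an ``immediate'' consequence of the definitions, and what you wrote is the natural elaboration of that observation. The key points you identify — working pointwise to show $\sigma(u_i) = \sigma(\CNF(u_i))$ for every assignment $\sigma$, using commutativity of $N$ to collect exponents and the single-constant hypothesis to gather the constant into one factor, and distinguishing the word from its evaluation — are precisely what makes the statement hold.
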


\subsubsection*{Extensions}
A group $H$ is said to be an \emph{$N$ by $Q$ extension} if 
\be\item $N$ is a normal subgroup of $H$
\item there is an isomorphism 
$\psi\colon H/N\to Q$.\ee
It follows that the maps $\iota\colon N \to H$   and $\pi_Q\colon H \to Q$ given by given by $\iota\colon n\mapsto n$ and $\pi_Q \colon g\mapsto \psi(gN)$ define an exact sequence 
\begin{equation*}
    \{1\} \to N \overset{\iota}\to H \overset{\pi_Q}\to Q \to \{1\}.
\end{equation*}
A set $T\subseteq H$ which contains exactly one element of each (right) coset of a subgroup $N$ is called a 
(right) \emph{transversal} for $N$ in $H$ (note that throughout this paper all cosets will be right cosets).
A map $s\colon Q \to H$ is a \emph{transversal map} if $\{s(q) \mid q \in Q\}$ is a transversal, or equivalently if $s$ is injective and $\pi_Q(s(q)) = q$.
w.l.o.g we assume  $s$ is always chosen so that $s(1_Q)=1_H$ throughout this article.

Given a fixed transversal map $s$, every element $g\in H$ can be written uniquely  as a product $g = ns(q)$ for some $n\in N$ and $q\in Q$, which we call the \emph{normal form} for $g$. The map $\pi_N\colon H \to N$  given by 
$\pi_N(g) = n$ when $g = ns(q)$ is well defined since $s$ is a transversal map. 
Since $N$ is a normal subgroup of $H$, each $s(q)$ acts by conjugation on $N$ as an inner automorphism. Define $\theta_s\colon Q \to \Aut(N)$  by $\theta_s\colon q \mapsto {}^{s(q)}n$. Also since $N$ is normal, for $q_1,q_2 \in Q$ we have  $Ns(q_1)Ns(q_2) = Ns(q_1)s(q_2)$, so  $s(q_1)s(q_2) = ns(q_1q_2)$ for some $n \in N$, and so 
 $s(q_1)s(q_2)s(q_1q_2)^{-1} \in N$.  Define a map $f_s\colon Q \times Q \to N$ by $f_s\colon (q_1,q_2)\mapsto s(q_1)s(q_2)s(q_1q_2)^{-1}$. 
We call the pair  $(\theta_s, f_s)$ the \emph{extension data} for the $N$ by $Q$ extension $H$ with respect to a chosen transversal map $s\colon Q\to H$. Clearly if $Q$ is finite and $N$ is finitely generated then  $(\theta_s, f_s)$ has a finite description.

In the case $s$ is a homomorphism then $s(q_1)s(q_2)s(q_1q_2)^{-1} = 1_N$ and we write $f_s = \constMap$
to denote the trivial map, and $H$ is a semidirect product of $N$ and $Q$ via $\theta_s$. 
For example, if $H$ is isomorphic to the direct product of a group $N$ and a finite group $Q$, $s\colon q\mapsto (1,q)$ is a transversal map which is an injective homomorphism so $f_s=\constMap$ and $\theta_s\colon n\mapsto n$, and we may write every element uniquely in the form $ns(q)=(n,q)$. 

\subsubsection*{Virtually abelian groups}
A standard 
argument shows that if $H$ has a finite index abelian subgroup, then it contains a normal finite index abelian subgroup (by taking the \emph{normal core}, see for example 
\cite[Proposition 2.2]{FriedlLoh}).
Thus we may view every virtually abelian group $H$ as an $N$ by $Q$ extension where $N\in \FreeAb$  and $Q\in \Fin$.

\begin{remark}[Action is determined by $q$ when $N$ is abelian]   \label{rmk:NabelianTransversalIndependent}
If $s_1,s_2$ are two transversal maps from $Q$ to $H$, $Ns_1(q) = Ns_2(q)$ so $s_2(q)^{-1}s_1(q) \in N$.
Then since $N$ is abelian we have 
\begin{align*}
    {}^{s_2(q)}(n)\left({}^{s_1(q)}(n)\right)^{-1} 
    &= s_2(q) n s_2(q)^{-1} s_1(q)n^{-1} s_1(q)^{-1} \\
    &= s_2(q)  n n^{-1} \left(s_2(q)^{-1} s_1(q)\right)  s_1(q)^{-1} = 1_N
\end{align*}
which shows that  the conjugation action does not depend on the choice of transversal.
Thus, when $N$ is abelian we may sometimes write 
${}^{q}n$ rather than ${}^{s(q)}n$, and $\theta$ rather than $\theta_s$.
\end{remark}

\subsubsection*{Virtually cyclic groups}
If $\varphi$ is an automorphism of the infinite cyclic group $C_\infty=\gen{x}$, there exists $i,j\in\Z$ so that 
$\varphi(x)=x^i$ and $\varphi(x^j)=x$, so $x=\varphi(x^j)=(\varphi(x))^j=x^{ij}$, which means $i=j=1$ or $i=j=-1$. Thus
 $\Aut(\gen{x})=\{ n\mapsto n, n \mapsto n^{-1}\}$, so in a $C_\infty$ by $Q$ extension each $q\in Q$ acts as 
 either ${}^qn=n$  for all $n\in C_\infty$ or ${}^qn=n^{-1}$ for all $n\in C_\infty$.

\begin{remark}
Suppose 
     an $N$ by $Q$ extension $H$ has transversal map  $s\colon Q\to H$ such that  for some  $q \in Q$, ${}^{s(q)}(n) = n^{-1}$ for all $n\in N$.
     Then 
\begin{align*}
    n_1n_2 &= s(q)n_1^{-1}s(q)^{-1}s(q)n_2^{-1}s(q)^{-1} \\
    &= s(q)n_1^{-1}n_2^{-1}s(q)^{-1} \\
    &= (n_1^{-1}n_2^{-1})^{-1} = n_2n_1
\end{align*}
so $N$ is abelian. 
Thus, if we wish to define a class of $N$ by $Q$ extensions where the action of $q\in Q$ is restricted to being either ${}^{s(q)}x=x$ for all $x\in N$ or ${}^{s(q)}x= x^{-1}$ for all $x\in N$ (generalising the class $\vC$), then necessarily $N$ must be abelian.
\end{remark}

\subsubsection*{Special abelian extensions}\label{subsubsection:SpecialDefn}
We define two subclasses of virtually abelian groups as follows.

\begin{definition}[$\PMonlyExt$]  \label{defn:PMonlyExt}
    Define $\PMonlyExt$ to be the class of $N$ by $Q$ extensions for a group $H$ which satisfy the following conditions: 
\be\item $N$ is abelian and $Q$ is finite 
\item  there is   a transversal map $s\colon Q\to H$ and a subset $\cI\subseteq Q$ 
so that \be\item 
 ${}^qn=n^{-1}$ for all $n\in N$ when $q\in \cI$
\item   ${}^qn=n$ for all   $n\in N$ when $q\in Q\setminus \cI$. \ee
\ee

\end{definition}
In other words $\theta_s\colon Q \to \Aut(N)$  is completely determined by the subset $\cI$: \[\theta_s(q)=\begin{cases}n\mapsto n^{-1} & q\in \cI\\
n\mapsto n & q\in Q\setminus \cI\\
\end{cases}\] 
so we can specify the extension data for  $H \in \PMonlyExt$ by 
the pair $(\cI,f_s)$, which we call the \emph{special extension data} of $H$.
The class $\PMonlyExt$ includes $\DPclass$ (when $\cI=\emptyset$ and $f_s = \constMap$)
and $\vC$ (when $N\cong C_\infty$).

\begin{definition}[$\PMonlySemi$] \label{defn:PMonlySemi}
    Define $\PMonlySemi$ to be the subclass of $\PMonlyExt$ having special extension data $(\cI, \constMap)$. 
\end{definition}

An example of a group in $\PMonlySemi$ but not $\vC$ or $\DPclass$ is 
 \[H=\Gen{a,b,p,q}{[a,b]=p^2=q^2=[p,q]=1, {}^{p}a=a,{}^{p}b=b,{}^{q}a=a^{-1},{}^{q}b=b^{-1}},\] a semidirect product of $\Z^2$
 and the Klein 4-group $C_2\times C_2=\gen{p,q}$.
 Here 
 $\cI=\{q,pq\}$.

\section{Preliminary results}

\subsection{Finite targets}
We start by observing that $\Epi{\Arb}{\Fin}$ is  in \NP. 
We note that Holt and Plesken \cite[Chapter 7]{HoltP1989perfect}  considered the computational problem of finding epimorphisms onto various classes of finite groups, without explicitly giving a complexity bound, and Friedl and L\"oh \cite[Proposition 5.2]{FriedlLoh}  show  that $\Epi{\Arb}{\Fin}$ is decidable.

We assume that the input to the problem is  a finite presentation $\Gen{g_1,\dots,g_n}{r_1,\dots,r_m}$ for the domain group  $G\in \Arb$ and a  multiplication table for the target group $Q\in \Fin$.

\begin{lemma}
    \label{lem:epi_finite}
    $\Epi{\Arb}{\Fin}$ is in \NP.
\end{lemma}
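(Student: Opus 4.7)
The plan is to exhibit a polynomial-size certificate together with a polynomial-time verifier, giving membership in \NP directly. A crucial input-size observation underlies the whole argument: since the multiplication table of $Q$ is part of the input, it has size $\Theta(|Q|^2\log |Q|)$, so $|Q|$ is polynomial (in fact, sublinear) in the total input size, and any multiplication in $Q$, or search for an inverse, can be performed in time polynomial in the input.

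The certificate I would guess nondeterministically is a set map $\psi\colon \{g_1,\dots,g_n\}\to Q$, encoded by listing $\psi(g_i)$ as an index into the multiplication table for each $i\in[1,n]$. This has bit length $O(n\log|Q|)$, which is polynomial.

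The verifier then performs two checks. First, by von Dyck's lemma (\cref{lem:vonD}), $\psi$ extends to a homomorphism $G\to Q$ if and only if $r_i(\psi(g_1),\dots,\psi(g_n))=_Q 1_Q$ for every $i\in[1,m]$. To check this, evaluate each relator under the substitution $g_j\mapsto \psi(g_j)$ by performing $|r_i|$ successive table lookups (locating inverses by a linear scan of the relevant row when needed); this runs in time linear in the total length of the relators times $|Q|$, hence polynomial. Second, the verifier checks surjectivity by computing the subgroup $\gen{\psi(g_1),\dots,\psi(g_n)}\leq Q$ via breadth-first saturation: starting from $\{1_Q\}$, repeatedly close under right multiplication by $\psi(g_j)^{\pm 1}$ for $j\in[1,n]$ until the set stabilises. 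Since the set only grows and is bounded by $|Q|$, this terminates in at most $|Q|$ expansion rounds, each doing $O(n|Q|)$ work, again polynomial. The verifier accepts if and only if both the relator check succeeds and the resulting subgroup equals $Q$.

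There is no real obstacle: correctness of the first check is \cref{lem:vonD}, correctness of the second is immediate from the definition of the subgroup generated by a finite set, and the complexity bounds all follow from the observation that $|Q|$ is polynomial in the input size. Hence $\Epi{\Arb}{\Fin}\in\NP$.
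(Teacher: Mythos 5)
Your proposal is correct and follows essentially the same route as the paper: guess the images of the generators, verify the homomorphism condition via von Dyck's lemma using the multiplication table, and verify surjectivity by iteratively closing the image set under the group operation (the paper phrases this as ``marking'' elements; your breadth-first saturation is the same idea). The explicit remark that $|Q|$ is polynomial in the input size is a nice clarifying touch but does not change the argument.
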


\begin{proof}
    On input a presentation $\Gen{g_1, \dots, g_n}{r_1,\dots, r_m}$ for  $G \in \Arb$, 
    non-deterministically specify 
    values $\tau(g_i)\in Q$ for each $i\in[1,n]$.

    Verify that $\tau$ 
    defines a homomorphism from $G$ to $H$  using \cref{lem:vonD} by checking that each relator is sent to $1_Q$ using the multiplication table for $Q$. 
  
    To verify that $\tau$ is a surjection, we may proceed as follows.
    Fix  a copy of $Q$ and `mark' each $q_j \in Q$ which satisfies $\tau(g_i) = q_j$ for $i\in[1,n]$. While not all of $Q$ is marked, scan the multiplication table for $Q$ to find $q_i,q_j,q_k\in Q$ so that $q_iq_j = q_k$ with $q_i,q_j$  marked and $q_k$  not marked, then mark $q_k$.
    If $\tau$ is a surjection then each $q\in Q$ is the image of some product of generators so the process will terminate with all of $Q$ marked.
    
    Each of the above steps takes polynomial time in the size $n$, $\sum_{i=1}^m\abs{r_i}$ and the size of the multiplication table for $Q$ (which is $O(\abs{Q}^2)$).
\end{proof}

Note that combining \cref{lem:epi_finite} with \cite[Corollary 1.2]{Kuperberg} (or \cref{thm:D2n_NPHARD} below)
we immediately have that   $\Epi{\Arb}{\Fin}$ is \NP-complete.

\subsection{\QtauPres}\label{subsec:Qtau}
The following is a useful format in which  to present a finitely presented  domain group when considering  epimorphism onto a target which involves a finite group.

\begin{definition}[\QtauPres]\label{defn:Qtau}
    Let $G$ be a finitely presented group, $Q$ a finite group, and  $\tau\colon G\to Q$ an epimorphism from $G$ to $Q$.
    We call $\Gen{\cX\cup \cY}{\cR}$ a \emph{\QtauPres} for $G$ if 
    \be
        \item $\cX,\cY,\cR$ are finite
    
        \item 
        $\cX\subseteq G$ and 
      $\tauX\colon \cX\to Q$ is a bijection
        \item the subgroup $\ker(\tau)$ is generated by $\cY$.
    \ee
\end{definition}

\begin{lemma}\label{lem:buildQtau}
    There is an algorithm which takes input
    \be
        \item a finite presentation $\Gen{g_1, \dots, g_n}{r_1, \dots, r_m}$ for a group $G$
        \item a multiplication table for a finite group $Q$
        \item   a list $(q_{1},\dots, q_{n})\in Q^n$ defining an epimorphism  $\tau\colon G\to Q$ by $\tau(g_j)=q_{j}$, $j\in[1,n]$
    \ee
    and outputs a 
    $(Q,\tau)$-presentation for $G$ which has size polynomial in $(n+m+\abs{Q})$,    which runs in time polynomial in $(n+m+\abs{Q})$.
\end{lemma}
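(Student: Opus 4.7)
The plan is to carry out a Reidemeister--Schreier style construction. I would build $\cX$ as a transversal for $\ker(\tau)$ in $G$ using the multiplication table of $Q$, take $\cY$ to be the induced Schreier generators of $\ker(\tau)$, and derive the relator set $\cR$ via Tietze transformations from the input presentation.

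To build $\cX$, perform a breadth-first search in the Cayley graph of $Q$ with respect to the generating set $q_1^{\pm 1},\dots,q_n^{\pm 1}$, starting at $1_Q$. For each $q\in Q$ this produces a word $w_q$ over $\{g_1^{\pm 1},\dots,g_n^{\pm 1}\}$ with $\tau(w_q)=q$ and $|w_q|\le |Q|-1$; take $w_{1_Q}$ to be the empty word. Setting $\cX=\{w_q\mid q\in Q\}$ viewed as a subset of $G$, the map $\tauX$ is a bijection by construction, and the search runs in time $O(n|Q|^2)$.

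To build $\cY$, for each $q\in Q$ and $j\in[1,n]$ define $y_{q,j}:=w_q\, g_j\, w_{q\cdot q_j}^{-1}\in G$ and put $\cY=\{y_{q,j}\mid q\in Q,\,j\in[1,n]\}$. Each $y_{q,j}$ lies in $\ker(\tau)$ since $\tau(y_{q,j})=q\cdot q_j\cdot(qq_j)^{-1}=1_Q$, and the Reidemeister--Schreier theorem, applied to the transversal $\cX$, shows that $\cY$ generates $\ker(\tau)$ as a subgroup.

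For $\cR$, observe that the identity $y_{1_Q,j}=g_j w_{q_j}^{-1}$ gives $g_j=_G y_{1_Q,j}\,x_{q_j}$, so the substitution $\sigma\colon g_j\mapsto y_{1_Q,j}\,x_{q_j}$ rewrites any word in the $g_j$ as a word in $\cX\cup\cY$ of at most twice its original length. Take $\cR$ to consist of $\sigma(r_i)$ for $i\in[1,m]$, together with $x_q\cdot\sigma(w_q)^{-1}$ for each $q\in Q$ and $y_{q,j}\cdot\sigma(w_q\, g_j\, w_{qq_j}^{-1})^{-1}$ for each $q\in Q$, $j\in[1,n]$. Starting from $\Gen{g_1,\dots,g_n}{r_1,\dots,r_m}$ and applying Tietze transformations — adjoining $\cX$ and $\cY$ with these defining relations, then eliminating each $g_j$ via $g_j=y_{1_Q,j}\,x_{q_j}$ — yields $\Gen{\cX\cup\cY}{\cR}$. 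Every relator has length $O(\max_i|r_i|+|Q|)$, so the output has total size polynomial in $n+m+|Q|$, and each construction step runs in polynomial time. The one subtle point is that the Schreier generators must generate $\ker(\tau)$ as a subgroup (rather than merely as a normal subgroup), but this is precisely the classical Reidemeister--Schreier result.
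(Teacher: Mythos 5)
Your argument is correct and shares the same broad outline as the paper's: construct a transversal for $\ker(\tau)$ indexed by $Q$, adjoin one new generator per transversal element, form products of the shape ``transversal $\cdot$ original generator $\cdot$ transversal'' to generate the kernel, and eliminate each $g_j$ by a Tietze move using $g_j =_G y_{1_Q,j}\,x_{q_j}$. The genuine difference is in how kernel-generation is established. You make $\cX$ an exact set of coset representatives via BFS, so that your $\cY$ consists of literal Schreier generators $w_q g_j w_{qq_j}^{-1}$, and you quote the classical Reidemeister--Schreier theorem (which applies to any group, not only free groups). The paper instead defines $y_{p,i,q}=x_p g_i x_q$ with the closing factor $x_q$ rather than an inverse of a transversal element, so its $\cY$ is not the Schreier set, and it gives a bespoke telescoping argument (inserting $x_{q_j}^{-1}x_{q_j}$) to prove generation. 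Your route buys a standard black box and a somewhat cleaner substitution $\sigma\colon g_j\mapsto y_{1_Q,j}x_{q_j}$; the paper's version stays self-contained and avoids invoking Reidemeister--Schreier. Your complexity accounting (relator length $O(\max_i|r_i|+|Q|)$, BFS time $O(n|Q|)$, $|\cY|=n|Q|$) is sound. One point worth spelling out in a full write-up: the elimination relation $g_j (y_{1_Q,j}x_{q_j})^{-1}$ is not literally among your defining relators but follows from the two relators $y_{1_Q,j}(g_j w_{q_j}^{-1})^{-1}$ and $x_{q_j}w_{q_j}^{-1}$, so formally one should adjoin it as a redundant relator before performing the Tietze removal of $g_j$. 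This is routine but easy to elide.
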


\begin{proof}
    Our procedure is as follows. Initialise $\Lambda = \cX = \cY = \emptyset$, $\cG = \{g_1,\dots,g_n\}$ and $\cR = \{r_1,\dots,r_m\}$. 
    \begin{enumerate}
        \item Set $\cG$ to be $\cG\cup\cG^{-1}$ and $\cR$ to be $\cR\cup\{g_i(g_i^{-1}) : i\in[1,n]\}$.
        \item For each $g \in \cG$: \be\item if $\tau(g) = q \not \in \Lambda$, set $\Lambda = \Lambda\cup\{q\}$, $\cX = \cX \cup \{x_q\}$ and $\cR = \cR \cup \{x_q g^{-1}\}$. \ee
        Since $\tau$ is an epimorphism, after this for-loop we have  $\Lambda$ is a generating set for $Q$, and $\tau(x_q)=q$ for each $x_q\in \cX$.
        \item While $\Lambda \not = Q$:
        \begin{enumerate}
            \item scan the multiplication table for $Q$ to find a triple $(p_1,p_2,q)$ where $p_1,p_2\in \Lambda$, $p_1p_2 = q$, and  $q \in Q \setminus \Lambda$ (such a $q$ must exist as $\Lambda$ is a generating set for $Q$)
            \item set $\Lambda=\Lambda\cup\{q\}$, $\cX = \cX \cup \{x_q\}$ and $\cR = \cR \cup \{x_{p_1}x_{p_2}x_q^{-1}\}$.
        \end{enumerate}
        On termination of this while-loop, we have $\Lambda = Q$ and  $\cX$ is in bijection with $\Lambda = Q$.
        As $\Lambda$ increases in each iteration, the loop is guaranteed to terminate. We have $\Gen{\cX \cup \cG}{\cR}$ presents $G$, and $\tau(x_q)=q$ for each $x_q\in \cX$.
        \item For each $g_i \in \cG$ and pair $(p,q) \in Q \times Q$: \be\item  if $\tau(x_p g_i x_q) =_Q 1$, set $\cY = \cY \cup \{y_{p,i,q}\}$, and $\cR = \cR \cup \{x_pg_ix_qy_{p,i,q}^{-1}\}$. \ee
        After this for-loop have $\Gen{\cX \cup \cY \cup \cG}{\cR}$ presents $G$.
        \item For each $i \in [1,n]$, we have that $\tau(g_i) \in Q$, which means $x_{\tau(g_i)}\in\cX$ and $\tau(x_{\tau(g_i)})=\tau(g_i)$. From this it follows that $\tau(x_{\tau(g_i)^{-1}}g_ix_{1_Q})=1_Q$, which implies $y_{\tau(g_i)^{-1},g_i,1_Q}\in\cY$ and $y_{\tau(g_i)^{-1},g_i,1_Q}=_Gx_{\tau(g_i)^{-1}}g_ix_{1_Q}$. Then we may perform a Tietze transformation to remove $g_i$ from the generating set and replace each occurrence of the letter $g_i$ in each relation by 
        \[x_{\tau(g_i)}y_{\tau(g_i)^{-1},g_i,1_Q}.\]
        After performing this for each $i\in[1,n]$, we
        obtain the presentation  $\Gen{\cX \cup \cY}{\cR}$ for $G$.
    \end{enumerate}
    The time complexity and output length for each subroutine is as follows.
    \begin{enumerate}
        \item  takes $\abs{\cG} = n$ steps, and so from here on setting $\cG$ to be $\cG\cup\cG^{-1}$ gives $2n$ steps when iterating through $\cG$.
        \item  takes $2n$ steps, and adds at most $\abs{Q}$ letters to $\Lambda, \cX$ and $\abs{Q}$ words of length 2 to $\cR$. 
        \item the while-loop  is performed at most $\abs{Q}$ times. Each iteration scans at most $\abs{Q}^2$ entries of the multiplication table, and adds at most $\abs{Q}$ letters to $\Lambda, \cX$ and at most $\abs{Q}$ words of length $3$ to $\cR$.
        \item  takes  $2n\abs{Q}^2$ steps, adds at most $2n\abs{Q}^2$ new letters to $\cY$, and adds at most $2n\abs{Q}^2$ new words of length at most $4$ to $\cR$. 
        \item  takes $2n$ steps, and all steps combined increase the length of relators by at most a factor of $2$ (replacing $g_i$ by a word of length $2$).
    \end{enumerate}
    To show that $\cY$ is a generating set for $\ker(\tau)$, suppose  an element in $\ker(\tau)$ is spelled as 
    \begin{equation*}
        w=g_{i_1}g_{i_2}\cdots  g_{i_k}\in (\cG\cup\cG^{-1})^\ast.
    \end{equation*} 
Let $q_{i_1},\dots, q_{i_{k-1}}\in Q$ so that $q_1=\tau(g_{i_1})^{-1}$ and $q_{i_j}=\tau(x_{q_{j-1}}^{-1}g_{i_j})^{-1}$ for $j \in[2,k-1]$
(recall that $q=\tau(x_q)$ for each $x_q\in \cX$).
Then 
    \begin{align*}
        w &= g_{i_1} \left(x_{q_1} x_{q_1}^{-1}\right) g_{i_2} \left(x_{q_2} x_{q_2}^{-1}\right)  \cdots  \left(x_{q_{k-1}} x_{q_{k-1}}^{-1}\right)  g_{i_k}
        \shortintertext{{so}}
        \tau(w) &= \tau(g_{i_1}x_{q_1}) \tau(x_{q_1}^{-1}  g_{i_2}x_{q_2})\tau(x_{q_2}^{-1}g_{i_3}x_{q_3})  \cdots \tau(
        x_{q_{k-2}^{-1}}
        g_{i_{k-1}}
        x_{q_{k-1}}) 
        \tau(x_{q_{k-1}}^{-1}g_{i_k}) \\
        &= \tau(y_{1,i_1,q_1}) \tau(y_{{q_1}^{-1}, i_2 q_2}) \cdots \tau( y_{q_{k-2}^{-1}, i_{k-1}, q_{k-1}}) \tau(x_{q_{k-1}}^{-1}g_{i_k}) \\
        &= \tau(x_{q_{k-1}}^{-1}g_{i_k}).
    \end{align*}
    Since $\tau(w) = 1_Q$, it follows that $\tau(x_{q_{k-1}}^{-1}g_{i_k}) = 1_Q$, so $x_{q_{k-1}}^{-1}g_{i_k} \in \cY$ (given by $y_{q_{k-1}^{-1},i_k,1} = x_{q_{k-1}}^{-1}g_{i_k}x_{1_Q}$). 
    Thus we have written $w$ as a product of letters from $\cY$, so $\gen{\cY} = \ker(\tau)$.
\end{proof}

\begin{remark}
    Since we do not assume that $\cG\subseteq G$ or that $G$ has decidable word problem, we do not assert that $\cY$ is a subset of $G$ (it may have repetitions), whereas we have ensured that $\cX\subseteq G$ in the proof of the above lemma. 
\end{remark}

\subsection{Epimorphism into extensions}
Recall that if $H$ is an $N$ by $Q$ extension, the map $\pi_Q\colon H\to Q$ is an epimorphism.
 
\begin{lemma}\label{lem:epi_iff_NbyQ_epi}
    Let $G, N \in \Arb$, and $Q \in \Fin$ and $H$ is given by an $N$ by $Q$ extension and a
    transversal map $s$. The following are equivalent:
    
    \be\item there exists an epimorphism from $G$ to $H$ 
   \item there exist homomorphisms $\tau\colon G \to Q$,  $\kappa\colon G \to H$ such that
        \begin{enumerate}
       \item  $\tau$ is surjective
            \item $\kappa(g)= n s(q)$ implies $q=\tau(g)$
             \item for all $ n \in N$ there exists $ g\in \ker(\tau)$ such that $\kappa(g)= n s(1_Q)$.
        \end{enumerate}
   \ee
\end{lemma}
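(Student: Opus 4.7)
The plan is to prove the two directions separately, with the forward direction being essentially a check that the induced maps have the claimed properties, and the reverse direction being a surjectivity argument using normal forms.

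For $(1)\Rightarrow(2)$, I would start from a given epimorphism $\kappa\colon G\to H$ and define $\tau := \pi_Q \circ \kappa\colon G\to Q$. Since $\pi_Q\colon H\to Q$ is surjective (it is the quotient map in the exact sequence defining the extension) and $\kappa$ is surjective, the composition $\tau$ is surjective, giving (a). For (b), if $\kappa(g) = n s(q)$ is the normal form of $\kappa(g)$, then by definition of $\pi_Q$ (applied to the normal form) we get $\pi_Q(\kappa(g)) = q$, while by construction $\tau(g) = \pi_Q(\kappa(g))$, so $q = \tau(g)$. For (c), fix $n \in N$; using surjectivity of $\kappa$, pick any $g$ with $\kappa(g) = n = n\, s(1_Q)$ (recalling our convention $s(1_Q)=1_H$). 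Then (b) forces $\tau(g) = 1_Q$, so $g \in \ker(\tau)$.

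For $(2)\Rightarrow(1)$, the task reduces to showing that the homomorphism $\kappa$ from the hypothesis is surjective. Given any $h \in H$, write it uniquely in normal form $h = n s(q)$ with $n \in N$ and $q \in Q$. By surjectivity of $\tau$ (a), pick $g_1 \in G$ with $\tau(g_1) = q$; then by (b) there exists $n_1 \in N$ with $\kappa(g_1) = n_1 s(q)$. Now apply (c) to the element $n n_1^{-1} \in N$ to obtain $g_2 \in \ker(\tau)$ with $\kappa(g_2) = n n_1^{-1}\, s(1_Q) = n n_1^{-1}$. Multiplying inside $H$ and using that $N$ is a subgroup,
\[
\kappa(g_2 g_1) \;=\; \kappa(g_2)\kappa(g_1) \;=\; (n n_1^{-1})\,(n_1 s(q)) \;=\; n\, s(q) \;=\; h,
\]
so $h$ lies in the image of $\kappa$. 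Thus $\kappa$ is an epimorphism.

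There is no real obstacle here; the only point worth being careful about is that the decomposition used in (b) is the normal form with respect to the fixed transversal $s$, so that $\pi_Q$ reads off the second coordinate unambiguously, and that $s(1_Q) = 1_H$ by convention so that condition (c) really gives preimages of arbitrary elements of $N \subseteq H$. The argument uses no structure on $N$ or $Q$ beyond the existence of the normal form with respect to $s$, which is why the lemma applies to arbitrary $N$-by-$Q$ extensions.
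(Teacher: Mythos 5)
Your proof is correct and follows essentially the same route as the paper: define $\tau = \pi_Q\circ\kappa$ for the forward direction, and for the converse pick $g_1$ hitting the $Q$-coordinate, then $g_2\in\ker(\tau)$ to adjust the $N$-coordinate, and multiply. You even write the product in the correct order $g_2g_1$ (so that $\kappa(g_2)\kappa(g_1)=(nn_1^{-1})(n_1s(q))=ns(q)$), whereas the paper's displayed line has a small typo writing $\kappa(g_1g_2)$ for the same expression.
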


\begin{proof}
  If $\psi\colon G \to H$ is an epimorphism, then $\tau = \pi_Q \circ \psi$ is an epimorphism.
For each $g\in G$ if $\psi(g)=ns(q)$, then $\tau(g) = \pi_Q (n s(q)) = q$ so $\psi=\kappa$ satisfies item~(b), and if $n\in N$ 
then since $\psi$ is surjective there exists $g\in G$ with $\psi(g) = n s(1_Q)$ and $\tau(g)=\pi_Q(ns(1_Q))=1_Q$ so $\psi=\kappa$ satisfies item~(c).

    Conversely, assume there exist $\tau$ and $\kappa$ as described in the lemma. 
    Then for each $ns(q)\in H$ there exists $g_1\in G$ so that $\tau(g_1)=q$, so  $\kappa(g_1)=n_1s(q)$ for some $n_1\in N$, and $g_2\in \ker(\tau)$ so that $\kappa(g_2)=nn_1^{-1}s(1_Q)$. Then $\kappa(g_1g_2)=nn_1^{-1}s(1_Q)n_1s(q)=ns(q)$.
 Therefore, $\kappa$ is a surjective homomorphism from $G$ to  $H$. 
\end{proof}

\begin{remark}\label{rmk:itemc'}
   Item~(c) in the above lemma may be replaced by   \be\item[(c')] 
   for some   $(Q,\tau)$-presentation $\Gen{\cX\cup\cY}{\cR}$ for $G$, 
   for all $ n \in N$ there exists $ w\in (\cY\cup\cY^{-1})^*$ such that $\kappa(w)= ns(1_Q)$\ee since $\ker(\tau)=\gen{\cY}$.
\end{remark}

\begin{example}[Necessity of the conditions in \cref{lem:epi_iff_NbyQ_epi}]
    Let $G = \Gen{x_1,q_1}{[x_1,q_1],q_1^2} \cong \Z \times \Z_2$, $N=\gen{x_2}\cong \Z$ and $Q=\Gen{q_2}{q_2^2}\cong C_2$.  Then clearly an epimorphism (isomorphism) from $G$ to $N\times Q$ exists.
 Consider the epimorphism $\tau\colon G \to Q$ defined by $\tau(x_1)=q_2, \tau(q_1)=1_Q$.
    For a homomorphism $\kappa\colon G\to N\times Q$ to satisfy condition (2), for all $ n\in N$ there must exist $ g\in \ker(\tau)=\{1_G,q_1\}$
    so that $\kappa(g)=ns(1_Q)=(n,1_Q)$, which is impossible since 
    $N\times \{1_Q\}$ is infinite.
    This example shows 
    that items~(a)--(c) are all required, it is not enough to have an epimorphism $\tau$ and a homomorphism $\kappa$ that do not satisfy (b) and (c).

\end{example}

\section{Direct product targets}
\label{sec:DirectProd}
In this section, we show the epimorphism problem from a finitely presented group to the direct product of a free abelian group of rank $d$ and a finite group is in \P. We begin by translating the epimorphism problem into the  problem \equationsProbNEW (defined on page~\pageref{page:EqnSubspan}).

\begin{definition}[Presentation to 
 system of equations]\label{defn:PRESEQNA}
    On input a presentation of the form $\Gen{\cX\cup\cY}{\cR}$ where $\cX,\cY,\cR$ are finite, define $\PRESEQNA(\cX,\cY,\cR)$ to be the set of equations constructed as follows. 
    Let 
    $\bbX=\{X_1, X_1^{-1},\dots,X_{\abs{\cX}},X_{\abs{\cX}}^{-1}\}$, $\bbY=\{Y_1, Y_1^{-1},\dots,Y_{\abs{\cY}}, Y_{\abs{\cY}}^{-1}\}$ be  sets of variables and $\zeta\colon \cX\cup\cX^{-1}\cup\cY\cup\cY^{-1} \to \bbX\cup\bbY$ the bijection 
        \[\zeta\colon x_j\mapsto X_j, \quad x_j^{-1}\mapsto X_j^{-1}, \quad y_j\mapsto Y_j, \quad y_j^{-1}\mapsto Y_j^{-1}.\]
    Then \PRESEQNA$(\cX,\cY,\cR)$  is the system of equations  $(\zeta(r_i))_{[1,\abs{\cR}]}.$ 
\end{definition}

Note that by definition $\PRESEQNA(\cX,\cY,\cR)$ is a system of equations  without constants.

\begin{lemma}[Epimorphism onto direct products]\label{lem:epi_iff_equations_new}
    Let $G,N \in \Arb$ and $Q \in \Fin$. The following are equivalent:
    \be\item there exists an epimorphism from $G$ to $N \times Q$ 

    \item 
  there exists an epimorphism  $\tau\colon G \to Q$ such that for some
      $(Q,\tau)$-presentation $\Gen{\cX\cup\cY}{\cR}$ for $G$,  \equationsProbNEW returns `Yes' on input $N$ and  
      $\PRESEQNA(\cX,\cY,\cR)$.
    \ee
\end{lemma}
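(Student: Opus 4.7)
The plan is to prove both directions explicitly, using Lemma~\ref{lem:epi_iff_NbyQ_epi} (with Remark~\ref{rmk:itemc'}) as the organising principle. The setting $H = N \times Q$ is particularly clean because the canonical transversal $s\colon q \mapsto (1_N, q)$ is itself a homomorphism and, crucially, the $N$-component projection $\pi_N\colon N \times Q \to N$ is also a homomorphism --- a fact that fails for arbitrary $N$ by $Q$ extensions and which makes this case reduce to a purely abelian-style equation-solving problem.

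For the direction (1) $\Rightarrow$ (2), I would start from an epimorphism $\psi\colon G \to N \times Q$, set $\tau = \pi_Q \circ \psi$, and invoke Lemma~\ref{lem:buildQtau} on a presentation of $G$, the multiplication table of $Q$, and the list of values of $\tau$ on the generators of $G$, obtaining a $(Q,\tau)$-presentation $\Gen{\cX \cup \cY}{\cR}$. I would then define $\sigma\colon \bbX \cup \bbY \to N$ by $\sigma(Z) = \pi_N(\psi(\zeta^{-1}(Z)))$, which respects inverses automatically. Since $\pi_N$ is a homomorphism, for each relator $r_i \in \cR$ one has $\sigma(\zeta(r_i)) = \pi_N(\psi(r_i)) = \pi_N(1_H) = 1_N$, so $\sigma$ solves the system $\PRESEQNA(\cX,\cY,\cR)$. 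To verify the generating condition, given $n \in N$, surjectivity of $\psi$ produces $g \in G$ with $\psi(g) = (n, 1_Q)$, forcing $g \in \ker(\tau) = \gen{\cY}$; writing $g$ as a word $w \in (\cY \cup \cY^{-1})^\ast$ and applying $\pi_N \circ \psi$ expresses $n$ as a product of $\sigma(Y_j)^{\pm 1}$, so $\gen{\sigma(Y_1),\dots,\sigma(Y_{|\cY|})} = N$.

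For the direction (2) $\Rightarrow$ (1), given $\tau$, a $(Q,\tau)$-presentation $\Gen{\cX \cup \cY}{\cR}$, and a solution $\sigma$ whose $\bbY$-values generate $N$, I would define $\kappa$ on the generating set by
\[
\kappa(x) = (\sigma(\zeta(x)), \tau(x)) \text{ for } x \in \cX, \qquad \kappa(y) = (\sigma(\zeta(y)), 1_Q) \text{ for } y \in \cY,
\]
which is consistent with $\cY \subseteq \ker(\tau)$. Each relator is sent to $1_{N \times Q}$ componentwise: the $Q$-coordinate vanishes because $\tau$ is a homomorphism on $G$, and the $N$-coordinate vanishes because $\sigma$ solves $\PRESEQNA(\cX,\cY,\cR)$; Lemma~\ref{lem:vonD} then extends $\kappa$ to a homomorphism. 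Conditions (a) and (b) of Lemma~\ref{lem:epi_iff_NbyQ_epi} hold by construction ($\tau$ is surjective by hypothesis, and $\pi_Q \circ \kappa = \tau$ holds on generators, hence everywhere). Condition (c') of Remark~\ref{rmk:itemc'} follows by lifting an expression of an arbitrary $n \in N$ as a word in the $\sigma(Y_j)^{\pm 1}$ to a word $w \in (\cY \cup \cY^{-1})^\ast$, for which $\kappa(w) = (n, 1_Q) = n\,s(1_Q)$.

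I do not expect any real obstacle: the argument is a bookkeeping exercise once Lemmas~\ref{lem:vonD}, \ref{lem:buildQtau}, and~\ref{lem:epi_iff_NbyQ_epi} are in hand. The only point that deserves care is the observation that $\pi_N$ is a homomorphism on $N \times Q$, which is what allows the relators of $G$ to be translated directly into equations over $N$ without any twist by a cocycle $f_s$ or action $\theta_s$; this is precisely the feature that the later sections will need to work harder to accommodate for genuine extensions in $\PMonlyExt$ and $\PMonlySemi$.
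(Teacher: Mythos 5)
Your proof is correct and takes essentially the same route as the paper's: both directions pass through \cref{lem:epi_iff_NbyQ_epi} and \cref{rmk:itemc'}, defining $\sigma = \pi_N \circ \kappa \circ \zeta^{-1}$ in the forward direction and constructing $\kappa$ componentwise from $(\sigma, \tau)$ in the converse. Your explicit observation that $\pi_N$ is a homomorphism on a direct product is exactly the simplification the paper uses implicitly, and your formula $\kappa(y) = (\sigma(\zeta(y)), 1_Q)$ agrees with the paper's $(\sigma(Y), \tau(y))$ because $\tau$ vanishes on $\cY$ by definition of a \QtauPres.
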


\begin{proof}
    Assume there exists an epimorphism from $G$ to $N\times Q$.
    By \cref{lem:epi_iff_NbyQ_epi} and \cref{rmk:itemc'}, there exist $\tau\colon G \to Q$ an epimorphism and  $\kappa\colon G \to N \times Q$ a homomorphism such that
         \begin{enumerate}
                     \item[(b)] $\kappa(g)= (n, s(q))$ implies $q=\tau(g)$
\item[(c')]for all $ n \in N$ there exists $ w\in (\cY\cup\cY^{-1})^*$ such that $\kappa(w)= (n,1_Q)$\end{enumerate}
   Define $\sigma\colon \bbX\cup\bbY \to N$ by
   \[ \begin{array}{lll}
        \sigma(X) = \pi_N(\kappa(\zeta^{-1}(X))) = \pi_N(\kappa(x)),  & \quad \quad &  \sigma(X^{-1}) = \pi_N(\kappa(x))^{-1}, \\
        \sigma(Y)= \pi_N(\kappa(\zeta^{-1}(Y))) = \pi_N(\kappa(y)),&&\sigma(Y^{-1})= \pi_N(\kappa(y))^{-1}.
    \end{array}\]

   Since $\kappa$ a homomorphism, for each $r\in \cR$ we have $ \kappa(r) = (1_N,1_Q) $ which means
    \begin{align*}
       \sigma(\zeta(r)) &= \pi_N(\kappa(r))=1_N
    \end{align*} 
    and we have verified that $\sigma$ is a solution to $\PRESEQNA(\cX,\cY,\cR)$.

For each  $ n\in N$ there exists $ w\in (\cY\cup\cY^{-1})^*$ such that $\kappa(w)= (n,1_Q)$, and 
 for each $n\in N$ there exists $\zeta(w) \in \bbY^\ast$  so that $\sigma(\zeta(w))= \pi_N(\kappa(w)) = n$, which means 
 $\gen{\sigma(Y_1), \dots, \sigma(Y_\ell)} = N$, and \equationsProbNEW returns `Yes'.

    Conversely, assume there exists an epimorphism  $\tau\colon G \to Q$ such that 
     for some $(Q,\tau)$-presentation $\Gen{\cX\cup\cY}{\cR}$ for $G$,  \equationsProbNEW returns `Yes' on input $N$ and  system of equations $\PRESEQNA(\cX,\cY,\cR)$, which means 
there is a solution $\sigma\colon \bbX\cup\bbY \to N$ such that 
$\sigma(\zeta(r))=1_N$ for each $r\in\cR$  and $\gen{\sigma(Y_1),\dots,\sigma(Y_{\abs{\cY}})} 
= N$. 
    
    Define a set map $\kappa\colon \cX\cup\cY \to N \times Q$ by 
    \begin{equation*}
        \kappa\colon \begin{cases}
            x \mapsto \left(\sigma(X),\tau(x)\right) & x \in \cX \\
            y \mapsto \left(\sigma(Y),\tau(y)\right) 
            & y \in \cY. 
        \end{cases}
    \end{equation*} 
with $\kappa\colon (\cX\cup\cY\cup\cX^{-1}\cup \cY^{-1})^*\to N$ the induced monoid homomorphism.

 Then for each $r\in\cR$ where $r = v_1\cdots v_k$ with $v_i\in \cX\cup\cY\cup\cX^{-1}\cup\cY^{-1}$ we have 
    \begin{align*}
  \kappa(r)     
        = \kappa(v_1)\cdots \kappa(v_k) 
        &= (\sigma(\zeta(v_1)),\tau(v_1))\cdots (\sigma(\zeta(v_k)),\tau(v_k)) \\
        &= (\sigma(\zeta(v_1))\cdots (\sigma(\zeta(v_k)), \tau(v_1)\cdots \tau(v_k)) \\
        &= (\sigma(\zeta(v_1\cdots v_k)),\tau(v_1\cdots v_k)) 
        = (\sigma(\zeta(r)),\tau(r)) 
        = (1_N,1_Q) 
    \end{align*} where $\tau(r)=1_Q$ since $\tau$ is a homomorphism, so 
    by \cref{lem:vonD} $\kappa$ is a homomorphism from $G$ to $N$.

For any $g\in G$ there exists $w\in (\cX\cup\cY\cup\cX^{-1}\cup\cY^{-1})^*$ with $g=_Gw$. Then $\kappa(g)=\kappa(w)=(\sigma(w),\tau(w))=(\sigma(w), \tau(g))$ so $\kappa(g)=(n,q)$ implies $q=\tau(g)$.

Since $\gen{\sigma(Y_1),\dots,\sigma(Y_{\abs{\cY}})} = N$,  for each $n \in N$ there exists $w \in \bbY^\ast$ such that $\sigma(w) = n$, so for each $n \in N$ there exists $\zeta^{-1}(w)\in\gen{\cY}$ so that   $\sigma(\zeta(\zeta^{-1}(w))) = \sigma(w) = n$.

Having found homomorphisms $\tau,\kappa$ and established conditions (a), (b) and (c') as in \cref{lem:epi_iff_NbyQ_epi} and \cref{rmk:itemc'}, we have shown the existence of an epimorphism from $G$ to $N \times Q$.
\end{proof}

For the remainder of this section we assume $N$ is free abelian of finite rank.

\begin{definition}[System of equations to matrix system]\label{defn:EQN_TO_MAT}
    Let
   \be
        \item $d, t,\ell, m\in\Z$
        \item $N = \gen{x_1,\dots,x_d} \in \FreeAb$
        \item $\bbX=\{X_1, X_1^{-1},\dots,X_t, X_t^{-1}\}$, $\bbY=\{Y_1,Y_1^{-1},\dots,Y_{\ell},Y_{\ell}^{-1}\}$ 
        \item $v_i\in(\bbX\cup\bbY)^\ast$ for $i \in [1,m]$
        \item $\MFc_i \in N$ with $\MFc_i = x_1^{b_{(i,1)}}\cdots x_d^{b_{(i,d)}}$ for $i \in [1,m]$   
        \item $(u_i)_{[1,m]}$ be a system of equations in $N$ where each equation is of the form $u_i = v_i\MFc_i$.
   \ee
For each $i\in[1,m]$, the commuted normal form (page \pageref{page:CNF} in Subsection~\ref{subsec:Notation}) of $u_i$ is \[\CNF(u_i) = X_1^{\alpha_{(i,1)}}\dots X_t^{\alpha_{(i,t)}} Y_1^{\beta_{(i,1)}} \dots Y_\ell^{\beta_{(i,\ell)}}\MFc_i\]  where 
        \[\alpha_{(i,j)}=\abs{v_i}_{X_j}-\abs{v_i}_{X_j^{-1}}\ \ \textnormal{and}\ \  \beta_{(i,k)}=\abs{v_i}_{Y_k}-\abs{v_i}_{Y_k^{-1}}\] for $j\in[1,t], k\in[1,\ell]$.
Define \[\EQNMAT(d,t,\ell, (u_i)_{[1,m]},(\MFc_i)_{[1,m]}))\] to be the triple $(A,B,\ell)$ with $A \in \Z^{m \times (t + \ell)}$, $B \in \Z^{m\times d}$,  
    \begin{align*}
        A = \begin{pmatrix}
            \alpha_{(1,1)} & \cdots & \alpha_{(1,t)} & \beta_{(1,1)} & \cdots & \beta_{(1,\ell)} \\
            \vdots & \ddots & \vdots & \vdots & \ddots & \vdots \\
            \alpha_{(m,1)} & \cdots & \alpha_{(m,t)} & \beta_{(m,1)} & \cdots & \beta_{(m,\ell)} 
        \end{pmatrix}, B = \begin{pmatrix}
            b_{(1,1)} & \cdots & b_{(1,d)}  \\
            \vdots & \ddots & \vdots  \\
            b_{(m,1)} & \cdots & b_{(m,d)} 
        \end{pmatrix}.
    \end{align*} 
\end{definition}

\begin{definition}[Matrices to system of equations]
    Given $\ell\in \Z$, matrices $A \in \Z^{m \times n}$ and $B \in \Z^{m \times d}$  where
    \begin{align*}
        A = \begin{pmatrix}
            a_{(1,1)} & \cdots & a_{(1,n)} \\
            \vdots & \ddots & \vdots \\
            a_{(m,1)} & \cdots & a_{(m,n)} 
        \end{pmatrix}, B = \begin{pmatrix}
            b_{(1,1)} & \cdots & b_{(1,d)}  \\
            \vdots & \ddots & \vdots  \\
            b_{(m,1)} & \cdots & b_{(m,d)} 
        \end{pmatrix},
    \end{align*}
    define $\MATEQN(A,B,\ell)$ to be the quintuple $(d,n-\ell,\ell,(u_i)_{[1,m]}, (\MFc_i)_{[1,m]})$ where $(u_i)_{[1,m]}$ is a system of equations with variables
    \[\{X_1,X_1^{-1},\dots,X_{n-\ell},X_{n-\ell}^{-1},Y_1,Y_1^{-1},\dots,Y_\ell,Y_\ell^{-1}\}\] 
    over a free abelian group
    $N=\gen{x_1,\dots, x_d}$ of rank $d$ where each $u_i$ is of the form 
    $u_i=v_i\MFc_i$,   \[v_i=   
         X_1^{a_{(i,1)}}\cdots X_{n-\ell}^{a_{(i,n-\ell)}}Y_1^{a_{(i,n-\ell+1)}}\cdots Y_\ell^{a_{(i,n)}}\]
    and 
    $\MFc_i = x_1^{b_{(i,1)}}\dots x_d^{b_{(i,d)}} \in N$.
\end{definition}

The following is immediate from the definitions.
\begin{lemma}\label{lem:BuildInP}
    The following computations can be achieved in polynomial time.
    \be
    \item 
    On input finite sets $\cX,\cY,\cR\subseteq  (\cX\cup\cY\cup\cX^{-1}\cup\cY^{-1})^\ast$, compute $\PRESEQNA(\cX,\cY,\cR)$
    \item On input $d,t,\ell\in\Z$, 
    a system of equations $(u_i)_{[1,m]}$ with each $u_i=v_i\MFc_i$ where $v_i$ is a word in variables $\{X_1,X_1^{-1},X_t,X_t^{-1}, Y_1,Y_1^{-1},\dots,Y_\ell, Y_\ell^{-1}\}$ and $\MFc_i = x_1^{b_{(i,1)}}\cdots x_d^{b_{(i,d)}}$, compute $\EQNMAT(d, t,\ell,(u_i)_{[1,m]}, (\MFc_i)_{[1,m]})$
    \item On input $A \in \Z^{m\times n}, B \in \Z^{n \times d}$ and $\ell \in \Z$, compute $\MATEQN(A,B,\ell)$.
\ee
\end{lemma}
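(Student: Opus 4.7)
The plan is to verify the three items in turn. Each is entirely syntactic, so the proof proposal is just to describe the mechanical transcription and check that both the running time and the output length are polynomial in the input size; there is no real algorithmic content, which is why the authors call the claim immediate.

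For item (1), I would store the bijection $\zeta$ from $\cX \cup \cX^{-1} \cup \cY \cup \cY^{-1}$ to $\bbX \cup \bbY$ as a lookup table of size $O(|\cX| + |\cY|)$, then scan each $r \in \cR$ once, writing the image letter by letter. The total work, and the total output length, is $O(|\cX| + |\cY| + \sum_{r \in \cR} |r|)$.

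For item (2), I would process each equation $u_i = v_i \MFc_i$ by scanning $v_i$ once while maintaining integer counters for $\alpha_{(i,j)} = |v_i|_{X_j} - |v_i|_{X_j^{-1}}$ with $j \in [1,t]$ and analogously $\beta_{(i,k)}$ with $k \in [1,\ell]$; the entries $b_{(i,j)}$ are read straight off the input encoding of $\MFc_i$. Since $|\alpha_{(i,j)}|, |\beta_{(i,k)}| \le |v_i|$, each such entry occupies $O(\log |v_i|)$ bits in binary, so the matrices $A \in \Z^{m \times (t+\ell)}$ and $B \in \Z^{m \times d}$ can be filled in time and space polynomial in the total input size.

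For item (3), I would walk through each row of $A$ and each row of $B$ and emit the corresponding word $u_i = v_i \MFc_i$, where each factor $X_j^{a_{(i,j)}}$ (resp.\ $x_j^{b_{(i,j)}}$) is produced by printing the variable letter followed by the binary integer copied directly from the matrix entry. The output length is linear in the bit-lengths of $A$ and $B$, and the running time is likewise polynomial. The only point worth flagging across all three items is that integer exponents must be kept in binary throughout, as stipulated in Subsection~\ref{subsec:Notation}, so that no step causes a super-polynomial blow-up; otherwise no obstacle arises.
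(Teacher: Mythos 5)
Your proposal correctly elaborates the paper's claim, which the paper leaves entirely unproved (it states only that the lemma is ``immediate from the definitions''); the scan, lookup-table, and letter-counting arguments for items (1) and (2) are exactly the right details. One subtlety worth noting on item (3): under the paper's own stated conventions (Subsection~\ref{subsec:Notation}), words and group constants are sequences of letters in unary on the generators, so literally writing out $X_j^{a_{(i,j)}}$ and $x_j^{b_{(i,j)}}$ for binary-encoded entries $a_{(i,j)}, b_{(i,j)}$ could produce exponentially long output; your fix of emitting binary exponents is the only reading under which item (3) is true as stated, and fortunately $\MATEQN$ is never actually invoked elsewhere in the paper, so this discrepancy is harmless.
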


\begin{lemma}\label{NewLemma:EQN_IFF_EQMATSUBSPAN}
    Let \be\item $N \in \FreeAb$ have rank $d$
    \item  $\bbX=\{X_1, X_1^{-1},\dots,X_t, X_t^{-1}\}$,  $\bbY=\{Y_1,Y_1^{-1},\dots,Y_{\ell},Y_{\ell}^{-1}\}$
    \item  $(u_i)_{[1,m]}$  be a system of equations in $N$ without constants, with each  $u_i\in(\bbX\cup\bbY)^\ast$
    \item  $(A, 0, \ell) = \EQNMAT(d,t,\ell,(u_i)_{[1,m]},(1_N)_{[1,m]}).$\ee 
    The following are equivalent:
    \be\item \equationsProbNEW returns `Yes' on input $N$ and $(u_i)_{[1,m]}$ 
    \item  \MatrixSubspanA returns `Yes' on input $(A,d,\ell)$.
    \ee
\end{lemma}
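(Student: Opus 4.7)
The plan is to translate solutions of the equation system coordinate-by-coordinate into integer vectors in the kernel of $A$, using the natural isomorphism $\varphi\colon N\to \Z^d$ to convert each value $\sigma(X_j), \sigma(Y_k) \in N$ into its tuple of integer exponents. Because $N$ is abelian, \cref{lem:abel-Eqns-commute} lets us replace each $u_i$ by its commuted normal form without altering the solution set, so that the condition $\sigma(u_i) = 1_N$ becomes, after applying $\varphi$, the $\Z^d$-identity
\[
\sum_{j=1}^t \alpha_{(i,j)}\varphi(\sigma(X_j)) + \sum_{k=1}^\ell \beta_{(i,k)}\varphi(\sigma(Y_k)) = 0.
\]
For each $j' \in [1,d]$, the $j'$-th coordinate of this identity is precisely the $i$-th entry of $A v_{j'}$, where $v_{j'} \in \Z^n$ (with $n = t+\ell$) collects the $j'$-th coordinates of $\varphi(\sigma(X_1)),\dots,\varphi(\sigma(X_t)),\varphi(\sigma(Y_1)),\dots,\varphi(\sigma(Y_\ell))$, in that order.

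For the forward direction, I would start from a solution $\sigma$ witnessing that \equationsProbNEW returns `Yes', define the vectors $v_{j'}$ as above, and note that the preceding observation gives $Av_{j'} = 0$ for every $j' \in [1,d]$. Then for the $n \times d$ matrix $V$ whose columns are $v_1,\dots,v_d$, the $k$-th row of $V|_\ell$ is exactly $\varphi(\sigma(Y_k))^T$, so the $k$-th column of $(V|_\ell)^T$ is $\varphi(\sigma(Y_k))$. Consequently $\spanZ{(V|_\ell)^T} = \varphi(\gen{\sigma(Y_1),\dots,\sigma(Y_\ell)}) = \varphi(N) = \Z^d$, which certifies a `Yes' answer for \MatrixSubspanA on $(A,d,\ell)$.

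The reverse direction inverts this construction: given $v_1,\dots,v_d \in \Z^n$ with $Av_{j'} = 0$ for all $j'$ and $\spanZ{(V|_\ell)^T} = \Z^d$, I would define $\sigma$ by $\sigma(X_j) = \varphi^{-1}(((v_1)_j, \dots, (v_d)_j)^T)$ for $j\in[1,t]$ and $\sigma(Y_k) = \varphi^{-1}(((v_1)_{t+k},\dots,(v_d)_{t+k})^T)$ for $k\in[1,\ell]$, extending to the inverse letters by $X_j^{-1} \mapsto \sigma(X_j)^{-1}$ and $Y_k^{-1} \mapsto \sigma(Y_k)^{-1}$. The hypothesis $Av_{j'} = 0$ then says that the $j'$-th coordinate of $\varphi(\sigma(\CNF(u_i)))$ vanishes for all $j'$ and $i$, so each $u_i$ is satisfied by \cref{lem:abel-Eqns-commute}, while $\spanZ{(V|_\ell)^T} = \Z^d$ yields $\gen{\sigma(Y_1),\dots,\sigma(Y_\ell)} = N$.

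There is no real mathematical obstacle here — both directions amount to rearranging the same data — but the main pitfall is keeping the index bookkeeping straight: rows versus columns of $A$, $V$, $V|_\ell$, and $(V|_\ell)^T$; variable indices $j\in[1,t]$ and $k\in[1,\ell]$; and rank-of-$N$ indices $j' \in [1,d]$. Once the correspondence $v_{j'} \leftrightarrow (\text{$j'$-th coordinates of } \varphi\circ\sigma)$ is laid out, both directions reduce to unfolding the definitions of $\EQNMAT$ and $\CNF$.
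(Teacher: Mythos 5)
Your proposal is correct and takes essentially the same route as the paper's proof: encode the solution $\sigma$ as an $(t+\ell)\times d$ integer matrix $V$ via the isomorphism $\varphi$ and the commuted normal forms, observe that $\sigma$ satisfies the system iff each column $v_{j'}$ of $V$ lies in $\ker A$, and that $\gen{\sigma(Y_1),\dots,\sigma(Y_\ell)}=N$ iff $\spanZ{(V|_\ell)^T}=\Z^d$, with the reverse direction inverting the same dictionary. The index bookkeeping in your description matches the paper's construction exactly.
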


\begin{proof}
    Let 
    \begin{equation*}
         A = \begin{pmatrix}
            \alpha_{(1,1)} & \cdots & \alpha_{(1,t)} & \beta_{(1,1)} & \cdots & \beta_{(1,\ell)} \\
            \vdots & \ddots & \vdots & \vdots & \ddots & \vdots \\
            \alpha_{(m,1)} & \cdots & \alpha_{(m,t)} & \beta_{(m,1)} & \cdots & \beta_{(m,\ell)} 
        \end{pmatrix}\in \Z^{m\times (t+\ell)}
    \end{equation*}  so that 
   for each $i\in [1,m]$ \[\CNF(u_i) = X_1^{\alpha_{(i,1)}}\dots X_t^{\alpha_{(i,t)}} Y_1^{\beta_{(i,1)}} \dots Y_\ell^{\beta_{(i,\ell)}}\]  where 
        \[\alpha_{(i,j)}=\abs{u_i}_{X_j}-\abs{u_i}_{X_j^{-1}}\ \ \textnormal{and}\ \  \beta_{(i,k)}=\abs{u_i}_{Y_k}-\abs{u_i}_{Y_k^{-1}}\] for $j\in[1,t], k\in[1,\ell]$.

By definition,    \equationsProbNEW on input $N$  and $(u_i)_{[1,m]}$ returns `Yes' if and only if 
there is a solution $\sigma\colon \bbX\cup\bbY \to N$ to  $(u_i)_{[1,m]}$  and $\gen{\sigma(Y_1), \dots,\sigma(Y_\ell)} = N$, where $\sigma$ is also a solution to $(\CNF(u_i))_{[1,m]}$ by \cref{lem:abel-Eqns-commute}.

We  may write the solution as 
    \begin{equation*}
        \sigma\colon \begin{cases}
            X_j \mapsto x_1^{c_{(j,1)}}\cdots x_d^{c_{(j,d)}},\quad  X_j^{-1} \mapsto (x_1^{c_{(j,1)}}\cdots x_d^{c_{(j,d)}})^{-1}  & j \in [1,t] \\
            Y_j \mapsto x_1^{c_{(t+j,1)}} \cdots x_d^{c_{(t+j,d)}}, \quad Y_j^{-1} \mapsto (x_1^{c_{(t+j,1)}} \cdots x_d^{c_{(t+j,d)}})^{-1} &j \in [1,\ell]
        \end{cases}
    \end{equation*} 
 for some values $c_{j,k}\in\Z$. Let \[V=\begin{pmatrix}
        c_{(1,1)} & \cdots & c_{(1,d)}\\ 
        \vdots & \ddots & \vdots \\ 
        c_{(t+\ell,1)} & \cdots & c_{(t+\ell,d)}
    \end{pmatrix} \in \Z^{(t+\ell)\times d},\] for $i\in[1,d]$ let $v_i \in \Z^{t+\ell}$ denote the $i$-th column of $V$, and for $i\in[1,\ell]$ let $\mu_i$  denote  the $i$-th column of $(V_\ell)^T$.
  By direct calculation  for each $i\in[1,m]$ we have
    \begin{align*}
        \sigma(u_i) &= x_1^{\sum_{j = 1}^t c_{(j,1)}
        \alpha_{(i,j)}        
        + \sum_{j = 1}^\ell c_{(t+j,1)}
        \beta_{(i,j)}}        
        \cdots x_d^{\sum_{j = 1}^t c_{(j,d)}
        \alpha_{(i,j)}        
        + \sum_{j = 1}^\ell c_{(t+j,d)}
        \beta_{(i,j)}}.
    \end{align*} Then 
 for $i\in[1,m]$,     $\sigma(u_i)=1_N$  
     if and only if 
    \begin{equation}\label{eqn:DPeqnSum_0}
        \sum_{j = 1}^t c_{(j,k)} \alpha_{(i,j)} + \sum_{j = 1}^\ell c_{(t+j,k)} \beta_{(i,j)}= 0
    \end{equation} for each $k \in [1,d]$.

  Then   $\sigma\colon \bbX\cup\bbY \to N$ is a solution to  $(\CNF(u_i))_{[1,m]})$ and $\gen{\sigma(Y_1), \dots,\sigma(Y_\ell)} = N$ if and only if 
    \begin{align*}
        AV &=  \begin{pmatrix}
            \alpha_{(1,1)} & \cdots & \alpha_{(1,t)} & \beta_{(1,1)} & \cdots & \beta_{(1,\ell)} \\
            \vdots & \ddots & \vdots & \vdots & \ddots & \vdots \\
            \alpha_{(m,1)} & \cdots & \alpha_{(m,t)} & \beta_{(m,1)} & \cdots & \beta_{(m,\ell)} 
        \end{pmatrix} \begin{pmatrix}
        c_{(1,1)} & \cdots & c_{(1,d)}\\ 
        \vdots & \ddots & \vdots \\ 
        c_{(t+\ell,1)} & \cdots & c_{(t+\ell,d)}
    \end{pmatrix} \\
    &= \begin{pmatrix}
        \sum_{j = 1}^t c_{(j,1)} \alpha_{(1,j)} + \sum_{j = 1}^\ell c_{(t+j,1)} \beta_{(1,j)} & \cdots & \sum_{j = 1}^t c_{(j,d)} \alpha_{(1,j)} + \sum_{j = 1}^\ell c_{(t+j,d)} \beta_{(1,j)} \\
        \vdots & \ddots & \vdots \\
        \sum_{j = 1}^t c_{(j,1)} \alpha_{(m,j)} + \sum_{j = 1}^\ell c_{(t+j,1)} \beta_{(m,j)} & \cdots & \sum_{j = 1}^t c_{(j,d)} \alpha_{(m,j)} + \sum_{j = 1}^\ell c_{(t+j,d)} \beta_{(m,j)}
        \end{pmatrix} \\
    &= \begin{pmatrix}
        0 & \cdots & 0 \\
        \vdots & \ddots & \vdots \\
        0 & \cdots & 0
        \end{pmatrix} \textnormal{ by \cref{eqn:DPeqnSum_0}},
    \end{align*}
  or equivalently $\sigma\colon \bbX\cup\bbY \to N$ is a solution to  $(\CNF(u_i))_{[1,m]})$ and $\gen{\sigma(Y_1), \dots,\sigma(Y_\ell)} = N$ if and only if  $Av_i = 0$ for $i \in [1,d]$
   and $\gen{\sigma(Y_1), \dots,\sigma(Y_\ell)} = N$.

Recall the natural isomorphism $\varphi\colon N \to \Z^d$ (page \pageref{page:natIso} in Subsection~\ref{subsec:Notation}).
For each $i\in[1,\ell]$ we have
    \begin{align*}
        \varphi(\sigma(Y_i)) &= \varphi(x_1^{c_{(t+i,1)}}\cdots x_d^{c_{(t+i,d)}})  \\
        &= c_{(t+i,1)} e_1 + \cdots + c_{(t+i,d)} e_d  = \mu_i \label{eqn:varphiYj=muj}.
    \end{align*}  

    Then $Av_i = 0$ for $i \in [1,d]$
   and $\gen{\sigma(Y_1), \dots,\sigma(Y_\ell)} = N$ if and only if $Av_i = 0$ for $i \in [1,d]$ and
    for each $h\in N$ 
  there exists $w \in \bbY^\ast$ such that $\sigma(w) = h$. This holds if and only if $Av_i = 0$ for $i \in [1,d]$ and
    for each $z \in \Z^d$  
    there exists $w \in \bbY^\ast$ such that $\varphi(\sigma(w)) =z$.
    Write $\CNF(w)=Y_1^{b_1}\cdots Y_\ell^{b_\ell}$.
    Then 
    \begin{align*}
        z = \varphi(\sigma(w))
        &= \varphi(\sigma(Y_1)^{b_1}\cdots \sigma(Y_\ell)^{b_\ell}) \\
        &= b_1\mu_1 + \cdots + b_\ell \mu_\ell 
    \end{align*}
    Therefore, $Av_i = 0$ for $i \in [1,d]$
    and $\gen{\sigma(Y_1), \dots,\sigma(Y_\ell)} = N$ if and only if 
    $Av_i = 0$ for $i \in [1,d]$ and 
    \[\spanZ{(V|_\ell)^T} =\spanZ{\mu_1,\dots,\mu_\ell} = \Z^d\] which is true if and only if \MatrixSubspanA returns `Yes'.   
\end{proof}

Combining the above results with the fact that will be proved in \cref{sec:LA} that \MatrixSubspanA can be decided in polynomial time gives the following.

\begin{theorem}\label{thm:AbelianDirectProd_is_NP}
   $\Epi{\Arb}{\DPclass}$ is in \NP.
\end{theorem}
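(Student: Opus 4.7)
The plan is to present a nondeterministic polynomial-time algorithm that implements the characterisation in Lemma~\ref{lem:epi_iff_equations_new}. On input a presentation $\Gen{g_1,\dots,g_n}{r_1,\dots,r_m}$ for $G\in\Arb$ together with a rank $d$ for the free abelian factor $N = \gen{x_1,\dots,x_d}$ and a multiplication table for the finite group $Q$, the algorithm first nondeterministically guesses an epimorphism $\tau\colon G\to Q$ by choosing values $\tau(g_i)\in Q$ for each generator. This certificate has size $O(n\log|Q|)$, which is polynomial in the input. The algorithm then deterministically verifies, exactly as in the proof of Lemma~\ref{lem:epi_finite}, that $\tau$ extends to a homomorphism (by evaluating each relator and checking it yields $1_Q$, using \cref{lem:vonD}) and that $\tau$ is surjective.

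Given a valid $\tau$, I invoke Lemma~\ref{lem:buildQtau} to construct a $(Q,\tau)$-presentation $\Gen{\cX\cup\cY}{\cR}$ for $G$ in polynomial time. Lemma~\ref{lem:BuildInP}(1) then yields, in polynomial time, the system of equations without constants $\PRESEQNA(\cX,\cY,\cR)$ over $N$ in variables $\bbX\cup\bbY$. By Lemma~\ref{lem:epi_iff_equations_new}, the existence of an epimorphism $G\to N\times Q$ inducing this $\tau$ is equivalent to \equationsProbNEW returning `Yes' on input $N$ and $\PRESEQNA(\cX,\cY,\cR)$. Since $N\in\FreeAb$ of rank $d$, I apply $\EQNMAT$ with trivial constants (Lemma~\ref{lem:BuildInP}(2)) to compute in polynomial time an instance $(A,d,\ell)$ of \MatrixSubspanA, which by Lemma~\ref{NewLemma:EQN_IFF_EQMATSUBSPAN} is a `Yes' instance if and only if the corresponding \equationsProbNEW instance is.

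Finally, I decide \MatrixSubspanA on $(A,d,\ell)$ in deterministic polynomial time using Theorem~\ref{thm:MatrixProbsInP} (whose proof will be given in Section~\ref{sec:LA}). The algorithm accepts precisely when some guess $\tau$ passes all verification steps and the resulting \MatrixSubspanA instance is positive; correctness follows by stringing together Lemmas~\ref{lem:epi_iff_NbyQ_epi}, \ref{lem:epi_iff_equations_new}, and~\ref{NewLemma:EQN_IFF_EQMATSUBSPAN}. The only nondeterministic step is guessing $\tau$; all remaining work is a composition of polynomial-time subroutines. The genuine obstacle in the overall strategy is not this reduction, which is essentially bookkeeping, but establishing that \MatrixSubspanA lies in \P; conditional on Theorem~\ref{thm:MatrixProbsInP}, the \NP upper bound is immediate.
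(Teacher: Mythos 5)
Your proposal is correct and follows essentially the same route as the paper's proof: nondeterministically guess and verify $\tau$ via \cref{lem:epi_finite}, build a $(Q,\tau)$-presentation via \cref{lem:buildQtau}, convert to equations and then to a \MatrixSubspanA instance via \cref{lem:BuildInP,NewLemma:EQN_IFF_EQMATSUBSPAN,lem:epi_iff_equations_new}, and decide it deterministically in polynomial time via \cref{prop:EQMATSUBSPAN_P}. No discrepancies to report.
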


\begin{proof}
    Let $G \in \Arb$, $N \in \FreeAb$ and $Q \in \Fin$. Using \cref{lem:epi_iff_equations_new} we may verify the existence of an epimorphism from $G$ to $N \times Q$  by verifying that:
    \begin{enumerate}
        \item[(i)] there exists an epimorphism $\tau\colon G \to Q$ 
        \item[(ii)] for some $(Q,\tau)$-presentation $\Gen{\cX\cup\cY}{\cR}$ for $G$, the output to \equationsProbNEW is `Yes' on input $N$ and $\PRESEQNA(\cX,\cY,\cR)$.
    \end{enumerate}

    On input $G = \Gen{\cG}{\cR}$, $d \in \N_+$ encoding a  free abelian group $N$ of rank $d$,  and a multiplication table encoding a finite group $Q$, the following procedure solves our problem:
    \begin{enumerate}
        \item guess a set map $\tau\colon \cG \to Q$ and verify that its extends to an epimorphism $\tau\colon G \to Q$
        \item construct a $(Q,\tau)$-presentation $\Gen{\cX\cup\cY}{\cR}$
        \item construct a system of equations from $\PRESEQNA(\cX,\cY,\cR)$ denoted as $(u_i)_{[1,m]}$.
        \item construct the triple $(A,\zeroMat_{m,d},\abs{\cY}) = \EQNMAT(d,\abs{\cX},\abs{\cY},(u_i)_{[1,m]},(1_N)_{[1,m]})$ where $A \in \Z^{m\times (\abs{\cX}+\abs{\cY})}$ 
        \item return the output of input 
        \MatrixSubspanA on input $(A,d,\abs{\cY})$.

    \end{enumerate}
Step~(1) verifies the existence of Condition~(i). step~(1) to~(3) builds the necessary data to solve Condition~(ii). \cref{NewLemma:EQN_IFF_EQMATSUBSPAN} states that to solve \equationsProbNEW,  we can solve \MatrixSubspanA on input 
$(A, 0, \abs{\cY})$ constructed in step~(4). Thus we solve \MatrixSubspanA in step~(5) and output the solution.
    
    The time complexity of the procedure is as follows:
    \begin{enumerate}
        \item We verify the correct $\tau$ in \NP by \cref{lem:epi_finite}; this is the only non-deterministic step of our algorithm.
        \item A construction of $(Q,\tau)$-presentation in \P exists by \cref{lem:buildQtau}.
        \item $\PRESEQNA(\cX,\cY,\cR)$ is a polynomial time construction by \cref{lem:BuildInP}.
        \item $\EQNMAT(d,\abs{\cX},\abs{\cY},(u_i)_{[1,m]},(1_N)_{[1,m]})$ is a polynomial time construction by \cref{lem:BuildInP}.
        \item \MatrixSubspanA is solved \P by \cref{prop:EQMATSUBSPAN_P} (Subsection~\ref{subsec:EQMATSUBSPAN}).
    \end{enumerate}
    Thus, our algorithm is in \NP. 
\end{proof}

\section{Virtually cyclic targets} 
\label{sec:vcyc}

In this section, we show that the epimorphism problem from a finitely presented group to a virtually cyclic group is in \P. 
We again begin by translating epimorphism  
to an equations problem. 

Recall that $\PMonlyExt$ is the class of  $N$ by $Q$ extensions such that $Q$ is finite, $N$ is abelian, and there exists a transversal map $s$ and a subset $\cI \subseteq Q$ so that
\[\theta_s(q)=\begin{cases}n\mapsto n^{-1} & q\in \cI\\
n\mapsto n & q\in Q\setminus \cI\\
\end{cases}\] so for all $n\in N$,  
\[{}^{s(q)}n=s(q)ns(q)^{-1}=\begin{cases}n^{-1} & q\in \cI\\
 n & q\in Q\setminus \cI.
\end{cases}\] 
We assume the data for a group in $\PMonlyExt$ is given as a group $N \in \FreeAb$, $Q \in \Fin$ and special extension data $(\cI,f_s)$.

The next three definitions 
introduce some notation that will be useful in our proofs below. 

\begin{definition}\label{defn:TildeF}
Let $H\in\PMonlyExt$ with $N,Q,(\cI,f_s)$ as above.
    For $k\geq 2$ define $\tildef_k\colon Q^k\to N$ by     \begin{equation*}
        \tildef_k(a_1,\dots, a_k) = f_s(a_1,a_2)f_s(a_1a_2,a_3)\cdots f_s(a_1\cdots a_{k-1}, a_k).
    \end{equation*}
\end{definition}

\begin{definition}[Left $A$-count] 
    Let $A,B$ be two disjoint sets and 
    $w = v_1\cdots v_n$ with $v_i \in A\cup B\cup A^{-1}\cup B^{-1}$. For each $p\in[1,n]$ 
    let 
    \[ k_p = \abs{v_{1} \cdots v_{p-1}}_{A} -  \abs{v_{1} \cdots v_{p-1}}_{A^{-1}} \] which we call the \emph{left $A$-count of $w$ at position $p$}.     Define  $\sgn(w,A,p) = (-1)^{k_p}$. 
\end{definition}
The digit $\sgn(w,A,p)$ encodes whether the number of letters from $A$ minus the number of letters from $A^{-1}$ (ignoring all letters from $B$) in the length $p-1$ prefix of $w$ is odd or even.

\begin{definition}\label{defn:GammaTechnical}
Let $\Gen{\cX\cup\cY}{\cR}$ be a \QtauPres\ for a group $G$, and let $\cI\subseteq \cX$, 
$\cI_\cX=\{x\in 
\cX\mid\tau(x)\in \cI\}$ denote the preimage of $\cI$ under the bijection  $\tauX\colon \cX\to Q$. For 
 $r\in (\cX\cup \cY\cup\cX^{-1}\cup \cY^{-1})^*$, 
 define  $\gamma(r)$ to be the word obtained by raising the $p$-th letter of $r$ to the power of $\sgn(r, \cI_\cX, p)$
for each $p\in[1,\abs{r}]$.
\end{definition}

\begin{example}
    If $r=x_1y_1x_3^{-2}y_2x_1^{-1}x_2^{-1}y_1x_1$ and $\cI_\cX=\{x_2,x_3\}$ then \[\gamma(r)= x_1y_1x_3^{-1}x_3y_2x_1^{-1}x_2^{-1} y_1^{-1}x_1^{-1}\] 
\end{example}

The purpose of defining $\gamma$ in this way will become evident 
in the proof of \cref{lem:epi_iff_equations_pmonly}.

Next we define a way to construct a system of equations from a presentation which will be useful for analysing epimorphism onto the class $\PMonlyExt$, 
analogously to the construction in \cref{defn:PRESEQNA} for direct products.

\begin{definition}[Presentation to system of equations for $\PMonlyExt$]\label{defn:PRESEQNB}
Let $H\in \PMonlyExt$ be an $N$ by $Q$ extension with special extension data $(\cI,f_s)$,
$\Gen{\cX\cup\cY}{\cR}$ a \QtauPres\ for a group $G$, $\cI\subseteq \cX$, 
$\bbX,\bbY$  alphabets with $\cX\cup\cX^{-1}$ in bijection with $\bbX$ and $\cY\cup\cY^{-1}$ in bijection with $\bbY$ via 
\[\zeta\colon x_j\mapsto X_j, \quad  x_j^{-1}\mapsto X_j^{-1}, \quad y_j\mapsto Y_j, \quad y_j^{-1}\mapsto Y_j^{-1},\]
 $\cI_\cX=\{x\in \cX\mid \tau(x)\in \cI\}$ the preimage of $\cI$ under the bijection  $\tauX\colon \cX\to Q$,  $\gamma$ as in \cref{defn:GammaTechnical}, and $\tildef_k$ as in \cref{defn:TildeF}.
    For $i\in[1,\abs{\cR}]$ assume each $r_i\in \cR$ has the form  \[r_i=v_{i,1}\dots v_{i,\abs{r_i}}\] with $v_{i,j}\in \cX\cup\cY\cup\cX^{-1}\cup\cY^{-1}$.
    Define $\PRESEQNB(\tau, \cX,\cY,\cR,\cI, f_s)$ to be the system of equations $(u_i)_{[1,\abs{\cR}]}$ where  \[u_i =\zeta(\gamma(r_i)) \tildef_{\abs{r_i}}(\tau(v_{i,1}),\dots,\tau(v_{i,\abs{r_i}})).\] 
\end{definition}

Note that by definition $\PRESEQNB(\tau, \cX,\cY,\cR,\cI, f_s)$ is a system of equations with variables in $\bbX\cup\bbY$ with a single constant.  It is clear from the above definitions that it can be written in polynomial time. We will now show how it arises in the context of epimorphism to $\PMonlyExt$.

Recall the decision problem 
 \equationsProbNEW.

\begin{lemma}\label{lem:epi_iff_equations_pmonly}
    Let $G\in \Arb$, $N \in \FreeAb$, $Q \in \Fin$ and $H 
    \in \PMonlyExt$ where $H$ is a $N$ by $Q$ extension with special extension data $(\cI,f_s)$. The following are equivalent. 
    \begin{enumerate}\item there exists an epimorphism from $ G$ to $H$ 
    \item  there exists an epimorphism $\tau\colon G \to Q$ 
     such that for some \QtauPres\ $\Gen{\cX\cup\cY}{\cR}$ for $G$, \equationsProbNEW returns `Yes' on input $N$ and   
     $\PRESEQNB(\tau, \cX,\cY,\cR,\cI, f_s).$
    \end{enumerate}
\end{lemma}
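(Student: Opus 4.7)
The plan is to apply \cref{lem:epi_iff_NbyQ_epi} together with \cref{rmk:itemc'} to reduce the existence of an epimorphism $G \to H$ to the existence of an epimorphism $\tau\colon G \to Q$ together with a homomorphism $\kappa\colon G \to H$ satisfying conditions (a), (b) and (c'), and then to encode the data of $\kappa$ as a solution to the equation system $\PRESEQNB(\tau, \cX, \cY, \cR, \cI, f_s)$ over $N$. Given any \QtauPres{} $\Gen{\cX \cup \cY}{\cR}$ of $G$, I would associate to $\kappa$ the map $\sigma\colon \bbX \cup \bbY \to N$ defined by $\sigma(\zeta(v)) = \pi_N(\kappa(v))$ and $\sigma(\zeta(v^{-1})) = \pi_N(\kappa(v))^{-1}$ for $v \in \cX \cup \cY$. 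The surjectivity condition (c') translates to $\gen{\sigma(Y_1), \dots, \sigma(Y_{\abs{\cY}})} = N$, because every $y \in \cY$ satisfies $\tau(y) = 1_Q$, so $\kappa$ restricted to words in $\cY \cup \cY^{-1}$ takes values in $Ns(1_Q)$, on which $\pi_N$ is just the multiplication map on $N$.

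The central technical step is a normal-form identity: for any word $r = v_1 \cdots v_k$ over $\cX \cup \cY \cup \cX^{-1} \cup \cY^{-1}$, writing $q_p = \tau(v_p)$ and $m_p = \pi_N(\kappa(v_p))$,
\[
\pi_N(\kappa(r)) = m_1^{\sgn(r, \cI_\cX, 1)} \, m_2^{\sgn(r, \cI_\cX, 2)} \cdots m_k^{\sgn(r, \cI_\cX, k)} \cdot \tildef_k(q_1, \dots, q_k).
\]
I would prove this by induction on $k$: pulling each $s(q_p)$ past a subsequent $N$-factor applies the automorphism $\theta_s(q_1 \cdots q_{p-1})$, which is either the identity or inversion on $N$, and collecting consecutive transversal elements $s(q_1) \cdots s(q_k)$ produces the cocycle $\tildef_k$ in the $N$-component (using that $N$ is abelian to commute the $N$-parts freely). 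The key observation, which motivates the definitions of $\sgn$ and $\gamma$, is that $\cI$ is closed under inverses in $Q$ (since it is the $\theta_s$-preimage of the order-two automorphism $n \mapsto n^{-1}$), so $\theta_s(q_1 \cdots q_{p-1})$ equals inversion exactly when the number of letters of $v_1 \cdots v_{p-1}$ lying in $\cI_\cX \cup \cI_\cX^{-1}$ is odd. Since letters of $\cY \cup \cY^{-1}$ map to $1_Q \notin \cI$ and $a + b \equiv a - b \pmod 2$, this parity equals $(-1)^{k_p} = \sgn(r, \cI_\cX, p)$.

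Once the normal-form identity is in hand, the remainder is routine. For $r \in \cR$ the condition $\kappa(r) = 1_H$ is equivalent to $\pi_N(\kappa(r)) = 1_N$ (since $\tau(r) = 1_Q$ automatically), and the identity reads this as exactly the equation $u_i \in \PRESEQNB(\tau,\cX,\cY,\cR,\cI,f_s)$ evaluated at $\sigma$. For $(1) \Rightarrow (2)$ I start with an epimorphism $\psi\colon G \to H$, set $\tau = \pi_Q \circ \psi$, pick any \QtauPres{} via \cref{lem:buildQtau}, take $\kappa = \psi$, and use the identity to verify that $\sigma$ is a solution with $\gen{\sigma(Y_i)} = N$. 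For $(2) \Rightarrow (1)$, given $\tau$ and $\sigma$, I define $\kappa(v) = \sigma(\zeta(v)) s(\tau(v))$ for $v \in \cX \cup \cY$, extend to a monoid homomorphism on $(\cX \cup \cY \cup \cX^{-1} \cup \cY^{-1})^*$, and apply the identity to each relator to show $\kappa(r) = 1_H$; \cref{lem:vonD} then delivers a homomorphism $G \to H$, while (b) and (c') are immediate from the construction.

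The main obstacle is the bookkeeping in the normal-form identity: both the cocycle $f_s$ and the $\pm 1$-valued action $\theta_s$ contribute simultaneously, and their interaction is exactly what is captured by $\gamma$ and $\tildef_k$. This is what distinguishes the argument from the direct-product case \cref{lem:epi_iff_equations_new}, where $f_s$ is trivial and the action is trivial, so \cref{lem:abel-Eqns-commute} applies directly after freely commuting letters.
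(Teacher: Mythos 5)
Your proposal is correct and follows essentially the same route as the paper: both reduce via \cref{lem:epi_iff_NbyQ_epi} and \cref{rmk:itemc'}, set $\sigma = \pi_N \circ \kappa \circ \zeta^{-1}$, and hinge on the normal-form identity $\pi_N(\kappa(r)) = m_1^{\sgn(r,\cI_\cX,1)}\cdots m_k^{\sgn(r,\cI_\cX,k)}\,\tildef_k(q_1,\dots,q_k)$, with the converse built by $\kappa(v) = \sigma(\zeta(v))s(\tau(v))$ and von Dyck's lemma. Your statement of the identity is a touch cleaner than the paper's (the paper derives it by successive insertions of $s(\cdot)^{-1}s(\cdot)$ pairs rather than by induction, and slightly abuses the $\gamma$ notation), and you make explicit the parity observation that $\cI$ is closed under inverses and $a+b \equiv a-b \pmod 2$ --- points the paper leaves implicit --- but these are presentational differences, not a different argument.
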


\begin{proof} 
   Assume there exists an epimorphism from $G$ to $H$.
    By \cref{lem:epi_iff_NbyQ_epi} and \cref{rmk:itemc'}, there exist $\tau\colon G \to Q$ an epimorphism and  $\kappa\colon G \to H$ a homomorphism such that
         \begin{enumerate}
                     \item[(b)] $\kappa(g)= n s(q)$ implies $q=\tau(g)$
\item[(c')]for all $ n \in N$ there exists $ w\in (\cY\cup\cY^{-1})^*$ such that $\kappa(w)= ns(1_Q)$\end{enumerate}
where $\Gen{\cX\cup\cY}{\cR}$ is some \QtauPres\ for $G$.

For $r \in \cR$ let $r = v_1 \cdots v_k$ with $v_i \in (\cX\cup\cY)\cup(\cX\cup\cY)^{-1}$.
Note that  for each $v_i$ we have 
\begin{align*}
\kappa(v_i) &= \pi_N(\kappa(v_i))s(\tau(v_i))= \pi_N(\kappa(v_i))s(v_i')  \end{align*}
where $v_i'=\tau(v_i)$.
Since $\kappa$ is a homomorphism and $r$ is a relation we have 
\begin{align*}
       1_N= \kappa(r) &= \kappa(v_1) \kappa(v_2) \cdots \kappa(v_k) 
        = \pi_N(\kappa(v_1))s(v_1') \pi_N(\kappa(v_2))s(v_2') \cdots \pi_N(\kappa(v_k))s(v_k').  
    \end{align*}
By inserting $s(v_1')^{-1}s(v_1')$ after $\pi_N(\kappa(v_2))$, $s(v_2')^{-1}s(v_1')^{-1}s(v_1')s(v_2')$ after $\pi_N(\kappa(v_3))$ and so on, we obtain 
\begin{align}\label{eqn:ExpansionM1}
       1_N
        &= \pi_N(\kappa(v_1))[{}^{s(v_1')} \pi_N(\kappa(v_2))][{}^{s(v_1')s(v_2')}\pi_N(\kappa(v_3))] \cdots [{}^{s(v_1')\cdots s(v_k')}\pi_N(\kappa(v_k))] \ s(v_1')\cdots s(v_k')
    \end{align}

Let us first deal with the term $s(v_1')\cdots s(v_k')$ at the end of  \cref{eqn:ExpansionM1}.
By definition of the map $f_s\colon Q\times Q\to N$ 
we have 
\begin{align*}s(v_1')s(v_2')&=f_s(v_1',v_2')s(v_1'v_2')\end{align*}
then 
\begin{align*}s(v_1')s(v_2')s(v_3')=f_s(v_1',v_2')s(v_1'v_2')s(v_3')
&=f_s(v_1',v_2')f_s(v_1'v_2',v_3')s(v_1'v_2'v_3')\\
&=\tildef_3(v_1',v_2',v_3')s(v_1'v_2'v_3').
\end{align*}
Repeating this we obtain
\begin{align}\label{eqn:sProductFtilde}
s(v_1')\cdots s(v_k')
&=\tildef_k(v_1',\dots, v_k')s(v_1'\cdots v_k') \nonumber\\
&=\tildef_k(v_1',\dots, v_k')s(\tau(v_1\cdots v_k)) \nonumber\\
&=\tildef_k(v_1',\dots, v_k')s(\tau(r)) \nonumber\\
&=\tildef_k(v_1',\dots, v_k')
\end{align}
since $r\in \cR$ and $\tau$ is a homomorphism so $s(\tau(r))=s(1_Q)=1_H$.

    Now we will deal with the term 
    \[
\pi_N(\kappa(v_1))[{}^{s(v_1')} \pi_N(\kappa(v_2))][{}^{s(v_1')s(v_2')}\pi_N(\kappa(v_3))] \cdots [{}^{s(v_1')\cdots s(v_k')}\pi_N(\kappa(v_k))]
\]
at the start of \cref{eqn:ExpansionM1}.
Recall from \cref{defn:GammaTechnical} that $\cI_\cX$ is the preimage of $\cI\subseteq Q$  under the bijection $\tauX$.
If $v_i\in \cY\cup\cY^{-1}$ then $v_i'=\tau(v_i)=1_Q$ so conjugation  by $s(v_i')$ sends $n\mapsto n$, and  
conjugation by $s(\tau(v_i))$  sends $n\mapsto n$ if $v_i\in \cX\setminus \cI_X$, 
and $n\mapsto n^{-1}$ if $v_i\in\cI_X$. Therefore conjugation  by $s(v_1')
\cdots s(v_{p-1}')$ sends 
 $\pi_N(\kappa(v_p))$ to  $\pi_N(\kappa(v_p))^{\sgn(r,\cI,p)}=\gamma(\pi_N(\kappa(v_p)))$ by \cref{defn:GammaTechnical},  so 
  \begin{align*}
&\pi_N(\kappa(v_1))[{}^{s(v_1')} \pi_N(\kappa(v_2))][{}^{s(v_1')s(v_2')}\pi_N(\kappa(v_3))] \cdots [{}^{s(v_1')\cdots s(v_k')}\pi_N(\kappa(v_k))]\\ &= \gamma(\pi_N(\kappa(v_1))
\pi_N(\kappa(v_2))\cdots \pi_N(\kappa(v_k)))\\
&=\gamma(\pi_N(\kappa(v_1\cdots v_k)))\\
&=\gamma(\pi_N(r)).
\end{align*} 
We have shown that \cref{eqn:ExpansionM1} becomes
 \begin{align}\label{eqn:ExpansionM2}
       1_N
        &= \gamma(\pi_N(r))\tildef_k(v_1',\dots, v_k')
    \end{align}

Let 
$\sigma=\pi_N\circ\kappa\circ \zeta^{-1}$ where $\zeta$ is the bijection as in \cref{defn:PRESEQNB}.
Then $\sigma\colon \bbX\cup\bbY \to N$ 
is the map 
    \[\begin{array}{lllll}
        \sigma(X_i) = \pi_N(\kappa(\zeta^{-1}(X_i))) = \pi_N(\kappa(x_i)), & \quad\quad &  \sigma(X_i^{-1}) = \pi_N(\kappa(x_i))^{-1}, \\
        \sigma(Y_i) = \pi_N(\kappa(\zeta^{-1}(Y_i))) = \pi_N(\kappa(y_i)),&&
        \sigma(Y_i^{-1}) = \pi_N(\kappa(y_i))^{-1}.
    \end{array}\]

For $i\in[1,m]$ the equation $u_i$ is obtained from   $r_i=v_{i,1}\dots v_{i,\abs{r_i}}$ with $v_{i,j}\in \cX\cup\cY\cup\cX^{-1}\cup\cY^{-1}$
 of the form 
 \begin{align*}u_i &=\zeta(\gamma(r_i)) \tildef_{\abs{r_i}}(\tau(v_{i,1}),\dots,\tau(v_{i,\abs{r_i}}))\\
 &=\gamma(\zeta(r_i)) \tildef_{\abs{r_i}}(\tau(v_{i,1}),\dots,\tau(v_{i,\abs{r_i}}))
 \end{align*}
so   
    \begin{align*}
        \sigma(u_i)&= \gamma(\pi_N(\kappa(r_i))) \tildef_{\abs{r_i}}(\tau(v_{i,1}),\dots,\tau(v_{i,\abs{r_i}}))
        =1_N
    \end{align*}
by
 \cref{eqn:ExpansionM2}, which means $\sigma$ is a solution to the system.

By item~(c'), for all $ n \in N$ there exists $ w\in (\cY\cup\cY^{-1})^*$ such that $\kappa(w)= ns(1_Q)$, so $\pi_N(\kappa(w))=n$.
Then $\zeta(w)\in \bbY^*$ satisfies \begin{align*}
    \sigma(\zeta(w)) &= \pi_N(\kappa(\zeta^{-1}(\zeta(w)))\\
    &=\pi_N(\kappa(w))=n
\end{align*} which means  $\gen{\sigma(Y_1), \dots,\sigma(Y_\ell)} = N$, so   \equationsProbNEW returns `Yes'.

    Conversely, assume that 
 there exists an epimorphism $\tau\colon G \to Q$ 
     and for some \QtauPres\ $\Gen{\cX\cup\cY}{\cR}$ for $G$ there is a solution $\sigma\colon \cX\cup\cY\to N$
     to the system $\PRESEQNB(\tau, \cX,\cY,\cR,\cI, f_s)$
 such that $\gen{\sigma(Y_1),\dots,\sigma(Y_\ell)} = N$.
We will show that there is a homomorphism $\kappa\colon G\to H$ such that $\tau,\kappa$ satisfy conditions (b) and (c') of \cref{lem:epi_iff_NbyQ_epi} and \cref{rmk:itemc'}, thereby proving the existence of an epimorphism from $G$ to $H$.

Let $\kappa\colon (\cX\cup\cY\cup\cX^{-1}\cup\cY^{-1})^* \to H$ be the monoid homomorphism induced by the set map
\begin{align*}
    \kappa(a)=\sigma(\zeta(a))s(\tau(a))
\end{align*} for $a\in \cX\cup\cY\cup\cX^{-1}\cup\cY^{-1}  $. 

Then for any $w=v_1\dots v_n$ where $v_i\in \cX\cup\cY\cup\cX^{-1}\cup\cY^{-1}  $
we have 
\begin{align*}
\kappa(w) &= \kappa(v_1)\cdots \kappa(v_n)\\
&=\sigma(\zeta(v_1))s(\tau(v_1))\cdots \sigma(\zeta(v_n))s(\tau(v_n))\\
&=\sigma(\zeta(v_1))s(v_1')\cdots \sigma(\zeta(v_n))s(v_n')
\end{align*}
where $v_i'=\tau(v_i)$ as before. Inserting $s(v_1')\cdots s(v_j')s(v_j')^{-1}\cdots s(v_1')^{-1}$  we obtain
\begin{align*}
\kappa(w) 
&=\sigma(\zeta(v_1)){}^{s(v_1')}\sigma(\zeta(v_2))
{}^{s(v_1')s(v_2')}\sigma(\zeta(v_3))\cdots {}^{s(v_1')\cdots s(v_{n-1}')}\sigma(\zeta(v_n))s(v_1')\cdots s(v_n')\\
&=\gamma(\sigma(\zeta(v_1))\cdots \sigma(\zeta(v_k))) s(v_1')\cdots s(v_n')\\
&=\gamma(\sigma(\zeta(w)))\tildef_k(v_1',\dots, v_k') s(\tau(v_1\cdots v_k))\ \quad \textnormal{by \cref{eqn:sProductFtilde}}\\
&=\gamma(\sigma(\zeta(w)))\tildef_k(v_1',\dots, v_k') s(\tau(w)).
\end{align*}

If $w\in \mathcal R$ then $\tau(w)=1_Q$ so $s(\tau(w))=1_N$
and $\gamma(\zeta(w))\tildef_k(v_1',\dots, v_k')=
\zeta(\gamma(w))\tildef_k(v_1',\dots, v_k')$ is an equation in the system  $\PRESEQNB(\tau, \cX,\cY,\cR,\cI,f_s)$ so applying $\sigma$ 
we have  $\kappa(w)=1_N$, so by \cref{lem:vonD} $\kappa$ is a homomorphism.

For $g\in G$ suppose $w\in (\cX\cup\cY\cup\cX^{-1}\cup\cY^{-1}  )^*$ spells $g$, then \begin{align*}
\kappa(g)=\kappa(w)
&=\gamma(\sigma(\zeta(w)))\tildef_k(v_1',\dots, v_k') s(\tau(w))\\
&=\gamma(\sigma(\zeta(w)))\tildef_k(v_1',\dots, v_k') s(\tau(g))\\
&=ns(\tau(g))
\end{align*}
where 
$n=\gamma(\sigma(\zeta(w)))\tildef_k(v_1',\dots, v_k') \in N$  
so condition (b) 
of \cref{lem:epi_iff_NbyQ_epi}  is satisfied.

  Since $\gen{\sigma(Y_1), \dots, \sigma(Y_\ell)} = N$, for all $n \in N$ there exists $w \in \bbY^\ast$  such that $\sigma(w) = n$. Then for all $n \in N$ there exists $\zeta^{-1}(w) \in (\cY\cup\cY^{-1})^\ast$ such that \begin{align*}
      \kappa(\zeta^{-1}(w))&=\sigma(\zeta(\zeta^{-1}(w)))s(\tau(\zeta^{-1}(w)))\\
      &=\sigma(w)s(\tau(\zeta^{-1}(w)))\\
      &=ns(\tau(\zeta^{-1}(w)))
  \end{align*}and $\tau(\zeta^{-1}(w))=1_Q$ because $\ker(\tau)=\gen{\cY}$, so $ \kappa(\zeta^{-1}(w))=ns(1_Q)$
  giving condition~(c') of \cref{rmk:itemc'}, thus showing there exists an epimorphism from $G$ to $H$. 
\end{proof}

For the rest of this section we  assume  $N$ is an infinite cyclic group (so $H$ is virtually cyclic). 

Recall from \cref{defn:EQN_TO_MAT} that $\EQNMAT(1,t,\ell,(u_i)_{[1,m]},(\MFc_i)_{[1,m]})$ is a triple $(A,b,\ell)$ where  $A\in \Z^{m\times (t+\ell)}$ and $b\in \Z^{m\times 1}$. 

\begin{lemma}\label{lem:EQN_TO_EQMATBSUBSPANZ}
    Let \be\item 
    $N$ be an infinite cyclic group $\langle x\rangle$
    \item $\bbX = \{X_1,X_1^{-1},\dots,X_t,X_t^{-1}\}$, $\bbY = \{Y_1,Y_1^{-1},\dots,Y_\ell,Y_\ell^{-1}\}$ \item $(u_i)_{[1,m]}$ be a system of equations over $N$ where each equation is of the form $u_i = v_i\MFc_i$ with $v_i \in (\bbX\cup\bbY)^\ast$ and $\MFc_i  = x^{b_i}\in N$ is a constant where $b_i\in \Z$.\ee The following are equivalent.
    \be\item \equationsProbNEW returns `Yes' on input $N$ and $(u_i)_{[1,m]}$ 
    \item \MatrixSubspanB returns `Yes' on input $(A,b,\ell) = \EQNMAT(1,t,\ell,(u_i)_{[1,m]},(\MFc_i)_{[1,m]})$.\ee
\end{lemma}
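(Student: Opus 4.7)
The plan is to adapt the argument of \cref{NewLemma:EQN_IFF_EQMATSUBSPAN} to the rank-one setting, carrying the constants $\MFc_i$ across the translation. Since $N = \langle x\rangle$ is abelian, \cref{lem:abel-Eqns-commute} tells me that $\sigma\colon \bbX\cup\bbY \to N$ solves $(u_i)_{[1,m]}$ if and only if it solves $(\CNF(u_i))_{[1,m]}$, where
\[\CNF(u_i) = X_1^{\alpha_{(i,1)}}\cdots X_t^{\alpha_{(i,t)}} Y_1^{\beta_{(i,1)}}\cdots Y_\ell^{\beta_{(i,\ell)}}\MFc_i,\]
with $\alpha_{(i,j)} = |v_i|_{X_j} - |v_i|_{X_j^{-1}}$ and $\beta_{(i,k)} = |v_i|_{Y_k} - |v_i|_{Y_k^{-1}}$. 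These integers are exactly the entries of the matrix $A$ produced by $\EQNMAT$, and the integers $b_i$ are exactly the entries of the vector $b$.

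Any map $\sigma\colon \bbX\cup\bbY \to N$ of the required form is determined by integers $c_1,\dots,c_{t+\ell}$ via $\sigma(X_j) = x^{c_j}$ for $j\in[1,t]$ and $\sigma(Y_k) = x^{c_{t+k}}$ for $k\in[1,\ell]$; collect these into the vector $\nu = (c_1,\dots,c_{t+\ell})^T \in \Z^{t+\ell}$. Since $N$ is written additively under the isomorphism $x^c \leftrightarrow c \in \Z$, a direct computation shows
\[\sigma(\CNF(u_i)) = x^{\sum_{j=1}^t \alpha_{(i,j)}c_j + \sum_{k=1}^\ell \beta_{(i,k)}c_{t+k} + b_i},\]
so the equations $\sigma(u_i) =_N 1_N$ for all $i$ hold if and only if $A\nu + b = 0$.

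It then remains to match the surjectivity condition $\langle \sigma(Y_1),\dots,\sigma(Y_\ell)\rangle = N$ with $\spanZ{(\nu|_\ell)^T} = \Z$. Under the isomorphism $\langle x\rangle \cong \Z$, the subgroup generated by $\sigma(Y_1),\dots,\sigma(Y_\ell)$ corresponds to the subgroup of $\Z$ generated by $c_{t+1},\dots,c_{t+\ell}$, and this subgroup is all of $\Z$ precisely when $\spanZ{c_{t+1},\dots,c_{t+\ell}} = \Z$. The column-vector $\nu|_\ell \in \Z^{\ell}$ has entries $c_{t+1},\dots,c_{t+\ell}$, so $(\nu|_\ell)^T$ is a $1\times \ell$ matrix whose span is exactly $\spanZ{c_{t+1},\dots,c_{t+\ell}}$. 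Combining the two characterisations, an integer vector $\nu$ witnesses a `Yes' answer for \MatrixSubspanB on $(A,b,\ell)$ if and only if the corresponding $\sigma$ witnesses a `Yes' answer for \equationsProbNEW on $N$ and $(u_i)_{[1,m]}$.

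The proof is largely a bookkeeping exercise; the only minor subtlety is correctly handling the single constant $\MFc_i = x^{b_i}$ on the right-hand side of each equation (giving the inhomogeneous term $b$), but since $N$ is cyclic, the constants contribute additively after passing to commuted normal form, and no additional difficulty arises.
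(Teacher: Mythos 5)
Your proposal is correct and follows essentially the same route as the paper's proof: pass to commuted normal form via \cref{lem:abel-Eqns-commute}, encode the candidate solution $\sigma$ as an integer vector $\nu$, observe that the equations become $A\nu+b=0$, and identify the subgroup-generation condition with $\spanZ{(\nu|_\ell)^T}=\Z$ under the isomorphism $N\cong\Z$. The paper spells out the two implications separately (forward and converse), whereas you set it up as a single bijective correspondence between solutions $\sigma$ and vectors $\nu$; this is a presentational choice, not a different method.
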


\begin{proof} Suppose 
\equationsProbNEW returns `Yes' on input $N$ and $(u_i)_{[1,m]}$. Then   there exists a solution  $\sigma\colon \bbX\cup\bbY \to N$ given by  \begin{equation*}\label{eqn:ME-sigma-B}
        \sigma\colon \begin{cases}
            X_j \mapsto x^{f_{j}} & j \in [1,t] \\
            Y_k \mapsto x^{f_{t+k}}  & k \in [1,\ell]
        \end{cases}
    \end{equation*}
to $(u_i)_{[1,m]}$ with $\gen{\sigma(Y_1),\dots,\sigma(Y_\ell)} = N$, which is also a solution to $(\CNF(u_i))_{[1,m]}$ by \cref{lem:abel-Eqns-commute}.
Then for $i \in [1,m]$   \begin{equation*}\label{eqn:ME-sum-B}
        \sigma(u_i) = x^{\sum_{j = 1}^t f_{j}\alpha_{(i,j)} + \sum_{k = 1}^\ell f_{t+k}\beta_{(i,k)}}  x^{b_i} = 1_N
\end{equation*}
where  
        \[\alpha_{(i,j)}=\abs{u_i}_{X_j}-\abs{u_i}_{X_j^{-1}}\ \ \textnormal{and}\ \  \beta_{(i,k)}=\abs{u_i}_{Y_k}-\abs{u_i}_{Y_k^{-1}}\] for $j\in[1,t], k\in[1,\ell]$, which holds if and only if
    \begin{equation}\label{eqn:VCeqnSum=0}
        \sum_{j = 1}^t f_{j}\alpha_{(i,j)} + \sum_{k = 1}^\ell f_{t+k}\beta_{(i,k)} + b_i = 0.
     \end{equation}

Recall from \cref{defn:EQN_TO_MAT} that 
    \begin{equation*}
        A = \begin{pmatrix}
            \alpha_{(1,1)} & \cdots & \alpha_{(1,t)} & \beta_{(1,1)} & \cdots & \beta_{(1,\ell)} \\
            \vdots & \ddots & \vdots & \vdots & \ddots & \vdots \\
            \alpha_{(m,1)} & \cdots & \alpha_{(m,t)} & \beta_{(m,1)} & \cdots & \beta_{(m,\ell)} 
        \end{pmatrix}, \ b= \begin{pmatrix}
            b_1 \\ \vdots \\ b_m
        \end{pmatrix}
    \end{equation*}
    and let $\nu \in \Z^{t+\ell}$ be the integer $(t+\ell)$-vector $\nu = (f_1 \ f_2 \ \cdots \ f_{t+\ell})^T$. Then    
    \begin{align}\label{ME:calculation}
        A\nu +b&= \begin{pmatrix}
            \alpha_{(1,1)} & \cdots & \alpha_{(1,t)} & \beta_{(1,1)} & \cdots & \beta_{(1,\ell)} \\
            \vdots & \ddots & \vdots & \vdots & \ddots & \vdots \\
            \alpha_{(m,1)} & \cdots & \alpha_{(m,t)} & \beta_{(m,1)} & \cdots & \beta_{(m,\ell)} 
        \end{pmatrix} \begin{pmatrix}
        f_1 \\ \vdots \\ f_{t+\ell}
    \end{pmatrix} + \begin{pmatrix}
        b_1 \\ \vdots \\ b_{t+\ell}
    \end{pmatrix}
    \nonumber\\
    &= \begin{pmatrix}
        \sum_{j=1}^t f_{j}\alpha_{(1,j)} + \sum_{j=1}^\ell f_{t+j}\beta_{(1,j)} + b_1 \\
        \vdots  \\
        \sum_{j=1}^t f_{j}\alpha_{(m,j)} + \sum_{j=1}^\ell f_{t+j}\beta_{(m,j)} + b_m
        \end{pmatrix}\\
    &= \begin{pmatrix}
        0  \\
        \vdots  \\
        0 
        \end{pmatrix} \textnormal{by \cref{eqn:VCeqnSum=0}.}\nonumber
    \end{align}  

   Since
    $\gen{\sigma(Y_1),\dots,\sigma(Y_\ell)} = N$, for all $h \in N$ there exists $w \in \bbY^\ast$ such that $\sigma(w) = h$, so for all $z \in \Z$ there exists $w \in \bbY^\ast$ such that $\varphi(\sigma(w)) = z$
where $\varphi\colon N \to \Z$
 is the natural isomorphism defined in Subsection~\ref{subsec:Notation}.
We have \begin{align*}
        z = \varphi(\sigma(w)) 
        &= \varphi(\sigma(Y_1)^{a_1}\cdots \sigma(Y_\ell)^{a_\ell}) \\
        &= a_1f_{t+1} + \cdots + a_\ell f_{t+\ell}
    \end{align*}
    where $a_i = \abs{w}_{Y_i} - \abs{w}_{Y_i^{-1}}$, so $z\in  \spanZ{c_{t+1},\dots,c_n}$, so \MatrixSubspanB returns `Yes' on input $(A,b,\ell) = \EQNMAT(1,t,\ell,(u_i)_{[1,m]},(\MFc_i)_{[1,m]})$.

    Conversely suppose \MatrixSubspanB on input $(A,b,\ell)= \EQNMAT(1,t,\ell,(u_i)_{[1,m]},(\MFc_i)_{[1,m]})$ returns  an 
integer $n$-vector
$\nu$  with $A\nu + b = 0$ and  $\spanZ{(\nu|_\ell)^T}=\Z$.
    Define  $\sigma\colon \bbX\cup\bbY \to N$  by  \begin{equation*}
        \sigma\colon \begin{cases}
            X_j \mapsto x^{\nu_{j}} & j \in [1,t] \\
            Y_k \mapsto x^{\nu_{t+k}}  & k \in [1,\ell]
        \end{cases}
    \end{equation*}
    Since $\spanZ{(\nu|_\ell)^T}=\Z$ then every $z\in \Z$ can be expressed as \[z=a_1\nu_{t+1}+\cdots+a_\ell\nu_{t+\ell}\] for $a_i\in \Z$, so for each $x^z\in N$ there exists $w=Y_1^{a_1}\cdots Y_1^{\nu_{a_\ell}}\in \bbY^*$ so that $\sigma(w)=x^{a_1\nu_{t+1}+\cdots+a_\ell\nu_{t+\ell}}=x^z$, so
 $N\subseteq \gen{\sigma(Y_1),\dots,\sigma(Y_\ell)}$.
Since $A\nu + b = 0$, by \cref{defn:EQN_TO_MAT} we have $\sigma(u_i)=1_N$ for $i\in[1,m]$ by the calculation in \cref{ME:calculation}, so \equationsProbNEW returns `Yes'.
\end{proof}

Combining the above results with the fact that will be proved in \cref{sec:LA} that \MatrixSubspanB can be decided in polynomial time gives the following.

\begin{theorem} 
    $\Epi{\Arb}{\vC}$ is in \NP. 
\end{theorem}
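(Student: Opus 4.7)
The plan is to imitate the proof of \cref{thm:AbelianDirectProd_is_NP} almost verbatim, replacing the machinery tailored to direct products by the machinery tailored to the larger class $\PMonlyExt$. The first observation is that every virtually cyclic group $H$ is an $N$ by $Q$ extension with $N$ infinite cyclic and $Q$ finite, and as noted earlier in the section, $\Aut(C_\infty) = \{n\mapsto n,\ n\mapsto n^{-1}\}$, so the action of each $q\in Q$ on $N$ must be either the identity or inversion; hence $H\in\PMonlyExt$, and the special extension data $(\cI,f_s)$ is well-defined. We therefore assume the input target is given as a rank-$1$ free abelian $N$, a multiplication table for $Q\in\Fin$, and the special extension data $(\cI,f_s)$.

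On input a finite presentation $\Gen{\cG}{\cR}$ for $G\in\Arb$ together with the extension data above, I would run the following procedure:
\begin{enumerate}
\item non-deterministically guess a map $\tau\colon\cG\to Q$ and verify (in the style of \cref{lem:epi_finite}) that it extends to an epimorphism $\tau\colon G\to Q$;
\item compute a \QtauPres\ $\Gen{\cX\cup\cY}{\cR'}$ for $G$ by \cref{lem:buildQtau};
\item compute the equation system $(u_i)_{[1,m]} = \PRESEQNB(\tau,\cX,\cY,\cR',\cI,f_s)$ given by \cref{defn:PRESEQNB};
\item compute the matrix triple $(A,b,\abs{\cY}) = \EQNMAT(1,\abs{\cX},\abs{\cY},(u_i)_{[1,m]},(\MFc_i)_{[1,m]})$;
\item return the answer produced by the polynomial-time algorithm for \MatrixSubspanB (\cref{thm:MatrixProbsInP}) on input $(A,b,\abs{\cY})$.
\end{enumerate}

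Correctness follows by chaining the two reductions already established. By \cref{lem:epi_iff_equations_pmonly}, an epimorphism $G\twoheadrightarrow H$ exists iff there exists an epimorphism $\tau\colon G\to Q$ for which \equationsProbNEW returns \emph{Yes} on $N$ and $\PRESEQNB(\tau,\cX,\cY,\cR',\cI,f_s)$ for some \QtauPres{} of $G$; and since $N\cong C_\infty$, \cref{lem:EQN_TO_EQMATBSUBSPANZ} states that this in turn holds iff \MatrixSubspanB returns \emph{Yes} on the triple $(A,b,\abs{\cY})$ produced by $\EQNMAT$. So the guess-and-check algorithm above accepts precisely the \emph{Yes} instances.

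For complexity, step~(1) is the only non-deterministic step and is in \NP by \cref{lem:epi_finite}; step~(2) runs in polynomial time by \cref{lem:buildQtau}; steps~(3) and~(4) run in polynomial time (for step~(3) this is an analogue of \cref{lem:BuildInP}, since the factors $\tildef_{\abs{r_i}}(\tau(v_{i,1}),\dots,\tau(v_{i,\abs{r_i}}))$ are products of at most $\abs{r_i}-1$ values of the given map $f_s$, and the rewriting $r_i\mapsto \gamma(r_i)$ only adjusts signs of exponents position by position); and step~(5) runs in polynomial time by \cref{thm:MatrixProbsInP}. I expect the main technical care to lie in step~(3): one needs to verify that computing $\PRESEQNB$ really is polynomial, which hinges on the fact that $Q$ is given in unary (so $f_s$ is tabulated on $Q\times Q$) and that $\gamma$ and $\tildef_k$ depend only on local data along each relator. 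Everything else is bookkeeping, and the NP bound follows.
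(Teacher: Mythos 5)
Your proposal is correct and follows essentially the same route as the paper's proof: guess an epimorphism $\tau\colon G\to Q$, build a $(Q,\tau)$-presentation, convert it to an equation system via $\PRESEQNB$, reduce to \MatrixSubspanB via $\EQNMAT$, and invoke \cref{prop:EQMATBSUBSPANZ_P}, with correctness chained through \cref{lem:epi_iff_equations_pmonly} and \cref{lem:EQN_TO_EQMATBSUBSPANZ}. The extra remarks you add (that $\vC\subseteq\PMonlyExt$ because $\Aut(C_\infty)$ has only two elements, and why computing $\PRESEQNB$ is polynomial) are correct and merely spell out details the paper leaves implicit.
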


\begin{proof}
    Let $G \in \Arb$ be given by a finite presentation $\Gen{\cG}{\cR}$ and $H \in \PMonlyExt$ a virtually cyclic group given by $N = \gen{x}$ an infinite cyclic,   a multiplication table for $Q \in \Fin$, and  special extension data $(\cI,f_s)$.
    Using 
    \cref{lem:epi_iff_equations_pmonly}
    we may verify the existence of an epimorphism from $G$ to $H$ by verifying that
 there exists an epimorphism $\tau\colon G \to H$, and for some \QtauPres\ $\Gen{\cX\cup\cY}{\cR}$ for $G$, \equationsProbNEW returns `Yes' on input $N$ and $\PRESEQNB(\tau, \cX,\cY,\cR,\cI, f_s)$.

     On input $G, H$ as above:
    \begin{enumerate}
        \item guess and verify that the set map $\tau\colon \cG \to Q$ extends to an epimorphism $\tau\colon G \to Q$ (this is the only non-deterministic step of the algorithm)
        \item construct a \QtauPres\ $\Gen{\cX\cup\cY}{\cR}$, and set 
        $\cI_X=\{x\in \cX\mid \tau(x)\in\cI\}$
        \item construct a system of equations  $\PRESEQNB(\tau, \cX,\cY,\cR,\cI, f_s)$ denoted $(u_i)_{[1,\abs{\cR}]}$ where   $v_i$ is an equation without constants,  $\MFc_i \in N$ is a constant, and $u_i = v_i\MFc_i$
        \item construct the triple $(A,b,\abs{\cY}) = \EQNMAT(1, (u_i)_{[1,\abs{\cR}]}, (\MFc_i)_{[1,\abs{\cR}]})$ 
        \item return the solution to \equationsProbNEW on input $(A,b,\abs{\cY})$. 
    \end{enumerate}

The correctness of this algorithm follows from \cref{lem:EQN_TO_EQMATBSUBSPANZ,lem:epi_iff_equations_pmonly}.
    The time complexity is as follows:
    \begin{enumerate}
        \item verifying in polynomial time that $\tau$ is an epimorphism follows from  \cref{lem:epi_finite}
        \item constructing a \QtauPres\ is in \P by \cref{lem:buildQtau}, and $\cI_\cX$ is immediate from the input
                \item constructing $\PRESEQNB(\tau, \cX,\cY,\cR,\cI, f_s)$ is in \P\  immediately from the  definition
        \item constructing $\EQNMAT(1,(u_i)_{[1,\abs{\cY}]},(\MFc_i)_{[1,\abs{\cY}]})$ is in \P\ by \cref{lem:BuildInP}
        \item \MatrixSubspanB is solved in \P by \cref{prop:EQMATBSUBSPANZ_P} (Subsection~\ref{subsec:EQMATBSUBSPANZ}).
    \end{enumerate}

    It follows that $\Epi{\Arb}{\vC}$ is in \NP. 
    \end{proof}

\section{Inverse restricted semi-direct targets}\label{sec:semipmOne}

Using results from the previous two sections, we are able to 
extend  the class of virtually abelian targets for which epimorphism from a finitely presented group is decidable, as follows.

Recall that $\PMonlySemi$ is the class of  $N$ by $Q$ extensions such that $Q$ is finite, $N$ is abelian,  there exists a transversal map $s$  and a subset $\cI \subseteq Q$ such that $f_s= \constMap$ and 
for all $n\in N$,  
\[{}^{s(q)}n=
\begin{cases}n^{-1} & q\in \cI\\
 n & q\in Q\setminus \cI
\end{cases}\]

\begin{theorem}
    $\Epi{\Arb}{\PMonlySemi}$ is in \NP.
\end{theorem}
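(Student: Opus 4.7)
The plan is to combine the two reductions developed in the previous sections. For $H \in \PMonlySemi$ we have $N$ a free abelian group of arbitrary finite rank (as in Section~\ref{sec:DirectProd}), but with a nontrivial inversion action encoded by the set $\cI \subseteq Q$ (as in Section~\ref{sec:vcyc}). The crucial simplifying feature is that $f_s = \constMap$, which makes the cocycle correction trivial.

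First I would invoke \cref{lem:epi_iff_equations_pmonly}, which is stated for the full class $\PMonlyExt$ and therefore applies directly to $H \in \PMonlySemi$: there is an epimorphism $G \to H$ iff there is an epimorphism $\tau \colon G \to Q$ such that for some \QtauPres\ $\Gen{\cX \cup \cY}{\cR}$ of $G$, \equationsProbNEW returns `Yes' on input $N$ and $\PRESEQNB(\tau, \cX, \cY, \cR, \cI, f_s)$. Next, observe that when $f_s = \constMap$, \cref{defn:TildeF} gives $\tildef_k(a_1,\dots,a_k) = 1_N$ for every $k \geq 2$. Consequently each equation produced by $\PRESEQNB$ has the form $u_i = \zeta(\gamma(r_i)) \cdot 1_N$, so the output is a system of equations \emph{without constants}.

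At this point I would apply \cref{NewLemma:EQN_IFF_EQMATSUBSPAN} to this constant-free system over the free abelian group $N$ of rank $d$: compute $(A, \zeroMat_{m,d}, \abs{\cY}) = \EQNMAT(d, \abs{\cX}, \abs{\cY}, (u_i)_{[1,m]}, (1_N)_{[1,m]})$ and ask \MatrixSubspanA on input $(A, d, \abs{\cY})$, which is decidable in \P by \cref{thm:MatrixProbsInP}. The only non-deterministic step is guessing the epimorphism $\tau \colon G \to Q$, verified in \NP by \cref{lem:epi_finite}; building the \QtauPres\ (\cref{lem:buildQtau}), constructing $\PRESEQNB$ (immediate from the definition, recalling that $\tildef_k = 1_N$ throughout), and constructing $\EQNMAT$ (\cref{lem:BuildInP}) are all polynomial-time operations.

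I do not anticipate any real obstacle here, as all of the substantive work has already been assembled: \cref{lem:epi_iff_equations_pmonly} handles the inversion action for arbitrary $H \in \PMonlyExt$, and \cref{NewLemma:EQN_IFF_EQMATSUBSPAN} handles constant-free equations over a free abelian group of arbitrary rank. The one point that deserves explicit verification is that $f_s = \constMap$ collapses $\tildef_k$ to $1_N$, which is exactly the condition needed to land in the constant-free setting of \MatrixSubspanA rather than the single-constant, rank-one setting of \MatrixSubspanB used for $\vC$. Putting the pieces together yields that $\Epi{\Arb}{\PMonlySemi}$ is in \NP.
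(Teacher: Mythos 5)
Your proposal is correct and follows essentially the same route as the paper's proof: reduce via \cref{lem:epi_iff_equations_pmonly}, observe that $f_s = \constMap$ collapses $\tildef_k$ to $1_N$ so that $\PRESEQNB$ yields a constant-free system, then apply \cref{NewLemma:EQN_IFF_EQMATSUBSPAN} to hand the problem to \MatrixSubspanA, with the single non-deterministic step being the guess of $\tau$. The paper does exactly this, spelled out as a five-step procedure, so there is nothing to add.
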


\begin{proof}  Let $G \in \Arb$ be given by a finite presentation $\Gen{\cG}{\cR}$ and $H \in \PMonlySemi$  given by an integer $d\in \N$ encoding $N \in \FreeAb$ of rank $d$,   a multiplication table for $Q \in \Fin$, and  special extension data $(\cI,\constMap)$.

    Since $\PMonlySemi$ is a subclass of $\PMonlyExt$, by \cref{lem:epi_iff_equations_pmonly} we may verify the existence of an epimorphism from $G$ to $H$ by verifying that:
    \begin{enumerate}
        \item[(i)] there exists an epimorphism $\tau\colon G \to H$ 
        \item[(ii)] for some \QtauPres\ $\Gen{\cX\cup\cY}{\cR}$ for $G$,  \equationsProbNEW returns `Yes' on input $N$ and $\PRESEQNB(\tau,\cX,\cY,\cR, \cI,\constMap)$.
    \end{enumerate}
Note that 
    since $f_s=\constMap$,  $\PRESEQNB(\cX,\cY,\cR, \cI_\cX, \constMap)$ is a system of equations without constants.

    The following procedure solves our problem. On input as above, 
    \begin{enumerate}
        \item guess a set map $\tau\colon \cG \to Q$ and verify it extends to an epimorphism $\tau\colon G \to Q$
        \item construct a \QtauPres\ $\Gen{\cX\cup\cY}{\cR}$, and set $\cI_X=\{x\in \cX\mid \tau(x)\in\cI\}$
        \item construct the system of equations without constants $\PRESEQNB(\tau, \cX,\cY,\cR, \cI,\constMap)$ denoted  $(u_i)_{[1,m]}$
        \item return `Yes'  \MatrixSubspanA on input $(A,0,\abs{\cY}) = \EQNMAT(d,(u_i)_{[1,m]},(1_N)_{[1,m]})$ returns `Yes', and `No' otherwise.
        
    \end{enumerate}

  The correctness of the procedure follows from  \cref{NewLemma:EQN_IFF_EQMATSUBSPAN,lem:epi_iff_equations_pmonly}.
   The time complexity is as follows.
    \begin{enumerate}
        \item Step~(1) is in \NP by  \cref{lem:epi_finite}; this is the only non-deterministic step of our algorithm. 
        \item We can construct a \QtauPres in \P  by \cref{lem:buildQtau}, and $\cI_\cX$ is immediate.
        \item Constructing $\PRESEQNB(\tau,\cX,\cY,\cR,\cI,\constMap)$ in \P is clear from its definition.
        \item Constructing $\EQNMAT(d,(u_i)_{[1,m]},(1_N)_{[1,m]})$ is in \P by \cref{lem:BuildInP}, and \MatrixSubspanA is solved \P by \cref{prop:EQMATSUBSPAN_P} (see Subsection~\ref{subsec:EQMATSUBSPAN}).
    \end{enumerate}
    Thus, our algorithm is in \NP.
\end{proof}

\begin{proof}[Proof of \cref{ThmMain}]
    We have shown  (modulo \cref{thm:MatrixProbsInP} to be proved in the next section) that $\Epi{\Arb}{\cT}$ is in \NP\ for $\cT=$
    the class of direct products of an abelian and a finite group, 
     the virtually cyclic groups, and  $\PMonlySemi$. 
    Since each of these classes includes finite groups, \NP-completeness follows from \cite[Corollary 1.2]{Kuperberg} or \cref{thm:D2n_NPHARD}.
\end{proof}

\section{Proving \MatrixSubspanA and \MatrixSubspanB are in \P}\label{sec:LA}

In this section we prove that the  integer matrix problems \MatrixSubspanA and \MatrixSubspanB can be decided in polynomial time. 
Recall that throughout this paper, integer matrices are assumed to be given with entries in binary.
Throughout this section we let $R$ denote either $\Z$ or $\Z_p$ for some prime $p$ (the ring of integers mod $p$).

\begin{definition}[Smith normal form and $\onecount$]\label{def:SNF}
    Let $A \in R^{m\times n}$. We call 
    $(K,D,L)$ 
    a \emph{Smith normal form} (SNF) for $A$ if  $A=KDL$, 
   $K \in \GL(m,R)$,  $L \in \GL(n,R)$,  and $D\in R^{m\times n}$ has the form 
    \begin{equation*}
        D = \begin{pmatrix}
            \begin{array}{c|c}
                M & 0 \\
                \hline
                0 & 0
            \end{array} 
            \end{pmatrix}
    \end{equation*}
    where $M$ is a diagonal matrix of the form
    \begin{equation*}
        M = \begin{pmatrix}
            \MFd_1 & 0 & \cdots & 0 \\
            0 & \MFd_2 & \cdots & 0 \\
            \vdots & \vdots & \ddots & \vdots \\
            0 & 0 & \cdots & \MFd_r
        \end{pmatrix}
    \end{equation*}
        for some $0 \leq r \leq \min(m,n)$,  each $\MFd_i \neq 0$ 
        such that $\MFd_i \mid \MFd_{i+1}$ for all $i \in [1,r-1]$. Note $\rank(D)=r$.
        When $R=\Z$ we also require $\MFd_i>0$ for $i\in[1,r]$.

        We denote the number of units (invertible elements) of $R$ on the diagonal of $D$ as $\onecount(D) = \max\{i\mid \MFd_i\in R^*\}$, so if $R=\Z$ then $\onecount(D)$ is the number of $1$s on the diagonal, and if $R=\Z_p$ then $\onecount(D)$ is the number of non-zero entries on the diagonal.
\end{definition}

\begin{lemma}[{\cite[Proposition 3.2]{sims1994}}] If  $R=\Z$, then 
the matrix $D$  in \cref{def:SNF} is unique. 
\end{lemma}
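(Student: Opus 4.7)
The plan is to prove uniqueness by expressing each diagonal entry $\MFd_i$ of $D$ in terms of invariants of $A$ that do not depend on the choice of $K$ and $L$. Since $A = KDL$ with $K \in \GL(m,\Z)$ and $L \in \GL(n,\Z)$, it suffices to find a function of a matrix which (i) takes the same value on $A$ and on $D$, and (ii) determines each $\MFd_i$ from $D$.

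First, for each $i \in [1,\min(m,n)]$ I would define the $i$-th determinantal divisor $g_i(A)$ to be the nonnegative generator of the ideal of $\Z$ generated by the $i \times i$ minors of $A$ (with $g_0(A) = 1$). The key claim is that $g_i$ is invariant under left and right multiplication by elements of $\GL(m,\Z)$ and $\GL(n,\Z)$. By the Cauchy–Binet formula each $i \times i$ minor of $KA$ is a $\Z$-linear combination of $i \times i$ minors of $A$, so $g_i(A) \mid g_i(KA)$; applying the same argument to $K^{-1}$ gives $g_i(KA) \mid g_i(A)$, and similarly for multiplication on the right. Hence $g_i(A) = g_i(D)$.

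Next I would compute $g_i(D)$ directly from the SNF shape. Any $i \times i$ submatrix of $D$ is either singular or diagonal with entries $\MFd_{j_1},\dots,\MFd_{j_i}$ for some $1 \le j_1 < \cdots < j_i \le r$; its determinant is the product $\MFd_{j_1}\cdots \MFd_{j_i}$. The divisibility chain $\MFd_1 \mid \MFd_2 \mid \cdots \mid \MFd_r$ forces $\MFd_1\cdots \MFd_i$ to divide every such product, and of course it occurs itself (take $j_k = k$). Therefore
\[
g_i(D) = \MFd_1 \MFd_2 \cdots \MFd_i \quad \text{for } i \in [1,r],
\]
while $g_i(D) = 0$ for $i > r$. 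Combining with the previous paragraph, $r$ equals the largest index with $g_i(A) \neq 0$ (which is $\rank(A)$), and
\[
\MFd_i \;=\; \frac{g_i(A)}{g_{i-1}(A)} \qquad (1 \le i \le r).
\]
Together with the positivity convention $\MFd_i > 0$, this pins down each $\MFd_i$ as a function of $A$ alone, so $D$ is unique.

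The main point where one must be slightly careful is the invariance of $g_i$ under $\GL(m,\Z)$- and $\GL(n,\Z)$-multiplication: the Cauchy–Binet step is routine, but it is essential that $K$ and $L$ have integer (not merely rational) inverses, which is exactly why $g_i$ is preserved rather than just multiplied by a rational unit. Everything else is a direct calculation from the diagonal shape of $D$ and the divisibility condition.
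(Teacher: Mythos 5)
Your proof is correct, and it is the classical determinantal-divisor argument for uniqueness of the Smith normal form. Note that the paper itself does not prove this lemma but cites it to Sims \cite[Proposition 3.2]{sims1994}; the proof you give is essentially the one found there (and in most references): define $g_i(A)$ as the gcd of the $i\times i$ minors, use Cauchy--Binet to show $g_i$ is invariant under multiplication by $\GL(m,\Z)$ and $\GL(n,\Z)$ on either side, compute $g_i(D)=\MFd_1\cdots\MFd_i$ directly from the divisibility chain, and recover $\MFd_i = g_i(A)/g_{i-1}(A)$ with the sign fixed by the positivity convention. The one point worth stating slightly more carefully is the characterization of the nonsingular $i\times i$ submatrices of $D$: a submatrix $D_{I,J}$ with $|I|=|J|=i$ is nonsingular precisely when $I=J\subseteq[1,r]$, in which case it is $\operatorname{diag}(\MFd_{j_1},\dots,\MFd_{j_i})$ — your phrasing ``either singular or diagonal'' is accurate but a reader may want the reason spelled out (if $I\neq J$ some row or column of $D_{I,J}$ is zero). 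Everything else is sound.
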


 Moreover for $A\in R^{m\times n}$ if $(K,D,L),(K'D',L')$ are both Smith normal forms for $A$ then $\onecount(D)=\onecount(D')$.
 Note that in many papers the 
Smith normal form for $A\in\Z^{m\times n}$ is defined just to be the matrix $D$.

If $A\in \Z_p^{m\times n}$ then (since $\Z_p$ is a field) multiplying left and right by elementary matrices one may easily  obtain a Smith normal form for $A$ where diagonal entries of $D$ are either $1$ or $0$. For  $A \in \Z^{m \times n}$ the problem requires more attention.

\begin{theorem}[Computing SNF; \cite{kannan1979}]
    \label{thm:snf_poly} The following computational problem is in \P. 

    \compalgo{a matrix $A\in 
\Z^{m\times n}$}{compute $K \in \GL(m,\Z), L \in \GL(n,\Z), D\in \Z^{m\times n}$ such that $(K,D,L)$ is a SNF for $A$.}
\end{theorem}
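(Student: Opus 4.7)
The plan is to invoke the classical two-phase algorithm of Kannan and Bachem. First I would compute a Hermite normal form $H = AL_1$ with $L_1 \in \GL(n,\Z)$ unimodular and $H$ upper triangular in canonical form. A direct implementation via elementary column operations risks exponential growth of intermediate entries; the standard fix is to perform all arithmetic modulo the determinant $\delta$ of some nonsingular $r \times r$ submatrix of $A$, where $r = \operatorname{rank}(A)$. Since entries of the true HNF are bounded by $\delta$, this modular reduction is lossless and produces $H$ together with $L_1$ of polynomial bit length.

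The second phase converts $H$ to SNF. One sweeps along the diagonal and, for each adjacent pivot pair $(a,b)$ with $a \nmid b$, writes $g = \gcd(a,b) = sa + tb$ via the extended Euclidean algorithm and applies a $2 \times 2$ unimodular transformation of the form
\[ \begin{pmatrix} s & t \\ -b/g & a/g \end{pmatrix} \]
that replaces the block $\operatorname{diag}(a,b)$ by $\operatorname{diag}(g, \operatorname{lcm}(a,b))$. The extra off-diagonal entries produced by the transformation are cleared by further row and column reductions, again performed modulo $\delta$. Iterating left to right along the diagonal enforces $\MFd_1 \mid \MFd_2 \mid \cdots \mid \MFd_r$ and yields a matrix $D$ in the form required by \cref{def:SNF}. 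Composing the column and row operations gives $L = L_1 L_2 \in \GL(n,\Z)$ and $K \in \GL(m,\Z)$, both with polynomially bounded entries.

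The main obstacle, and the content of the Kannan-Bachem paper, is the bit-complexity analysis. A naive sequence of $O(mn)$ row and column operations can produce intermediate entries of exponential size, so polynomial-time control depends on the modular-arithmetic trick together with a bookkeeping argument showing that the final $K, D, L$ over $\Z$ can be recovered correctly from representatives modulo $\delta$. Once this bit-length control is in place, each elementary step runs in polynomial time, the total number of steps is polynomial in $m$ and $n$, and the overall running time is polynomial in $m$, $n$, and $\log \|A\|_\infty$, as required.
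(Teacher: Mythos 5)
The paper does not prove this theorem; it simply cites Kannan and Bachem \cite{kannan1979} for the result. Your sketch is a faithful outline of that same algorithm — Hermite normal form with modular reduction to control entry growth, followed by two-sided $2\times 2$ $\gcd$-transformations to enforce the divisibility chain and obtain $D$, with $K$ and $L$ accumulated from the elementary operations — so you have reproduced the essence of the cited argument rather than offered a different route.
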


We also  observe the following facts. The first is straightforward, and the second can be found in \cite{kannan1979}.
\begin{lemma}\label{lem:poly_processes}
    The following  
    calculations
    can be achieved in polynomial time.
    \be
        \item Given $a_1,\dots,a_s \in \Z$, calculate $\gcd(a_1,\dots,a_s)$.
        \item Given $A \in \GL(n,\Z)$, calculate $A^{-1}$.
  \ee
\end{lemma}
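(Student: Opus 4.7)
For part (1), the plan is to reduce to the two-integer case and chain. For $a,b \in \Z$ the classical Euclidean algorithm computes $\gcd(a,b)$ in $O(\log(\min(|a|,|b|)))$ iterations, each consisting of one integer division of operands of bit length at most $\log\max(|a|,|b|)$, so the running time is polynomial in $\log|a|+\log|b|$. To handle a list $a_1,\dots,a_s$ I would iterate
\[
\gcd(a_1,\dots,a_s) \;=\; \gcd\!\bigl(\gcd(a_1,a_2),\,a_3,\dots,a_s\bigr),
\]
performing $s-1$ pairwise calls. Every intermediate gcd divides some $a_i$, so its bit length never exceeds $\max_i \log|a_i|$; consequently neither the operand size nor the number of iterations blows up, and the total cost is polynomial in the input bit size $\sum_i \log|a_i|$.

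For part (2), the first step is the observation that $\det(A)=\pm 1$: since $A$ and $A^{-1}$ are both integer matrices, $\det(A)\det(A^{-1})=1$ with both factors in $\Z$. Hence, by the adjugate formula,
\[
A^{-1} \;=\; \det(A)\cdot \mathrm{adj}(A),
\]
and each entry of $\mathrm{adj}(A)$ is, up to sign, the determinant of an $(n-1)\times(n-1)$ integer submatrix of $A$. Thus it suffices to compute $n^{2}+1$ integer determinants in polynomial time and assemble them into a matrix; the sign ambiguity $\pm 1$ is resolved from $\det(A)$ itself.

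The main obstacle will be the well-known pitfall that naive Gaussian elimination over $\Q$ can create intermediate fractions with exponentially many digits, so one cannot simply invoke row reduction without bit-size control. My plan is to invoke one of the standard polynomial-time remedies detailed in \cite{kannan1979}: the Bareiss fraction-free elimination, which keeps all intermediate quantities as integer subdeterminants of $A$ and hence polynomially bounded in bit length via Hadamard's inequality; or, equivalently, Edmonds' observation that every entry arising in Gaussian elimination on an integer matrix is a ratio of such subdeterminants. A clean alternative is to compute each required determinant modulo $O(n\log\|A\|)$ distinct small primes (where each reduction is trivial because $\Z_p$ is a field) and reassemble the integer value via the Chinese Remainder Theorem, using the Hadamard bound to know when enough primes have been used. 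Any of these routes yields $A^{-1}$ in time polynomial in the bit size of $A$.
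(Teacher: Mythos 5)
Your proof is correct. Note that the paper itself offers no proof of this lemma at all: it merely remarks that part (1) is straightforward and refers to Kannan and Bachem \cite{kannan1979} for part (2), so there is no in-text argument to compare against. For (1), iterating the pairwise Euclidean algorithm and observing that every intermediate gcd divides an input (so bit sizes never grow) is the standard argument and is complete. For (2), your route via the adjugate --- using $\det(A)\in\{\pm 1\}$ so that $A^{-1}=\det(A)\cdot\mathrm{adj}(A)$, and computing the $n^{2}+1$ integer minors by a bit-size-controlled method (Bareiss fraction-free elimination, Edmonds' subdeterminant bound, or modular computation plus CRT with the Hadamard bound) --- is a valid, self-contained alternative to what \cite{kannan1979} would give, which more naturally produces $A^{-1}$ as a by-product of the Hermite or Smith normal form computation together with the unimodular multipliers. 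You correctly identify and address the one genuine pitfall (intermediate coefficient blowup in naive elimination over $\Q$), so there is no gap.
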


Putting the above results together, we have
\begin{lemma}\label{lem:Solve_Ax=b}
    The following algorithmic problem can be answered in polynomial time.
    On input $A \in \Z^{m \times n}$ and $b \in \Z^m$, \be\item decide if there  exists $x \in \Z^n$ such that $Ax + b = 0$ (returning `Yes/No')
    \item if `Yes', return $u_1, \dots, u_k \in \Z^n$ and $c \in \Z^n$ such that for $x \in \Z^n$, $Ax + b = 0$ if and only if $x \in \spanX{u_1,\dots,u_k}{c}$.
\ee
\end{lemma}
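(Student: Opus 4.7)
The plan is to reduce the problem to solving a diagonal system via the Smith normal form, using \cref{thm:snf_poly} and \cref{lem:poly_processes} as black boxes.

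First I would compute a Smith normal form $(K,D,L)$ of $A$ in polynomial time by \cref{thm:snf_poly}, and then compute $K^{-1}$ and $L^{-1}$ in polynomial time by \cref{lem:poly_processes}(2). Setting $b' = -K^{-1}b \in \Z^m$ and substituting $y = Lx$, the equation $Ax + b = 0$ becomes $KDLx = -b$, i.e. $Dy = b'$. Since $D$ has the block form of \cref{def:SNF}, with nonzero diagonal entries $\MFd_1,\dots,\MFd_r$ (where $r = \rank D$) and all other entries zero, the system $Dy = b'$ reads $\MFd_i y_i = b'_i$ for $i \in [1,r]$ and $0 = b'_i$ for $i \in [r+1,m]$. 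Hence a solution exists if and only if $b'_i = 0$ for every $i > r$ and $\MFd_i \mid b'_i$ for every $i \in [1,r]$; both checks are clearly polynomial in the input size.

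If the system is unsolvable, return `No'. Otherwise, let $y^* \in \Z^n$ be the vector with $y^*_i = b'_i/\MFd_i$ for $i \in [1,r]$ and $y^*_i = 0$ for $i \in [r+1,n]$; then the full set of integer solutions of $Dy = b'$ is $\{y^* + \sum_{j=1}^{n-r} t_j e_{r+j} : t_j \in \Z\}$, where $e_{r+j}$ denotes the standard basis vector of $\Z^n$. Pulling back through $x = L^{-1}y$, set
\[
c = L^{-1} y^*, \qquad u_j = L^{-1} e_{r+j} \quad \text{for } j \in [1, n-r],
\]
and take $k = n - r$. Since $L^{-1} \in \GL(n,\Z)$ is a bijection on $\Z^n$, we have $Ax + b = 0$ iff $x \in \spanX{u_1,\dots,u_k}{c}$, as required.

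The only real concern is the bit-complexity: we must ensure the entries of $K^{-1}, L^{-1}, y^*$ remain polynomially bounded in the input size so that the matrix-vector products above can be computed in polynomial time. This is already guaranteed by \cref{thm:snf_poly} and \cref{lem:poly_processes}(2), which produce $K, L, D$ (and their inverses) of polynomial bit-length, and by the divisibility conditions $\MFd_i \mid b'_i$ that make the entries of $y^*$ polynomial in the sizes of $b'$ and $D$. Every remaining step is straightforward arithmetic on polynomially-sized integer matrices, so the entire procedure runs in polynomial time.
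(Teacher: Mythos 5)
Your proof is correct and follows essentially the same route as the paper's: compute the Smith normal form $A = KDL$, substitute $y = Lx$ to reduce to the diagonal system $Dy = -K^{-1}b$, read off the solvability conditions and the parametrized solution set, and push it back through $L^{-1}$. The only cosmetic difference is that you write $c = L^{-1}y^*$ and $u_j = L^{-1}e_{r+j}$ directly rather than spelling out the entries of $c$ as sums, and your identification of the $u_j$ as the $(r+j)$-th columns of $L^{-1}$ is in fact cleaner than the paper's final sentence (which contains a typo, writing "$(n-r+i)$-th column").
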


\begin{proof}
The following procedure solves our problem.
    \begin{enumerate}
        \item Calculate an SNF  $(K,D,L)$ of $A$ and set  $r=\rank(D)$.
        \item Let  $\MFb = -K^{-1}b$ with $i$-th entry $\MFb_i$ and let $\MFd_i$ be the $i$-th non-zero diagonal entry of $D$. If  $\MFb_i/\MFd_i \not \in \Z$ for some $i \in [1,r]$ then output `No'. 
        If $\MFb_i \neq 0$ for some  $i \in [r+1,m]$   then output `No'.  Else return `Yes'. (Thus we may  assume from here that $\MFb_i/\MFd_i \in \Z$ for each $i\in[1,r]$ and $\MFb_i = 0$ for each $i \in [r+1,m]$.)
        \item Denote the $(i,j)$-th element of $L^{-1}$ as $l_{(i,j)}$,  $u_i$ the $(r+i)$-th column of $L^{-1}$ for $i \in [1,n-r]$, and $c_i = \sum_{j=1}^r l_{(i,j)}\MFb_j/\MFd_j$ for $i \in [1,n]$. Set $k = n-r$ and $c = (c_1 \ \cdots \ c_n)^T$, and  output $u_1,\dots,u_k \in \Z^n$ and $c \in \Z^n$.
    \end{enumerate}
    Step~(1) takes polynomial time by \cref{thm:snf_poly}, steps~(2) and~(3) require the inverse of an integer matrix which can be obtained in polynomial time 
    by \cref{lem:poly_processes}, and basic calculations with integers written in binary which are polynomial time.
    Thus, our process takes polynomial time.

    Now, we justify the correctness of the procedure. We wish to solve the  equation $  Ax - b=0$ where $A=KDL$ 
    for $x\in \Z^n$, so we wish to solve 
    \begin{align*}
        DLx - K^{-1}b&=0.
    \end{align*}
    Let $y = Lx \in \Z^n$ and $-K^{-1}b = \MFb \in \Z^m$, so our equation becomes
    \begin{align}\label{eqn:Dy=b}
        Dy =
        \begin{pmatrix}
            \mathfrak{d}_1 & \cdots & 0 & \cdots & 0 \\
            \vdots & \ddots & \vdots & & \vdots \\
            0 & \cdots & \mathfrak{d}_r & \cdots & 0 \\
            \vdots & & \vdots & \ddots & \vdots\\
            0 & \cdots & 0 & \cdots & 0  
        \end{pmatrix} \begin{pmatrix}
            y_1 \\ \vdots \\ y_r \\ \vdots \\ y_n
        \end{pmatrix}&= \begin{pmatrix}
            \MFb_1 \\ \vdots \\ \MFb_r \\ \vdots \\ \MFb_m
        \end{pmatrix} = \MFb.
    \end{align}
    From \cref{eqn:Dy=b} it is clear that 
     for a solution to exist we need $y_i = \MFb_i/\MFd_i$ for $i \in [1,r]$ and $\MFb_{r+1},\dots,\MFb_m = 0$, and since $y\in\Z^n$ we have the condition to return `Yes/No' in step (2).
If `Yes', let $a_i = \MFb_i / \MFd_i\in\Z$ for $i\in[1,r]$, so 
      \[\MFb=(   a_1\mathfrak{d}_1  \ \cdots\ a_r\mathfrak{d}_r \ 0 \ \cdots \ 0)^T
    \]
    and in  such case a solution has the form 
\[ y=( a_1  \ \cdots \ a_r \ t_{r+1} \ \cdots \ t_n)^T\]
    for any $t_i\in \Z, i> r$. 

    Recall that we write $l_{(i,j)}$ for the $(i,j)$-th entry of $L^{-1}$.
    Since $Lx = y$ we have
    \begin{align*}        
        x &= L^{-1}y \\
        &= \begin{pmatrix}
            l_{(1,1)} & \cdots & l_{(1,n)} \\
            \vdots & \ddots & \vdots \\
            l_{(n,1)} & \cdots & l_{(n,n)}
        \end{pmatrix}
        \begin{pmatrix}
            a_1 \\ \vdots \\a_r \\ t_{r+1} \\ \vdots \\ t_n
        \end{pmatrix} \\
        &= \begin{pmatrix}
            l_{(1,1)}a_1 + \cdots + l_{(1,r)}a_r + l_{(1,r+1)}t_{r+1} + \cdots + l_{(1,n)}t_{n} \\
            \vdots \\
            l_{(n,1)}a_1 + \cdots + l_{(n,r)}a_r + l_{(n,r+1)}t_{r+1} + \cdots + l_{(n,n)}t_{n}
        \end{pmatrix}.
    \end{align*} 
    Let $c_i 
    = \sum_{j=1}^r l_{(i,j)}a_j$ for $i \in [1,n]$, then
    \begin{equation*}
        x = \begin{pmatrix}
            c_1 \\ \vdots \\ c_n
        \end{pmatrix}
        + t_{r+1} \begin{pmatrix}
            l_{(1,r+1)} \\ \vdots \\ l_{(n,r+1)}
        \end{pmatrix} + \cdots
        + t_{n} \begin{pmatrix}
            l_{(1,n)} \\ \vdots \\ l_{(n,n)}
        \end{pmatrix}.
    \end{equation*}
    Setting $u_i$ to be the $(n-r+i)$-th column of $L^{-1}$ for $i \in [1,k]$ we have shown that for $x \in \Z^n$, $Ax -b=0$ if and only if $x \in \spanX{u_1,\dots,u_k}{c}$.
    \end{proof}
    
\begin{lemma}\label{lem:remove_rows_span}
    Let $U \in \Z^{n \times k}$, $b \in \Z^n$, $\ell \in \Z$, and   $c = (b_{n-\ell+1} \ \cdots \ b_n)^T$.
    Then the following are equivalent.
    \be\item There exists a
    matrix $V\in\Z^{n\times d}$ such that $\spanZ{(V|_\ell)^T} = \Z^d$ and each column of $V$  lies in $\spanX{U}{b}$ 
    \item  There exists a  matrix $W\in\Z^{\ell\times d}$ such that  $\spanZ{W^T} = \Z^d$ and
    each column of $W$ lies in  $\spanX{U|_\ell}{c}$. \ee
\end{lemma}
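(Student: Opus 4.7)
The plan is to prove the two implications directly, noting that the key operation is that restricting to the bottom $\ell$ rows commutes nicely with both the linear span and the translation by $b$. Specifically, for any $x \in \Z^k$ we have $(Ux + b)|_\ell = (U|_\ell)x + b|_\ell = (U|_\ell)x + c$, so restricting an element of $\spanX{U}{b}$ to its last $\ell$ coordinates yields an element of $\spanX{U|_\ell}{c}$. This observation drives both directions.

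For the forward direction $(1) \Rightarrow (2)$, given $V$ as in (1), I would simply set $W = V|_\ell$. Then tautologically $\spanZ{W^T} = \spanZ{(V|_\ell)^T} = \Z^d$. If $v_j$ is the $j$-th column of $V$ with $v_j = Ux_j + b$ for some $x_j \in \Z^k$, then the $j$-th column of $W$ equals $v_j|_\ell = (U|_\ell)x_j + c \in \spanX{U|_\ell}{c}$, as required.

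For the reverse direction $(2) \Rightarrow (1)$, the idea is to lift each column of $W$ back to an element of $\Z^n$ using the coefficient vectors witnessing its membership in $\spanX{U|_\ell}{c}$. Concretely, given $W$ as in (2), for each column $w_j$ of $W$ choose $x_j \in \Z^k$ with $w_j = (U|_\ell)x_j + c$. Define $v_j = U x_j + b \in \spanX{U}{b}$ and let $V$ be the $n \times d$ matrix whose columns are $v_1, \dots, v_d$. Then each column of $V$ lies in $\spanX{U}{b}$ by construction, and $v_j|_\ell = (U|_\ell)x_j + c = w_j$, so $V|_\ell = W$ and hence $\spanZ{(V|_\ell)^T} = \spanZ{W^T} = \Z^d$.

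There is no real obstacle here; the lemma amounts to the observation that affine linear combinations are compatible with coordinate projection, together with the fact that any preimage $x_j$ of a column of $W$ under the map $x \mapsto (U|_\ell)x + c$ can be reused to build a preimage of a corresponding column of $V$ under $x \mapsto Ux + b$. The only thing to be careful about is the notational bookkeeping between $V|_\ell$ (bottom $\ell$ rows) and the transpose $(V|_\ell)^T$ that appears inside $\spanZ{\cdot}$; once one notes that $\spanZ{M^T}$ denotes the $\Z$-span of the \emph{rows} of $M$ viewed as column vectors, the equivalence of the spanning conditions under $W = V|_\ell$ is immediate.
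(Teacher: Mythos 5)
Your proof is correct and follows essentially the same approach as the paper: the forward direction sets $W = V|_\ell$ and observes compatibility with restriction, and the reverse direction lifts each column of $W$ using the witnessing coefficient vector to construct $V$ with $V|_\ell = W$. The only difference is purely notational (matrix form $Ux_j + b$ versus explicit $\Z$-linear combination $b + \alpha_{i,1}u_1 + \cdots + \alpha_{i,k}u_k$).
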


\begin{proof}
    Assume there exists a
    matrix $V=\begin{pmatrix}
         v_1 & \cdots & v_d
      \end{pmatrix}\in\Z^{n\times d}$ such that $\spanZ{(V|_\ell)^T} = \Z^d$ and each column $v_i$ of $V$  lies in $\spanX{U}{b}$. 
    For each $i\in[1,d]$  let $w_i \in \Z^\ell$ be the last $\ell$ entries of $v_i$ and set $W=\begin{pmatrix}
         w_1 & \cdots & w_d
      \end{pmatrix}\in\Z^{\ell\times d}$, so $W = V|_\ell$.
    Since $v_i \in \spanX{U}{b}$, $w_i \in \spanX{U|_\ell}{c}$, and so   $\spanZ{W^T} = \spanZ{(V|_\ell)^T} = \Z^d$.

    Conversely, assume there exist 
    a matrix $W=\begin{pmatrix}
         w_1 & \cdots & w_d
      \end{pmatrix}\in\Z^{\ell\times d}$ such that  $\spanZ{W^T} = \Z^d$ and
    each column $w_i$ of $W$ lies in  $\spanX{U|_\ell}{c}$.
    For each $i\in[1,d]$ there exist $\alpha_{i,j}\in\Z$ so that \[w_i=c+\alpha_{i,1}\tilde u_1+\cdots +\alpha_{i,k}\tilde u_k\] where $\tilde u_j\in\Z^\ell$ are the columns of $U|_\ell$.
    Define $v_i\in \Z^k$ to be 
    \[v_i=b+\alpha_{i,1}u_1+\cdots +\alpha_{i,k}u_k\] 
    where $u_j\in\Z^n$ are the columns of $U$.
   Then the matrix $V=\begin{pmatrix}
         v_1 & \cdots & v_d
     \end{pmatrix}$ satisfies $V|_\ell=W$ so 
   $\spanZ{(V|_\ell)^T} = \spanZ{W^T} = \Z^d$, and each column of $V$ is in $\spanX{U}{b}$ by construction.
    \end{proof}

\subsection{Solving \MatrixSubspanA } 
\label{subsec:EQMATSUBSPAN}
In this subsection, we show that \MatrixSubspanA can be decided in polynomial time. We start with two simple observations. Recall that $R=\Z$ or $\Z_p$.

\begin{lemma}\label{lem:spanA=spanB}
    Let $A,B \in R^{m\times n}$ and $L \in \GL(n,R)$. If $A = BL$ then $\spanZ{A} = \spanZ{B}$.
\end{lemma}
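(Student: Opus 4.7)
The plan is a direct two-inclusion argument exploiting invertibility of $L$. Write $A = (a_1 \mid \cdots \mid a_n)$ and $B = (b_1 \mid \cdots \mid b_n)$ in column form, and denote the $(i,j)$-entry of $L$ by $\ell_{ij}$ and the $(i,j)$-entry of $L^{-1}$ by $\ell'_{ij}$ (note $L^{-1} \in \GL(n,R)$ since $L \in \GL(n,R)$).

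For the inclusion $\spanZ{A} \subseteq \spanZ{B}$, I would expand $A = BL$ column-by-column to get $a_j = \sum_{i=1}^n \ell_{ij} b_i \in \spanZ{B}$ for each $j \in [1,n]$, so every column of $A$ lies in $\spanZ{B}$, and hence so does every $R$-linear combination. For the reverse inclusion, use $B = AL^{-1}$ (valid because $L$ is invertible over $R$) to write $b_j = \sum_{i=1}^n \ell'_{ij} a_i \in \spanZ{A}$, giving $\spanZ{B} \subseteq \spanZ{A}$.

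There is essentially no obstacle here; the only thing one must be careful about is that $L^{-1}$ has entries in $R$, which is guaranteed by the hypothesis $L \in \GL(n,R)$ (and for both $R = \Z$ and $R = \Z_p$ this just means $L^{-1}$ exists as an element of $R^{n \times n}$). The proof is a couple of lines long.
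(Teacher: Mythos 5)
Your proof is correct, and it takes a cleaner route than the paper's. The paper factors $L$ into a product of elementary matrices $E_1,\dots,E_k$ and argues that each of the three column operations (swap, sign change, add an integer multiple of one column to another) preserves the column span, then concludes by induction. You instead note directly that each column of $A = BL$ is an $R$-linear combination of columns of $B$ (with coefficients the entries of the corresponding column of $L$), and symmetrically that each column of $B = AL^{-1}$ is an $R$-linear combination of columns of $A$, using that $L^{-1} \in R^{n\times n}$. Your argument is shorter, avoids invoking the (true, but nontrivial over $\Z$) fact that $\GL(n,\Z)$ is generated by elementary matrices, and in fact works verbatim over any commutative ring $R$, since the only thing needed is that $L$ has a two-sided inverse with entries in $R$. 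The paper's elementary-matrix decomposition buys nothing extra here; both arguments establish the lemma, but yours is the more economical one.
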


\begin{proof}
    Recall that if $L \in \GL(n, R)$ then there exists a sequence of elementary matrices $E_1,\dots,E_k \in R$ such that $E_1\cdots  E_k = L$.  Let $B_s=BE_1\cdots E_s$ for $s\in [1,k]$ and $B_0=B$.
    The three types of elementary matrices coincide with the following operations on $B_{s}$:
    \begin{enumerate}
        \item interchanging two columns
        \item multiplying a column by $-1$
        \item adding an integer multiple of one column to another.
    \end{enumerate}
    It is clear that operations~(1) and~(2) do not change $\spanZ{B_s}$. 

    Suppose $E_{s+1}$ has the effect of replacing  $b_i$ by $b_i+cb_j$ for some $i\neq j\in [1,n],c\in\Z$, where $b_i,b_j$ are columns of $B_{s-1}$. Assume w.l.o.g. $i<j$.
    If $z\in  \spanZ{B_s}$, there exist $a_1,\dots,a_n\in \Z$ such that $z=a_1b_1 + \cdots a_nb_n$,
    so \[z=a_1b_1 + \cdots + a_i(b_i+cb_j) + \cdots + (a_j-a_ic)b_j+\cdots +a_nb_n\]
where $a_j-a_ic\in \Z$ 
so $z\in\spanZ{B_sE_{s+1}}$.

Similarly, if \[z=a_1b_1 + \cdots + a_i(b_i+cb_j) + \cdots + a_jb_j+\cdots +a_nb_n\]
then \[z=a_1b_1 + \cdots + a_ib_i + \cdots + (a_j+a_ic)b_j+\cdots +a_nb_n\] where $a_j+a_ic\in R$
so $ z\in \spanZ{B_sE_{s+1}}$. This proves $\spanZ{B_s}=\spanZ{B_sE_{s+1}}$.
    Thus all three operations preserve the span, so (by induction) $\spanZ{B} = \spanZ{BL}$.
\end{proof}

\begin{lemma}\label{lem:GLnZcols_generate_basis}
    Let $K \in \GL(\ell,\Z)$ and denote the $(i,j)$-th element as $k_{i,j}$. Then for each $j\in[1,\ell]$ there exists $a_1,\dots,a_\ell \in \Z$ such that\[
    \begin{array}{l}
        a_1k_{1,j} + \cdots + a_\ell k_{\ell,j} = 1
 \textnormal{ and}\\
        a_1k_{1,s} + \cdots + a_\ell k_{n,s} = 0
        \textnormal{ for $s \in [1,\ell]$ and $s \neq j$}.
    \end{array}\]
\end{lemma}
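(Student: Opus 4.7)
The statement says that the rows of $K^{-1}$ provide the desired coefficient vectors, so the proof should be essentially a direct invocation of invertibility over $\Z$.

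My plan is as follows. Since $K \in \GL(\ell,\Z)$, by definition $K^{-1}$ exists and lies in $\Z^{\ell\times \ell}$. Fix $j \in [1,\ell]$ and let $(a_1,\dots,a_\ell)$ denote the $j$-th row of $K^{-1}$; these are integers. The condition $K^{-1}K = I_\ell$ says that the $(j,s)$-entry of $K^{-1}K$ is $\delta_{j,s}$ (the Kronecker delta). Expanding the matrix product, the $(j,s)$-entry equals
\begin{equation*}
\sum_{i=1}^\ell a_i k_{i,s}.
\end{equation*}
Setting $s = j$ yields $\sum_i a_i k_{i,j} = 1$, while $s \neq j$ yields $\sum_i a_i k_{i,s} = 0$, which are exactly the required identities.

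There is no real obstacle here; the only point to mention explicitly is why $K^{-1}$ has \emph{integer} entries, which follows because $K \in \GL(\ell,\Z)$ means precisely that $K$ is invertible over $\Z$ (equivalently $\det(K) = \pm 1$), as is assumed in the notation set up in Subsection~\ref{subsec:Notation}. One could alternatively argue by viewing $K$ as describing a change of basis of the free $\Z$-module $\Z^\ell$: the system $aK = e_j^T$ has a unique rational solution, and this solution must be integral because $K^{-1}$ has integer entries. Either phrasing gives a one- or two-line proof, so I would keep the write-up minimal.
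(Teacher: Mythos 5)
Your proof is correct and takes essentially the same approach as the paper: both proofs take the $j$-th row of $K^{-1}$ as the coefficient vector and read off the required identities from the equation $K^{-1}K = I_\ell$. You are slightly more explicit about why $K^{-1}$ has integer entries, which is a harmless improvement over the paper's treatment.
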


\begin{proof}
    Denote the $(i,j)$-th element of $K^{-1}$ as $c_{i,j}$. Then 
    \begin{align*}
        K^{-1}K &= \begin{pmatrix}
            c_{1,1} & \cdots & c_{1,\ell} \\
            \vdots & \ddots & \vdots \\
            c_{\ell,1} & \cdots & c_{\ell,\ell}
        \end{pmatrix} \begin{pmatrix}
            k_{1,1} & \cdots & k_{1,\ell} \\
            \vdots & \ddots & \vdots \\
            k_{\ell,1} & \cdots & k_{\ell,\ell}
        \end{pmatrix} \\
        &= \begin{pmatrix}
            \sum_{i=1}^\ell c_{1,i}k_{i,1} & \cdots & \sum_{i=1}^\ell c_{1,i}k_{i,\ell} \\
            \vdots & \ddots & \vdots \\
            \sum_{i=1}^\ell c_{\ell,i}k_{i,1} & \cdots & \sum_{i=1}^\ell c_{\ell,i}k_{i,\ell}
        \end{pmatrix} 
        = \begin{pmatrix}
            1 & \cdots & 0 \\
            \vdots & \ddots & \vdots \\
            0 & \cdots & 1
        \end{pmatrix}
    \end{align*}
  so for each $j\in[1,\ell]$ we have \begin{align*}
        c_{j,1}k_{1,j} + \cdots + c_{j,\ell}k_{\ell,j} &= 1 \\
        c_{s,1}k_{1,s} + \cdots + c_{s,\ell}k_{\ell,s} &= 0 \textnormal{ for $s \in [1,\ell]$ and $s \neq j$} 
    \end{align*}
    The result follows for fixed $j\in[1,\ell]$ by setting      
    $a_1 = c_{j,1}, \dots, a_\ell = c_{j,\ell}$.
\end{proof}

Using these facts, we obtain the following.

\begin{lemma}\label{lem:ifOneCount_then_existv}
    Let $A \in \Z^{\ell \times n}$ 
    with  SNF $(K,D,L)$, and  $d \in \N_{+}$.
    If $\onecount(D) \geq d$ then there exists a matrix $V\in\Z^{\ell\times d} $  such that $\spanZ{V^T} = \Z^d$ and each column of $V$  lies in $\spanZ{A}$.
\end{lemma}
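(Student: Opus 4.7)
The plan is to choose $V$ to be the first $d$ columns of $K$, which are guaranteed to lie in $\spanZ{A}$ because $\MFd_1=\cdots=\MFd_d=1$ by the $\onecount$ hypothesis and the divisibility chain $\MFd_i\mid \MFd_{i+1}$; then to verify the spanning condition by invoking \cref{lem:GLnZcols_generate_basis} applied to $K$.

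First, since $A=KDL$ with $L\in\GL(n,\Z)$, \cref{lem:spanA=spanB} gives $\spanZ{A}=\spanZ{KD}$. Next, because $\onecount(D)\geq d$ and the invariant factors satisfy $\MFd_i\mid\MFd_{i+1}$, the first $d$ diagonal entries of $D$ are all equal to $1$. Thus the first $d$ columns of $KD$ coincide with the first $d$ columns of $K$. Define $V\in\Z^{\ell\times d}$ to be the matrix whose columns are these first $d$ columns of $K$. Every column of $V$ is then (trivially) a $\Z$-linear combination of columns of $KD$, hence lies in $\spanZ{A}$.

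It remains to show $\spanZ{V^T}=\Z^d$. The $i$-th column of $V^T$ (for $i\in[1,\ell]$) is the integer $d$-vector $(k_{i,1},\ldots,k_{i,d})^T$ where $k_{i,j}$ denotes the $(i,j)$-entry of $K$. Fix $j\in[1,d]$. Since $K\in\GL(\ell,\Z)$, \cref{lem:GLnZcols_generate_basis} supplies integers $a_1,\dots,a_\ell$ with
\[
\sum_{i=1}^\ell a_i k_{i,j}=1,\qquad \sum_{i=1}^\ell a_i k_{i,s}=0\ \text{ for all }s\in[1,\ell]\setminus\{j\}.
\]
In particular the second condition holds for all $s\in[1,d]\setminus\{j\}$, so the combination $\sum_{i=1}^\ell a_i(k_{i,1},\ldots,k_{i,d})^T$ equals the standard basis vector $e_j\in\Z^d$. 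Ranging over $j\in[1,d]$ exhibits each $e_j$ as a $\Z$-linear combination of the columns of $V^T$, giving $\spanZ{V^T}=\Z^d$.

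I do not expect any real obstacle: the only mild subtlety is aligning indices between ``columns of $V^T$'' and ``rows of $V$ = truncated rows of $K$'', so that \cref{lem:GLnZcols_generate_basis} can be applied column-by-column to recover the standard basis of $\Z^d$. Everything else reduces to the two preparatory lemmas already established.
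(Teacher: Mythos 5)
Your proof is correct and follows essentially the same route as the paper's: reduce via $\spanZ{A}=\spanZ{KD}$ using \cref{lem:spanA=spanB}, observe that $\onecount(D)\geq d$ together with the divisibility chain and positivity forces $\MFd_1=\cdots=\MFd_d=1$ so that $V$ can be taken to be the first $d$ columns of $K$, and then invoke \cref{lem:GLnZcols_generate_basis} column-by-column to exhibit each $e_j\in\spanZ{V^T}$. The only difference is cosmetic: you are slightly more explicit than the paper about restricting the vanishing condition from $s\in[1,\ell]\setminus\{j\}$ to $s\in[1,d]\setminus\{j\}$ when truncating to $d$-vectors, which is a nice clarification but not a new idea.
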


\begin{proof}
    Since $A = KDL$, by \cref{lem:spanA=spanB} we have $\spanZ{A} = \spanZ{KD}$.  Since the first $d$ entries along the diagonal of $D$ are $1$'s,  the first $d$ columns of $K$ are in $\spanZ{KD}$. Let $v_1,\dots,v_d \in \Z^\ell$ be the first $d$ columns of $K$, so $v_i\in  \spanZ{KD}=\spanZ{A}$, and let $V = (v_1 \ \cdots \ v_d)$.

    Denote the elements of $K$ as $k_{i,j}$, so 
    $v_j = (k_{1,j} \ \cdots \ k_{\ell,j})^T$ for  $j\in[1,d]$.
    By \cref{lem:GLnZcols_generate_basis}, for each $j \in [1,\ell]$ there exists $a_{1}, \dots, a_{\ell}\in\Z$ such that
    \begin{align*}
        a_{1} k_{1,j} + \cdots + a_{\ell} k_{\ell,j} &= 1 \\
        a_{1} k_{1,s} + \cdots + a_{\ell} k_{\ell,s} &= 0 \textnormal{ for $s \in [1,\ell]$ and $s \neq j$},
    \end{align*} that is, 
    \begin{align*}
        a_{1} \begin{pmatrix} k_{1,1}\\\vdots\\k_{1,j} \\\vdots \\k_{1,d}  \end{pmatrix}
        +\cdots +
        a_{\ell} \begin{pmatrix} k_{\ell,1}\\\vdots\\
        k_{\ell,j} \\
        \vdots\\ k_{\ell,d}  \end{pmatrix}
        =
        \begin{pmatrix} 0\\\vdots\\
        1 \\
        \vdots\\ 0  \end{pmatrix}=e_j
    \end{align*}
where the vectors $(k_{i,1},\dots, k_{i,d})^T$ are the columns of $V^T$. Thus, we have shown that $e_j\in\spanZ{V^T}$ for each $j\in [1,d]$, so 
$\spanZ{V^T}=\Z^d$.
\end{proof}

If $A\in\Z^{m\times n}$, let $[A]_p\in \Z_p^{m\times n}$ denote the matrix $(a_{i,j} \mod p)_{i\in[1,m],j\in[1,n]}$ where $a_{i,j}$ is the $i,j$-th entry of $A$.  
For $B \in \Z_p^{m\times n}$, $\spanZp{B}$ is the set of all $\Z_p$ linear combinations of the columns of $B$. 

\begin{lemma}\label{lem:Rmodule_onecount}
    Let $A \in R^{m\times n}$ with SNF $(K,D,L)$ such that $\rank(D) = \onecount(D)$. If there exists $V \in R^{m\times d}$ such that $\spanZ{V^T} = R^d$ and the columns of $V$ lie in $\spanZ{A}$ then $\onecount(D) \geq d$. 
\end{lemma}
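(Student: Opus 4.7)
The plan is to exploit the Smith normal form to replace $A$ with the columns of $K$ it effectively spans, express $V$ through those columns, and then reduce the conclusion to a standard rank argument.

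First I would use \cref{lem:spanA=spanB} on $A = KDL$ (with $L \in \GL(n,R)$) to replace $\spanZ{A}$ with $\spanZ{KD}$. Set $r = \rank(D) = \onecount(D)$. By hypothesis the only non-zero diagonal entries $\MFd_1,\dots,\MFd_r$ of $D$ are units of $R$, so the columns of $KD$ are unit multiples of $k_1,\dots,k_r$ (the first $r$ columns of $K$) followed by zero columns. Consequently
\[
\spanZ{A} \;=\; \spanZ{KD} \;=\; \spanZ{k_1,\dots,k_r}.
\]

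Since each column of $V \in R^{m\times d}$ lies in $\spanZ{k_1,\dots,k_r}$, I can collect the coefficients into a matrix $M \in R^{r\times d}$ such that $V = K' M$, where $K' = (k_1\mid\cdots\mid k_r) \in R^{m\times r}$. Taking transposes gives $V^T = M^T (K')^T$, so every column of $V^T$ (that is, every row of $V$) is an $R$-linear combination of the columns of $M^T$. Therefore $\spanZ{V^T} \subseteq \spanZ{M^T}$, and the hypothesis $\spanZ{V^T} = R^d$ forces $\spanZ{M^T} = R^d$.

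It remains to conclude $r \geq d$ from the fact that the $r$ columns of $M^T \in R^{d\times r}$ span $R^d$. For each standard basis vector $e_i \in R^d$, pick $c_i \in R^r$ with $M^T c_i = e_i$, and assemble the $c_i$ as columns of $C \in R^{r\times d}$; then $M^T C = I_d$. This is where I expect the only subtlety: I need to rule out $r < d$ uniformly for $R \in \{\Z, \Z_p\}$. I would do so by passing to the fraction field $F$ ($F = \mathbb{Q}$ when $R = \Z$, $F = \Z_p$ when $R = \Z_p$ is already a field), where $\rank_F(M^T C) \leq \rank_F(M^T) \leq r$ but $\rank_F(I_d) = d$, giving $r \geq d$, i.e.\ $\onecount(D) \geq d$.
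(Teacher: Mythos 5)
Your proof is correct and takes essentially the same route as the paper: both replace $\spanZ{A}$ by the first $r$ columns of $K$ via the SNF, express $V = K'M$, use $\spanZ{V^T}=R^d$ to produce a factorization $M^T C = I_d$ with inner dimension $r$, and conclude $r\geq d$ by a rank inequality. Your passage to the fraction field is a slightly cleaner way to justify the rank step uniformly for $R\in\{\Z,\Z_p\}$ than the paper's appeal to real-valued matrices, but the argument is the same in substance.
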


\begin{proof} 
    Let $\onecount(D) = c$, observe that when $R = \Z$ we have $KD$ as the $m\times d$ matrix whose first $c$ columns are the first $c$ columns of $K$, and remaining $d-c$ columns are $0 \in \Z^m$. In the case $R = \Z_p$, as $\Z_p$ is a field, then w.l.o.g assume all non-$0$ diagonals of $D$ are $1$. 

    By \cref{lem:spanA=spanB} $\spanZ{A} = \spanZ{KD}$, so the columns of $V$ lie in $\spanZ{KD}$. Denote $v_i$ as the $i$-th column of $V$ and $k_{i,j}$ as the $i,j$-th element of $K$. As $v_i \in \spanZ{KD}$, for $j \in [1,d]$ there exists $t_{j,1},\dots,t_{j,c} \in R$ such that
    \begin{equation*}
        v_j = t_{j,1}\begin{pmatrix}
            k_{1,1} \\
            \vdots \\
            k_{m,1}
        \end{pmatrix} +
        \cdots + 
        t_{j,c}\begin{pmatrix}
            k_{1,c} \\
            \vdots \\
            k_{m,c}
        \end{pmatrix},
    \end{equation*}
    and so
    \begin{equation*}
        V = \begin{pmatrix}
            \sum_{i=1}^{c} t_{1,i}k_{1,i} & \cdots & \sum_{i=1}^{c} t_{d,i}k_{1,i} \\ 
            \vdots & \ddots & \vdots \\
            \sum_{i=1}^{c} t_{1,i}k_{m,i} & \cdots & \sum_{i=1}^{c} t_{d,i}k_{m,i}
        \end{pmatrix}.
    \end{equation*}
   
    Since $\spanZ{V^T} = R^d$ and $e_1,\dots,e_d \in \spanZ{V^T}$,  for $\ell \in [1,d]$ there exists $\rho_{\ell,1},\dots,\rho_{\ell_,m} \in R$ such that 
    \begin{align*}
        e_\ell &= \rho_{\ell,1}\begin{pmatrix}
            \sum_{i=1}^{c} t_{1,i}k_{1,i} \\ \vdots \\ \sum_{i=1}^{c} t_{d,i}k_{1,i}
        \end{pmatrix} + \cdots + \rho_{\ell,m}\begin{pmatrix}
            \sum_{i=1}^{c} t_{1,i}k_{m,i} \\ \vdots \\ \sum_{i=1}^{c} t_{d,i}k_{m,i}
        \end{pmatrix} \\
        &= \begin{pmatrix}
            \rho_{\ell,1} \sum_{i=1}^{c} t_{1,i}k_{1,i} + \cdots + \rho_{\ell,m}\sum_{i=1}^{c} t_{1,i}k_{m,i} \\
            \vdots \\
            \rho_{\ell,1} \sum_{i=1}^{c} t_{d,i}k_{1,i} + \cdots + \rho_{\ell,m} \sum_{i=1}^{c} t_{d,i}k_{m,i}
        \end{pmatrix} \\
        &= \begin{pmatrix}
            \sum_{i = 1}^m \rho_{\ell,i}  (\sum_{j=1}^c t_{1,j} k_{i,j}) \\
            \vdots \\
            \sum_{i = 1}^m \rho_{\ell,i}  (\sum_{j=1}^c t_{d,j} k_{i,j})
        \end{pmatrix} \\ 
        &= \begin{pmatrix}
            \sum_{j=1}^c t_{1,j} (\sum_{i = 1}^m \rho_{\ell,i}k_{i,j}) \\
            \vdots \\
            \sum_{j=1}^c t_{d,j} (\sum_{i = 1}^m \rho_{\ell,i}k_{i,j})
        \end{pmatrix}.
    \end{align*}
    As the concatenation of $e_1,\dots,e_d$ is the $d\times d$ identity matrix, we have 
    \begin{align*}
        \begin{pmatrix}
            1 & \cdots & 0 \\
            \vdots & \ddots & \vdots \\
            0 & \cdots & 1
        \end{pmatrix} &= \begin{pmatrix}
            \sum_{j=1}^c t_{1,j} (\sum_{i = 1}^m \rho_{1,i}k_{i,j}) & \cdots & \sum_{j=1}^c t_{1,j} (\sum_{i = 1}^m \rho_{d,i}k_{i,j}) \\
            \vdots & \ddots & \vdots \\
            \sum_{j=1}^c t_{d,j} (\sum_{i = 1}^m \rho_{1,i}k_{i,j}) & \cdots & \sum_{j=1}^c t_{d,j} (\sum_{i = 1}^m \rho_{d,i}k_{i,j})
        \end{pmatrix} \\
        &= \begin{pmatrix}
            t_{1,1} & \cdots & t_{1,c} \\
            \vdots & \ddots & \vdots \\
            t_{d,1} & \cdots & t_{d,c}
        \end{pmatrix} \begin{pmatrix}
            \sum_{i = 1}^m \rho_{1,i}k_{i,1} & \cdots & \sum_{i = 1}^m \rho_{d,i}k_{i,1} \\
            \vdots & \ddots & \vdots \\
            \sum_{i = 1}^m \rho_{1,i}k_{i,c} & \cdots & \sum_{i = 1}^m \rho_{d,i}k_{i,c}
        \end{pmatrix}.
    \end{align*}
    Recall from standard linear algebra that for any real-valued matrices $A \in \R^{d \times c}$, $B \in \R^{c\times d}$ we have $\rank(AB)\leq \min\{\rank(A),\rank(B)\}$. Since $\rank(D) = d$  it follows that $c \geq d$.
\end{proof}

\begin{lemma}\label{lem:span_modp_invariant}
    Let $p$ be a prime, $A \in \Z^{m\times n}$. If there exists $V\in \Z^{m \times d}$ such that the columns of $V$ lie in $\spanZ{A}$ and $\spanZ{V^T} = \Z^d$, then there exists $W \in \Z_p^{m  \times d}$ such that the columns of $W$ lie in $\spanZp{[A]_p}$ and $\spanZp{W^T} = \Z_p^d$.
\end{lemma}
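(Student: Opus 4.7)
The plan is to take $W := [V]_p \in \Z_p^{m \times d}$, the entrywise reduction of $V$ modulo $p$, and verify directly that the two required properties of $W$ follow from the corresponding properties of $V$. The proof will consist of two short verifications; I do not expect any genuine obstacle, since reduction mod $p$ is a ring homomorphism and therefore commutes with matrix multiplication and $\Z$-linear combinations.

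For the first property, I will use the hypothesis that each column $v_i$ of $V$ lies in $\spanZ{A}$. That means there exists an integer $n$-vector $c_i$ with $v_i = A c_i$. Applying the reduction-mod-$p$ map to both sides gives $[v_i]_p = [A]_p \, [c_i]_p$, so the $i$-th column of $W$, which is $[v_i]_p$, is a $\Z_p$-linear combination of the columns of $[A]_p$, i.e.\ lies in $\spanZp{[A]_p}$.

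For the second property, I will use $\spanZ{V^T} = \Z^d$. In particular, each standard basis vector $e_j \in \Z^d$, $j \in [1,d]$, can be written as an integer combination of the columns of $V^T$; that is, there exist integers $a_{j,1}, \dots, a_{j,m}$ with $e_j = \sum_{i=1}^m a_{j,i} u_i$, where $u_i$ denotes the $i$-th column of $V^T$. Reducing this equality modulo $p$ yields $[e_j]_p = \sum_{i=1}^m [a_{j,i}]_p [u_i]_p$, and since the $i$-th column of $W^T = [V^T]_p$ is exactly $[u_i]_p$, we obtain $[e_j]_p \in \spanZp{W^T}$. The images $[e_1]_p, \dots, [e_d]_p$ generate $\Z_p^d$ as a $\Z_p$-module, so $\spanZp{W^T} = \Z_p^d$, as required.

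Together these two verifications show the matrix $W = [V]_p$ witnesses the conclusion of the lemma, completing the proof.
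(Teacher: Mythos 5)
Your proof is correct and uses the same core construction as the paper, namely setting $W = [V]_p$. Your verification is in fact considerably cleaner than the paper's: you check both required properties directly via the fact that reduction modulo $p$ is a ring homomorphism and hence commutes with the integer linear combinations witnessing $v_i \in \spanZ{A}$ and $e_j \in \spanZ{V^T}$, whereas the paper's write-up at this point contains a somewhat garbled passage (it appears to replace $V$ by a diagonal matrix and invokes standard basis vectors $f_1,\dots,f_d$ of $\Z^m$ in a way that is not clearly justified, and it does not explicitly verify that the columns of $W$ lie in $\spanZp{[A]_p}$). Your version is the argument that should be there.
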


\begin{proof}
    Assume there exists $V\in \Z^{m\times d}$ such that the columns of $V$ lie in $\spanZ{A}$ and $\spanZ{V^T} = \Z^d$, then there exist $e_1, \dots, e_d \in \spanZ{V^T} = \Z^d$ and $f_1, \dots, f_d \in \Z^m$ (the standard basis of dimension $m$) $f_1,\dots,f_d \in \spanZ{A}$. So let $V \in \Z^{m \times d}$ be the diagonal matrix with $d$ entries of $1$ on the diagonal then as $W = [V]_p$, that is, each entry $w_{i,j}$ of $W$ is equal to $v_{i,j}\mod p$.
    Then $W^T \in \Z_p^{d \times m}$ is the diagonal matrix with $d$ entries of $1$ on the diagonal and so $\spanZ{W^T} = \Z_p^d$.
\end{proof}

\begin{corollary}\label{corr:VT_span_checkOneCount}
    Let $A \in \Z^{m \times n}$, $\min(m,n) \geq d \in\N_+$, and $(K,D,L)$ is an SNF for $A$.
    If there exists $V \in \Z^{m\times d}$ such that the columns of $V$ lie in $\spanZ{A}$ and $\spanZ{V^T} = \Z^d$, then  $\onecount(D) \geq d$.
\end{corollary}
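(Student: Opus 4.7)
The plan is to reduce modulo a well-chosen prime so that Lemma~\ref{lem:Rmodule_onecount} can be applied, since that lemma requires $\rank(D)=\onecount(D)$, which holds automatically over a field but not in general over $\Z$. Let $c=\onecount(D)$ and $r=\rank(D)$. I want to show $c\ge d$.

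First I would dispose of the easy case $c=r$: here the SNF $(K,D,L)$ satisfies $\rank(D)=\onecount(D)$ as hypothesised in Lemma~\ref{lem:Rmodule_onecount} (with $R=\Z$), so the existence of $V$ with columns in $\spanZ{A}$ and $\spanZ{V^T}=\Z^d$ gives $c=\onecount(D)\ge d$ directly.

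In the generic case $c<r$, my idea is to pick a prime $p$ dividing the first non-unit diagonal entry $\MFd_{c+1}$. Because the SNF divisibility gives $\MFd_{c+1}\mid\MFd_i$ for every $i\in[c+1,r]$, it follows that $p\mid\MFd_i$ for all such $i$, so $[D]_p\in\Z_p^{m\times n}$ is diagonal with exactly its first $c$ entries equal to $1$ and all others $0$. Since $\det K,\det L\in\{\pm1\}$ are units mod $p$, we have $[K]_p\in\GL(m,\Z_p)$ and $[L]_p\in\GL(n,\Z_p)$, and the identity $A=KDL$ reduces to $[A]_p=[K]_p[D]_p[L]_p$. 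Thus $([K]_p,[D]_p,[L]_p)$ is an SNF for $[A]_p$ over the field $\Z_p$, with $\rank([D]_p)=\onecount([D]_p)=c$.

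The last step is to transfer the existence of $V$ to the $\Z_p$ setting via Lemma~\ref{lem:span_modp_invariant}, obtaining $W\in\Z_p^{m\times d}$ whose columns lie in $\spanZp{[A]_p}$ and with $\spanZp{W^T}=\Z_p^d$. Now Lemma~\ref{lem:Rmodule_onecount} applies with $R=\Z_p$ (its hypothesis $\rank=\onecount$ holds automatically over a field), yielding $\onecount([D]_p)\ge d$, i.e.\ $\onecount(D)=c\ge d$. The only delicate point, and the one I would double-check carefully, is the choice of prime: we must be sure that $p$ kills every non-unit diagonal entry of $D$ while preserving the units, and this is exactly what $p\mid\MFd_{c+1}$ together with the divisibility chain $\MFd_{c+1}\mid\MFd_{c+2}\mid\cdots\mid\MFd_r$ guarantees.
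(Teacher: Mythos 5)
Your proposal is correct and takes essentially the same approach as the paper: split on whether $\onecount(D)=\rank(D)$, and otherwise reduce modulo a prime $p\mid\MFd_{c+1}$ so that $\rank([D]_p)=\onecount([D]_p)$, then combine Lemma~\ref{lem:span_modp_invariant} with Lemma~\ref{lem:Rmodule_onecount} over $\Z_p$. You supply a bit more justification than the paper does (verifying that $([K]_p,[D]_p,[L]_p)$ is genuinely an SNF for $[A]_p$ over $\Z_p$), but the argument is the same.
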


\begin{proof}
If  $\onecount(D) = \rank(D)$ then \cref{lem:Rmodule_onecount} proves the claim.
Else $\onecount(D) 
        < \rank(D)$.

By \cref{lem:span_modp_invariant} for any prime $p$ there will exist $W \in \Z_p^{m\times d}$ such that the columns of $W$ lie in $\spanZp{[A]_p}$ and $\spanZp{W^T} = \Z_p^d$. Let $c = \onecount(D)$ and $p \mid \MFd_{c+1}$ (the first non-$1$ diagonal entry of $D \in \Z^{m\times n}$) and so $\rank([D]_p) = \onecount([D]_p)$. This gives two cases.
    \begin{enumerate}
        \item If there exists $V \in \Z^{m\times d}$ such that the columns of $V$ lie in $\spanZ{A}$ and $\spanZ{V^T} = \Z^d$ and $\onecount(D) = \rank(D)$, then  $\onecount(D) \geq d$.
        \item If there exists $W \in \Z_p^{m\times d}$ such that the columns of $W$ lie in $\spanZp{[A]_p}$ and $\spanZp{W^T} = \Z_p^d$ and $\rank([D]_p) = \onecount([D]_p)$, then $\onecount([D]_p) \geq d$, which implies $\onecount(D) \geq d)$.
    \end{enumerate}
\end{proof}

\begin{proposition}
\label{prop:EQMATSUBSPAN_P}
    \MatrixSubspanA is in \P.    
\end{proposition}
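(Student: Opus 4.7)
The plan is to reduce \MatrixSubspanA to a single SNF computation after first explicitly computing (generators for) the integer kernel of $A$. On input $(A,d,\ell)$, first I would apply \cref{lem:Solve_Ax=b} to the instance $(A,0)$, obtaining in polynomial time integer $n$-vectors $u_1,\dots,u_k$ such that $\spanZ{U}=\ker_{\Z}(A)$, where $U\in\Z^{n\times k}$ is the matrix whose columns are the $u_i$. By definition, integer $n$-vectors $v_1,\dots,v_d$ satisfy $Av_i=0$ for all $i$ if and only if each column of the matrix $V=(v_1\ \cdots\ v_d)$ lies in $\spanZ{U}$.

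Next I would peel off the last $\ell$ rows: by \cref{lem:remove_rows_span} (applied with $b=0$, and hence $c=0$), the existence of an $n\times d$ matrix $V$ whose columns lie in $\spanZ{U}$ with $\spanZ{(V|_\ell)^T}=\Z^d$ is equivalent to the existence of an $\ell\times d$ matrix $W$ whose columns lie in $\spanZ{U|_\ell}$ with $\spanZ{W^T}=\Z^d$. Thus the instance $(A,d,\ell)$ is a \textbf{Yes}-instance if and only if the matrix $A'=U|_\ell\in\Z^{\ell\times k}$ admits a matrix $W\in\Z^{\ell\times d}$ whose columns lie in $\spanZ{A'}$ with $\spanZ{W^T}=\Z^d$.

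At this point \cref{lem:ifOneCount_then_existv} and \cref{corr:VT_span_checkOneCount} together give an exact criterion: such $W$ exists if and only if any Smith normal form $(K,D,L)$ of $A'=U|_\ell$ satisfies $\onecount(D)\geq d$. So the algorithm reduces to computing an SNF of $U|_\ell$ and comparing $\onecount(D)$ with $d$, returning \textbf{Yes} precisely when $\onecount(D)\geq d$.

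Finally I would verify that every step runs in polynomial time in the size of the binary encoding of $(A,d,\ell)$: computing the kernel generators $U$ is in \P by \cref{lem:Solve_Ax=b}; forming the submatrix $U|_\ell$ is trivial; computing an SNF of $U|_\ell$ is in \P by \cref{thm:snf_poly}; and counting units on the diagonal of $D$ and comparing with $d$ takes linear time. The only real caveat to watch is the degenerate range of parameters, namely $d=0$ (when the answer is trivially \textbf{Yes} since the empty matrix spans $\Z^0$) and $\ell=0$ or $\min(\ell,k)<d$ (when $\onecount(D)<d$ automatically, giving \textbf{No}); these boundary cases slot in without effort. I do not foresee a genuine obstacle, as the structural content has already been established in \cref{lem:remove_rows_span}, \cref{lem:ifOneCount_then_existv} and \cref{corr:VT_span_checkOneCount}; the task here is simply to assemble the pieces into a polynomial-time procedure.
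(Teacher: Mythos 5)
Your proposal is correct and follows essentially the same route as the paper's own proof: compute generators for $\ker_\Z(A)$ via \cref{lem:Solve_Ax=b}, pass to the bottom $\ell$ rows via \cref{lem:remove_rows_span} (with $b=c=0$), then reduce to comparing $\onecount$ of a Smith normal form against $d$ using \cref{lem:ifOneCount_then_existv} and \cref{corr:VT_span_checkOneCount}. Your explicit remark that the call to \cref{lem:Solve_Ax=b} on the homogeneous instance $(A,0)$ always succeeds, and your handling of the degenerate parameter values, are small but welcome clarifications over the paper's presentation.
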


\begin{proof}
    Recall that \MatrixSubspanA asks the following: given $A \in \Z^{m\times n}$, $d, \ell \in \Z$ where $0 \leq \ell < n$, does there exist $v_1,\dots,v_d \in \Z^n$ such that $Av_i = 0$ for $i \in [1,d]$  and for the matrix $V=(v_1\ \cdots \ v_d)$ we have $\spanZ{(V|_\ell)^T} = \Z^d$?

    We solve \MatrixSubspanA by the following procedure:
    \begin{enumerate}
        \item call the algorithm in \cref{lem:Solve_Ax=b} on input $A \in \Z^{m\times n}$ and $0 = b \in \Z^m$
        \item if this algorithm returns `No', return `No' to \MatrixSubspanA
        \item else let $u_1,\dots,u_m, c \in Z^n$ be the output of the procedure (here $c = 0$), set $U \in \Z^{n\times m}$ be the matrix whose $i$-th column is $u_i$ 
        \item Calculate the SNF $(K,D,L)$ of $U|_\ell$. Thus $K \in \GL(\ell,\Z), L \in \GL(m,\Z), D \in \Z^{\ell\times m}$ and so $U|_\ell = KDL$.
        \item If $\onecount(D) \geq d$ output `Yes', otherwise if $\onecount(D) < d$ output `No'.
    \end{enumerate}

    Step~(1) is polynomial time by \cref{lem:Solve_Ax=b} and step~(2) is polynomial time by \cref{thm:snf_poly} and step~(3) is a straightforward calculation, thus our procedure is polynomial time.
    
    We will now justify the correctness of the procedure.

    If `No' is returned in step~(2), then there does not exist $x \in \Z^n$ which satisfies $Ax=0$, so we output `No' for \MatrixSubspanA. Thus, w.l.o.g we may assume there exists a solution to the procedure in \cref{lem:Solve_Ax=b}, which finds  $U$ and $c$ such that $c = 0$ and $Ax= 0$ if and only if $x \in \spanZ{U}$.
    We then can check if there exists $v_1,\dots,v_d \in \spanZ{U}$ for such that for matrix $V = (v_1 \ \cdots \ v_d)$, we have $\spanZ{(V|_\ell)^T} = \Z^d$, and by \cref{lem:remove_rows_span} $V$ exists if and only if there exists $W \in \Z^{\ell\times d}$ such that $\spanZ{W^T} = \Z^d$ and each column of $W$ lies in $\spanZ{U|_\ell}$.
    Using $D$ of the SNF $(K,D,L)$ calculated in step~(4), by \cref{lem:ifOneCount_then_existv} and the contrapositive of \cref{corr:VT_span_checkOneCount} such a $W$ exists if and only if $\onecount(D) \geq d$, thus justifying the output in step~(5). 
\end{proof}

\subsection{Solving \MatrixSubspanB}
\label{subsec:EQMATBSUBSPANZ}
In this subsection we show that \MatrixSubspanB is decidable in polynomial time. Note that our method in this subsection closely follows \cite[Proposition 4.2 and Lemma 4.3]{FriedlLoh}. 

First, we note the following. 
\begin{lemma}\label{lem:span=Z_iff_gcd1}
    For $a_1,\dots,a_s \in \Z$ and , $\spanZ{\begin{bmatrix}a_1 & \cdots & a_s\end{bmatrix}} = \Z$  if and only if $\gcd(a_1,\dots,a_s) = 1$.
\end{lemma}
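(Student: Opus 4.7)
The plan is to prove both directions using Bezout's identity, which is the standard characterization of integer linear combinations of several integers. This is a very classical fact, so the proof will be short.

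For the forward direction, I would assume $\spanZ{\begin{bmatrix}a_1 & \cdots & a_s\end{bmatrix}} = \Z$. Then in particular $1 \in \spanZ{\begin{bmatrix}a_1 & \cdots & a_s\end{bmatrix}}$, so there exist $c_1, \dots, c_s \in \Z$ with $c_1 a_1 + \cdots + c_s a_s = 1$. Setting $d = \gcd(a_1, \dots, a_s)$, we have $d \mid a_i$ for all $i \in [1,s]$, and hence $d \mid 1$. Since $d \ge 0$, this forces $d = 1$.

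For the backward direction, I would invoke Bezout's identity (generalized to $s$ integers by a straightforward induction on $s$ using the two-variable version): if $\gcd(a_1, \dots, a_s) = 1$ then there exist $c_1, \dots, c_s \in \Z$ with $c_1 a_1 + \cdots + c_s a_s = 1$. Hence $1 \in \spanZ{\begin{bmatrix}a_1 & \cdots & a_s\end{bmatrix}}$. Since the span is closed under integer scalar multiplication, every $n \in \Z$ lies in the span (namely $n \cdot 1 = n(c_1 a_1 + \cdots + c_s a_s)$), so $\spanZ{\begin{bmatrix}a_1 & \cdots & a_s\end{bmatrix}} = \Z$.

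There is no substantial obstacle here; both directions follow immediately from Bezout. The only point worth a sentence of justification is the $s$-variable version of Bezout, which follows by induction from the two-integer case $\gcd(a,b) = ax + by$ applied iteratively (e.g., $\gcd(a_1, \dots, a_s) = \gcd(\gcd(a_1, \dots, a_{s-1}), a_s)$).
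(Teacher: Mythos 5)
The paper states this lemma without proof, treating it as a well-known elementary fact about integer linear combinations. Your proof is correct and is exactly the standard argument one would supply: the forward direction follows since $\gcd(a_1,\dots,a_s)$ divides every element of the span, hence divides $1$; the backward direction is Bezout's identity (extended to $s$ integers by iterating $\gcd(a_1,\dots,a_s)=\gcd(\gcd(a_1,\dots,a_{s-1}),a_s)$) combined with closure of the span under integer scalar multiplication. There is nothing to compare against in the paper, and nothing missing in your argument.
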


It follows that the decision problem 
\MatrixSubspanB can be expressed as follows.\label{pageMATRIXSUBSPANB}
\compproblem{Given $A \in \Z^{m \times n}$, $b \in \Z^m$, $\ell \in \Z$ where $ \ell\in[0,n-1]$.}{Does there exist an integer $n$-matrix $\nu = \begin{bmatrix}\nu_1 &  \cdots & \nu_n\end{bmatrix}^T \in \Z^n$ such that $A\nu + b = 0$ and $\gcd(\nu_{n-\ell+1},\dots,\nu_n) = 1$.}

Next we present some basic facts about $\gcd$, generalising \cite[Lemma 4.3]{FriedlLoh}. 

\begin{lemma}
\label{lem:NEWgcdFacts} Let $n,k,\ell\in \N_+$. 
\be
    \item \label{item:GCD1Equiv} Let $U \in \Z^{\ell\times k}$ with SNF $(K,D,L)$ and $r \in \Z^\ell$. There exists $\mu = (\mu_1 \ \cdots \ \mu_\ell)^T \in \spanX{U}{r} \in \Z^\ell$ such that $\gcd(\mu_1,\dots,\mu_\ell) = 1$ if and only if there exists $v = (v_1 \ \cdots \ v_\ell) \in \spanX{D}{K^{-1}r} \in \Z^\ell$ such that $\gcd(v_1,\dots,v_\ell) = 1$.
    \item\label{itemNEWgcd2} If $s\geq 2$,  $\MFd, b_1,\dots,b_s \in \Z$ such that  $b_i\neq 0$ for at least one $i\in[2,s]$, and $\gcd(\MFd,b_1,\dots,b_s) = 1$,  then there exists $x \in \Z$ such that   $   \gcd(x\MFd + b_1,b_2,\dots,b_s) = 1$.
    \item\label{itemNEWgcd3}  If  $s,\MFd_1,\dots,\MFd_n, b_1,\dots,b_s \in \Z$ and $\MFd_i \mid \MFd_{i+1}$ for $i\in[1,n-1]$, then there exist $a_1,\dots,a_s \in \Z$ such that 
    \begin{equation*}
        \gcd(a_1\MFd_1+b_1, \dots, a_s\MFd_s + b_s) = 1
    \end{equation*}
    if and only if 
    one the following holds:
    \begin{enumerate}
        \item $b_1,\dots,b_s = 0$ and $\MFd_1 \in \{-1,1\}$
        \item $\gcd(b_1,\dots,b_s) = 1$
        \item $\gcd(b_1,\dots,b_s) = c > 1$, $\gcd(\MFd_1, c) = 1$,  and
\be
\item $b_i \not = 0$ for some $i \in [2,s]$ 
   \item  $s=1$ and  $c \equiv 1\mod{\MFd_1}$
   \item $b_2,\dots,b_s = 0$ 
  and $c \equiv 1\mod{\MFd_1}$ or $\MFd_2 \not = 0$.
   \ee

    \end{enumerate}
    \ee
\end{lemma}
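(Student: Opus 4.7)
The plan is to handle the three items separately, with (1) a linear-algebra reduction, (2) a Chinese remainder argument, and (3) a case analysis built on (2). For item (1), I will apply Lemma~\ref{lem:spanA=spanB} to get $\spanZ{U} = \spanZ{KD}$, so every element of $\spanX{U}{r}$ has the form $KDw + r$ with $w \in \Z^k$; left-multiplication by $K^{-1} \in \GL(\ell, \Z)$ then provides a bijection between $\spanX{U}{r}$ and $\spanX{D}{K^{-1}r}$. It remains to check that this bijection preserves coprimality of entries, which reduces to the fact that $\gcd(\mu_1, \ldots, \mu_\ell) = 1$ iff $c^T\mu = 1$ for some $c \in \Z^\ell$, and for $v = K^{-1}\mu$ one has $c^T\mu = (K^Tc)^Tv$, where $c \mapsto K^Tc$ is a bijection of $\Z^\ell$ since $K^T \in \GL(\ell, \Z)$.

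For item (2), set $g := \gcd(b_2, \ldots, b_s)$; the target condition is equivalent to $\gcd(x\MFd + b_1, g) = 1$, and by CRT it suffices to produce, for each prime $p \mid g$, a residue class of $x$ modulo $p$ making $x\MFd + b_1 \not\equiv 0 \pmod p$. If $p \mid \MFd$, the hypothesis $\gcd(\MFd, b_1, \ldots, b_s) = 1$ forces $p \nmid b_1$, so $b_1 \not\equiv 0 \pmod p$ automatically; if $p \nmid \MFd$, pick any $x \not\equiv -b_1 \MFd^{-1} \pmod p$. Combining by CRT yields the desired $x$.

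For item (3), I will do a case split on $(b_1, \ldots, b_s)$, proving sufficiency then necessity. Sufficiency is direct: (a) take $a_1 = \pm 1$ and the remaining $a_i = 0$; (b) take all $a_i = 0$; in (c)(i) apply item (2) with $\MFd := \MFd_1$, noting its hypothesis $\gcd(\MFd_1, b_1, \ldots, b_s) = \gcd(\MFd_1, c) = 1$ holds; (c)(ii) solves $a_1\MFd_1 + b_1 = \pm 1$ directly from the congruence on $c$; (c)(iii) either reduces to (c)(ii) when $c \equiv 1 \pmod{\MFd_1}$, or when $\MFd_2 \neq 0$ sets $a_2 = 1$ and applies item (2) to $(\MFd_1, b_1, \MFd_2)$. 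For necessity, suppose $\gcd(a_1\MFd_1 + b_1, \ldots, a_s\MFd_s + b_s) = 1$: when all $b_i = 0$, the divisibilities $\MFd_1 \mid \MFd_i$ force $\MFd_1 \mid 1$, giving (a); otherwise any prime $p$ dividing both $\MFd_1$ and $c = \gcd(b_1, \ldots, b_s)$ would divide every $a_i\MFd_i + b_i$ and hence divide the full gcd, so $\gcd(\MFd_1, c) = 1$ is forced, yielding (b) or the leading condition of (c). The finer subcases of (c) are then pinned down by splitting on whether some $b_i \neq 0$ for $i \geq 2$, or $s = 1$, or $b_2 = \cdots = b_s = 0$ with $\MFd_2 = 0$ (in which case the gcd collapses to $|a_1\MFd_1 + b_1|$, forcing the congruence on $c$).

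The main obstacle is the bookkeeping in (3), especially reconciling signs between $b_1$ and $c = |b_1|$ when $s = 1$, and identifying exactly which subcondition of (c) is forced by the pattern of vanishing among the $b_i$ and $\MFd_i$; each subcase is elementary once isolated, but the enumeration has to be exhaustive.
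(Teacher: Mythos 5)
Your proposal is correct and follows essentially the same structure as the paper's proof. For item~(1) the paper argues gcd-invariance under left-multiplication by $K^{-1}$ by factoring into elementary matrices, while you argue via the Bezout identity $\gcd(\mu)=1 \Leftrightarrow \exists c\colon c^T\mu = 1$ together with the bijection $c \mapsto K^T c$; these are equivalent. For item~(2) the paper simply cites \cite[Lemma 4.3]{FriedlLoh}, so your CRT argument is additional content but is correct. For item~(3) you reproduce the paper's case split, and the sign concern you flag at the end is well-placed: it points to a genuine gap in the lemma as stated, not merely in your bookkeeping. The paper's necessity argument for (c)(ii) writes $\gcd(a_1\MFd_1 + b_1) = a_1\MFd_1 + b_1$ and $c = b_1$, tacitly assuming both quantities are positive, but $\gcd$ of a single integer is its absolute value. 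Concretely, with $s=1$, $\MFd_1 = 3$, $b_1 = 2$ the equation $\gcd(3a_1 + 2) = 1$ is solved by $a_1 = -1$ (since $\lvert -1\rvert = 1$), yet $c = 2 \equiv -1 \not\equiv 1 \pmod{3}$, so none of (a)--(c)(iii) hold as written. The congruence in (c)(ii), and in the $\MFd_2 = 0$ branch of (c)(iii), should read $c \equiv \pm 1 \pmod{\MFd_1}$. Your proposal inherits the same gap, but you correctly identified it as the one genuinely delicate point; the other subcases of (3) are handled exactly as the paper does (applying item~(2) with the appropriate arguments, and the clean divisibility argument $\MFd_1 \mid 1$ when all $b_i=0$).
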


\begin{proof} 
    To prove item~(\ref{item:GCD1Equiv}), assume there exists $\mu = (\mu_1 \ \cdots \ \mu_\ell)^T \in \spanX{U}{r}$ with $\gcd(\mu_1,\dots,\mu_\ell) = 1$. Letting $v = K^{-1}\mu$, we have $v \in \spanX{K^{-1}U}{K^{-1}r}$. By \cref{lem:spanA=spanB} $\spanZ{K^{-1}U} = \spanZ{D}$ we have  $v = (v_1 \ \cdots \ v_\ell)^T \in \spanX{D}{K^{-1}r}$. 
    As $v = K^{-1}\mu$ and $K^{-1} \in \GL(\ell,\Z)$,  there exists a sequence of elementary matrices $E_1\cdots E_k = K^{-1}$, and multiplication by an elementary matrix does not change the gcd, so $\gcd(K^{-1}\mu) = \gcd(\mu)$, thus $\gcd(v) = \gcd(\mu) = 1$.

    Item~(\ref{itemNEWgcd2}) is \cite[Lemma 4.3]{FriedlLoh}.

    This leaves item~(\ref{itemNEWgcd3}). Assume one of the conditions (a)--(c) hold. 
    If $b_i=0$ and $\MFd_1=\pm 1$ set $a_1=1$ and $a_j=0$ for $j\geq 2$ and if $\gcd(b_1,\dots, b_s)=1$ then set $a_i=0$.
If $\gcd(b_1,\dots,b_s) = c > 1$, $\gcd(\MFd_1, c) = 1$, we have three subcases.
If $b_i \not = 0$ for some $i \in [2,s]$, 
 then by item~(\ref{itemNEWgcd2}) there exists $x \in \Z$ such that $\gcd(x\MFd_1 + b_1, b_2,\dots,b_s) = 1$ so set $a_1 = x$ and $a_2,\dots,a_s = 0$.
 If $n=1$ then $c=\gcd(b_1)=b_1$, and $c \equiv 1\mod{\MFd_1}$ means $c+\alpha \MFd_1 =1$ for some $\alpha\in \Z$, so setting $a_1=-\alpha$ gives  
$\gcd(a_1\MFd_1+b_1)=a_1\MFd_1+b_1=1$.
If $b_2=\dots=b_s=0$ then $c=\gcd(b_1,0,\dots, 0)=b_1$, so if $c \equiv 1\mod{\MFd_1}$ then again we have  $a_1=\alpha\in \Z$ so that $\alpha b_1+\MFd=1$, and if $\MFd_2\neq 0$
then $\MFd_1\mid \MFd_2$ and $\gcd(\MFd_1, c) = 1$
implies $\gcd(\MFd_1,b_1,\MFd_2) = 1$ with $\MFd_2,b_1\neq 0$ so by item~(\ref{itemNEWgcd2}) there exists $x\in 
Z$ so that $\gcd(x\MFd_1+b_1,\MFd_2)=1$, so set $a_1=x, a_2=1,a_3=\dots=a_s=0$.

Conversely assume there exist $a_1,\dots,a_s \in \Z$ such that
    \begin{equation}\label{eqnGCDmurray}
        \gcd(a_1\MFd_1+b_1, \dots, a_s\MFd_s + b_s) = 1
    \end{equation}

If $b_i=0$ for $i\in[1,s]$, since $\MFd_i \mid \MFd_{i+1}$ for  $i\in[1,s-1]$ we have $\gcd(a_1\MFd_1,\dots, a_s\MFd_s)>|\MFd_1|$ which contradicts \cref{eqnGCDmurray}.
Thus we must have condition (a) or $b_i\neq 0$ for some $i\in[1,s]$.
If $b_i\neq 0$ for some $i\in[1,s]$, either condition (b) holds or $\gcd(b_1,\dots,b_s) = c > 1$.

If 
$f=\gcd(\MFd_1, c) \neq 1$ then $f$ divides every $b_i$ and $\MFd_i$ so no choice of $a_i\in\Z$ can satisfy  \cref{eqnGCDmurray}.
Thus we must have $\gcd(\MFd_1, c)=1$.

Assume condition (c)(i) does not hold. Then either $s=1$ or $s>1$ and $b_i=0$ for $i\in[2,s]$.

If $s=1$ then \cref{eqnGCDmurray} becomes $1=\gcd(a_1\MFd_1+b_1)=a_1\MFd_1+b_1=a_1\MFd_1+c$ since $c=b_1$, and we have condition (c)(ii).

Else $s>1$. If $\MFd_2=0$ then $\MFd_i=0$ for $i>2$, and since $b_i=0$ for $i\in[2,s]$  
\cref{eqnGCDmurray} becomes $1=\gcd(a_1\MFd_1+b_1, 0,\dots, 0)=a_1\MFd_1+b_1$ so $c \equiv 1\mod{\MFd_1}$ and we have condition (c)(iii).
\end{proof}

Recall from p.\pageref{pageMATRIXSUBSPANB} that \MatrixSubspanB may be stated as follows: given $A \in \Z^{m \times n}$, $b \in \Z^m$, $\ell \in \Z$ where $ \ell\in[0,n-1]$, decide if there is an integer $n$-matrix $\nu = (\nu_1, \dots,\nu_n)^T \in \Z^n$ such that $A\nu + b = 0$ and $\gcd(\nu_{n-\ell+1},\dots,\nu_n) = 1$.

\begin{proposition}
\label{prop:EQMATBSUBSPANZ_P}
    \MatrixSubspanB is in \P
\end{proposition}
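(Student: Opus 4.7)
My plan is to decide \MatrixSubspanB by a pipeline of three polynomial-time reductions, ending in the explicit disjunctive criterion of \cref{lem:NEWgcdFacts}(3).

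First I will apply \cref{lem:Solve_Ax=b} to $(A,b)$: either this detects that $A\nu+b=0$ has no integer solution (in which case \MatrixSubspanB answers No) or it returns in polynomial time columns $u_1,\dots,u_k \in \Z^n$ and a vector $c\in\Z^n$ parametrising the full solution set as $\spanX{U}{c}$ for $U=(u_1\mid\cdots\mid u_k)$. By \cref{lem:span=Z_iff_gcd1}, the gcd condition $\gcd(\nu_{n-\ell+1},\dots,\nu_n)=1$ is the same as $\spanZ{(\nu|_\ell)^T}=\Z$, so the $d=1$ case of \cref{lem:remove_rows_span} reduces the problem to: does $\spanX{U|_\ell}{c|_\ell}$ contain a vector with coprime entries?

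Next I will compute a Smith normal form $(K,D,L)$ of $U|_\ell$ via \cref{thm:snf_poly} and set $r'=K^{-1}(c|_\ell)\in\Z^\ell$, using \cref{lem:poly_processes} to invert the unimodular matrix $K$. By \cref{lem:NEWgcdFacts}(1), the reduced problem becomes: does $\spanX{D}{r'}$ contain a vector with $\gcd=1$? Writing $D$ in the block form of \cref{def:SNF} with nonzero diagonal entries $\MFd_1\mid\cdots\mid\MFd_r$ and zeros elsewhere, the elements of $\spanX{D}{r'}$ are precisely $(a_1\MFd_1+r'_1,\dots,a_r\MFd_r+r'_r,r'_{r+1},\dots,r'_\ell)^T$ with $a_1,\dots,a_r\in\Z$ free. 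Padding the diagonal by $\MFd_{r+1}=\cdots=\MFd_\ell=0$ (which preserves the divisibility chain and keeps those coordinates fixed to $r'_i$) places us in the exact setting of \cref{lem:NEWgcdFacts}(3) with $s=\ell$ and $b_i=r'_i$.

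Finally I will test each of the disjunctive criteria (a), (b), (c)(i)--(iii) in \cref{lem:NEWgcdFacts}(3). Each is a Boolean combination of a gcd computation together with a few divisibility and congruence tests on integers already in hand, all polynomial-time by \cref{lem:poly_processes}. Chaining these subroutines yields a polynomial-time decision procedure for \MatrixSubspanB. The main subtlety is justifying that the two reductions---from the bottom-$\ell$-rows formulation to the SNF-diagonal formulation via $K^{-1}$, and from the diagonal formulation to condition (3) via zero-padding the divisibility chain---faithfully preserve the gcd question. Both facts are already packaged inside \cref{lem:NEWgcdFacts}, so the real work of this proposition has been done in the preparatory lemmas; what remains here is simply assembling them into a pipeline and observing that every step is polynomial time.
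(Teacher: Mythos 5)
Your proposal is correct and follows essentially the same route as the paper's own proof: solve $A\nu+b=0$ via \cref{lem:Solve_Ax=b} to get the affine solution set $\spanX{U}{c}$, restrict to the bottom $\ell$ coordinates via \cref{lem:remove_rows_span}, normalise by the SNF of $U|_\ell$ and \cref{lem:NEWgcdFacts}~(1), and then decide using the disjunctive criterion of \cref{lem:NEWgcdFacts}~(3) with the zero-padded divisibility chain. The only cosmetic difference is that you invoke \cref{lem:span=Z_iff_gcd1} explicitly inside the proof, whereas the paper uses it just beforehand to restate the problem in gcd form.
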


    \begin{proof}
	We solve \MatrixSubspanB by the following procedure. Given $A \in \Z^{m \times n}$, $b \in \Z^m$, $\ell \in \Z$ where $ \ell\in[0,n-1]$:
\begin{enumerate}
		\item call the algorithm in \cref{lem:Solve_Ax=b} on input $A \in \Z^{m\times n}$ and $b \in \Z^m$
\item if this algorithm returns `No', return `No' to \MatrixSubspanB
\item else let $u_1,\dots,u_m \in \Z^n$ and $c'=(c'_1 \ \cdots \ c'_n)^T\in \Z^n$ be the output of this algorithm, and set $U \in \Z^{n\times m}$ be the matrix whose $i$-th column is $u_i$ 
\item compute the SNF  $(K,D,L)$ of $U|_\ell\in\Z^{\ell\times m}$. Thus $K\in\GL(\ell,\Z)$, $L\in\GL(m,\Z)$, $D\in \Z^{\ell\times m}$ with diagonal entries $\MFd_i$ with $i \in [1,\dots,\rank(D)]$ and $U|_\ell = KDL$. Set:
\begin{itemize}
    \item $\MFd_i = 0$ for $i \in [\rank(D)+1,\ell]$
    \item $c = (c_1 \ \cdots \ c_\ell) = K^{-1}(c'_{n-\ell+1} \ \cdots \ c'_n) \in \Z^\ell$.
\end{itemize}
    \item Return `Yes' if one the following conditions hold, and `No' otherwise: 
		\begin{enumerate}
        \item $c_1,\dots,c_\ell = 0$ and $\MFd_1 \in \{-1,1\}$
        \item $\gcd(c_1,\dots,c_\ell) = 1$
        \item $\gcd(c_1,\dots,c_\ell) = f > 1$, $\gcd(\MFd_1, f) = 1$,  and
    \be
        \item $c_i \not = 0$ for some $i \in [2,\ell]$ 
        \item  $\ell=1$ and  $f \equiv 1\mod{\MFd_1}$
        \item $c_2,\dots,c_\ell = 0$ 
        and $f \equiv 1\mod{\MFd_1}$ or $\MFd_2 \not = 0$.
    \ee
    \end{enumerate}
  \end{enumerate}

	Step~(1) is polynomial time by \cref{lem:Solve_Ax=b}, step~(4) is polynomial time by \cref{thm:snf_poly}, and step~(5) is polynomial time by \cref{lem:poly_processes}. 

    If `No' is returned in step~(2), then there does not exist $x \in \Z^n$ which satisfies $Ax+b=0$, so we output `No' for \MatrixSubspanB. Thus, w.l.o.g we may assume there exists a solution to the procedure in \cref{lem:Solve_Ax=b}, which finds $U$ and $c'$ such that $Ax + b= 0$ if and only if $x \in \spanX{U}{c'}$.
    Let $r = (c'_{n-\ell+1} \ \dots \ c'_n)^T \in \Z^\ell$, we then can check if there exists $\nu \in \spanX{U}{c'}$ such that $\gcd(\nu_1 \ \cdots \ \nu_d) =1$, and by \cref{lem:remove_rows_span} $\nu$ exists if and only if there exists $\mu = (\mu_1 \ \cdots \ \mu_\ell)^T \in \spanX{U|_\ell}{r} \in \Z^{\ell}$ such that $\gcd(\mu_1,\dots,\mu_\ell) = 1$.

    By \cref{lem:NEWgcdFacts}~(\cref{item:GCD1Equiv}) such a $\mu$ exists if and only if there exists $v = (v_1 \ \cdots \ v_\ell)^T \in \spanX{D}{K^{-1}b}$ such that $\gcd(v_1,\dots,v_\ell) = 1$. As $c = K^{-1}r$, all elements in $\spanX{D}{c}$ take the form $a_1\MFd_1 + c_1 + \cdots + a_\ell\MFd_\ell + c_\ell$ for $a_1,\dots,a_\ell \in \Z$, so checking if $v$ exists is equivalent to checking if there exists $a_1,\dots,a_\ell \in \Z$ such that $\gcd(a_1\MFd_1 + c_1,\dots, a_\ell\MFd_\ell + c_\ell) = 1$ which is solved in step~(5) by \cref{lem:NEWgcdFacts}~(\cref{itemNEWgcd3}).
\end{proof}

\begin{proof}[Proof of \cref{thm:MatrixProbsInP}]
   \cref{prop:EQMATSUBSPAN_P,prop:EQMATBSUBSPANZ_P} immediately give \cref{thm:MatrixProbsInP}.
\end{proof}

\section{Proof of \cref{thm:MainDihedral}}
\label{sec:MainDihedral}

We now turn our attention to epimorphism onto a single finite target group. Specifically, we will prove that deciding whether there exists an epimorphism from a finitely presented group onto the dihedral group $D_{2n}$ of order $2n$, where $n$ is not a power of $2$,
is \NP-hard. Combined with \cref{lem:epi_finite}, this establishes that the epimorphism problem onto such a group is \NP-complete (\cref{thm:MainDihedral}). 
This  complements the work of Kuperberg and Samperton, who proved an analogous result when the target is a finite simple group (see Subsection~\ref{subsec:Kuperberg}).
Recall that for $\Epi{\Arb}{D_{2n}}$, the parameter $n$ is not part of the input, instead the input consists solely of a finite presentation for the source group. 

Our method is to once again relate deciding epimorphism to solving equations in some finitely generated group. Goldmann and Russell prove the following:

\begin{theorem}[{\cite[Theorem 3]{GOLDMANN2002253}}] \label{thm:eqn_d2n_NPcomp}
    Let $H$ be a finite group. The problem of deciding whether a system of equations over $H$ has a solution is
    \be
        \item \NP-complete if $H$ is non-abelian
        \item in \P if $H$ is abelian.
    \ee
\end{theorem}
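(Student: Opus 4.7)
The plan splits along the abelian/non-abelian dichotomy, with \NP-membership handled uniformly. On input a system $u_1,\dots,u_m$ of equations in variables $X_1,\dots,X_n$ over a fixed finite $H$, one non-deterministically guesses an assignment $\sigma\colon X_i\mapsto h_i\in H$ and verifies each $u_j(\sigma)=_H 1_H$ in time polynomial in the input length (since $H$ is fixed, each group multiplication costs $O(1)$). This handles the \NP\ upper bound in part~(1).

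For part~(2), the abelian case, the plan is to reduce to linear algebra over cyclic groups. Using \cref{lem:abel-Eqns-commute}, each equation can be written in commuted normal form $X_1^{\alpha_{i,1}}\cdots X_n^{\alpha_{i,n}}\MFc_i = 1_H$. By the structure theorem, $H\cong \Z/n_1\oplus\cdots\oplus\Z/n_k$, so projecting each equation onto each cyclic component yields $k$ independent systems of linear congruences. Deciding solvability of each such system is polynomial-time: one can compute a Smith Normal Form as in \cref{thm:snf_poly} (adapted to the ring $\Z/n_s\Z$) and read off solvability from the SNF. This gives a polynomial-time algorithm for the full system.

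For part~(1), \NP-hardness, the plan is a polynomial-time reduction from $3$-SAT. Fix any non-abelian $H$ and choose $a,b\in H$ with $c:=[a,b]\neq 1_H$. The reduction associates to each Boolean variable $x_i$ a group variable $Z_i$, pinned by gadget equations to take one of two distinguished values representing TRUE/FALSE (for instance $c$ and $1_H$); and to each clause $\ell_{i_1}\vee\ell_{i_2}\vee\ell_{i_3}$, a gadget equation built from conjugations and nested commutators in $a,b,c$ which is satisfied exactly when at least one literal is TRUE under the induced assignment. A Boolean assignment satisfies the formula if and only if the extended assignment solves the resulting system, and the total length is polynomial in the formula size.

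The principal obstacle is building logic gadgets that work for \emph{every} non-abelian $H$. The natural case split is according to whether $H$ admits a non-abelian simple quotient. In the non-solvable case, one can adapt Barrington-style simulations of $\mathsf{NC}^1$ circuits inside a non-abelian simple section of $H$, using conjugation to implement conjunctions. The solvable-but-non-abelian case is more delicate: iterated commutators of $[a,b]$ collapse after finitely many steps in a nilpotent group, so the encoding instead uses conjugation by carefully chosen coset representatives outside the centralizer of $c$ to realise AND/OR without premature trivialisation. Verifying soundness, completeness and polynomial total blow-up of the gadget combination completes the reduction, and together with the \NP\ upper bound yields \NP-completeness in part~(1).
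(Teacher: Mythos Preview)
The paper does not prove this theorem at all: it is quoted verbatim from Goldmann and Russell \cite[Theorem~3]{GOLDMANN2002253} and used as a black box in the reduction of \cref{thm:D2n_NPHARD}. So there is no ``paper's own proof'' to compare your attempt against.

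That said, let me comment on your sketch on its own merits. The \NP\ upper bound and the abelian case are fine and essentially standard. The gap is in the \NP-hardness argument for non-abelian $H$, specifically the solvable (in particular nilpotent) non-abelian case. You correctly identify this as the ``principal obstacle'', but your proposed fix --- ``conjugation by carefully chosen coset representatives outside the centralizer of $c$ to realise AND/OR without premature trivialisation'' --- is a placeholder, not a construction. In a nilpotent non-abelian group such as $D_8$ or $Q_8$, every iterated commutator eventually dies, and conjugation alone does not obviously rescue the gadget: you would need to exhibit concrete equations whose solution set over $H$ computes OR on three inputs, and verify soundness and completeness. The Goldmann--Russell paper does exactly this via a careful encoding (reducing from graph $3$-colourability and exploiting the structure of a minimal non-abelian quotient of $H$), and the nilpotent case genuinely requires work that your sketch does not supply. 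Without that, the hardness direction of part~(1) is not established.
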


Fix
\begin{equation*}
    \Gen{s, t}{s^2 = t^n = 1, \, sts = t^{-1}}
\end{equation*}
as a presentation for $D_{2n}$. Using these relations,  each element of $D_{2n}$ can be expressed uniquely as a word of the form $\alpha t^r$, where $\alpha \in \{1, s\}$ and $r \in [0, n-1]$. 

The automorphism group of $D_{2n}$ is straightforward to compute (see, for example, \cite{Jerrythesis}).

\begin{lemma}\label{lem:Aut_D2n} For $r,\mu\in\Z$ let  $\varphi_{r, \mu}\colon D_{2n}\to D_{2n}$ be the map   $s\mapsto  st^r,  t\mapsto  t^\mu$.
    If $n \geq 3$, then
    $\Aut(D_{2n}) = \{\varphi_{r, \mu} \mid r \in [0, n-1], \mu \in [1, n-1], \gcd(\mu, n) = 1\}$.
  
\end{lemma}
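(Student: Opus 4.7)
The plan is to establish both inclusions for the described set.

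For the easy direction, I would invoke von Dyck's lemma (\cref{lem:vonD}) with the presentation $\Gen{s,t}{s^2, t^n, stst}$ to show that for any parameters $r,\mu$ as stated, the set map $s\mapsto st^r$, $t\mapsto t^\mu$ extends to a group homomorphism $\varphi_{r,\mu}$. The three relator checks are short: $(st^r)^2 = st^rst^r = t^{-r}t^r = 1$; $(t^\mu)^n = 1$ is immediate; and $\varphi_{r,\mu}(stst)$ collapses to $1$ by repeatedly using $t^ks = st^{-k}$. Surjectivity of $\varphi_{r,\mu}$ then follows because $\gcd(\mu,n)=1$ gives $\langle t^\mu\rangle = \langle t\rangle$, so $t$ lies in the image, and then $s = (st^r)t^{-r}$ does too; finiteness of $D_{2n}$ promotes surjectivity to bijectivity.

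For the converse, the crucial observation is that $\langle t\rangle$ is characteristic in $D_{2n}$ when $n\geq 3$. My argument will be that every reflection $st^i$ has order $2$ (by the direct computation $(st^i)^2 = st^ist^i = t^{-i}t^i = 1$), so every element of order $n\geq 3$ lies in $\langle t\rangle$, making $\langle t\rangle$ the unique cyclic subgroup of order $n$. Given any automorphism $\varphi$, this forces $\varphi(t)$ to be a generator of $\langle t\rangle$, i.e.\ $\varphi(t)=t^\mu$ with $\mu\in[1,n-1]$ and $\gcd(\mu,n)=1$, and forces $\varphi(s)$ into the nontrivial coset $s\langle t\rangle$, i.e.\ $\varphi(s)=st^r$ for some $r\in[0,n-1]$. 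This identifies $\varphi$ with $\varphi_{r,\mu}$ on generators and hence globally.

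No step presents a serious obstacle. The only point requiring mild care is the $n\geq 3$ hypothesis, which is needed for the order argument above; for $n=2$ the group $D_4$ is the Klein four-group and $\langle t\rangle$ fails to be characteristic, while for $n\geq 3$ the element $t$ has order at least $3$, cleanly separating rotations from reflections by order.
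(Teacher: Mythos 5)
Your proof is correct and complete. Both directions are sound: the forward inclusion follows from von Dyck's lemma plus the observation that $\gcd(\mu,n)=1$ forces surjectivity (hence bijectivity on a finite group), and the reverse inclusion follows from the characteristic-subgroup argument showing $\varphi(t)\in\langle t\rangle$ (because all elements outside $\langle t\rangle$ have order $2$, while $t$ has order $n\geq 3$), which then forces $\varphi(s)\in s\langle t\rangle$. Note that the paper does not actually supply a proof of this lemma — it defers to an external reference (a thesis) with the remark that the computation is ``straightforward'' — so there is no in-paper argument to compare against; your approach is the standard one and fills that gap cleanly.
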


The proof of \cref{thm:MainDihedral} is divided into three cases which require slightly different techniques.

\subsection{Odd Case}
For $n > 1$ odd, we will show that deciding whether a system of equations over $D_{2n}$ has a solution can be reduced to $\Epi{\Arb}{D_{2n}}$ in polynomial time.

\begin{lemma}\label{lem:UsefulConjDn}
    If $n > 1$ is odd, then 
    \[
    D_{2n} = \left\{\alpha_0 \left({}^{\alpha_1}t\right) \cdots \left({}^{\alpha_n}t\right) \mid \alpha_i \in \{1, s\}\right\}.
    \]
\end{lemma}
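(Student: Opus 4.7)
The plan is to unpack the conjugation notation, reduce the problem to a statement about sums of signs modulo $n$, and then exploit that $\gcd(2,n)=1$ when $n$ is odd.

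First I would compute the two possible values of ${}^{\alpha_i}t$. Since $s^2=_G 1$ and $sts=_G t^{-1}$ in $D_{2n}$, we have ${}^1 t = t$ and ${}^s t = sts^{-1} = sts = t^{-1}$. Hence, associating $\epsilon_i = +1$ with $\alpha_i = 1$ and $\epsilon_i = -1$ with $\alpha_i = s$, every product on the right-hand side has the form
\[
\alpha_0 \, t^{\epsilon_1} t^{\epsilon_2} \cdots t^{\epsilon_n} = \alpha_0 \, t^{\epsilon_1 + \cdots + \epsilon_n},
\]
with $\alpha_0 \in \{1,s\}$ and $\epsilon_i \in \{\pm 1\}$.

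Since every element of $D_{2n}$ is uniquely of the form $\alpha t^r$ with $\alpha \in \{1,s\}$ and $r \in [0, n-1]$, taking $\alpha_0 = \alpha$ handles the coset of $\gen{t}$. It remains to show that every residue $r \in [0, n-1]$ is hit by some choice of signs $(\epsilon_1, \ldots, \epsilon_n)$. If exactly $j$ of the $\epsilon_i$ equal $-1$ (with $j \in [0, n]$), then the exponent is $n - 2j$, so I need $j$ satisfying $n - 2j \equiv r \pmod{n}$, equivalently $2j \equiv -r \pmod{n}$.

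The key (and only) place the hypothesis that $n$ is odd enters is precisely here: since $\gcd(2,n) = 1$, the element $2$ is a unit modulo $n$, so the congruence $2j \equiv -r \pmod{n}$ admits a solution $j \in [0, n-1] \subseteq [0,n]$. Choosing any such $j$ of the $\epsilon_i$'s to be $-1$ and the rest to be $+1$ produces the desired element. There is no real obstacle here; the only thing worth being careful about is to verify that the range $j \in [0,n]$ is compatible with the number of available slots (which it is, because we have $n$ slots $\alpha_1, \ldots, \alpha_n$ and $j$ is allowed to be $0$ or $n$).
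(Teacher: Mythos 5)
Your proof is correct and follows essentially the same route as the paper: unpack ${}^{\alpha_i}t$ to $t^{\pm 1}$, reduce to showing the exponent $n-2j$ (with $j$ the number of $-1$'s) covers all residues mod $n$, and use oddness of $n$ to conclude. The only difference is cosmetic: where the paper tabulates the values $n-2\ell$ for $\ell\in[0,n]$ and reads off the residues, you invoke invertibility of $2$ modulo $n$ to solve $2j\equiv -r\pmod n$, which is a slightly slicker way to land the same punch.
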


\begin{proof}
    Since ${}^{\alpha_i}t \in \{t, t^{-1}\}$ for $\alpha_i \in \{1, s\}$, we have
    \[
    \alpha_0 \left({}^{\alpha_1}t\right) \cdots \left({}^{\alpha_n}t\right) = \alpha_0 t^{n - \ell}t^{ - \ell} = \alpha_0 t^{n - 2\ell}
    \]
    where $\ell = \abs{\{i \in [1, n] \mid \alpha_i = s\}} \in [0, n]$. Then, using the 
 values in Table~\ref{table:odd_table} 
 \begin{table}[h]\label{table:odd_table}
    \centering
   \[ \begin{array}{|c|c|c|c|c|c|c|c|c|c|}
        \hline
        \ell & 0 & 1 & \cdots & \tfrac{n-1}{2} & \tfrac{n+1}{2}   & \cdots  & n \\
        \hline
        n-2\ell & n & n -2  & \cdots & 1  & -1 & \cdots  & -n\\
              \hline
        (n-2\ell)+n & 0 &   & \cdots &  &   n-1 & \cdots & 0\\
        \hline
    \end{array}\]
    \caption{Computing exponents of $t$ in \cref{lem:UsefulConjDn}}
    \end{table}
        and the fact that $t^n=1$, we have  $\{\alpha_0 t^{n - 2\ell} \mid \alpha_0 \in \{1, s\}, \ell \in [0, n]\} = \{\alpha_0 t^r \mid \alpha_0 \in \{1, s\},  r \in [0, n-1]\}$,
    which proves the claim.
\end{proof}

\begin{lemma}\label{lem:ODDD2nCommute}
    If $n > 1$ is odd, $r \in [0, n-1]$, and $x \in D_{2n}$ commutes with $st^r$, then $x \in \{1, st^r\}$. In particular,  $Z(D_{2n})=\{1\}$. 
\end{lemma}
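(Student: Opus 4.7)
The plan is to use the fact that every element of $D_{2n}$ has a unique normal form $\alpha t^k$ with $\alpha \in \{1,s\}$ and $k \in [0,n-1]$, and to split into two cases based on whether the commuting element $x$ lies in the cyclic subgroup $\langle t\rangle$ or in the coset $s\langle t\rangle$. The key computation relies only on the defining relation $sts = t^{-1}$, equivalently $ts = st^{-1}$.

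In the first case, write $x = t^k$. Then $x \cdot st^r = t^k s t^r = s t^{r-k}$ while $st^r \cdot x = s t^{r+k}$, so commuting forces $t^{2k} = 1$, i.e., $2k \equiv 0 \pmod n$. Because $n$ is odd, $\gcd(2,n)=1$, so $k \equiv 0 \pmod n$ and $x = 1$. In the second case, write $x = st^k$. Then $x \cdot st^r = st^k s t^r = t^{r-k}$ while $st^r \cdot x = st^r st^k = t^{k-r}$, so commuting forces $t^{2(r-k)} = 1$, i.e., $2(r-k) \equiv 0 \pmod n$. The same parity argument gives $k \equiv r \pmod n$ and hence $x = st^r$. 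Together these two cases yield $x \in \{1, st^r\}$.

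For the ``in particular'' clause, any $z \in Z(D_{2n})$ commutes with both $s = st^0$ and $st$; applying the first part with $r = 0$ gives $z \in \{1,s\}$, and with $r = 1$ gives $z \in \{1, st\}$, so $z \in \{1,s\} \cap \{1,st\} = \{1\}$.

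There is no real obstacle here; the argument is a short direct computation that crucially uses the oddness of $n$ to invert $2$ modulo $n$. (In the even case, $t^{n/2}$ is central, which is why the hypothesis $n$ odd is essential.)
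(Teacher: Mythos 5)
Your proof is correct and takes essentially the same approach as the paper's: split $x$ into the two normal-form cases $t^k$ and $st^k$, compute that commuting with $st^r$ forces $t^{2k}=1$ or $t^{2(r-k)}=1$ respectively, and use the oddness of $n$ to conclude $k=0$ or $k=r$. The only cosmetic differences are that you invoke $\gcd(2,n)=1$ explicitly where the paper argues from the range $[0,n-1]$, and you derive triviality of the center from $s$ and $st$ where the paper uses $st$ and $st^2$; both are fine.
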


\begin{proof} Write $x=\alpha t^\ell$ for $\alpha\in\{1,s\}$ and  $\ell\in[1,n-1]$. 
If 
  $x=t^\ell$ then  $[x,st^r]=t^{2\ell}=1$  if and only if  $\ell=0$, so $x=1$. If  $x=st^\ell$ then $[x,st^r]=t^{2r-2\ell}=1$ if and only if $n$ divides $2(r-\ell)$, and since $r,\ell\in[0,n-1]$ we have  $r=\ell$. This implies the center is trivial for $n\geq 3$ since $x\in Z(D_{2n})$ implies $x$ commutes with $st$ and $st^2$.
  \end{proof}

\begin{definition}[Odd normal form]\label{defn:D2nNF_pt1}
    Let $n>1$ be an odd integer, $\bbX=\{X_1, X_1^{-1},\dots,X_k, X_k^{-1}\}$ and $\bbY=\{Y_{0,1}, Y_{0,1}^{-1},\dots,Y_{n,k}, Y_{n,k}^{-1}\}$. Define $\ONF \colon (\bbX\cup\{s,t,t^{-1}\})^\ast \to (\bbY\cup\{s,t,t^{-1}\})^\ast$ to be the monoid homomorphism induced by the set map 
 \[
        \begin{array}{lllllll}
            X_j \mapsto Y_{0,j}\cdot ({}^{Y_{1,j}}t)\cdot ({}^{Y_{2,j}}t) \cdots  ({}^{Y_{n,j}}t), &\quad &  X_j^{-1} \mapsto 
          ({}^{Y_{n,j}}t)^{-1}\cdots  ({}^{Y_{1,j}}t)^{-1} \cdot Y_{0,j}^{-1}; &\quad j \in [1,k] \\
            s \mapsto s, 
            t \mapsto t, t^{-1}\mapsto t^{-1}.
        \end{array}
        \]
\end{definition}

\begin{lemma}\label{lem:d2_p1_Sol_iff_D2nNF}
    Let $n,\bbX, \bbY$ and $\ONF$ be as \cref{defn:D2nNF_pt1}. 
    Let $(u_i)_{[1,m]}$ be a system of equations in $D_{2n}$, where each equation $u_i \in (\bbX \cup \{s, t, t^{-1}\})^\ast$. Then there exists a solution $\sigma_1 \colon \bbX \to D_{2n}$ to $(u_i)_{[1,m]}$ if and only if there exists a solution $\sigma_2 \colon \bbY \to \{1, s\} \subseteq D_{2n}$ to $(\ONF(u_i))_{[1,m]}$.
\end{lemma}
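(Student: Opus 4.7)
The plan is to translate directly between the two systems using \cref{lem:UsefulConjDn} and the fact that $\ONF$ is, by construction, a monoid homomorphism that fixes $s,t,t^{-1}$. Write $\zeta_j := Y_{0,j}\cdot({}^{Y_{1,j}}t)\cdots({}^{Y_{n,j}}t)$, so that $\ONF(X_j)=\zeta_j$ and $\ONF(X_j^{-1})=\zeta_j^{-1}$; in particular $\ONF(w)$ is obtained from $w$ by replacing every occurrence of $X_j$ (resp.\ $X_j^{-1}$) by $\zeta_j$ (resp.\ $\zeta_j^{-1}$) and leaving $s,t,t^{-1}$ untouched. The main content of the statement is really already in \cref{lem:UsefulConjDn}; the remainder is bookkeeping about composing valuations with $\ONF$.

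For the forward direction, suppose $\sigma_1$ solves $(u_i)_{[1,m]}$. For each $j\in[1,k]$ apply \cref{lem:UsefulConjDn} to $\sigma_1(X_j)\in D_{2n}$ to obtain values $\alpha_{i,j}\in\{1,s\}$, $i\in[0,n]$, with
\[
\sigma_1(X_j)=\alpha_{0,j}\cdot({}^{\alpha_{1,j}}t)\cdots({}^{\alpha_{n,j}}t).
\]
Define $\sigma_2\colon\bbY\to\{1,s\}$ by $\sigma_2(Y_{i,j})=\alpha_{i,j}$ (and $\sigma_2(Y_{i,j}^{-1})=\alpha_{i,j}^{-1}=\alpha_{i,j}$, which is consistent since every element of $\{1,s\}$ has order at most $2$, so $\{1,s\}$ is closed under inversion). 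Then by construction
\[
\sigma_2(\ONF(X_j))=\sigma_2(\zeta_j)=\alpha_{0,j}\cdot({}^{\alpha_{1,j}}t)\cdots({}^{\alpha_{n,j}}t)=\sigma_1(X_j),
\]
and similarly $\sigma_2(\ONF(X_j^{-1}))=\sigma_1(X_j^{-1})$. Since both $\sigma_2\circ\ONF$ and $\sigma_1$ are induced monoid homomorphisms from $(\bbX\cup\{s,t,t^{-1}\})^\ast$ to $D_{2n}$ that agree on each generator (noting that $\ONF$ fixes $s,t,t^{-1}$), they are equal as maps. Applying this to each $u_i$ gives $\sigma_2(\ONF(u_i))=\sigma_1(u_i)=1_{D_{2n}}$, so $\sigma_2$ solves $(\ONF(u_i))_{[1,m]}$.

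For the reverse direction, suppose $\sigma_2\colon\bbY\to\{1,s\}$ solves $(\ONF(u_i))_{[1,m]}$. Define $\sigma_1(X_j):=\sigma_2(\zeta_j)\in D_{2n}$ (and $\sigma_1(X_j^{-1}):=\sigma_1(X_j)^{-1}$, which is automatic). Exactly the same argument as above shows that $\sigma_1$ and $\sigma_2\circ\ONF$ coincide as monoid homomorphisms on $(\bbX\cup\{s,t,t^{-1}\})^\ast$, so $\sigma_1(u_i)=\sigma_2(\ONF(u_i))=1_{D_{2n}}$ for each $i$.

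The only nontrivial ingredient is \cref{lem:UsefulConjDn}; everything else is the universal property that $\ONF$ was constructed to have, namely $\sigma\circ\ONF=\sigma'$ whenever $\sigma(Y_{i,j})$ are chosen so that $\sigma(\zeta_j)=\sigma'(X_j)$. The only point that needs a moment's care is the consistency with inverses, which works because $\sigma_2$ is required to take values in the order-$2$ subgroup $\{1,s\}\le D_{2n}$, so there is no ambiguity in how it acts on the formal inverses $Y_{i,j}^{-1}$.
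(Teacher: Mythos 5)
Your proposal is correct and takes essentially the same approach as the paper: both rely on \cref{lem:UsefulConjDn} to show that substituting $\ONF(X_j)$ for $X_j$ and restricting the $Y_{i,j}$ to take values in $\{1,s\}$ gives a bijective translation between solutions of the two systems. The paper's proof is simply stated more tersely, leaving implicit the bookkeeping about extending valuations to monoid homomorphisms and the compatibility with formal inverses, which you have spelled out correctly.
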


\begin{proof}
    For $i\in[1,m]$ each equation is a word $u_i(s,t,t^{-1}, X_1, X_1^{-1},\dots,X_k, X_k^{-1})$.
    By \cref{lem:UsefulConjDn}, replacing each variable $X_j$ by the word $Y_{0,j}\left({}^{Y_{1,j}}t\right) \cdots \left({}^{Y_{n,j}}t\right)=\ONF(X_j)$ in each equation and restricting $Y_{i,j}$ to take values in $\{1,s\}$ does not change the set of solutions. 

    Thus, we can rewrite each $u_i$ as $\ONF(u_i)$, and the result follows.
\end{proof}

Next, we describe a way to build a finitely presented group from a system of equations over $D_{2n}$.

\begin{definition}
[Group presentation for odd dihedral case]
\label{defn:D2nNF_group}
    Let $n, \bbX,\bbY,\ONF$, 
   and $(u_i)_{[1,m]}$ be as in \cref{lem:d2_p1_Sol_iff_D2nNF}.
   Let $\cG_{n,k} = \{g_{i,j} \mid i\in[0,n],j\in[1,k]\}$  be a set of  $(n+1)k$ distinct letters.
    Define $\lambda\colon (\{s,t,t^{-1}\} \cup\bbY)^\ast \to (\{a, d,d^{-1}\} \cup \cG_{n,k}\cup\cG_{n,k}^{-1})^\ast$ to be the monoid homomorphism  induced by the bijection
   \[  \lambda\colon\left\{\begin{array}{llllllll}
            s &\mapsto a \\
            t &\mapsto  d,& \quad &  t^{-1}&\mapsto d^{-1} \\
            Y_{i,j} &\mapsto g_{i,j}, &\quad&
            Y_{i,j}^{-1} &\mapsto g_{i,j}^{-1}; & \quad i \in [0,n],j\in[1,k].
    \end{array}\right.\]
Then   $\DOddGroup(n,(u_i)_{[1,m]})$ is the group with presentation
                \[\Gen
             { \{a, d\}\cup  \cG_{n,k} }{ \{a^2,  d^n, adad, \lambda(\ONF(u_i)), [g,g'], [g,a], g^2\mid  i\in[1,m], g,g' \in \cG_{n,k}    \}   }. \]
\end{definition}

\begin{remark}\label{rmk:OddDnPolyTimeConstruct}It is clear that for $n$  a fixed constant, the finite presentation for $\DOddGroup(n,(u_i)_{[1,m]})$ can be constructed in linear time in the size $k+\sum_{i=1}^m|u_i|$ of the system of equations. 
\end{remark}

The idea of this construction is to force any epimorphism from $\DOddGroup(n,(u_i)_{[1,m]})$ to $D_{2n}$ to send $a$ to $\varphi(s)$, $d$ to $\varphi(t)$, and $g_{i,j}$ to $\{1,\varphi(s)\}$ if and only if the system of equations has a solution, where $\varphi$ is an automorphism of $D_{2n}$. This is the content of the next two lemmas.

\begin{lemma}\label{lem:d2_pt1_restricted_epi}
    If  $\psi\colon \DOddGroup(n,(u_i)_{[1,m]}) \to D_{2n}$ is an  epimorphism, then there exists 
    $\varphi \in \Aut(D_{2n})$ such that
    \begin{align*}
        \psi\colon \begin{cases}
            a &\mapsto \varphi(s) \\
            d &\mapsto \varphi(t) \\
            g_{i,j} &\mapsto  \gamma_{i,j} \in \gen{\varphi(s)}; \quad i \in [0,n], j \in [1,k]. 
        \end{cases}
    \end{align*}
\end{lemma}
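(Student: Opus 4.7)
The plan is to use the defining relations of $\DOddGroup(n,(u_i)_{[1,m]})$ (apart from the equation relations $\lambda(\ONF(u_i))$, which are not needed here) together with the structure of $D_{2n}$ for $n$ odd to successively pin down the images of $a$, $d$, and the $g_{i,j}$.

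First, from $a^2=1$ the element $\psi(a)$ has order dividing $2$, so $\psi(a) \in \{1\} \cup \{st^r \mid r \in [0,n-1]\}$. From $d^n=1$ the element $\psi(d)$ has order dividing $n$; since $n$ is odd, every reflection has order exactly $2 \nmid n$, so $\psi(d) \in \gen{t}$ and we may write $\psi(d)=t^\mu$ for some $\mu \in [0,n-1]$. Next I would rule out $\psi(a)=1$: in that case the relation $adad=1$ collapses to $\psi(d)^2=1$, forcing $2\mu \equiv 0 \pmod n$, and since $n$ is odd this gives $\mu=0$, hence $\psi(d)=1$. The image of $\psi$ would then be generated by $\{\psi(g_{i,j})\}$, which pairwise commute (from $[g,g']=1$) and square to $1$ (from $g^2=1$), hence form an abelian subgroup, contradicting surjectivity onto the non-abelian $D_{2n}$. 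Therefore $\psi(a)=st^r$ for some $r \in [0,n-1]$.

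Next, the relations $[g_{i,j},a]=1$ say each $\psi(g_{i,j})$ centralises $\psi(a)=st^r$, so by \cref{lem:ODDD2nCommute} we have $\psi(g_{i,j}) \in \{1, st^r\}$. In particular every $\psi(g_{i,j})$ lies in $\gen{st^r}$. For $\psi$ to be surjective we therefore need $\gen{\psi(a),\psi(d)} = \gen{st^r, t^\mu}$ to be all of $D_{2n}$, which is equivalent to $\gen{t^\mu}=\gen{t}$, i.e.\ $\gcd(\mu,n)=1$ and $\mu \in [1,n-1]$.

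By \cref{lem:Aut_D2n}, the map $\varphi := \varphi_{r,\mu}$ is then an automorphism of $D_{2n}$ with $\varphi(s)=st^r=\psi(a)$ and $\varphi(t)=t^\mu=\psi(d)$, and $\psi(g_{i,j}) \in \{1, st^r\} = \gen{\varphi(s)}$, giving the claimed conclusion. The only non-routine step in this plan is eliminating the trivial case $\psi(a)=1$: this is where the mutual commutativity of the $g_{i,j}$'s, together with the oddness of $n$ applied to the dihedral relation, are essential to force a contradiction with surjectivity.
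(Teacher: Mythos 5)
Your proof is correct and, in two places, takes a noticeably cleaner route than the paper's. First, the paper establishes the constraint on $\psi(d)$ indirectly: it first rules out $\psi(d)^2=1$ (using the parity argument $\psi(d)=\psi(d)(\psi(d)^2)^{(n-1)/2}$), and later uses the relation $adad$ to eliminate the case $\psi(d)=st^p$. You instead observe directly from $d^n=1$ that $\psi(d)$ has order dividing the odd number $n$, so it cannot be a reflection (which has order $2$), giving $\psi(d)\in\gen{t}$ in one step. Second, for $\gcd(\mu,n)=1$, the paper carries out an explicit calculation rewriting a word spelling $t$ over $\{st^r,t^p,t^{-p}\}$ to extract the gcd condition, whereas you appeal to the standard fact that $\gen{st^r,t^\mu}=D_{2n}$ iff $\gen{t^\mu}=\gen{t}$. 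Both shortcuts are legitimate and make the argument more transparent; the paper's longer word calculation is, however, a template reused in the harder even case (\cref{lem:D2_forced_epi_3}), which may explain why it is spelled out here. The rest of the structure — ruling out $\psi(a)=1$ via $adad$ to force $\psi(G)$ abelian, applying \cref{lem:ODDD2nCommute} to the $[g_{i,j},a]$ relations, and invoking \cref{lem:Aut_D2n} — matches the paper.
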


\begin{proof}
For readability we denote $\DOddGroup(n,(u_i)_{[1,m]})$ as $G$ for this proof. 
If $\psi(d)=1$ then  $\psi(G)$ is abelian since it is generated by $\psi(a)$ and $\psi(g_{i,j})$ which all commute, which means $\psi$ is not surjective onto $D_{2n}$. Thus $\psi(d)\neq 1$.

Now suppose $\psi(d)^2 = 1$. Then since $d^n$ is a relation in $G$ and $n>1$ is odd we have \[1=\psi(d^n)=\psi(d)(\psi(d)^2)^{\nicefrac{(n-1)}2}=\psi(d),\] a contradiction.
Thus $\psi(d)^2\neq 1$.

Since $a^2$ is a relation in $G$, we have that $\psi(a) \in \{1, st^{r} \mid r \in [0,n-1]\}$ which is the set of all elements of order $2$ in $D_{2n}$. If $\psi(a)=1$, then by the relation $adad$, we have that $\psi(d)^2 = 1$ which is not possible, so $\psi(a)=st^r$ for some $r\in[0,n-1]$.

Since $[g_{i,j},a]$ is a relation in $G$ for all $g_{i,j}\in \cG_{n,k}$, $\psi(g_{i,j})$ commutes with $\psi(a)$ so by  \cref{lem:ODDD2nCommute}
$\gen{\psi(a),\psi(g_{0,1}),\dots,\psi(g_{n,k}) } = \gen{\psi(a)}$.

Since $adad$ is a relation in $G$, if  $\psi(d)=\alpha t^{p}$ with $\alpha\in\{1,s\}$ and $p\in[0,n-1]$ then 
\begin{align*}
   1= \psi(adad)&=st^r\alpha t^{p}st^r\alpha t^{p}=\begin{cases}
           st^rs t^{p}st^rs t^{p}=t^{-2r+2p} & \alpha=s\\
        st^{r+p}st^{r+p}=1 & \alpha=1.
    \end{cases}
\end{align*} 
If $\alpha = s$ then $r = p$ and $\psi(d)=\psi(a)$ which would mean $\psi$ is not surjective. Thus, $\alpha=1$ and $\psi(d)=t^p$ with $p\in[1,n-1]$ (since $\psi(d)\neq 1$).
It follows that $\psi(G)=\gen{\psi(a),\psi(t)}=\gen{st^r,t^p}$, so we can express any element in $\psi(G)$ as a word in $\{st^r,t^p,t^{-p}\}^\ast$. Since $\psi$ is surjective onto $D_{2n}\ni t$ we have 
\begin{align*}
t &=(t^p)^{i_0}(st^r)(t^p)^{i_1}\dots (st^r)(t^p)^{i_{2m}}  \quad \textnormal{(the number of $s$ letters must be even)}\\
&=
t^{-p i_0} 
t^{-r-p i_1}
t^{r+p i_2}
t^{-r-p i_3}\dots t^{-r-p i_{2m-1}}
t^{r+p i_{2m}}\\
&=
t^{p i_0}
t^{-p i_1}
t^{p i_2}
t^{-p i_3}\dots t^{-p i_{2m-1}}
t^{p i_{2m}}=(t^p)^{i_0-i_1+\dots+i_{2m}}
\end{align*}
so $n$ divides $1-p x$ for $ x=\sum_{j=0}^{2m} (-1)^ji_j \in \Z$, so 
 $\gcd(p,n)=1$.    Then by \cref{lem:Aut_D2n} there exists $\varphi_{r,p} \in \Aut(D_{2n})$ such that $\varphi_{r,p}(t) = t^{p}=\psi(d)$ and  $\varphi_{r,p}(s) = st^r=\psi(a)$.
\end{proof}

\begin{lemma}\label{lem:d2np1_iff_epi_and_sol}
    Let $n>1$, $\bbX = \{X_1,X_1^{-1}, \dots, X_k,X_k^{-1}\}$ and $(u_i)_{[1,m]}$ with   $u_i \in (\{s,t,t^{-1}\}\cup \bbX)^\ast$ be a system of equations over $D_{2n}$. There exists an epimorphism $\psi\colon \DOddGroup(n,(u_i)_{[1,m]}) \to D_{2n}$ if and only if there exists a solution $\sigma\colon \bbX \to D_{2n}$ to the system  $(u_i)_{[1,m]}$.
\end{lemma}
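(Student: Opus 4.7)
The plan is to prove both directions using the two preceding lemmas: Lemma \ref{lem:d2_pt1_restricted_epi} to constrain any epimorphism, and Lemma \ref{lem:d2_p1_Sol_iff_D2nNF} to reformulate solutions in terms of $\{1,s\}$-valued variables.

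For the forward direction, suppose $\psi\colon \DOddGroup(n,(u_i)_{[1,m]}) \to D_{2n}$ is an epimorphism. By Lemma \ref{lem:d2_pt1_restricted_epi}, there exists $\varphi \in \Aut(D_{2n})$ so that $\psi(a)=\varphi(s)$, $\psi(d)=\varphi(t)$, and each $\psi(g_{i,j})=\gamma_{i,j} \in \{1,\varphi(s)\}$ (using that $\gen{\varphi(s)}$ has order $2$). I would then define $\sigma\colon \bbX \to D_{2n}$ by setting $\sigma(X_j)=\varphi^{-1}(\psi(\lambda(\ONF(X_j))))$. Since $\varphi$ is a homomorphism, $\varphi^{-1}\circ\psi\circ\lambda$ sends $s\mapsto s$, $t\mapsto t$, and each $Y_{i,j}\mapsto \varphi^{-1}(\gamma_{i,j})\in\{1,s\}$, so evaluating on $\ONF(u_i)$ just yields $u_i$ with each $X_j$ replaced by $\sigma(X_j)$. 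As $\lambda(\ONF(u_i))$ is a relator in $\DOddGroup(n,(u_i)_{[1,m]})$, we have $\psi(\lambda(\ONF(u_i)))=1$, hence $\sigma(u_i)=\varphi^{-1}(1)=1$, establishing that $\sigma$ is a solution.

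For the backward direction, suppose $\sigma\colon \bbX \to D_{2n}$ solves $(u_i)_{[1,m]}$. By Lemma \ref{lem:d2_p1_Sol_iff_D2nNF} there is a solution $\sigma_2\colon \bbY \to \{1,s\}$ to $(\ONF(u_i))_{[1,m]}$. Define a set map $\psi$ on the generators of $\DOddGroup(n,(u_i)_{[1,m]})$ by $\psi(a)=s$, $\psi(d)=t$, and $\psi(g_{i,j})=\sigma_2(Y_{i,j})\in\{1,s\}$. I would then verify using \cref{lem:vonD} that $\psi$ extends to a homomorphism by checking each defining relator:
\begin{itemize}
\item $a^2,\, d^n,\, adad$ hold in $D_{2n}$ directly from the presentation of $D_{2n}$;
\item $g^2$ holds because $\psi(g)\in\{1,s\}$ and $s^2=1$;
\item $[g,a]$ and $[g,g']$ hold because $\psi(g),\psi(g')\in\{1,s\}=\gen{\psi(a)}$ all lie in an abelian subgroup;
\item $\lambda(\ONF(u_i))$: the homomorphism $\psi\circ\lambda$ restricted to $(\{s,t,t^{-1}\}\cup\bbY)^\ast$ agrees with the evaluation map $\sigma_2$ (extended by $s\mapsto s,\,t\mapsto t$), so $\psi(\lambda(\ONF(u_i)))=\sigma_2(\ONF(u_i))=1$.
\end{itemize}
Finally, since $\psi(a)=s$ and $\psi(d)=t$ generate $D_{2n}$, the homomorphism $\psi$ is surjective, completing the proof.

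The main work is bookkeeping: tracking how the three rewriting layers $\ONF$, $\lambda$, and $\psi$ (or $\sigma$) compose cleanly, and verifying that the relators imposed on the $g_{i,j}$ exactly match the restriction to $\{1,s\}$ used in the ONF. No step should present serious difficulty; the slight subtlety is confirming that $\varphi^{-1}\circ\psi\circ\lambda\circ\ONF$ evaluates $u_i$ with constants $s,t$ preserved and variables sent to admissible $D_{2n}$-values, which follows because $\varphi$ fixes the abstract presentation of $D_{2n}$.
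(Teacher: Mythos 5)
Your proof is correct and follows essentially the same approach as the paper. The one small variation is in the forward direction: you define $\sigma\colon\bbX\to D_{2n}$ directly as $\varphi^{-1}\circ\psi\circ\lambda\circ\ONF$, whereas the paper first extracts $\sigma_2\colon\bbY\to\{1,s\}$ and then invokes \cref{lem:d2_p1_Sol_iff_D2nNF} to pass back to $\bbX$; both routes work and rely on the same ingredients (\cref{lem:d2_pt1_restricted_epi} for the constraint on $\psi$, and the fact that the composed maps are monoid homomorphisms agreeing on generators).
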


\begin{proof}
    Assume that there exists an epimorphism $\psi'\colon \DOddGroup(n,(u_i)_{[1,m]}) \to D_{2n}$.
    By \cref{lem:d2_pt1_restricted_epi} there exists $\varphi \in \Aut(D_{2n})$ such that 
    \begin{align*}
        \psi'&\colon \begin{cases}
            a &\mapsto \varphi(s) \\
            d &\mapsto \varphi(t) \\
            g_{i,j} &\mapsto  \gamma_{i,j}' \in \gen{\varphi(s)}; \quad i \in [0,n], j \in [1,k] 
        \end{cases}
    \intertext{so letting $\psi = {\varphi^{-1}} \circ \psi'$ we have an epimorphism}
        \psi&\colon \begin{cases}
            a &\mapsto s \\
            d &\mapsto t \\
            g_{i,j} &\mapsto  \gamma_{i,j} \in \gen{s}; \quad  i \in [0,n], j \in [1,k]. 
        \end{cases}
    \end{align*}

    Define $\sigma\colon \bbY \to \{1,s\}$ by $\sigma(Y_{i,j}) = \gamma_{i,j}, \sigma(Y_{i,j}^{-1}) = \gamma_{i,j}^{-1}$.
    Note that since $s^2 = 1$, then for all $Y_{i,j} \in \bbY$, $\sigma(Y_{i,j}) = \sigma(Y_{i,j}^{-1})$, so w.l.o.g. we may assume $\bbY=\{Y_{0,1}, \dots, Y_{n,k}\}$. For $i\in[1,m]$ let $v_i\in(\{s,t,t^{-1}\}\cup\bbY$ be such that $\ONF(u_i)=v_i$. 
    Since $\psi$ is a homomorphism, for each relation $\lambda(\ONF(u_i))$ of $\DOddGroup$, $i \in [1,m]$ we have
     \begin{align*}
        1 = \psi(\lambda(\ONF(u_i))) &= \psi(\lambda(v_i(s,t,t^{-1}, Y_{0,1},\dots,Y_{n,k}))) \\
        &= \psi(v_i(a,d,d^{-1}, g_{0,1}, \dots, g_{n,k})) \\
        &= v_i(s,t,t^{-1}, 
        \gamma_{0,1}, \dots, \gamma_{n,k})\\
        &= \sigma(v_i(s,t,t^{-1},Y_{0,1},\dots,Y_{n,k})) = \sigma(\ONF(u_i))
    \end{align*}
    so $\sigma$ solves $(\ONF(u_i))_{[1,m]}$, and the result follows by \cref{lem:d2_p1_Sol_iff_D2nNF}.

Conversely, 
assume there exists a solution 
to $(u_i)_{[1,m]}$, so by \cref{lem:d2_p1_Sol_iff_D2nNF} there exists a solution $\sigma\colon \bbY \to \{1,s\}$ to $(\ONF(u_i))_{[1,m]}$, so for $i \in [1,m]$, if $\ONF(u_i) = v_i$  we have
    \begin{equation}\label{eqn:d2_pt1_ifSol_Hom}
        \sigma(\ONF(u_i)) = \sigma(v_i(s,t,t^{-1}, \bbY))=1. 
    \end{equation}

    Define $\psi\colon \{a,d,d^{-1}\}\cup\cG_{n,k}\cup\cG_{n,k}^{-1} \to D_{2n}$ as the set map
 \[
        \psi\colon \left\{\begin{array}{llllll}
            a &\mapsto s \\
            d &\mapsto t, &\quad &d^{-1}\mapsto t^{-1}\\
            g_{i,j} &\mapsto \sigma(Y_{i,j}) &
            \quad & g_{i,j}^{-1}  \mapsto \sigma(Y_{i,j})^{-1}; &
            \quad  &  g_{i,j} \in \cG.
        \end{array}\right.
\]

    Since the other relations in $\DOddGroup(n,(u_i)_{[1,m]})$ clearly map to $1$ in $D_{2n}$, 
    by \cref{lem:vonD}  $\psi$ induces a homomorphism from $\DOddGroup(n,(u_i)_{[1,m]})$ to $D_{2n}$ if and only if $\psi(\lambda(\ONF(u_i))) = 1$ for all $i\in[1,m]$.

   We have 
    \begin{align*}
        \psi(\lambda(\ONF(u_i))) &= \psi(\lambda(v_i(s,t,t^{-1}, \bbY)))\\ 
        &= v_i(s,t, t^{-1},\sigma(\bbY))= \sigma(v_i(s,t, t^{-1},\bbY))=1
        \quad \textnormal{ by \cref{eqn:d2_pt1_ifSol_Hom}}
    \end{align*}
   so $\psi$ is a homomorphism, which is surjective since  $\psi(\DOddGroup(n,(u_i)_{[1,m]})) = \gen{s,t} = D_{2n}$.
\end{proof}

\subsection{Even Case}

We now turn to the case where $n$ is even.
We begin by observing some preliminary facts.

\begin{lemma}\label{lem:d2n_even_facts}
    Let $n>2$ be even. 
    \begin{itemize}
        \item[(a)] For any element $x \in D_{2n}$, if $x^2 = 1$ 
        then $x \in \{1, t^{\nicefrac{n}2},    st^{r} \mid r \in [0,n-1]\}$
        \item[(b)] The centre $Z(D_{2n}) = \{1, t^{\nicefrac{n}2}\}$
        \item[(c)] If $st^a,st^b$ commute for $0\leq a\leq b\leq n-1$ then $b=a$ or $b=a+\frac{n}2$.
    \end{itemize}
\end{lemma}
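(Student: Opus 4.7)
The plan is to work with the normal form: every element of $D_{2n}$ can be written uniquely as $\alpha t^r$ with $\alpha \in \{1,s\}$ and $r \in [0,n-1]$. The key identity, which follows immediately from $sts = t^{-1}$ in the presentation, is $st^r s = t^{-r}$ for every $r \in \Z$, equivalently $t^r s = s t^{-r}$. All three items reduce to short direct calculations with this identity.

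For item (a), I would split on whether $\alpha = 1$ or $\alpha = s$. If $x = s t^r$, then $x^2 = (st^r)(st^r) = s(st^{-r})t^r = t^{-r}t^r = 1$, so every such element already has order dividing $2$. If $x = t^r$ with $r \in [0,n-1]$, then $x^2 = t^{2r}$, and $t^{2r} = 1$ holds in $D_{2n}$ if and only if $n \mid 2r$. Since $0 \le 2r \le 2(n-1)$, this forces $2r \in \{0,n\}$, i.e.\ $r = 0$ or $r = n/2$; here $n/2$ is an integer because $n$ is even. This gives exactly the listed set.

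For item (b), a central element $z$ must commute with both $s$ and $t$. Writing $z = \alpha t^r$: if $\alpha = s$, then $zt = st^{r+1}$ and $tz = t \cdot st^r = st^{r-1}$, so $zt = tz$ would force $t^2 = 1$, impossible for $n > 2$; hence $\alpha = 1$. Then $zs = t^r s = st^{-r}$ and $sz = st^r$, so $zs = sz$ forces $t^{2r} = 1$, giving $r \in \{0, n/2\}$ as in item~(a). Both $1$ and $t^{n/2}$ are easily checked to be central, yielding $Z(D_{2n}) = \{1, t^{n/2}\}$.

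For item (c), I would just compute the two products using $st^c s = t^{-c}$:
\begin{align*}
(st^a)(st^b) &= s(st^{-a})t^b = t^{b-a}, \\
(st^b)(st^a) &= s(st^{-b})t^a = t^{a-b}.
\end{align*}
Thus $st^a$ and $st^b$ commute iff $t^{2(b-a)} = 1$, iff $n \mid 2(b-a)$. With $0 \le a \le b \le n-1$ we have $0 \le 2(b-a) \le 2(n-1)$, so $2(b-a) \in \{0, n\}$, giving $b = a$ or $b = a + n/2$ as claimed. There is no real obstacle here; everything is an immediate consequence of the dihedral relation $sts = t^{-1}$ combined with the fact that $n$ is even (so that $n/2$ lies in $\Z$).
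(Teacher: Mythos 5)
Your proof is correct and follows essentially the same route as the paper's: write elements in the normal form $\alpha t^r$ and reduce everything to direct computations with the dihedral relation $sts = t^{-1}$, with the range $r \in [0,n-1]$ converting the divisibility condition $n \mid 2r$ (resp.\ $n \mid 2(b-a)$) into $r \in \{0, n/2\}$ (resp.\ $b - a \in \{0, n/2\}$). The only cosmetic difference is in item~(b), where you split the check into commutation with $t$ and with $s$ separately, while the paper phrases the same calculations as the commutators $[t, st^r]$ and $[t^r, s]$; the underlying arithmetic is identical.
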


\begin{proof} 
    Recall that every element of $D_{2n}$ can be uniquely expressed as a word $\alpha t^r$ where $\alpha\in\{1,s\}$ and $r\in[0,n-1]$. If $x=st^r$ then $(st^r)^2=st^rst^r=1$ for any $r\in[0,n-1]$. If $x=t^r$ then $t^{2r}=1$ if and only if $r=0$ or $r=\frac{n}2$, which gives item~(a).
   
    Item~(b) can be observed by noting that for any $r\in[0,n-1]$, $[t,st^r]=tst^rt^{-1}t^{-r}s=t^{2}\neq 1$ since $n>2$ (so no element $st^r$ can be in the center), and  $[t^r,s]=t^{2r}$ so $t^r$ is in the centre if and only if  $r=0$ or $\frac{n}2$. 

    For item~(c), if $st^a$ and $st^b$ commute then 
    \begin{align*}
       1= &[st^a,st^b] 
        = st^a st^b t^{-a} s t^{-b} s 
        = t^{-a} t^b t^{-a} t^b
        = t^{2(b-a)} 
    \end{align*} which means $n$ divides $2(b-a)$, 
    and $a,b \in [0,n-1]$ means $b-a \leq n-1$, so   
   $b-a \in \{0, \frac{n}{2}\}$.
\end{proof}

Our strategy requires a different proof for the cases \be\item $n=2^bc$ with $c>1$ odd and $b>1$
\item $n=2c$ with $c>1$ odd.\ee
For the first case we will show that deciding whether a system of equations over the dihedral group $D_{n}$ of order $n$ reduces to $\Epi{\Arb}{D_{2n}}$.
We alert the reader to the fact that here we are dealing with dihedral groups of different sizes.
Fix  
\begin{align*}
    D_n &= \Gen{s_1, t_1}{s_1^2 = t_1^{\nicefrac{n}2} = 1,\, s_1t_1s_1 = t_1^{-1}} \\
    D_{2n} &= \Gen{s_2, t_2}{s_2^2 = t_2^{n} \ \ \; = 1, \,   s_2t_2s_2 = t_2^{-1}}
\end{align*} as presentations for the groups $D_n,  D_{2n}$ respectively.

\begin{lemma}\label{lem:Di_Subgroup}
    Let $n = 4c$ where 
    $c\in\N_+$,  
    and \[H = \left\{\alpha_0\left({}^{\alpha_1}t_2\right)  \cdots  \left({}^{\alpha_{\nicefrac{n}2}}t_2\right)  \mid \alpha_i \in \{1,s_2, t_2^{\nicefrac{n}{2}}, s_2t_2^{\nicefrac{n}{2}} \}\right\}.\] Then $H$ is a subgroup of $D_{2n}$ that is isomorphic to $D_n$.
\end{lemma}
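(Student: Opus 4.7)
The plan is to show that $H$ coincides with the subgroup $K=\langle s_2, t_2^2\rangle$ of $D_{2n}$ and then to observe that $K \cong D_n$ from the defining relations.

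First I would record three basic calculations. Using that $n=4c$ makes $n/2=2c$ even and makes $t_2^{n/2}$ central of order $2$ in $D_{2n}$ (\cref{lem:d2n_even_facts}(b)), the set $\{1, s_2, t_2^{n/2}, s_2 t_2^{n/2}\}$ is a Klein four subgroup of $D_{2n}$. Next, for every $\alpha$ in this set a direct computation gives ${}^{\alpha}t_2 \in \{t_2, t_2^{-1}\}$: centrality of $t_2^{n/2}$ kills that factor, leaving only the effect of the $s_2$-component.

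Next I would analyze the product $\left({}^{\alpha_1}t_2\right)\cdots\left({}^{\alpha_{n/2}}t_2\right)$. Letting $\ell$ be the number of indices $i$ with ${}^{\alpha_i}t_2 = t_2^{-1}$, the product equals $t_2^{n/2-2\ell}$ for some $\ell \in [0,n/2]$. Since $n/2$ is even, this exponent is even, so the product lies in $\langle t_2^2\rangle$; and as $\ell$ varies, $(n/2 - 2\ell) \bmod n$ sweeps out exactly the even residues modulo $n$, hence all of $\langle t_2^2\rangle$. Because $t_2^{n/2}=(t_2^2)^{n/4}\in\langle t_2^2\rangle$, each choice of $\alpha_0 \in \{1,s_2,t_2^{n/2},s_2t_2^{n/2}\}$ multiplies the product by an element of $K=\langle s_2,t_2^2\rangle$, so $H\subseteq K$. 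The reverse inclusion is immediate: every element of $K$ has the form $s_2^\epsilon t_2^{2m}$ with $\epsilon\in\{0,1\}$ and $m\in[0,n/2-1]$, and one can realise it by setting $\alpha_0\in\{1,s_2\}$ according to $\epsilon$ and choosing the remaining $\alpha_i\in\{1,s_2\}$ so that the tail product equals $t_2^{2m}$.

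Finally, $K=\langle s_2,t_2^2\rangle$ is generated by an element $s_2$ of order $2$ and an element $t_2^2$ of order $n/2$ satisfying the dihedral conjugation relation $s_2(t_2^2)s_2 = t_2^{-2}$. By von Dyck's lemma (\cref{lem:vonD}) applied to the presentation of $D_n$, there is a surjection $D_n\twoheadrightarrow K$; counting elements ($|K|=n=|D_n|$) makes this an isomorphism. Combining with $H=K$ gives the lemma.

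The only step requiring care is verifying that $H=K$ as sets rather than just $H\subseteq K$; this is where the hypothesis $n=4c$ (as opposed to merely $n$ even) enters in an essential way, through the evenness of $n/2$ which forces the tail products into $\langle t_2^2\rangle$ and lets the central element $t_2^{n/2}$ be absorbed into $\langle t_2^2\rangle$. The rest is a routine bookkeeping exercise.
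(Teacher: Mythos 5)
Your proof is correct and follows essentially the same route as the paper: both reduce the tail product to $t_2^{n/2-2\ell}$, observe that the evenness of $n/2$ (coming from $n=4c$) forces these exponents to sweep out exactly the even residues, absorb the central factor $t_2^{n/2}$ into $\langle t_2^2\rangle$, and then identify $H$ with $\langle s_2, t_2^2\rangle\cong D_n$. The only cosmetic difference is that you argue the equality $H=\langle s_2,t_2^2\rangle$ via the two inclusions and close the isomorphism with a von Dyck plus counting argument, whereas the paper computes $H$ as a set directly and asserts the isomorphism.
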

 
\begin{proof}
    Since 
 ${}^{\alpha_i}t_2 =t_2$ for $\alpha_i \in \{1, t_2^{\nicefrac{n}{2}} \}$, and ${}^{\alpha_i}t_2 =t_2^{-1}$ for  $\alpha_i \in \{s_2, s_2t_2^{\nicefrac{n}{2}}\}$, 
 we have
    \[
    \alpha_0 \cdot ({}^{\alpha_1}t_2) \cdots ({}^{\alpha_{\nicefrac{n}{2}}}t_2) = \alpha_0 t_2^{(\nicefrac{n}{2}-\ell)-\ell} = \alpha_0 t_2^{\nicefrac{n}{2}-2\ell}
    \]
    where $\ell = \abs{\{i \in [1, \tfrac{n}{2}] \mid \alpha_i \in \{s_2, s_2t_2^{\nicefrac{n}{2}}\}\}}$. Thus \begin{align*}
H =& \left\{\alpha_0 t_2^{\nicefrac{n}{2}-2\ell} \mid 
    \alpha_0 \in \{1, s_2, t_2^{\nicefrac{n}{2}}, s_2t_2^{\nicefrac{n}{2}}\}, \ell \in [0, \tfrac{n}{2}]\right\}\\
    =&
    \left\{\alpha_0 t_2^{2(\nicefrac{n}{4}-\ell)} \mid 
    \alpha_0 \in \{1, s_2, t_2^{\nicefrac{n}{2}}, s_2t_2^{\nicefrac{n}{2}}\}, \ell \in [0, \tfrac{n}{2}]\right\}
    .        
    \end{align*}

Since $\frac{n}2$ is even, 
as $\ell$ ranges over $[0,\frac{n}2]$, we have the values in Table~\ref{table:l_r_table}.
 \begin{table}[h]\label{table:l_r_table}
    \centering
   \[ \begin{array}{|c|c|c|c|c|c|c|c|c|c|}
        \hline
        \ell & 0 & 1 & \cdots & \tfrac{n}{4}-1 & \tfrac{n}{4} & \tfrac{n}{4}+1 & \cdots & \tfrac{n}{2} - 1 & \tfrac{n}{2} \\
        \hline
        \tfrac{n}2-2\ell & \tfrac{n}{2} & \tfrac{n}{2} -2  & \cdots & 2 & 0 & -2 & \cdots & \tfrac{-n}{2}+2 & \tfrac{-n}{2}\\
              \hline
        (\tfrac{n}2-2\ell)+n &  &   & \cdots &  &  & n-2 & \cdots & \tfrac{n}{2}+2 & \tfrac{n}{2}\\
        \hline
    \end{array}\]
    \caption{Computing exponents of $t_2$ in \cref{lem:Di_Subgroup}}
    \end{table}
From these values  and using the fact that $t_2^n=1$ in $D_{2n}$ we see that as $\ell$ ranges over $[0,\tfrac{n}2]$, the term $t_2^{2(\nicefrac{n}{4}-\ell)}$ is equal to a term of the form $t^{2r}$ for $r$ ranging over $[0,\frac{n}2-1]$, with all values of $r$ realised in this range. Note that the additional term $t_2^{\nicefrac{n}2}$ does not add any new powers of $t_2$ since $\frac{n}2$ is even.
 Thus
  \[H =  \left\{\alpha_0 t_2^{2r} \mid 
    \alpha_0 \in \{1, s_2, t_2^{\nicefrac{n}{2}}, s_2t_2^{\nicefrac{n}{2}}\}, r \in [0, \tfrac{n}{2}-1]\right\}\] which is a subgroup since it coincides with  $\gen{s_2,t_2^2}$, and is clearly isomorphic to $D_n$ via the map 
    $s_1\mapsto s_2, t_1\mapsto t_2^2$.
\end{proof}

\begin{definition}[Even  normal form]\label{defn:D2n_evenNF_2}
    Let $n, k\in\N_+$ with $n$ even, $\bbX=\{X_1, X_1^{-1},\dots,X_k, X_k^{-1}\}$ and $\bbY=\{Y_{0,1}, Y_{0,1}^{-1},\dots,Y_{\nicefrac{n}2,k}, Y_{\nicefrac{n}2,k}^{-1}\}$.
     Define a monoid homomorphism $\ENFTwo \colon (\{s_1,t_1,t_1^{-1}\} \cup \bbX)^\ast \to (\{s_2,t_2^2,t_2^{-2}\} \cup\bbY)^\ast$ via the set map 
   \[ \begin{array}{llll}
            X_j \mapsto  Y_{0,j}  \cdot \left({}^{Y_{1,j}}t_2\right)  \cdots  \left({}^{Y_{\nicefrac{n}2,j}}t_2\right),
          &  \quad&
            X_j^{-1} \mapsto    \left({}^{Y_{\nicefrac{n}2,j}}t_2\right)^{-1}  \cdots
          \left({}^{Y_{1,j}}t_2\right)^{-1}  \cdot Y_{0,j}^{-1}; &  j \in [1,k] \\
            s_1 \mapsto s_2 
            t_1 \mapsto t_2^2,   t_1^{-1} \mapsto t_2^{-2}.
  \end{array}\]
\end{definition}

\begin{lemma}\label{lem:d2n_nf_iff_sol_3}
    Let $n = 2^bc$ where $c>1$ is odd and $b > 1$, 
    $\bbX,\bbY,\ENFTwo$ as in \cref{defn:D2n_evenNF_2},
    and $(u_i)_{[1,m]}$ be a system of equations in $D_{n}$ with each equation $u_i\in(\bbX\cup\{s_1,t_1,t_1^{-1}\})^\ast$.

    Then there exists a solution $\sigma_1\colon \bbX \to D_{n}$ to $(u_i)_{[1,m]}$ if and only if there exists a solution $\sigma_2\colon \bbY \to \{1,s_2, t_2^{\nicefrac{n}{2}}, s_2t_2^{\nicefrac{n}{2}}\} \subseteq D_{2n}$ to $(\ENFTwo(u_i))_{[1,m]}$.
\end{lemma}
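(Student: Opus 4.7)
The plan is to mimic the structure of \cref{lem:d2_p1_Sol_iff_D2nNF} but with \cref{lem:UsefulConjDn} replaced by \cref{lem:Di_Subgroup}. The key point is that $\ENFTwo$ is set up to match the isomorphism from \cref{lem:Di_Subgroup}: it sends the constants $s_1 \mapsto s_2$ and $t_1 \mapsto t_2^2$, and replaces each variable $X_j$ by $Y_{0,j} \cdot ({}^{Y_{1,j}}t_2) \cdots ({}^{Y_{\nicefrac{n}{2},j}}t_2)$, which is precisely the normal form from \cref{lem:Di_Subgroup} for elements of the subgroup $H \le D_{2n}$ generated by $s_2$ and $t_2^2$. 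Since $n = 2^bc$ with $b>1$, we have $4 \mid n$, so \cref{lem:Di_Subgroup} applies and supplies an isomorphism $\varphi \colon D_n \to H$ determined by $s_1 \mapsto s_2$, $t_1 \mapsto t_2^2$. I also note up front that each element of $\{1,s_2,t_2^{\nicefrac{n}{2}},s_2 t_2^{\nicefrac{n}{2}}\}$ squares to $1_{D_{2n}}$, so $\sigma_2(Y_{i,j}) = \sigma_2(Y_{i,j}^{-1})$ is well defined throughout.

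For the forward direction, I would start from a solution $\sigma_1\colon \bbX \to D_n$, and for each $j \in [1,k]$ consider $\varphi(\sigma_1(X_j)) \in H$. By \cref{lem:Di_Subgroup} there exist $\alpha_{i,j} \in \{1,s_2,t_2^{\nicefrac{n}{2}},s_2 t_2^{\nicefrac{n}{2}}\}$ with
\[\varphi(\sigma_1(X_j)) = \alpha_{0,j}\cdot ({}^{\alpha_{1,j}}t_2) \cdots ({}^{\alpha_{\nicefrac{n}{2},j}}t_2).\]
Setting $\sigma_2(Y_{i,j}) = \alpha_{i,j}$, a direct substitution shows that evaluating $\ENFTwo(u_i)$ under $\sigma_2$ in $D_{2n}$ coincides with applying $\varphi$ to the evaluation of $u_i$ under $\sigma_1$ in $D_n$, which is $\varphi(1_{D_n}) = 1_{D_{2n}}$; hence $\sigma_2$ solves $(\ENFTwo(u_i))_{[1,m]}$.

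For the reverse direction, I would take a solution $\sigma_2 \colon \bbY \to \{1,s_2,t_2^{\nicefrac{n}{2}},s_2 t_2^{\nicefrac{n}{2}}\}$ of $(\ENFTwo(u_i))_{[1,m]}$. By \cref{lem:Di_Subgroup}, the element
\[ h_j := \sigma_2(Y_{0,j})\cdot ({}^{\sigma_2(Y_{1,j})}t_2) \cdots ({}^{\sigma_2(Y_{\nicefrac{n}{2},j})}t_2) \]
lies in $H$ for each $j$, so $\sigma_1(X_j) := \varphi^{-1}(h_j) \in D_n$ is well defined. Since $\ENFTwo$ sends $s_1, t_1$ to $s_2, t_2^2$ (the images of the constants under $\varphi$) and each $X_j$ to a word that evaluates under $\sigma_2$ to $\varphi(\sigma_1(X_j))$, the evaluation of $u_i$ under $\sigma_1$ in $D_n$ corresponds, after applying $\varphi$, to the evaluation of $\ENFTwo(u_i)$ under $\sigma_2$ in $D_{2n}$; the latter equals $1_{D_{2n}}$, so injectivity of $\varphi$ gives $\sigma_1(u_i) =_{D_n} 1_{D_n}$.

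The only real obstacle is bookkeeping: verifying that the symbol-level substitution defining $\ENFTwo$ really does induce the algebraic substitution under $\varphi$. This is exactly the commutativity of the diagram ``substitute, then map via $\varphi$'' versus ``map constants/variables via $\varphi$, then substitute'', which holds because both $\ENFTwo$ and $\varphi$ agree on $s_1, t_1$ and because the rewriting of each $X_j$ matches the normal form for $\varphi(\sigma_1(X_j))$ in $H$ guaranteed by \cref{lem:Di_Subgroup}.
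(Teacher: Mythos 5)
Your proposal is correct and takes essentially the same approach as the paper's (much terser) proof: both rely on \cref{lem:Di_Subgroup} to transfer solutions between $D_n$ and the subgroup $H \le D_{2n}$ via the isomorphism $s_1 \mapsto s_2$, $t_1 \mapsto t_2^2$. You simply make explicit the bookkeeping (that $\sigma_2 \circ \ENFTwo$ and $\varphi \circ \sigma_1$ agree on generators, hence on all words) that the paper summarises as ``replacing each variable \dots does not change the set of solutions from $D_n$ to $D_{2n}$.''
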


\begin{proof}
    For $i \in [1,m]$ each equation is a word $u_i(s_1,t_1,t_1^{-1},X_1,X_1^{-1},\dots,X_k,X_k^{-1})$. By \cref{lem:Di_Subgroup} replacing each variable $X_j$ by the word $  Y_{0,j}  \cdot \left({}^{Y_{1,j}}t_2\right)  \cdots  \left({}^{Y_{\nicefrac{n}2,j}}t_2\right) = \ENFTwo(X_j)$ in each equation and restricting $Y_{i,j}$ to take values in $\{1,s_2,t_2^{\nicefrac{n}{2}}, s_2t_2^{\nicefrac{n}{2}}\}$ does not change the set of solutions from $D_n$ to $D_{2n}$. Thus, we can rewrite each $u_i$ as $\ENFTwo(u_i)$ and the result follows.
\end{proof}

In analogy with \cref{defn:D2nNF_group} we construct a finitely presented group $\DEvenGroupTwo$ as follows.

\noindent
\emph{Notation.}  
For $x, y$ letters, the string 
$[[\dots [[x, y], y], \dots ], y]$ consisting of $n$ copies of the letter $y$ and one copy of the letter $x$ is called the \emph{right nested commutator} of $x$ and $y$ repeated $n$ times, which we denote as $\mcomm{x}{n}{y}$. For example:
\begin{align*}
    \mcomm{x}{4}{y} &= [[[[x, y], y], y], y] \\
    &= [[[xyx^{-1}y^{-1}, y], y], y] \\
    &= [[(xyx^{-1}y^{-1})y(xyx^{-1}y^{-1})^{-1}y^{-1}, y], y] \quad \textnormal{and so on.}
\end{align*}

\begin{definition}[Group presentation for $n=4c$ case]\label{defn:DEvenGroupTwo}
    Let $k \in \Z$, $n = 2^bc$ where $c>1$ is odd and $b > 1$, $\bbX$,  
    $\bbY$ and $\ENFTwo$ be as in \cref{defn:D2n_evenNF_2}, $(u_i)_{[1,m]}$ a system of equations in $D_{n}$ with each equation $u_i\in(\bbX\cup\{s_1,t_1,t_1^{-1}\})^\ast$, and $\cG_{\nicefrac{n}{2},k} = \{g_{i,j} \mid i\in[0,\frac{n}{2}],j\in[1,k]\}$  a set of  $(\tfrac{n}{2}+1)k$ distinct letters.
    Define $\lambda\colon (\{s_2,t_2^2,t_2^{-2}\} \cup\bbY)^\ast \to (\{a, d^2,d^{-2}\} \cup \cG_{\halfN,k}\cup\cG_{\halfN,k}^{-1})^\ast$ to be the monoid homomorphism  induced by the bijection
    \[ \begin{array}{llllllll}
            s_2 &\mapsto a \\
            t_2 &\mapsto d,& \quad &  t_2^{-1}&\mapsto d^{-1} \\
            Y_{i,j} &\mapsto g_{i,j}, &\quad&
            Y_{i,j}^{-1} &\mapsto g_{i,j}^{-1}; & \quad i \in [0,n],j\in[1,k].
    \end{array}\]
 Then $\DEvenGroupTwo(n,(u_i)_{[1,m]})$ is the group with presentation 
    \[
        \Gen{ \{a,d\}\cup \cG_{\nicefrac{n}2,k}}{\{a^2,d^{n}, adad, [g,g'], [g,a], g^2,  \mcomm{d^c}{b}{g}, \lambda(\ENFTwo(u_i)) \mid  g,g' \in \cG_{\nicefrac{n}2,k}, i \in [1,m] \}}.   
    \]
\end{definition}

\begin{remark}\label{rmk:EvenDnPolyTimeConstruct}
    Similarly to $\DOddGroup$, it is clear that for $n$  a fixed constant, the finite presentation for $\DEvenGroupTwo(n,(u_i)_{[1,m]})$ can be constructed in linear time in the size $k+\sum_{i=1}^m|u_i|$ of the system of equations.
\end{remark}

\begin{lemma}\label{lem:D2_forced_epi_3}
    Let $n = 2^bc$ where $c>1$ is odd and {$b > 1$}, and $G=\DEvenGroupTwo(n,(u_i)_{[1,m]})$ as in  \cref{defn:DEvenGroupTwo}.
   If $\psi\colon G \to D_{2n}$ is an epimorphism, then there  
    exists $\varphi \in \Aut(D_{2n})$ such that
    \begin{align*}
        \psi\colon \begin{cases}
            a &\mapsto \varphi(s_2) \\
            d &\mapsto \varphi(t_2) \\
            g_{i,j} &\mapsto \gamma_{i,j} \in \{\varphi(1), \varphi(s_2), \varphi(t_2^{\nicefrac{n}{2}}), \varphi(s_2t_2^{\nicefrac{n}{2}})\};
            \quad  i \in [0,n], j\in [1,k].
        \end{cases}
    \end{align*}
\end{lemma}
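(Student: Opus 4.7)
For readability I denote $\DEvenGroupTwo(n,(u_i)_{[1,m]})$ by $G$. My approach parallels the proof of \cref{lem:d2_pt1_restricted_epi} (the odd case), with two new wrinkles to handle: the center $Z(D_{2n}) = \{1, t_2^{\nicefrac{n}{2}}\}$ is nontrivial, and the presentation contains the new nested-commutator relations $\mcomm{d^c}{b}{g}$ that must be exploited. My first step is to extract from the relations $a^2, g^2, [g,g'], [g,a]$ together with \cref{lem:d2n_even_facts} that $\psi(a)$ is an involution or the identity, that the $\psi(g_{i,j})$ are pairwise commuting involutions (or identity) centralizing $\psi(a)$, and from $adad$ that $\psi(a)$ inverts $\psi(d)$ while $\psi(d)^n = 1$. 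In particular $\psi(a) \in \{1, t_2^{\nicefrac{n}{2}}\} \cup \{s_2 t_2^r : r \in [0,n-1]\}$.

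The crux is to rule out $\psi(d)$ being a reflection $s_2 t_2^p$, and this is where the nested-commutator relation becomes essential. Since $c$ is odd and $\psi(d)^2 = 1$, we have $\psi(d)^c = \psi(d)$, and a routine induction using $[s_2 t_2^p, s_2 t_2^r] = t_2^{2(r-p)}$ together with $[t_2^k, s_2 t_2^r] = t_2^{2k}$ yields $\mcomm{\psi(d)^c}{b}{s_2 t_2^{r_{i,j}}} = t_2^{2^b(r_{i,j} - p)}$, forcing $c \mid r_{i,j} - p$ for each reflection-valued $\psi(g_{i,j})$. I then plan to combine this congruence with the centralizer constraints from $[g,a]$ and pairwise commutativity (\cref{lem:d2n_even_facts}(b,c)) to show that $\psi(G)$ is contained in the dihedral subgroup $\langle t_2^c, s_2 t_2^p \rangle$ of order $2^{b+1}$. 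Since $|D_{2n}| = 2^{b+1} c > 2^{b+1}$ (as $c > 1$), this contradicts surjectivity of $\psi$, so $\psi(d) = t_2^p$ for some $p \in [0, n-1]$.

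For the remaining steps, I rule out $\psi(a) \in Z(D_{2n})$: in that case conjugation by $\psi(a)$ is trivial, so $adad = 1$ forces $\psi(d)^2 = 1$ and hence $\psi(d) \in Z(D_{2n})$; but then $\psi(G)$ is generated by pairwise commuting involutions, so is abelian of exponent $2$ and lies in a Klein-$4$ by \cref{lem:d2n_even_facts}(b,c), contradicting surjectivity. So $\psi(a) = s_2 t_2^r$, and each $\psi(g_{i,j})$ centralizes $\psi(a)$ and thus lies in $V := \{1, t_2^{\nicefrac{n}{2}}, s_2 t_2^r, s_2 t_2^{r + \nicefrac{n}{2}}\}$. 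Surjectivity then forces $\langle t_2^p, V \rangle = D_{2n}$, which requires $\gcd(p, \nicefrac{n}{2}) = 1$; since $\nicefrac{n}{2} = 2^{b-1} c$ is even (as $b \geq 2$), $p$ must be odd, giving $\gcd(p, n) = 1$. Applying \cref{lem:Aut_D2n}, I take $\varphi \in \Aut(D_{2n})$ with $\varphi(s_2) = s_2 t_2^r$ and $\varphi(t_2) = t_2^p$; oddness of $p$ ensures $\varphi(t_2^{\nicefrac{n}{2}}) = t_2^{\nicefrac{n}{2}}$, and then $V = \{\varphi(1), \varphi(s_2), \varphi(t_2^{\nicefrac{n}{2}}), \varphi(s_2 t_2^{\nicefrac{n}{2}})\}$ as required.

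The main obstacle will be the Case~2 collapse: I must simultaneously juggle three constraints on each $\psi(g_{i,j})$ (pairwise commutativity, centralizing $\psi(a)$, and the congruence $c \mid r_{i,j} - p$ coming from $\mcomm{d^c}{b}{g}$) and verify that together they confine $\psi(G)$ to a proper dihedral subgroup of index $c$. This is precisely the role of the relation $\mcomm{d^c}{b}{g}$ in the presentation, and the argument relies crucially on the factorization $n = 2^b c$ with $c > 1$.
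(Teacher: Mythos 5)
Your proposal is correct and follows essentially the same route as the paper's proof: the same preliminary use of $a^2, g^2, [g,g'], [g,a]$ with \cref{lem:d2n_even_facts}, the same nested-commutator calculation $\mcomm{\psi(d)^c}{b}{s_2t_2^r}=t_2^{2^b(r-p)}$ to force $c\mid r-p$ and rule out $\psi(d)^2=1$, and the same surjectivity argument yielding $\gcd(p,n)=1$ before invoking \cref{lem:Aut_D2n}. The only difference is cosmetic: you package the $\psi(d)^2=1$ contradiction as containment of $\psi(G)$ in the proper dihedral subgroup $\langle t_2^c,s_2t_2^p\rangle$ of order $2^{b+1}$, whereas the paper spells $t_2$ in the generators and observes that only powers of $t_2^c$ can arise; you also add the small (and in the paper tacit) check that oddness of $p$ makes $\varphi$ fix $t_2^{\nicefrac{n}{2}}$, so that $V$ really is $\{\varphi(1),\varphi(s_2),\varphi(t_2^{\nicefrac{n}{2}}),\varphi(s_2t_2^{\nicefrac{n}{2}})\}$.
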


\begin{proof} For readability, as we are exclusively dealing with the dihedral group of order $2n$, we simplify the notation by denoting $s_2$ and $t_2$ as $s$ and $t$, respectively, and $\cG=\cG_{\nicefrac{n}2,k}$, throughout this proof. 

 We first claim that there exists $\ell \in [0, \frac{n}{2} - 1]$ such that $\psi(x) \in \{1, t^{\nicefrac{n}{2}}, st^\ell, st^{\ell + \nicefrac{n}{2}}\}$ for all $x\in\{a\}\cup \cG$.
To see this, each $x\in\{a\}\cup \cG$ has order 2, so by \cref{lem:d2n_even_facts}~(a), $\psi(x)\in \{1, t^{\halfN}, st^r\} $ for some $r\in[0,n-1]$. 
Let $M=\{r\in[0,n-1]\mid \exists x\in \{a\}\cup\cG, \psi(x)=st^r\}$.
If $M$ is empty (so all $x\in \{a\}\cup\cG$ satisfy $\psi(x)\in\{1,t^{\halfN}\}$), choose any $\ell$, and otherwise choose $\ell=\min M$. To see that this is justified, 
 suppose $x,y\in\{a\}\cup\cG$  are such that $\psi(x)=st^a, \psi(y)=st^b$ for $0\leq a<b\leq n-1$. Since all elements in $\{a\}\cup\cG$ pairwise commute,  by \cref{lem:d2n_even_facts}~(c) we have $b=a+\frac{n}2$.

Next we claim $\psi(d)^2 \neq 1$.
    For contradiction, assume $\psi\colon G_R \to D_{2n}$ is an epimorphism and $\psi(d)^2 = 1$, so by \cref{lem:d2n_even_facts}~(a) $\psi(d) \in \{1,t^{\halfN}, st^p \mid p \in [0,n-1]\}$. 
    If $\psi(d) \in \{1,t^{\nicefrac{n}{2}}\} = Z(D_{2n})$, then $\psi(d)$ commutes with $\psi(x)$ for all $x\in\{a\}\cup \cG$, so $\psi(G)$ is abelian which contradicts that $\psi$ is an epimorphism. Thus, $\psi(d) = st^p$ for some $p \in [0,n-1]$.

    If $\psi(a) = st^r$,  then by the relation $adad$ we have
    \[
    1 = \psi(adad) = st^r st^p st^r st^p = st^r st^p t^{-r}s t^{-p}s = [st^r, st^p] = [\psi(a), \psi(d)]
    \]
 which shows that $\psi(a)$ and $\psi(d)$ commute. By \cref{lem:d2n_even_facts}~(c) we have $\psi(d)=st^{r\pm \nicefrac{n}2}$, and by the first claim $\psi(x)$ has this form or lies in the center for $x\in \cG$, so $\psi(d)$ commutes with $\psi(x)$ for all $x\in \{a\}\cup\cG$, so $\psi(G)$ is abelian, contradicting that $\psi$ is surjective.

    Otherwise, $\psi(a) \in \{1,t^{\nicefrac{n}{2}}\}$.
    Suppose that  $\psi(x) \in \{1,\frac{n}{2}\}$ for all $x\in \cG$. Then $\psi(G)$ is abelian since every element can be expressed in the form $(t^{\frac{n}{2}})^i(\psi(d))^j$.
    Thus we may assume there is 
some $x\in \cG$ with $\psi(x)=st^\ell$.
    Then by the relation $\mcomm{d^c}{b}{x}$, and noting that $[st^p,st^\ell] = (st^p)(st^\ell)(t^{-p}s)(t^{-\ell}s) = s^2 t^{-p}t^\ell t^{-p}t^{\ell}s^2 = t^{2(\ell-p)}$, we have 
    \begin{align*}
        1 =\psi(\mcomm{d^c}{b}{x}) &= \psi(\mcomm{d}{b}{x}) \quad \textnormal{since $\psi(d)^2=1$ and $c$ is odd }
     \nonumber\\
     &= \mcomm{st^{p}}{b}{st^\ell}   \nonumber\\
        &=\left[\cdots[[[[st^{p}, st^\ell], st^\ell],  st^\ell],st^\ell],  \cdots st^\ell\right]\quad \textnormal{ ($b$ times)}\nonumber\\
        &=\left[\cdots[[[t^{2(\ell-p)}, st^\ell],  st^\ell],st^\ell],  \cdots st^\ell\right] \quad \textnormal{ ($b-1$ times)}\nonumber\\
        &=\left[\cdots[[t^{4(\ell-p)}, st^\ell], st^\ell], \cdots st^\ell\right] \quad \textnormal{ ($b-2$ times)}\nonumber\\
        &=\left[\cdots[t^{8(\ell-p)}, st^\ell], \cdots st^\ell\right] \quad \textnormal{ ($b-3$ times)}\nonumber\\
        &\ \vdots\nonumber\\
        &=[t^{2^{b-1}(\ell-p)}, st^\ell] 
        \quad \textnormal{ ($b-(b-1)$ times)} \nonumber\\
      &  =t^{2^{b}(\ell-p)}.
    \end{align*}
It follows that $n=2^bc$ divides $2^{b}(\ell-p)$, and so $c$ divides $\ell-p$. Let $q\in \Z$ such that $l-p=qc$.

By the first claim, for any other $g'\in \cG$, $\psi(g')$ has the form $1, t^{\nicefrac{n}{2}}, st^{\ell}$ or $st^{\ell\pm \nicefrac{n}2}$, so \[\psi(G)=\gen{\psi(a),\psi(d),\psi(g)\mid g\in\cG}=\gen{t^{\nicefrac{n}{2}},  st^{\ell+qc},st^{\ell}}.\]

 Noting that $\frac{n}2>1$, $x^2=1$ for $x\in\{t^{\nicefrac{n}{2}}, (st^{\ell})^{\pm 1}, (st^{\ell+qc})^{\pm 1}\}$, and
    \[
    \begin{array}{rlllrlllll}
        st^{\ell} \cdot st^{\ell+qc} &=& t^{qc}   \\
        (st^{\ell})^{-1}  \cdot st^{\ell+qc} &=& t^{qc}  \\
         st^{\ell}  \cdot (st^{\ell+qc})^{-1} &=& t^{qc}   \\
         (st^{\ell})^{-1}  \cdot (st^{\ell+qc})^{-1} &=& t^{-qc}\\ 
    \end{array}\]
 to spell the element $t\in D_{2n}$ by a word  $w\in \{t^{\nicefrac{n}{2}}, (st^{\ell})^{\pm 1}, (st^{\ell+qc})^{\pm 1}\}^*$, $w$ has an even number of $s$ letters, so the word will be a power of $t^c$. Since $c>1$ this is not possible, so $\psi$ is not surjective. (Note that at this step we rely on the hypothesis that $c>1$ is odd, \emph{ie.} $n$ is not a power of $2$.)

Thus we may assume for the remainder of the proof that $\psi(d)^2 \neq 1$.
By \cref{lem:d2n_even_facts}~(a) we have $\psi(d) = t^p$ for $p \in [0,\frac{n}{2}-1]\cup[\frac{n}{2}+1,n-1]$.

    If $\psi(a) \in \{1,t^{\nicefrac{n}{2}}\}$, then by the relation $adad=1$ in $G$ we have 
    \begin{align*}
       1= \psi(adad) =
        \psi(a) \psi(d) \psi(a) \psi(d)&= 
        \psi(d)^2
    \end{align*}
    which is a contradiction. Thus, $\psi(a) = st^r$ for some $r \in [0,n-1]$. By the first claim we have   $\psi(g) \in \{1,t^{\nicefrac{n}{2}}, st^r, st^{r\pm \nicefrac{n}{2}}\}$ for all $g\in\cG$.

    To show that $\gcd(n,p) = 1$, we first note that
    \[
    \psi(G) = \gen{\psi(a), \psi(d), \psi(\cG)} 
    \subseteq \gen{st^r, t^{\nicefrac{n}{2}}, st^{r+\nicefrac{n}{2}}, t^p} 
    = \gen{st^r, t^p, t^{\nicefrac{n}{2}}}
    \]
    for  $r \in [0,n-1]$ and $p \in [0,\frac{n}{2}-1]\cup[\frac{n}{2}+1,n-1]$.

    Since $\psi$ surjects onto $D_{2n}$ by hypothesis,   $t$ is spelled by a word $w \in \{st^{r}, t^{-r}s, t^p, t^{-p}, t^{\nicefrac{n}{2}}\}^\ast$, where $w$  has even number occurrences of $s$, and after commuting all $t^{\nicefrac{n}2}$ factors to the left and applying $(st^r)(st^r)=1$, has the form 
    \begin{align*}
       w=& (t^{\nicefrac{n}2})^k(t^p)^{i_0}(st^{r})^{\epsilon_1}(t^p)^{i_1}\cdots (st^{r})^{\epsilon_{2m}}(t^p)^{i_{2m}}
    \end{align*} 
    where $\epsilon_j\in\{-1,1\}$, $k, i_j\in\Z$.

     Since 
    $(st^r)t^\eta=t^{-\eta}(st^r)$ and $(st^r)^{-1}t^\eta =t^{-r}st^\eta=t^{-\eta}t^{-r}s
    =t^{-\eta}(st^r)^{-1}$  for $\eta\in\Z$, moving all factors $(st^r)^{\epsilon_j}$ to the right via these rules we obtain 
        \begin{align*}w=&
        (t^{\nicefrac{n}2})^k(t^p)^{i_0-i_1+i_2-\cdots +i_{2m}}(st^{r})^{\epsilon_1+\cdots+\epsilon_{2m}}\\
        =&(t^{\nicefrac{n}2})^k(t^p)^{y}(st^{r})^{2z}\\
        =&(t^{\nicefrac{n}2})^k(t^p)^{y} \quad\quad \textnormal{( since  $(st^{r})^2 = 1$)}\nonumber
    \end{align*} where $y=i_0-i_1+i_2-\cdots +i_{2m}$ and $z=m-\abs{\{j\colon \epsilon_j=-1\}}$.
Thus $1 = \frac{kn}{2} + py$. (Note at this step we are using the hypothesis $b>1$, since $b=1$ would not give us an automorphism.) 
Writing $n=2^bc$ with $b>1$ we obtain 
 \begin{align*}
        1 &= \tfrac{kn}{2} + py 
         = k2^{b-1}c + py 
    \end{align*} from which we can see $\gcd(p,2) = 1$ and $\gcd(p,c) = 1$, which (inductively) implies $\gcd(p,2^bc)=1$.

    Thus 
    \begin{align*}
        \psi\colon \begin{cases}
            a &\mapsto s_2t_2^r = \varphi(s_2) \\
            d &\mapsto t_2^p = \varphi(t_2) \\
            g_{i,j} &\mapsto \gamma_{i,j} \in 
            \{\varphi(1), \varphi(s_2), \varphi(t_2^{\nicefrac{n}{2}}), \varphi(s_2t_2^{\nicefrac{n}{2}})\};
            \quad  i \in [0,n], j\in [1,k]
        \end{cases}
    \end{align*} where $\varphi\colon s_2\mapsto s_2t_2^r, t_2\mapsto t_2^p$ is an automorphism by \cref{lem:Aut_D2n}.
\end{proof}

\begin{lemma}\label{lem:d2_iff_epi_and_sol_3}
    Let $k \in \Z$, $n = 2^bc$ where $c>1$ is odd and $b > 1$, $\bbX$, 
    $\bbY$,  $\ENFTwo$, $(u_i)_{[1,m]}$ and $\DEvenGroupTwo(n,(u_i)_{[1,m]})$
    be as in \cref{defn:D2n_evenNF_2,defn:DEvenGroupTwo}.
    Then there exists an epimorphism $\psi\colon \DEvenGroupTwo(n,(u_i)_{[1,m]}) \to D_{2n}$ if and only if there exists a solution $\sigma\colon \bbX \to D_{n}$ to the system of equations $(u_i)_{[1,m]}$.
\end{lemma}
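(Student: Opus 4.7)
The plan is to mirror the structure of \cref{lem:d2np1_iff_epi_and_sol}, using \cref{lem:D2_forced_epi_3} in place of \cref{lem:d2_pt1_restricted_epi} and \cref{lem:d2n_nf_iff_sol_3} in place of \cref{lem:d2_p1_Sol_iff_D2nNF}. The extra ingredient compared to the odd case is the nested-commutator relator $\mcomm{d^c}{b}{g}$.

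For the forward direction, suppose $\psi\colon \DEvenGroupTwo(n,(u_i)_{[1,m]}) \to D_{2n}$ is an epimorphism. Applying \cref{lem:D2_forced_epi_3} yields $\varphi\in\Aut(D_{2n})$ constraining the image of each generator. Composing with $\varphi^{-1}$ produces an epimorphism $\bar\psi = \varphi^{-1}\circ\psi$ with $\bar\psi(a)=s_2$, $\bar\psi(d)=t_2$, and $\bar\psi(g_{i,j}) = \gamma_{i,j}\in\{1,s_2,t_2^{\nicefrac{n}{2}}, s_2 t_2^{\nicefrac{n}{2}}\}$ for each $i,j$. Define $\sigma_2\colon \bbY\to\{1,s_2,t_2^{\nicefrac{n}{2}}, s_2 t_2^{\nicefrac{n}{2}}\}$ by $\sigma_2(Y_{i,j}) = \gamma_{i,j}$ (with $\sigma_2(Y_{i,j}^{-1}) = \gamma_{i,j}^{-1}$). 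Since each $\lambda(\ENFTwo(u_i))$ is a relator in $\DEvenGroupTwo(n,(u_i)_{[1,m]})$, evaluating $\bar\psi$ on it gives $\sigma_2(\ENFTwo(u_i)) = 1$, so $\sigma_2$ solves the system $(\ENFTwo(u_i))_{[1,m]}$. \cref{lem:d2n_nf_iff_sol_3} then produces the required solution $\sigma_1\colon \bbX\to D_n$.

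For the converse, start with a solution $\sigma_1\colon \bbX\to D_n$ of $(u_i)_{[1,m]}$ and use \cref{lem:d2n_nf_iff_sol_3} to obtain a solution $\sigma_2\colon \bbY\to \{1,s_2,t_2^{\nicefrac{n}{2}}, s_2 t_2^{\nicefrac{n}{2}}\}$ of $(\ENFTwo(u_i))_{[1,m]}$. Define the set map $\psi$ on generators by $a\mapsto s_2$, $d\mapsto t_2$, $g_{i,j}\mapsto \sigma_2(Y_{i,j})$ and invoke \cref{lem:vonD} to extend to a homomorphism, checking each class of relator:
\begin{itemize}
    \item $a^2$, $d^n$, $adad$ follow directly from the defining dihedral relations;
    \item $[g,g']$, $[g,a]$, and $g^2$ follow since $\{1,s_2,t_2^{\nicefrac{n}{2}}, s_2 t_2^{\nicefrac{n}{2}}\}$ is a Klein four-subgroup of $D_{2n}$ centralised by $s_2$ (using that $t_2^{\nicefrac{n}{2}}\in Z(D_{2n})$ and $s_2\cdot s_2 t_2^{\nicefrac{n}{2}} = t_2^{\nicefrac{n}{2}} = s_2 t_2^{\nicefrac{n}{2}}\cdot s_2$);
    \item $\lambda(\ENFTwo(u_i))$ evaluates to $\sigma_2(\ENFTwo(u_i)) = 1$ by construction of $\lambda$ and the fact that $\sigma_2$ solves the system.
\end{itemize}
Surjectivity is immediate since $\psi(a)=s_2$ and $\psi(d)=t_2$ generate $D_{2n}$.

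The main obstacle is verifying the nested-commutator relators $\mcomm{d^c}{b}{g}$. If $\sigma_2(Y_{i,j})\in\{1,t_2^{\nicefrac{n}{2}}\}$, which is central, then the innermost commutator $[t_2^c,\sigma_2(Y_{i,j})]$ already equals $1$. Otherwise $\sigma_2(Y_{i,j})\in\{s_2, s_2 t_2^{\nicefrac{n}{2}}\}$, which squares to $1$ and conjugates $t_2$ to $t_2^{-1}$; a direct computation then gives $[t_2^{2^i c},\sigma_2(Y_{i,j})] = t_2^{2^{i+1} c}$, so iterating $b$ times yields $t_2^{2^b c} = t_2^n = 1$. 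This step is precisely where the assumption $n = 2^b c$ with $b>1$ is used, ensuring the relator $\mcomm{d^c}{b}{g}$ is already forced by the dihedral target while simultaneously being compatible with any assignment of $g$ into the specified four-element subgroup.
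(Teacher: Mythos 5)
Your proposal is correct and follows essentially the same structure as the paper's proof: compose with $\varphi^{-1}$ using \cref{lem:D2_forced_epi_3}, translate via \cref{lem:d2n_nf_iff_sol_3}, and verify the nested-commutator relator $\mcomm{d^c}{b}{g}$ by splitting into the central and non-central cases with the computation $[t_2^{2^i c}, st_2^r] = t_2^{2^{i+1}c}$. One small imprecision in your closing remark: the relator check itself would also succeed for $b=1$ (since $t_2^{2c} = t_2^n = 1$ when $n=2c$), so the hypothesis $b>1$ is not used at this verification step; it is instead needed in the cited \cref{lem:Di_Subgroup,lem:D2_forced_epi_3}.
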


\begin{proof}
    Assume that there is an epimorphism $\psi' \colon \DEvenGroupTwo(n,(u_i)_{[1,m]}) \to D_{2n}$. By \cref{lem:D2_forced_epi_3} there exists $\varphi\in\Aut(D_{2n})$ such that
    \begin{align*}
        \psi'&\colon \begin{cases}
            a &\mapsto \varphi(s_2) \\
            d &\mapsto \varphi(t_2) \\
            g_{i,j} &\mapsto  \gamma_{i,j}' \in 
            \{\varphi(1), \varphi(s_2), \varphi(t_2^{\nicefrac{n}{2}}), \varphi(s_2t_2^{\nicefrac{n}{2}})\}; \quad i \in [0,n], j \in [1,k] 
        \end{cases}
    \intertext{so letting $\psi = {\varphi^{-1}} \circ \psi'$ we have an epimorphism}
        \psi&\colon \begin{cases}
            a &\mapsto s_2 \\
            d &\mapsto t_2 \\
            g_{i,j} &\mapsto  \gamma_{i,j} \in
            \{1, s_2, t_2^{\nicefrac{n}{2}}, s_2t_2^{\nicefrac{n}{2}}\}; \quad  i \in [0,n], j \in [1,k]. 
        \end{cases}
    \end{align*}
    Define $\sigma\colon \bbY \to \{1,s_2, t_2^{\nicefrac{n}{2}}, s_2t_2^{\nicefrac{n}{2}}\}$ by $\sigma(Y_{i,j}) = \gamma_{i,j}, \sigma(Y_{i,j}^{-1}) = \gamma_{i,j}^{-1}$.
    Note that since $\gamma_{i,j}^2 = 1$ for all $i \in [0,n], j \in [1,k]$, then for all $Y_{i,j} \in \bbY$, $\sigma(Y_{i,j}) = \sigma(Y_{i,j}^{-1})$, so w.l.o.g. we may assume $\bbY=\{Y_{0,1}, \dots, Y_{\nicefrac{n}{2},k}\}$. For $i\in[1,m]$ let $v_i\in(\{s_2,t_2^2,t_2^{-2}
    \}\cup\bbY)^\ast$ be such that $\ENFTwo(u_i)=v_i$.
    Since $\psi$ is a homomorphism, for each relator $\lambda(\ENFTwo(u_i))$ of $\DEvenGroupTwo$, $i \in [1,m]$ we have
    \begin{align*}
    1 = \psi(\lambda(\ENFTwo(u_i))) &= \psi(\lambda(v_i(s_2, t_2^2, t_2^{-2}, Y_{0,1}, \dots, Y_{\nicefrac{n}{2},k}))) \\
    &= \psi(v_i(a, d^2, d^{-2}, g_{0,1}, \dots, g_{\nicefrac{n}{2},k})) \\
    &= v_i(s_2, t_2^2, t_2^{-2}, 
    \gamma_{0,1}, \dots, \gamma_{\nicefrac{n}{2},k}) \\
    &= \sigma(v_i(s_2, t_2^2, t_2^{-2}, Y_{0,1}, \dots, Y_{\nicefrac{n}{2},k})) = \sigma(\ENFTwo(u_i)).
    \end{align*}
    Thus, $\sigma$ solves $(\ENFTwo(u_i))_{[1,m]}$, and the result follows by \cref{lem:d2n_nf_iff_sol_3}.

    Conversely, assume there exists a solution to $(u_i)_{[1,m]}$. By \cref{lem:d2n_nf_iff_sol_3}, there exists a solution $\sigma\colon \bbY \to \{1, s_2, t_2^{\nicefrac{n}{2}}, s_2t_2^{\nicefrac{n}{2}}\}$ to $(\ENFTwo(u_i))_{[1,m]}$. Thus, for $i \in [1,m]$, if $\ENFTwo(u_i) = v_i$, we have:
    \begin{equation}\label{eqn:d2_pt2_ifSol_Hom}
        \sigma(\ENFTwo(u_i)) = \sigma(v_i(s_2, t_2^2, t_2^{-2}, \bbY)) = 1. 
    \end{equation}
    Define $\psi\colon \{a, d, d^{-1}\} \cup \cG_{\nicefrac{n}{2},k} \cup \cG_{\nicefrac{n}{2},k}^{-1} \to D_{2n}$ as the set map
    \[
    \psi\colon \left\{
    \begin{array}{llllll}
        a &\mapsto s_2, \\
        d &\mapsto t_2, &\quad &d^{-1} \mapsto t_2^{-1}, \\
        g_{i,j} &\mapsto \sigma(Y_{i,j}), &
        \quad &g_{i,j}^{-1} \mapsto \sigma(Y_{i,j})^{-1}; &
        \quad &g_{i,j} \in \cG_{\halfN,k}.
    \end{array}
    \right.
    \]

By construction (\cref{defn:DEvenGroupTwo}),     it is clear that the relations $a^2$, $d^n$, $adad$, $[g, a]$, $[g, g']$, and $g^2$ map to $1$ in $D_{2n}$ for all $g, g' \in \cG_{\nicefrac{n}{2},k}$. Now we check the relation $\mcomm{d^c}{b}{g}$.
    If $\psi(g) \in \{1, t_2^{\nicefrac{n}{2}}\}$, then $\psi(g)$ commutes with $t_2$, and we have $[t_2^c,\psi(g)] = 1$ so $\mcomm{t_2^c}{b}{\psi(g)}=1$. Now, consider the case where $\psi(g) = s_2t_2^r$ with $r \in \{0, \tfrac{n}{2}\}$.
    For this calculation, we denote $s_2$ and $t_2$ as $s$ and $t$, respectively.
    Noting that $[t^c,st^r] = t^cst^rt^{-c}t^{-r}s = t^cst^{-c}s = t^{2c}$ we have
    \begin{align*}
        \psi(\mcomm{d^c}{b}{g}) =& \mcomm{t^c}{b}{st^r} \quad \textnormal{since $\psi(d)^2=1$ and $c$ is odd }
        \\
        =&\left[\cdots[[[[t^c, st^r], st^r],  st^r],st^r],  \cdots st^r\right]\quad \textnormal{ ($b$ times)}\\
        =&\left[\cdots[[[t^{2c}, st^r],  st^r],st^r],  \cdots st^r\right] \textnormal{ ($b-1$ times)}\\
        =&\left[\cdots[[t^{4c}, st^r], st^r], \cdots st^r\right] \textnormal{ ($b-2$ times)}\\
        =&\left[\cdots[t^{8c}, st^r], \cdots st^r\right] \textnormal{ ($b-3$ times)}\\
        & \vdots\\
        =&[t^{2^{b-1}c}, st^r] \textnormal{ ($b-(b-1)$ times)}\\
        =&t^{2^{b}c} = 1.
    \end{align*}
    (Note that this calculation that shows why we have chosen to write  $d^c$ in our nested commutator.)
    For $\psi$ to induce a homomorphism $\DEvenGroupTwo(n,(u_i)_{[1,m]})$ to $D_{2n}$, it remains to check if $\psi(\lambda(\ENFTwo(u_i))) = 1$ for $i\in[1,m]$. We have
    \begin{align*}
        \psi(\lambda(\ENFTwo(u_i))) &= \psi(\lambda(v_i(s_2,t_2^2,t_2^{-2}, \bbY)))\\ 
        &= v_i(s_2,t_2^2, t_2^{-2}, \sigma(\bbY))= \sigma(v_i(s_2,t_2^2, t_2^{-2}, \bbY))=1
        \quad \textnormal{ by \cref{eqn:d2_pt2_ifSol_Hom}.}
    \end{align*}
    Thus by  \cref{lem:vonD}, $\psi$ is a homomorphism, which is surjective since  $\psi(\DEvenGroupTwo(n,(u_i)_{[1,m]})) = \gen{s_2,t_2} = D_{2n}$.
\end{proof}

\begin{theorem}
    \label{thm:D2n_NPHARD}
    Let $n>1$ be an integer such that either
    \begin{itemize}
        \item $n$ is odd, or
        \item $n = 2^bc$ where $b>1$ and $c>1$ is odd.
    \end{itemize}
    Then $\Epi{\Arb}{D_{2n}}$ is \NP-hard.
\end{theorem}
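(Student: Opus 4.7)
The plan is to assemble the machinery already established in Section~\ref{sec:MainDihedral} into a polynomial-time many-one reduction from a known \NP-hard problem, namely the equation-satisfiability problem over a non-abelian finite group (\cref{thm:eqn_d2n_NPcomp}). The two cases of the hypothesis require slightly different auxiliary targets, but the overall structure is identical.

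For the odd case ($n>1$ odd, so $n\geq 3$ and $D_{2n}$ is non-abelian), I would proceed as follows. Given an instance $(u_i)_{[1,m]}$ of the equation-satisfiability problem over $D_{2n}$, construct the finite presentation for $\DOddGroup(n,(u_i)_{[1,m]})$ as in \cref{defn:D2nNF_group}. By \cref{rmk:OddDnPolyTimeConstruct} this is computable in time polynomial in the size of the input system (with $n$ a fixed constant). Then \cref{lem:d2np1_iff_epi_and_sol} states that $(u_i)_{[1,m]}$ has a solution in $D_{2n}$ if and only if there is an epimorphism $\DOddGroup(n,(u_i)_{[1,m]}) \to D_{2n}$. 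Since equation-satisfiability over the non-abelian finite group $D_{2n}$ is \NP-hard by \cref{thm:eqn_d2n_NPcomp}, this is a polynomial-time reduction witnessing \NP-hardness of $\Epi{\Arb}{D_{2n}}$.

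For the case $n=2^bc$ with $b>1$ and $c>1$ odd, the strategy is essentially the same, but the reduction now proceeds from equations over the auxiliary group $D_n$ (which is non-abelian since $n/2=2^{b-1}c\geq 2$). Given a system $(u_i)_{[1,m]}$ of equations over $D_n$, construct $\DEvenGroupTwo(n,(u_i)_{[1,m]})$ as in \cref{defn:DEvenGroupTwo}; by \cref{rmk:EvenDnPolyTimeConstruct} this is again polynomial time. By \cref{lem:d2_iff_epi_and_sol_3}, the system has a solution in $D_n$ if and only if there is an epimorphism $\DEvenGroupTwo(n,(u_i)_{[1,m]}) \to D_{2n}$. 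Since equation-satisfiability over the non-abelian group $D_n$ is \NP-hard by \cref{thm:eqn_d2n_NPcomp}, this yields a polynomial-time reduction establishing \NP-hardness of $\Epi{\Arb}{D_{2n}}$ in this case as well.

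There is essentially no hard step left: all the substantive technical work — showing that epimorphisms from the cleverly-constructed source groups are forced to send the generators $a,d,g_{i,j}$ into the prescribed orbits under $\Aut(D_{2n})$, and converting solutions back and forth between the two problem formats — was carried out in \cref{lem:d2_pt1_restricted_epi,lem:d2np1_iff_epi_and_sol,lem:D2_forced_epi_3,lem:d2_iff_epi_and_sol_3}. The only minor point requiring care in the write-up is to observe that in both cases the finite target $D_{2n}$ (resp.\ $D_n$) over which equation-satisfiability is \NP-hard is indeed non-abelian under the given hypotheses on $n$, so that \cref{thm:eqn_d2n_NPcomp} applies.
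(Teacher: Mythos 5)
Your proposal is correct and matches the paper's proof step for step: both reduce from equation satisfiability over $D_{2n}$ (odd case) or over $D_n$ (case $n=2^bc$) via the polynomial-time constructions $\DOddGroup$ and $\DEvenGroupTwo$, then invoke \cref{lem:d2np1_iff_epi_and_sol,lem:d2_iff_epi_and_sol_3,thm:eqn_d2n_NPcomp}; you are even slightly more careful than the paper in noting explicitly that it is $D_n$, not $D_{2n}$, that must be non-abelian in the second case. One small arithmetic slip: a dihedral group of order $n$ is non-abelian when $n/2\geq 3$, not $n/2\geq 2$; the conclusion nonetheless holds since $b>1$ and $c>1$ odd give $n/2 = 2^{b-1}c \geq 6$.
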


\begin{proof}
    Recall that to show a problem $A \subseteq \{0,1\}^\ast$ is \NP-hard, we take an existing \NP-hard problem $B \subseteq \{0,1\}^\ast$ and show that $B$ is polynomial reducible to $A$. That is, we find a function $f \colon \{0,1\}^\ast \to \{0,1\}^\ast$, computable in polynomial time, such that $w \in B$ if and only if $f(w) \in A$.

    In this setting, $A$ is the set of strings encoding finite presentations of a group $G$, and $B$ is the set of strings encoding systems of equations over a dihedral group. Thus, $w \in B$ encodes an instance of a system of equations, and $f(w)$ will encode an instance of a group presentation constructed from the data of $w$.

    Let $n > 1$ be an odd integer. Given an input system of equations $(u_i)_{[1,m]}$ with variables $\bbX = \{X_1, X_1^{-1}, \dots, X_k, X_k^{-1}\}$ over $D_{2n}$, construct the group $\DOddGroup(n, (u_i)_{[1,m]})$ as defined in \cref{defn:D2nNF_group}. This can be achieved in polynomial time by \cref{rmk:OddDnPolyTimeConstruct}. By \cref{lem:d2np1_iff_epi_and_sol}, a solution to $(u_i)_{[1,m]}$ exists if and only if there exists an epimorphism from $\DOddGroup(n, (u_i)_{[1,m]})$ to $D_{2n}$.

    Let $n = 2^bc$ where $b>1$ and $c>1$ is odd. Given an input system of equations $(u_i)_{[1,m]}$ with variables $\bbX = \{X_1, X_1^{-1}, \dots, X_k, X_k^{-1}\}$ over $D_{n}$, 
    construct the group $\DEvenGroupTwo(n, (u_i)_{[1,m]})$ as defined in \cref{defn:DEvenGroupTwo} (in polynomial time by \cref{rmk:EvenDnPolyTimeConstruct}). By \cref{lem:d2_iff_epi_and_sol_3}, a solution to $(u_i)_{[1,m]}$ exists if and only if there exists an epimorphism from $\DEvenGroupTwo(n, (u_i)_{[1,m]})$ to $D_{2n}$.
    
    Since $D_{2n}$ is non-abelian for $n > 1$, the result follows from \cref{thm:eqn_d2n_NPcomp}.
\end{proof}

\begin{remark} Note that the case $n=2c$ is not covered by the even case proof since  \cref{lem:Di_Subgroup,lem:D2_forced_epi_3} break down for this case. 
Rather than trying to modify these, we have a different approach as shown in the next subsection.
\end{remark}

\subsection{Direct product of abelian and trivial center}

To deal with the remaining $n=2c$ case, we proceed as follows.

We first note that for $c>1$ odd, $D_{4c}$ is isomorphic to $D_{2c}\times C_2$.  (One way to show this is via Tietze transformations: starting from  $D_{4c}=\Gen{s,t}{t^{2c}, s^2, stst}$, add $x=t^c, y=t^2$, then remove $t=xy^{-\lfloor\frac{c}2\rfloor}$).
By  \cref{lem:ODDD2nCommute}, $Z(D_{2c})=\{1\}$.

\begin{lemma}\label{lem:NoCenterProdIFF}
Let $G$ be a finitely presented group, $A$ an abelian group, and $B$ a group with trivial center. There is an epimorphism from $G\times A$ to $B\times A$ if and only if   there is an epimorphism from $G$ to $B$. 
\end{lemma}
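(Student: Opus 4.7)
The plan is as follows. The easy direction ($\Leftarrow$) is immediate: given an epimorphism $\varphi\colon G\to B$, the map $\varphi\times\mathrm{id}_A\colon G\times A\to B\times A$ sending $(g,a)\mapsto(\varphi(g),a)$ is clearly an epimorphism. All the work is in the converse.

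For the forward direction, suppose $\psi\colon G\times A\to B\times A$ is an epimorphism. I would make the following key observation: since $A$ is abelian, the subgroup $\{1_G\}\times A\leq G\times A$ is central in $G\times A$, so its image $\psi(\{1_G\}\times A)$ is central in $\psi(G\times A)=B\times A$. Since
\[
Z(B\times A)=Z(B)\times Z(A)=\{1_B\}\times A
\]
(using the hypothesis $Z(B)=\{1_B\}$), we conclude $\psi(\{1_G\}\times A)\subseteq\{1_B\}\times A$. Equivalently, if $\pi_B\colon B\times A\to B$ denotes the projection, then $\pi_B\circ\psi$ kills $\{1_G\}\times A$.

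Define $\varphi\colon G\to B$ by $\varphi(g)=\pi_B(\psi(g,1_A))$. This is a homomorphism because $\psi$ and $\pi_B$ are. To check surjectivity, let $b\in B$. By surjectivity of $\psi$ there exist $g\in G$ and $a\in A$ with $\psi(g,a)=(b,1_A)$. Then
\[
\psi(g,1_A)=\psi(g,a)\,\psi(1_G,a^{-1})=(b,1_A)\,\psi(1_G,a^{-1}),
\]
and since $\psi(1_G,a^{-1})\in\{1_B\}\times A$ by the centrality argument above, its $B$-coordinate is $1_B$. Hence $\varphi(g)=\pi_B(\psi(g,1_A))=b$, so $\varphi$ is onto.

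The only genuine step is the centrality observation; everything else is formal. I do not anticipate any obstacle, as the hypothesis $Z(B)=\{1\}$ has been tailored precisely to force $\psi(\{1_G\}\times A)$ into the $A$-factor. Note that the finite presentability of $G$ plays no role in the argument; the statement holds for arbitrary $G$.
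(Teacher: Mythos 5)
Your proof is correct and uses essentially the same argument as the paper: both hinge on the observation that $\{1_G\}\times A$ is central, so its image lands in $Z(B\times A)=\{1_B\}\times A$, making the $B$-component of $\psi$ independent of the $A$-coordinate. The minor presentational difference (you apply $\pi_B$ at the end, the paper composes with $\pi_B$ first and works with the resulting map onto $B$) does not change the substance, and your remark that finite presentability is not actually used is accurate.
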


\begin{proof}
	Suppose \( \kappa\colon G \times A \to B \times A \) is an epimorphism. Recall that $\pi_B\colon B\times A\to B$ is the epimorphism $\pi_B((x,y))=x$ for all $(x,y)\in B\times A$.
	Then \( \psi = \pi_B \circ \kappa \) is an epimorphism.

    For each $z\in A$, $(1,z)\in Z(G\times A)$ so $\psi((1,z))\in Z(B)$. Thus $\psi((1,z))=1$ for all $z\in A$ since $B$ has trivial center.
Since $\psi$ is an epimorphism, for each $b\in B$ there exists $(x,y)\in G\times A$ so that $\psi((x,y))=b$. Then \[b=\psi((x,y))=\psi((x,1))\psi((1,y))=\psi((x,1))\]
since $y\in A$, so $\psi$ restricted to $G$
is an epimorphism.

	Conversely if $\tau\colon G\to B$ is an epimorphism, 
    then  the map $\tau'\colon G\times A \to B\times A$ defined by $\tau'((x,y))=(\tau(x),y)$ is an epimorphism.
\end{proof}

\begin{lemma}[Direct product with abelian and no center]\label{lem:AbelianTimesNoCenter}
    Let $A$ be a finitely generated abelian group  and $B$ a finite group with the following properties.\be\item $\Epi{\Arb}{B}$ is \NP-hard
    \item $Z(B)=\{1\}$.\ee
    Then $\Epi{\Arb}{B\times A}$ is \NP-complete.
\end{lemma}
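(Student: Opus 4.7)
The plan is to establish \NP-containment and \NP-hardness separately, with both parts following quickly from machinery already in the paper.

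For \NP-containment, I would decompose the finitely generated abelian group $A$ as $\Z^d \times F$ where $F$ is finite abelian, and then observe that $B \times A \cong \Z^d \times (B \times F)$, a direct product of a free abelian group of finite rank and a finite group. Hence $B \times A$ lies in the class $\DPclass$, and \NP-containment for the fixed target $B \times A$ follows immediately from \cref{ThmMain}(1), which gives \NP-containment even for the uniform problem $\Epi{\Arb}{\DPclass}$.

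For \NP-hardness, I would reduce from $\Epi{\Arb}{B}$, which is \NP-hard by assumption. Fix once and for all a finite presentation $\Gen{Y}{S}$ of the fixed group $A$. Given an input finite presentation $\Gen{X}{R}$ for a group $G$, construct the standard direct-product presentation
\[
\Gen{X \cup Y}{\,R \cup S \cup \{[x, y] : x \in X,\ y \in Y\}\,}
\]
of $G \times A$. Since $Y$ and $S$ depend only on the (fixed) target and not on the instance, this construction takes time linear in the size of $\Gen{X}{R}$. By \cref{lem:NoCenterProdIFF}, invoking the hypothesis $Z(B) = \{1\}$, there is an epimorphism $G \times A \to B \times A$ if and only if there is an epimorphism $G \to B$, so the construction is a valid many-one polynomial-time reduction.

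I do not expect any serious obstacle: both ingredients, namely \cref{ThmMain}(1) for the upper bound and \cref{lem:NoCenterProdIFF} for the hardness reduction, are already established, and the only remaining work is the routine verification that the direct-product presentation can be built in polynomial time when the abelian factor is fixed.
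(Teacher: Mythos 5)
Your proposal is correct and follows essentially the same approach as the paper: $\NP$-containment via the fact that $B\times A$ lies in $\DPclass$ (the paper cites Theorem~\ref{thm:AbelianDirectProd_is_NP} directly rather than through Theorem~\ref{ThmMain}, but this is the same result), and $\NP$-hardness by reducing from $\Epi{\Arb}{B}$ using the direct-product presentation and Lemma~\ref{lem:NoCenterProdIFF}.
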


\begin{proof}
   Since $B$ is finite,  by \cref{thm:AbelianDirectProd_is_NP},   $\Epi{\Arb}{B\times A}$ is in \NP.
   We show it is \NP-hard by showing that 
   $\Epi{\Arb}{B}$ is polynomial reducible to $\Epi{\Arb}{B\times A}$. Let $\Gen{P}{Q}$ be a finite presentation for $A$.
   
   On input a finite presentation $\cP=\Gen{X}{R}$ for a group $G\in\Arb$, construct a presentation $\cP'$ for $G\times A$ by writing \[\Gen{X\cup P}{R\cup Q\cup \{[x,y]\mid x\in X, y\in P\}}\]
which can clearly be done in linear time in the size of $\cP$.
By \cref{lem:NoCenterProdIFF} $\Epi{\Arb}{B}$ returns `Yes' on input $\cP$ if and only if $\Epi{\Arb}{B\times A}$ returns `Yes' on input $\cP'$.
\end{proof}

\begin{proof}[Proof of \cref{thm:MainDihedral}]
  \cref{thm:D2n_NPHARD} 
  combined with \cref{lem:epi_finite} gives the result for $D_{2n}$ when  $n=2^bc$ with $b=0$ or $b>1$, with the case $b=1$ covered by \cref{lem:AbelianTimesNoCenter} since $D_{4c}$ is isomorphic to $D_{2c}\times C_2$, and $Z(D_{2c})=\{1\}$.
\end{proof}

\begin{remark}
    Note that $D_{2n}$ is nilpotent if and only if $n$ is a power of $2$. It is unclear whether \cref{thm:MainDihedral} extends to these groups, or if there is some way to show $\Epi{\Arb}{D_{2^k}}$ is in \P.
\end{remark}

\section{Proof of \cref{thm:ObserveFacts}}
\label{sec:otherResults}

\cref{thm:ObserveFacts} collects together some known facts and  observations to add to the list of cases for which an upper bound on the complexity of the epimorphism problem can be given.

\subsection{Epimorphism onto free groups}
\label{subsec:FreeTargets}

Recall the notation for systems of equations from Subsection~\ref{subsec:Notation}.
Let $m,n,d\in \N_+$, $\bbX = \{X_1,X_1^{-1},\dots,X_n,X_n^{-1}\}$, $F_d$ be a free group of rank $d$ with identity element $e\in F_d$,  and 
  $ \left( u_i\right)_{[1,m]}$
a system of equations without constants over $F_d$, so   
 each $u_i=u_i(\bbX)\in \bbX^*$.
Define the \emph{rank} of a solution $\sigma\colon \bbX\to F_d$ to 
 be the rank of the free subgroup $\langle \sigma(X_1),\dots, \sigma(X_n)\rangle$ of $F_d$. Since $\{X_i^{\pm 1}\mapsto e\}$ is a solution to any system without constants, the rank of a solution is at least $0$ and at most the number of variables $n$.
Define the \emph{rank} of the system without constants $(u_i)_{ i\in[1,m]}$  to be the maximum rank over all solutions.

As an example, suppose $u=X^{-1}Y^{-1}XYZ^{s}$ is a system of one equation over $F_d$. Recall that $v\in F_d$ is a \emph{primitive element} if it is not equal to a proper power.
By \cite{Schutzenberger} (see \cite[page 51]{LS}) 
the only solutions are $\{X\mapsto v^i, Y\mapsto v^j, Z\mapsto e\}$ for some primitive element $v$.
Thus the rank of this equation is $1$.

\begin{theorem}[{Razborov \cite[Theorem 3]{Razborov1985}}]\label{thm:Raz}
    Let $d$ be a fixed integer.  
    Given a system of equations $ \left( u_i\right)_{[1,m]}$ without constants over a free group $F_d$, there is an algorithm which 
    computes the {rank} of the system. 
\end{theorem}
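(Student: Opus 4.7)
The plan is to reduce to the decidability of systems of equations over free groups (Makanin's algorithm) combined with Razborov's finer description of the full solution set. Observe first that the rank of any solution lies in $[0,n]$, so computing the rank of the system reduces to deciding, for each $r \in [0,n]$ in turn, whether there exists a solution $\sigma\colon \bbX \to F_d$ with $\langle \sigma(X_1),\dots,\sigma(X_n)\rangle$ of rank at least $r$, and returning the largest such $r$.

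A first attempt would introduce auxiliary variables $Y_1,\dots,Y_r$ as a putative free basis of a rank-$r$ subgroup and express each $\sigma(X_i)$ as a word in the $Y_j$'s via further auxiliary equations, so that the $Y_j$'s witness a rank-$r$ subgroup of $\langle \sigma(X_1),\dots,\sigma(X_n)\rangle$. While the positive part of this lives in the existential theory of $F_d$ (where Makanin's algorithm suffices), the crucial freeness assertion ``no non-trivial reduced word in $Y_1,\dots,Y_r$ equals $1$ in $F_d$'' is universally quantified and does not fit into the existential fragment. This is the main obstacle.

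This is exactly what Razborov's analysis is designed to circumvent. The plan is to invoke the construction of a Makanin-Razborov diagram for $(u_i)_{[1,m]}$: a finite rooted tree whose leaves carry a complete parameterization of the solution set of the system by free groups of explicitly computable rank. Under this parameterization, the tuple $(\sigma(X_1),\dots,\sigma(X_n))$ becomes a tuple of words in the parameters, and the rank of the subgroup it generates --- as the parameters range over the parameter free group --- can be read off combinatorially at each leaf (for instance via Stallings foldings applied to the parametric tuple, or equivalently by Nielsen reduction over the parameterizing free basis). Taking the maximum over the finitely many leaves then yields the rank of the system, giving the required algorithm. The hard part is therefore not the ``computing the max'' step but the underlying Makanin-Razborov machinery --- precisely Razborov's main technical contribution.
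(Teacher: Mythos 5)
The paper does not prove this statement: it is cited verbatim from Razborov, and the remark immediately following it explicitly treats the construction of Makanin--Razborov diagrams as a black box of unknown complexity. Your sketch correctly identifies why a naive reduction to the existential theory of $F_d$ fails (the freeness condition is universally quantified) and correctly attributes the resolution to Razborov's MR-diagram machinery, but --- like the paper itself --- it ultimately defers the substance to Razborov rather than supplying an independent proof, which is the appropriate treatment here.
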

Note that Razborov's algorithm runs via constructing ``Makanin-Razborov diagrams'', and the complexity to compute these is not known, most likely at least doubly exponential space (see for example \cite[page 2]{VolkerLothaire}).

\begin{lemma}\label{lem:epiFree}
    $\Epi{\Arb}{\Free}$ is decidable.
\end{lemma}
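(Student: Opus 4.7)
The plan is to reduce $\Epi{\Arb}{\Free}$ to the problem of computing the rank of a system of equations without constants over a free group, which is decidable by \cref{thm:Raz}.

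On input a finite presentation $\Gen{g_1,\dots,g_n}{r_1,\dots,r_m}$ for $G \in \Arb$ and $d \in \N$ specifying $F_d \in \Free$, I would form the system $(r_i(\bbX))_{i\in[1,m]}$ of equations without constants over $F_d$, where $\bbX = \{X_1, X_1^{-1},\dots,X_n, X_n^{-1}\}$ and $r_i(\bbX)$ is obtained by replacing each letter $g_j^{\pm 1}$ in $r_i$ by $X_j^{\pm 1}$. By \cref{lem:vonD}, a set map $\sigma$ of the form $X_j \mapsto h_j$, $X_j^{-1} \mapsto h_j^{-1}$ with $h_j \in F_d$ is a solution to this system if and only if $g_j \mapsto h_j$ extends to a homomorphism $\psi \colon G \to F_d$; under this correspondence the image $\psi(G)$ coincides with the subgroup $\langle \sigma(X_1),\dots,\sigma(X_n)\rangle$ of $F_d$, whose rank is by definition the rank of $\sigma$.

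The key claim is that an epimorphism from $G$ to $F_d$ exists if and only if the rank of the above system is at least $d$. The forward direction is immediate, since an epimorphism corresponds to a solution whose image is $F_d$ and thus has rank exactly $d$. For the converse, suppose $\sigma$ is a solution of rank $k \geq d$, so the corresponding image $H = \psi(G)$ is a free subgroup of $F_d$ of rank $k$. Fixing a free basis $b_1,\dots,b_k$ of $H$ we obtain a surjective homomorphism $\phi \colon H \to F_d$ sending $b_1,\dots,b_d$ to a free basis of $F_d$ and $b_{d+1},\dots,b_k$ to $1$. Then $\phi \circ \psi \colon G \to F_d$ is an epimorphism. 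Thus the decision procedure is: compute the rank $k$ of the system via \cref{thm:Raz}, and return ``Yes'' iff $k \geq d$. (If $d > n$ we may immediately return ``No'', since an $n$-generated group cannot surject onto $F_d$.)

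The main obstacle is not mathematical but computational: the only nontrivial ingredient, \cref{thm:Raz}, is Razborov's algorithm, whose running time is bounded only by the construction of Makanin--Razborov diagrams and is believed to require at least doubly exponential space. Thus this approach establishes decidability but yields no practical complexity bound, matching the comment made earlier in the excerpt that the resulting complexity of $\Epi{\Arb}{\Free}$ is unknown.
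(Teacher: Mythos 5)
Your proposal is correct and follows essentially the same route as the paper: replace generators by variables and relators by equations without constants, invoke von Dyck's lemma to identify solutions of the system with homomorphisms $G \to F_d$, apply Razborov's rank theorem, and return ``Yes'' iff the rank is at least $d$, projecting a free subgroup of rank $k\geq d$ onto a rank-$d$ free group by collapsing superfluous basis elements. The only cosmetic difference is that you surject directly onto $F_d$ itself, whereas the paper surjects onto a rank-$d$ free subgroup of the ambient free group and remarks that this is enough up to isomorphism; your extra early termination when $d>n$ is harmless but unnecessary since the rank is automatically at most $n$.
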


    \begin{proof}	
    Assume the input is a finite presentation $\Gen{g_1,\dots,g_n}{r_1,\dots,r_m}$ for a group $G \in \Arb$, and $d \in \N_+$ specifies a target free group of rank $d$.
    Let $\cG= \{g_1,g_1^{-1},\dots,g_n,g_n^{-1}\}$ and $\bbX=\{X_1,X_1^{-1},\dots,X_n,X_n^{-1}\}$.
    Perform the following procedure:
    \begin{enumerate}
    \item Fix a free group $H=\gen{a_1,\dots,a_d}$.
    \item Let $\lambda\colon g_i \mapsto X_i,  g_i^{-1} \mapsto X_i^{-1}$ induce a monoid homomorphism from words over $\cG$ to words over $\bbX$. 
    Then  $(\lambda(r_j))_{[1,m]}$
    is a system of equations without constants over $F_d$ with each $\lambda(r_j)\in\bbX^*$. 
    \item Apply \cref{thm:Raz} with input $(\lambda(r_j))_{[1,m]}$ over the group $H$, to compute the rank $\MFr\in[0,n]$ of the system.
    If $\MFr\geq d$, return `Yes', else return `No'.
    \end{enumerate}

    The justification for the procedure is as follows. Let $h_1,\dots, h_n \in H$. By \cref{lem:vonD}, ${\{X_i\mapsto h_i, X_i^{-1}\mapsto h_i^{-1}\mid i \in [1,n]\}}$ is a solution to $(\lambda(r_j))_{[1,m]}$
    if and only if the map induced by $\{g_i\mapsto h_i\mid i \in [1,n]\}$ is a homomorphism from $G$ to $H$. If $\MFr\geq d$ 
    then there is a homomorphism $\kappa$ from $G$  onto a subgroup $K$ of $H$ such that $K$ is free of rank $\MFr$. 
    Then $K$ has some free basis, say $\{y_1,\dots, y_{\MFr}\}$ (which we do not have to compute) and there exists a subgroup $K'=\gen{y_1,\dots,y_d}$ of $H$ and map   $\tau\colon K \to K'$ defined by $\tau(y_i)=\begin{cases}
       y_i & i\leq d\\
       1 & i>d
    \end{cases}$. Then $\tau\circ \kappa$ is a surjective homomorphism from $G$ to $K'$, and by definition $K'$ is free of rank $d$. (Note that the procedure finds an epimorphism to some free group of rank $d$, not necessarily to the original group  $H$.)  
    If $\MFr<d$, then there is no epimorphism since an epimorphism $\psi\colon G\to H$ is a solution to the system of rank $d$.   
\end{proof}

\begin{remark}
 We were not able to prove the analogue of \cref{thm:AbelianDirectProd_is_NP} for targets of the form $N\times Q$ when $N$ is a free group of finite rank. Our attempt to do this explains why  \cref{lem:epi_iff_equations_new} is stated for an arbitrary group $N$, but we were not able to  follow the same strategy as in \cref{sec:DirectProd} from that point.

\medskip\noindent
\emph{Open question.}
    Is \equationsProbNEW decidable on input a system of equations (without constants) over $N$ when $N$ is a free group of finite rank?
\medskip

In addition we can ask whether it is possible to give complexity bounds for computing the rank of a system of equations without constants over a free group (cf. \cref{thm:Raz}), for  example can this be done in \EXPSPACE or \PSPACE?

\medskip\noindent
\emph{Open problem.}
    Determine bounds for the complexity of computing the rank of a system of equations $ \left( u_i\right)_{[1,m]}$ without constants over a free group $F_d$. 
\medskip

\end{remark}

\subsection{Epimorphism onto non-abelian finite simple groups}\label{subsec:Kuperberg}

Let $\Hom$ be the set of all triangulated homology 3–spheres, given by finite triangulations. From a finite triangulation of a 3-manifold $M$ one can write down (in linear time) a finite presentation for the fundamental group $\pi_1(M)$ of the manifold. Let $H$ be a fixed, finite, non-abelian simple group. Kuperberg and Samperton \cite[Corollary 1.2]{Kuperberg} prove the following problem is \NP-complete.

\compproblem[]{$M\in\Hom$ and the promise that every non-trivial homomorphism from $ \pi_1(M)$ to $H$ is surjective}{Is there a non-trivial homomorphism from $ \pi_1(M)$ to $H$?}

It follows immediately that $\Epi{\Arb}{H}$ is \NP-hard, via the following reduction. On input a finite triangulation for $M$ and the promise that every non-trivial homomorphism from $ \pi_1(M)$ to $H$ is surjective, obtain in linear time a presentation for $\pi_1(M)$. Then there is an epimorphism from  $\pi_1(M)$ to $H$ if and only if there is a non-trivial homomorphism from  $\pi_1(M)$ to $H$.

This gives Item~(2) of \cref{thm:ObserveFacts}. 
Item~(3) follows from   \cref{lem:AbelianTimesNoCenter}, Item~(2) and \cref{thm:D2n_NPHARD}.

\subsection{Epimorphism onto abelian groups}\label{subsec:AbelianCase}

We can sharpen \cref{thm:AbelianDirectProd_is_NP} for the case when the target group is abelian (the direct product of a free abelian group with a finite abelian group) to show that $\Epi{\Arb}{\Abe}$ is in \P. Here we give a brief outline of the proof, see~\cite{Jerrythesis} for more details (including the proof for the technical  \cref{lem:epi_abe_tech_lem}).

\medskip\noindent
 \emph{Notation.} For a group $G$, $G_{ab} = G/[G:G]$ is the \emph{abelianisation} of $G$. Note that if $\Gen{S}{R}$ is a presentation 
for $G$ then $\Gen{S}{R\cup\{[s,t]:s,t\in S\}}$ is a presentation for $G_{ab}$, which can clearly be obtained in time linear in the size of  $\Gen{S}{R}$.
Recall that $C_a$ denotes the cyclic group of order $a\in \N_+\cup\{\infty\}$. 
By the well-known fundamental theorem of finitely generated abelian groups, 
if $G$ is a finitely generated abelian group then
\begin{equation*}
    G \cong (C_\infty)^{d} \times C_{a_1} \times \cdots \times C_{a_k}
\end{equation*}
for some $d, a_1,\dots,a_k \in \N_+$ where $a_i \mid a_{i+1}$ for $i\in[1,k-1]$.  Such a description for a finitely generated abelian group will be called its \emph{standard form}.

Let $G = \Gen{\cX}{\cR}$, then  $\kappa\colon x \mapsto x$ for all $x \in \cX$ induces a homomorphism $\kappa\colon G \to G_{ab}$. Moreover $\kappa$ is surjective since each $g\in G_{ab}$ can be represented by a  word  $w \in (\cX\cup\cX^{-1})^\ast$, and the element $g'\in G$ spelled by $w$ satisfies $\kappa(g')=g$. 
It follows that if $\tau\colon G_{ab} \to H$ is an epimorphism to a group $H$, then $\psi\colon G \to H$ defined by $\psi(g) = \tau(\kappa(g))$ for all $g \in G$ is an epimorphism. When $H$ is abelian, we have a stronger statement.

\begin{lemma}\label{lem:abe_epi_iff_abeabe_epi}
    Let $G \in \Arb$ and $H \in \Abe$. Then there exists an epimorphism $\psi\colon G \to H$ if and only if there exists an epimorphism $\varphi\colon G_{ab} \to H$. 
\end{lemma}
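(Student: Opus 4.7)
The plan is to prove both directions using the universal property of the abelianisation, since $H$ is abelian.

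For the reverse direction, this is already essentially noted in the paragraph immediately preceding the lemma: if $\varphi\colon G_{ab}\to H$ is an epimorphism, then composing with the canonical surjection $\kappa\colon G\to G_{ab}$ (which sends each generator to its coset) gives a homomorphism $\varphi\circ\kappa\colon G\to H$ that is surjective as a composition of two surjections. This works for any target $H$, abelian or not, so I would just cite the preceding discussion.

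For the forward direction, suppose $\psi\colon G\to H$ is an epimorphism and $H$ is abelian. Then for every $a,b\in G$ we have $\psi([a,b]) = [\psi(a),\psi(b)] = 1_H$, since commutators in an abelian group are trivial. Hence $[G,G]\subseteq\ker(\psi)$, so $\psi$ factors through the quotient $G/[G,G] = G_{ab}$: there exists a unique homomorphism $\varphi\colon G_{ab}\to H$ such that $\psi = \varphi\circ\kappa$. Since $\psi$ is surjective and $\psi = \varphi\circ\kappa$, the map $\varphi$ must also be surjective (its image contains the image of $\psi$, which is all of $H$).

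There is essentially no obstacle here; this is a direct application of the universal property of the abelianisation. The entire proof is two or three lines. The reason the lemma is worth stating, of course, is that it lets the algorithmic problem $\Epi{\Arb}{\Abe}$ be reduced to the case where the domain is already finitely generated abelian, presented in standard form after abelianising the input in linear time, which then reduces to a Smith normal form computation on an integer matrix as indicated by the reference to the accompanying lemma \ref{lem:epi_abe_tech_lem}.
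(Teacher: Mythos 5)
Your proof is correct and is exactly the standard universal-property argument that the paper implicitly relies on (the paper states this lemma without a written proof, giving only the reverse direction in the preceding paragraph and deferring details to the cited thesis). Nothing to fix.
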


The following technical lemma that allows us to check if an epimorphism exists between two finitely generated abelian  groups given in standard form. 

\begin{lemma}\label{lem:epi_abe_tech_lem}
     Let 
    \begin{equation*}
        G \cong (C_\infty)^d \times C_{a_1} \times \cdots \times C_{a_s}
\  \textnormal{ and }\
        H \cong C_{c_1} \times \cdots \times C_{c_t}
    \end{equation*}
    with $s,t,a_i,c_j \in \N_{+}$ such that $c_{i+1} \mid c_{i}$ and $a_{i+1} \mid a_i$. Then there exists an epimorphism $\psi\colon G \to H$ if and only if $s \geq t-d$ and $c_{d+i} \mid a_i$ for $i \in [1,t-d]$.
\end{lemma}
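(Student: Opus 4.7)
I will establish the two directions separately.

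For the backward direction, I will construct an epimorphism $\psi\colon G \to H$ explicitly. Write $G = \Z^d \times C_{a_1} \times \cdots \times C_{a_s}$ and $H = C_{c_1} \times \cdots \times C_{c_t}$. The plan is to use the free part $\Z^d$ to project coordinate-wise onto $C_{c_1} \times \cdots \times C_{c_{\min(d,t)}}$ via the quotients $\Z \twoheadrightarrow C_{c_j}$ and, if $t > d$, to use the first $t-d$ cyclic summands of the torsion part of $G$ to map onto $C_{c_{d+1}} \times \cdots \times C_{c_t}$ by the natural surjections $C_{a_i} \twoheadrightarrow C_{c_{d+i}}$ (well-defined because $c_{d+i} \mid a_i$). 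Any remaining torsion factors are sent to $0$; the inequality $s \geq t - d$ ensures enough torsion summands are available.

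For the forward direction, suppose $\psi\colon G \to H$ is surjective. The key tool is that, for each prime $p$ and each integer $k \geq 1$, $\psi$ induces a surjection of $\Z_p$-vector spaces
\[
    p^{k-1}G/p^k G \;\twoheadrightarrow\; p^{k-1}H/p^k H.
\]
A short calculation shows that, for a cyclic group $C_m$, the summand $p^{k-1}C_m/p^k C_m$ has $\Z_p$-dimension $1$ if $p^k \mid m$ and $0$ otherwise, while each $\Z$ summand always contributes dimension $1$. Surjectivity therefore forces
\[
    d + \#\{i : p^k \mid a_i\} \;\geq\; \#\{j : p^k \mid c_j\} \qquad (\star)
\]
for every prime $p$ and every $k \geq 1$.

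It remains to translate $(\star)$ into the stated divisibility conditions. Since $a_{i+1} \mid a_i$ and $c_{j+1} \mid c_j$, both sets in $(\star)$ are initial segments $[1, s_p^k]$ and $[1, t_p^k]$, so $(\star)$ reads $d + s_p^k \geq t_p^k$. Taking $p$ to be any prime dividing $c_t$ gives $t_p^1 = t$, whence $s \geq s_p^1 \geq t - d$. For $c_{d+i} \mid a_i$ with $i \in [1, t-d]$, fix any prime power $p^k$ dividing $c_{d+i}$: then $t_p^k \geq d + i$, so $s_p^k \geq i$, i.e.\ $p^k \mid a_i$; varying over all such prime powers yields $c_{d+i} \mid a_i$. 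The only subtlety is to assume the standard forms have no trivial summands (i.e.\ $a_i, c_j > 1$), which is the usual convention and causes no loss of generality; no other step presents a serious obstacle.
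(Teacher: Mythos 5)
Your proof is correct, and I want to flag that the paper itself does not contain a proof of this lemma — it defers to the cited thesis \cite{Jerrythesis}, so a direct comparison of approaches is not possible from the paper alone. Both directions of your argument are sound: the explicit construction for the ``if'' direction (using the free part to cover the first $\min(d,t)$ cyclic factors and torsion factors $C_{a_i}\twoheadrightarrow C_{c_{d+i}}$ for the rest) is exactly what one expects, and the ``only if'' direction via the induced surjections $p^{k-1}G/p^kG \twoheadrightarrow p^{k-1}H/p^kH$ of $\Z_p$-vector spaces is the standard Ulm-type invariant argument. The translation from the dimension inequality $(\star)$ to $s\geq t-d$ and $c_{d+i}\mid a_i$ is carried out correctly, and relies on both divisibility chains being nested so that the relevant index sets are initial segments.

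You are also right to flag the trivial-summand issue, and it is worth stating more strongly than you do: as literally written, the lemma's hypotheses allow $c_t=1$, and then the lemma is in fact \emph{false}. For example, take $G=C_\infty$ (so $d=1$, $s=0$) and $H=C_2\times C_1$ (so $t=2$, $c_1=2$, $c_2=1$): an epimorphism $\Z\to C_2\times C_1\cong C_2$ exists, yet $s\geq t-d$ fails. So the normalisation $c_j>1$ is not merely a convention but a necessary correction to the statement. By contrast, trivial $a_i$ cause no harm: if the divisibility conditions hold with $c_j>1$ then $c_{d+i}\mid a_i$ automatically forces $a_i>1$ for $i\le t-d$, and any trivial torsion factors with larger index may be discarded without affecting $s\geq t-d$. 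It would strengthen your writeup to record this counterexample explicitly rather than relegating the normalisation to a side remark, since it is a correction to the lemma as stated in the paper.
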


Using \cref{thm:snf_poly} (computing the Smith normal form in \P) on input finite presentations for $G_1,G_2$ we can compute integers  $s,t,a_i,c_j \in \N_{+}$ such that $c_{i+1} \mid c_{i}$, $a_{i+1} \mid a_i$ and 
 \begin{equation*}
        G_1 \cong  (C_\infty)^{d_1} \times C_{a_1} \times \cdots \times C_{a_s}
  \ \textnormal{ and }\
        G_2 \cong  (C_\infty)^{d_2} \times C_{c_1} \times \cdots \times C_{c_t}.
    \end{equation*}
     We require the free abelian rank of $G_1$ to be at least that of $G_2$ for an epimorphism to be possible. If so,  applying \cref{lem:epi_abe_tech_lem} to \begin{equation*}
         (C_\infty)^{d_1-d_2} \times C_{a_1} \times \cdots \times C_{a_s}
  \ \textnormal{  and  }\
     C_{c_1} \times \cdots \times C_{c_t}
    \end{equation*}
    we can decide  $\Epi{\Abe}{\Abe}$  in \P.
    Putting this all together we obtain
\begin{lemma}\label{lem:ABEL-P}
    $\Epi{\Arb}{\Abe}$ is in \P, where the input for both source and target groups are finite presentations. 
\end{lemma}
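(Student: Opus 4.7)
The plan is to reduce $\Epi{\Arb}{\Abe}$ to $\Epi{\Abe}{\Abe}$ via \cref{lem:abe_epi_iff_abeabe_epi}, then to decide the latter in polynomial time by placing both groups into standard form with the Smith normal form algorithm of \cref{thm:snf_poly} and invoking the divisibility criterion of \cref{lem:epi_abe_tech_lem}.

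Given a finite presentation $\Gen{\cX}{\cR}$ for $G \in \Arb$, first obtain a presentation for $G_{ab}$ in linear time by adjoining all commutators $[x,y]$ with $x, y \in \cX$. Extract the $\abs{\cR} \times \abs{\cX}$ integer matrix $M_G$ whose $(i,j)$-entry is the exponent sum of $x_j$ in $r_i$; each entry is bounded in absolute value by $\abs{r_i}$, so the binary encoding of $M_G$ is polynomial in the input size. Since $G_{ab} \cong \Z^{\abs{\cX}}/\im(M_G^T)$, applying \cref{thm:snf_poly} to $M_G$ produces in polynomial time a Smith normal form from which (after discarding unit diagonal entries and reversing order to match the convention of \cref{lem:epi_abe_tech_lem}) we read off the standard form
\begin{equation*}
G_{ab} \cong (C_\infty)^{d_1} \times C_{a_1} \times \cdots \times C_{a_s},
\end{equation*}
with $a_{i+1} \mid a_i$ and each $a_i \geq 2$. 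Run the same procedure on the finite presentation for $H$ to obtain $H \cong (C_\infty)^{d_2} \times C_{c_1} \times \cdots \times C_{c_t}$ in standard form.

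By \cref{lem:abe_epi_iff_abeabe_epi} an epimorphism $G \to H$ exists iff an epimorphism $G_{ab} \to H$ exists. Since any homomorphic image of $G_{ab}$ has torsion-free rank at most $d_1$, a necessary condition is $d_1 \geq d_2$; this is checked in constant time, and "No" is returned if it fails. Otherwise, by the structure theorem, the existence of an epimorphism $G_{ab} \to H$ is equivalent to the existence of one from $(C_\infty)^{d_1 - d_2} \times C_{a_1} \times \cdots \times C_{a_s}$ onto $C_{c_1} \times \cdots \times C_{c_t}$, which by \cref{lem:epi_abe_tech_lem} reduces to verifying $s \geq t - (d_1 - d_2)$ together with the divisibilities $c_{(d_1 - d_2) + i} \mid a_i$ for $i \in [1, t - (d_1 - d_2)]$. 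Each check is a comparison or a division of binary-encoded integers and hence polynomial time.

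The whole algorithm---building the abelianised presentation, extracting the exponent-sum matrix, two Smith normal form computations, and the final bounded sequence of integer comparisons and divisibility tests---runs in polynomial time in the combined encoding of the input presentations. The only conceptually nontrivial ingredient, namely the exact combinatorial characterisation of epimorphisms between abelian groups in standard form, is packaged into \cref{lem:epi_abe_tech_lem} (whose proof is deferred to \cite{Jerrythesis}); this is the main obstacle, and once it is in hand, the complexity claim follows immediately.
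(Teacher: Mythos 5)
Your proposal is correct and follows essentially the same route as the paper: abelianise $G$, pass to standard forms via Smith normal form, and apply \cref{lem:epi_abe_tech_lem} after checking the free-rank condition. You also correctly note the need to reverse the divisibility order from the SNF convention ($\MFd_i \mid \MFd_{i+1}$) to match \cref{lem:epi_abe_tech_lem}'s convention ($a_{i+1} \mid a_i$), a detail the paper leaves implicit.
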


\begin{proof}
    Let $G \in \Arb$ and $H \in \Abe$. The following procedure solves our problem.
    \begin{enumerate}
        \item Compute a presentation for 
        $G_{ab}$.
        \item If  $\Epi{\Abe}{\Abe}$ on input $G_{ab}$ as the domain group and $H$ as the target group returns `Yes, return `Yes'. Else return `No'.
    \end{enumerate}
    Step~(1) constructs a finite presentation for $G_{ab}$ in polynomial time, and  step~(2) is in \P as described above.
\end{proof}

\begin{proof}[Proof of \cref{thm:ObserveFacts}]
  \cref{lem:epiFree}  shows that epimorphism with finite rank free group targets is decidable. 
  \cite[Corollary 1.2]{Kuperberg} 
  shows that epimorphism to a non-abelian finite simple group is \NP-hard, combining with \cref{lem:epi_finite} we have the \NP-complete result.
    \cref{lem:ABEL-P} shows that epimorphism with abelian targets is in \P. 
\end{proof}

\section{Concluding remarks}

We have shown the epimorphism problem from the class of finitely presented groups to 
three  classes of virtually abelian groups, and to a fixed finite non-nilpotent dihedral group, is \NP-complete. 
In addition the problem is \NP-complete when the target is a  fixed group $B\times A$ where $B$  a finite group with trivial center and $\Epi{\Arb}{B}$ is \NP-hard, and $A$ is an abelian group.
These results, together with observations that epimorphism from finitely presented groups to
abelian groups is in \P, to a fixed finite non-abelian simple group is \NP-complete, 
to free groups is decidable,  and from non-abelian nilpotent to non-abelian nilpotent is undecidable, represent the current state of knowledge for the problem.

For fixed finite targets a possible conjecture might be that epimorphism from finitely presented groups to a fixed group that is  not nilpotent is \NP-complete. 
A challenge problem is to decide for example whether $\Epi{\Arb}{D_8}$ is in \P or \NP-hard.

The motivation for the present paper was to show that the problems considered by Friedl and L\"oh in 
\cite{FriedlLoh} have reasonable 
complexity,  
and to extend the classes of virtually abelian and other targets for which the (uniform) epimorphism problem is decidable. Even though we have shown epimorphism is less difficult in those cases than Friedl and L\"oh might have indicated, 
we appear to be no closer to 
resolving their conjecture.

\begin{conjecture}[{\cite[Conjecture 1.5]{FriedlLoh}}]
The uniform epimorphism problem onto the class of all
finitely generated virtually abelian groups is not decidable.
\end{conjecture}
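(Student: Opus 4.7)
The plan is to prove undecidability by reducing a known undecidable problem (Hilbert's 10th problem, or equivalently the solvability of Diophantine systems, or the word problem for a suitable finitely presented group) to the uniform problem $\Epi{\Arb}{\cT}$ where $\cT$ is the class of all finitely generated virtually abelian groups. The central feature to exploit is \emph{uniformity}: both the source presentation \emph{and} the target group are inputs, so given an instance of the undecidable problem I can tailor the target group $H = N \rtimes_\theta Q$ (or a genuine non-split extension with cocycle $f_s$) to encode the instance data in the action matrices $\theta_s(q) \in \GL(d,\Z)$ and/or in the cocycle $f_s\colon Q\times Q\to N$.

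Concretely, given a Diophantine polynomial equation $p(x_1,\ldots,x_k)=0$, I would try to build $N=\Z^d$ (with $d$ depending on the degree and number of variables of $p$) and choose a finite group $Q$ together with an action $\theta\colon Q\to \GL(d,\Z)$ whose image generates a subring of $\mathrm{Mat}(d,\Z)$ rich enough to capture the coefficients of the monomials of $p$. The source group $G$ would be a carefully crafted presentation whose generators are forced, by the constraints in \cref{lem:epi_iff_NbyQ_epi} together with the surjectivity requirement (c), to map to elements of $N$ whose coordinates realise a solution of $p=0$. The proof would then proceed by constructing the analogue of $\PRESEQNB$ for arbitrary $\theta$ and showing that, unlike the $\pm 1$ case treated in \cref{lem:epi_iff_equations_pmonly}, the resulting system of equations in $N$ has undecidable solvability, because the conjugation action can now multiply coordinates by matrix entries of arbitrary $\theta_s(q)$, producing genuine bilinear interactions rather than just linear ones.

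The hardest step, and the one I expect is the real obstacle, is showing that the richness of the action actually escapes the linear-algebraic machinery of \cref{sec:LA}. The authors' polynomial-time algorithms for $\Epi{\Arb}{\PMonlyExt}$ work because $\theta_s(q)\in\{\pm \mathrm{id}\}$ reduces everything to integer matrix problems (\MatrixSubspanA and \MatrixSubspanB) that admit Smith normal form solutions. For general $\theta$, $N$ becomes a $\Z[Q]$-module, and since $\Z[Q]$ is a finite-dimensional $\Z$-algebra it is not clear a priori that the corresponding module-theoretic decision problems are undecidable: they might still be reducible (via some non-obvious structure theorem for $\Z[Q]$-modules, or via effective versions of Noetherianity) to Presburger arithmetic or a decidable extension thereof. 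Thus the genuinely delicate task is to \emph{simultaneously} use the action $\theta$ \emph{and} a non-trivial cocycle $f_s$ (allowed in full generality by Friedl--Löh's setup) in a way that produces interactions not captured by any module-theoretic decision procedure.

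A plausible route around this is to reduce from the undecidable conjugacy problem or the undecidable membership problem in certain matrix semigroups: Paterson's undecidability of mortality for $3\times 3$ integer matrices, or Markov's theorem on the undecidability of freeness in matrix semigroups, both live inside $\GL(d,\Z)$-like settings and have been used elsewhere to leverage undecidability through semidirect-product-style constructions. I would attempt to embed such a matrix-semigroup instance into the epimorphism condition, with the finite group $Q$ playing the role of a set of matrix generators via $\theta$, and the generators of the source group $G$ encoding a word in those matrices whose product equals the identity (or some target element); surjectivity would then be equivalent to solvability of the matrix word problem. The final step would be to verify that the resulting $H$ is indeed finitely generated virtually abelian (automatic from the construction) and that the reduction is computable, at which point the conjecture would follow from the undecidability of the chosen source problem.
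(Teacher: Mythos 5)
This statement is an open \emph{conjecture} (quoted verbatim from Friedl--L\"oh), not a theorem of the paper; the authors explicitly remark in the concluding section that ``we appear to be no closer to resolving their conjecture.'' There is no proof in the paper for you to be compared against, and your submission, by its own admission, is not a proof either: it is a research plan that repeatedly hedges (``I would try to build\ldots'', ``A plausible route around this is\ldots'') and concedes in its third paragraph that the central step --- showing the relevant $\Z[Q]$-module decision problems escape the linear-algebraic machinery of Section~6 --- is precisely the unresolved obstacle.

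To be concrete about the gaps: (i) no explicit source presentation $G$, finite group $Q$, action $\theta$, or cocycle $f_s$ is constructed, so there is no reduction to verify; (ii) you correctly observe that for general $\theta$ the problem becomes one about $\Z[Q]$-modules, but you give no argument that this problem is undecidable --- and since $\Z[Q]$ is a finitely generated $\Z$-algebra over which finitely generated modules are Noetherian, decidability is a live possibility, which would \emph{refute} the conjecture rather than prove it; (iii) the proposed detour through matrix mortality or semigroup freeness does not obviously transfer, because in a virtually abelian target $N\rtimes_\theta Q$ the image of $\theta$ is a \emph{finite} subgroup of $\GL(d,\Z)$, so one cannot freely embed an arbitrary undecidable matrix-semigroup instance into the action; the semigroup generated by $\theta(Q)$ is automatically a finite group, killing the usual sources of undecidability in that setting. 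Until someone supplies a concrete family of targets for which solvability of the induced equation/module problem is provably undecidable --- or alternatively extends the paper's polynomial-time methods to all of $\Aut(\Z^d)$ and refutes the conjecture --- the statement remains open.
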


\section*{Acknowledgements}
This research was supported by Australian Research Council grant DP210100271. The second author 
was supported by the Australian Government Research Training Program Stipend.
The third author was partially supported by DFG grant WE 6835/1--2.
The first and third author were 
supported through the program ``Oberwolfach Research Fellows''  at the Mathematisches Forschungsinstitut Oberwolfach in 2023.

We wish to thank  Michal Ferov, Robert Tang, Alexander Thumm and Kane Townsend 
for fruitful discussions about this project. 

\bibliographystyle{plain}
\bibliography{epi}

\end{document}